\numberwithin{equation}{subsection}
\newtheorem{thm}{Theorem}[subsection]
\newtheorem*{thm*}{Theorem}
\newtheorem{cor}[thm]{Corollary}
\newtheorem*{cor*}{Corollary}
\newtheorem{lem}[thm]{Lemma}
\newtheorem{prop}[thm]{Proposition}
\newtheorem{prop-const}[thm]{Proposition-Construction}
\newtheorem*{conjecture*}{Conjecture}
\newtheorem*{princ*}{Principle}
\theoremstyle{remark}
\newtheorem{rem}[thm]{Remark}
\newtheorem{example}[thm]{Example}
\newtheorem{defin}[thm]{Definition}
\newtheorem{notation}[thm]{Notation}
\newtheorem{warning}[thm]{Warning}
\newtheorem{convention}[thm]{Convention}
\newcounter{steps}[thm]
\newcommand{\preprime}{\prescript{\prime}{}}
\newcommand{\presup}[1]{\prescript{#1}{}\!}
\newcommand{\xar}[1]{\xrightarrow{#1}}
\newcommand{\rar}[1]{\xar{#1}}
\newcommand{\isom}{\rar{\simeq}}
\newcommand{\rightrightrightarrows}{%
        \mathrel{\vcenter{\mathsurround0pt
                \ialign{##\crcr
                        \noalign{\nointerlineskip}$\rightarrow$\crcr
                        \noalign{\nointerlineskip}$\rightarrow$\crcr
                        \noalign{\nointerlineskip}$\rightarrow$\crcr
                }%
        }}%
}
\newcommand{\into}{\hookrightarrow}
\newcommand{\onto}{\twoheadrightarrow}
\newcommand{\la}{\lambda}
\newcommand{\La}{\Lambda}
\newcommand{\bDelta}{\mathbf{\Delta}}
\newcommand{\bA}{{\mathbb A}}
\newcommand{\bB}{{\mathbb B}}
\newcommand{\bD}{{\mathbb D}}
\newcommand{\bG}{{\mathbb G}}
\newcommand{\bP}{{\mathbb P}}
\newcommand{\bQ}{{\mathbb Q}}
\newcommand{\bV}{{\mathbb V}}
\newcommand{\bW}{{\mathbb W}}
\newcommand{\bZ}{{\mathbb Z}}
\newcommand{\cD}{{\mathcal D}}
\newcommand{\cI}{{\mathcal I}}
\newcommand{\cK}{{\mathcal K}}
\newcommand{\sA}{{\EuScript A}}
\newcommand{\sB}{{\EuScript B}}
\newcommand{\sC}{{\EuScript C}}
\newcommand{\sD}{{\EuScript D}}
\newcommand{\sF}{{\EuScript F}}
\newcommand{\sG}{{\EuScript G}}
\newcommand{\sH}{{\EuScript H}}
\newcommand{\sK}{{\EuScript K}}
\newcommand{\sL}{{\EuScript L}}
\newcommand{\sO}{{\EuScript O}}
\newcommand{\sP}{{\EuScript P}}
\newcommand{\sW}{{\EuScript W}}
\newcommand{\fb}{{\mathfrak b}}
\newcommand{\fg}{{\mathfrak g}}
\newcommand{\fh}{{\mathfrak h}}
\newcommand{\fk}{{\mathfrak k}}
\newcommand{\fl}{{\mathfrak l}}
\newcommand{\fn}{{\mathfrak n}}
\newcommand{\fs}{{\mathfrak s}}
\newcommand{\ft}{{\mathfrak t}}
\newcommand{\fz}{{\mathfrak z}}
\newcommand{\on}{\operatorname}
\newcommand{\ol}[1]{\overline{#1}{}}
\newcommand{\ul}{\underline}
\newcommand{\mathendash}{\text{\textendash}}
\newcommand{\ldotsplus}{\mathinner{\ldotp\ldotp\ldotp\ldotp}}
\newcommand{\Coker}{\on{Coker}}
\newcommand{\End}{\on{End}}
\newcommand{\Hom}{\on{Hom}}
\newcommand{\Ext}{\on{Ext}}
\newcommand{\Spec}{\on{Spec}}
\newcommand{\Spf}{\on{Spf}}
\newcommand{\id}{\on{id}}
\newcommand{\Ad}{\on{Ad}}
\newcommand{\ind}{\on{ind}}
\newcommand{\Rep}{\mathsf{Rep}}
\newcommand{\gr}{\on{gr}}
\newcommand{\act}{\on{act}}
\newcommand{\coact}{\on{coact}}
\renewcommand{\dot}{\bullet}
\newcommand{\Fun}{\on{Fun}}
\newcommand{\vph}{\varphi}
\newcommand{\Vect}{\mathsf{Vect}}
\newcommand{\Fil}{\mathsf{Fil}\,	}
\newcommand{\BiFil}{\mathsf{BiFil}\,}
\newcommand{\Gr}{\on{Gr}}
\newcommand{\Spr}{\on{Spr}}
\newcommand{\Whit}{\mathsf{Whit}}
\newcommand{\Sym}{\on{Sym}}
\newcommand{\LocSys}{\on{LocSys}}
\newcommand{\Op}{\on{Op}}
\renewcommand{\mod}{\mathendash\mathsf{mod}}
\newcommand{\comod}{\mathendash\mathsf{comod}}
\newcommand{\Fl}{\on{Fl}}
\newcommand{\sFl}{\Fl^{\sinf}}
\newcommand{\sinf}{\!\frac{\infty}{2}} 
\newcommand{\colim}{\on{colim}}
\newcommand{\DGCat}{\mathsf{DGCat}}
\newcommand{\ShvCat}{\mathsf{ShvCat}}
\renewcommand{\lim}{\on{lim}}
\newcommand{\Ind}{\mathsf{Ind}}
\newcommand{\Pro}{\mathsf{Pro}}
\newcommand{\Tot}{\on{Tot}}
\newcommand{\heart}{\heartsuit}
\newcommand{\Oblv}{\on{Oblv}}
\newcommand{\Av}{\on{Av}}
\newcommand{\ren}{\mathendash ren}
\newcommand{\ld}{\check}
\newcommand{\IndCoh}{\mathsf{IndCoh}}
\newcommand{\QCoh}{\mathsf{QCoh}}
\newcommand{\Alg}{\mathsf{Alg}}
\newcommand{\LieAlg}{\mathsf{LieAlg}}
\newcommand{\Lie}{\on{Lie}}
\renewcommand{\o}[1]{\accentset{\circ}{#1}{}}
\renewcommand{\subset}{\subseteq}
\renewcommand{\sl}{\fs\fl}
\newcommand{\biggg}{\bBigg@{4}}
\newcommand{\Biggg}{\bBigg@{5}}
\date{\today}
\begin{document}

\frenchspacing

\setlength{\epigraphwidth}{0.4\textwidth}
\renewcommand{\epigraphsize}{\footnotesize}

\title{$\sW$-algebras and Whittaker categories}

\author{Sam Raskin}

\address{Massachusetts Institute of Technology, 
77 Massachusetts Avenue, Cambridge, MA 02139.} 
\email{sraskin@mit.edu}

\begin{abstract}

Affine $\sW$-algebras are a somewhat complicated family of (topological)
associative algebras associated with a semisimple Lie algebra, quantizing
functions on the algebraic loop space of Kostant's slice. They have
attracted a great deal of attention in geometric
representation theory because of Feigin-Frenkel's duality
theorem for them, which identifies $\sW$-algebras for a Lie algebra and
its Langlands dual through a subtle construction.

The main result of this paper is an affine version of Skryabin's theorem,
describing the category of modules over the $\sW$-algebra in simpler
categorical terms. But unlike the classical story, 
it is essential to work with derived categories in the affine setting.

One novel feature is the use of geometric techniques to
study $\sW$-algebras: the theory of $D$-modules on the loop group
and the geometry of the affine Grassmannian are indispensable tools. 
These are used to give an infinite family
of affine analogues of the Bezrukavnikov-Braverman-Mirkovic theorem, providing a geometric version of Rodier's
compact approximation to the Whittaker model from the arithmetic setting.
We also use these methods to generalize
Beraldo's theorem identifying Whittaker invariants and Whittaker coinvariants, extending his result
from $GL_n$ to a general reductive group. 
At integral level, these methods seem to have deep 
intrinsic meaning in the local geometric Langlands program. 

The theory developed here provides systematic proofs
of many classical results in the subject.
In particular, we clarify the
exactness properties of the quantum 
Drinfeld-Sokolov functor. 

\end{abstract}

\dedicatory{For Arthur}

\maketitle

\setcounter{tocdepth}{1}
\tableofcontents

\section{Introduction}

\subsection{} This paper is about generalizing Skryabin's theorem, a simple
result about \emph{finite} $\sW$-algebras,\footnote{One finds competing explanations for the name in the
literature. 

The implicit connection to Whittaker models from harmonic analysis
has been suggested repeatedly, e.g. in \cite{kac-desole}.
The explanation makes a lot of sense, since the subject was started in \cite{kostant-whittaker}; here
the word ``Whittaker" in the title is explicitly meant to evoke the above meaning.

Arakawa \cite{arakawa-report} suggests the 
name comes because affine $\sW$-algebras generalize
the Virasoro algebra, and \emph{W} succeeds \emph{V} in the alphabet. 

De Sole and Kac also suggest in \cite{kac-desole} \S 0.2 that because $\sW$-algebras
quantize functions on the space of invariant polynomials of the group,
which can be thought of as invariant polynomials for the Weyl group
considered as a Chevalley group, that the name derives from \emph{Weyl}. 

I tried to hunt the answer down in the literature, with only partial success.
For $\sl_3$, the affine $\sW$-algebra has two 1-parameter families of
generators; one family has to do with Virasoro, so is denoted 
$L_n$ by standard tradition. In Zamolodchikov's first paper \cite{zamolodchikov}
on the subject, which introduced the affine $\sW$-algebra for $\sl_3$,
he denotes the other family by $V_n$. Howeover,
in his \emph{second} paper \cite{zamolodchikov-fateev}
on the subject, joint with Fateev, the second family is denoted by $W_n$.

As far as I could tell, the name originates from this choice of notation
in the second paper. I do not know what this choice was made. The connection
to Virasoro was transparent at that time, but I am not sure
about the connection to Whittaker models and Kostant's work. I'm not even
sure that the connection to $\sl_3$ would have been clear yet.}
to the more subtle setting of 
\emph{affine} $\sW$-algebras. 

The main new construction of the paper is of a more general nature.
It provides a \emph{compact approximation} to the 
\emph{Whittaker model}, which
corresponds under the conjectural
local geometric Langlands correspondence to the
stratification of the moduli space of de Rham local systems
by slope. This method, which we call the
\emph{adolescent Whittaker} construction, is a new
one, though closely related to \cite{rodier}. 
It appears to be fundamental in geometric Langlands, 
and may be of interest to specialists
in the Langlands program without an interest in $\sW$-algebras.

But in what follows, we emphasize the role of $\sW$-algebras, which
are ultimately the main players in this paper. We provide a survey
of the subject below. The reader with no interest in this part of the paper
can safely skip that material.

\subsection{Some notation}

We work over a ground field $k$ of characteristic $0$ throughout the paper.

Let $\fg$ be a reductive Lie algebra. Let $\fn$ be the radical of a Borel $\fb$.
We fix Chevalley generators $e_i \in \fn$. Let $\psi: \fn \to k$ be 
the \emph{non-degenerate} character with $\psi(e_i) = 1$ for all $i$.

Throughout the paper, notation is \emph{always} assumed derived
(see \S \ref{ss:minus-infty} for an explanation why).
Our derived categories are considered as DG categories.\footnote{This
means that they are enriched over chain complexes of vector
spaces (and satisfy a few additional hypotheses).
However, this notion should be considered in the homotopic sense,
i.e., as $\infty$-categories in the sense of Lurie, c.f. \cite{dgcat}.}
They all admit arbitrary (small) colimits (equivalently, arbitrary direct sums), i.e., they are \emph{cocomplete}.
They are considered as objects of $\DGCat_{cont}$, the $\infty$-category of
cocomplete DG categories and continuous functors (i.e., functors preserving all colimits) . 
 
E.g., $\Vect$ means the DG category of chain complexes of ($k$-)vector spaces. Similarly, 
$\fg\mod$ denotes the DG category of chain complexes of $\fg$-modules.
We use the notation $\Vect^{\heart}$, $\fg\mod^{\heart}$, etc.
to refer to the usual abelian categories (since these
are the hearts of standard $t$-structures on the DG categories).

We use $\ul{\Hom}(-,-)$ to denote the chain complex of maps in
a DG category.

\subsection{Finite $\sW$-algebras}

We begin by describing what $\sW$-algebras are in the finite-dimensional
setting, and what about them we wish to generalize.

\subsection{}

We have the \emph{finite Drinfeld-Sokolov} functor:

\[
\Psi^{fin}: \fg\mod \to \Vect
\]

\noindent defined by: 

\[
M \mapsto C^{\dot}(\fn,M \otimes -\psi)
\]

\noindent where $C^{\dot}$ indicates the cohomological Chevalley complex (i.e., Lie algebra cohomology),
and $-\psi$ is abusive notation for
the 1-dimensional $\fn$-module defined by the character $-\psi$ (the reason for the sign will be apparent later).
 
The non-derived version of this functor was introduced in 
\cite{kostant-whittaker}, where its basic properties were
established.\footnote{Almost established, in any case.
One often finds this source cited for results which are not
proved there, but whose proofs can readily be extracted from it.}

Define the DG algebra 
$\sW^{fin}$ as the \emph{endomorphisms} of this
functor.\footnote{Explicitly, this means
we take $\ul{\End}_{\fg\mod}(\Psi^{fin}(U(\fg)))$,
where $\Psi^{fin}(U(\fg))$ is regarded as a $\fg$-module
through the bimodule structure on $U(\fg)$. 

More explicitly still, note that 
$C_{\dot}(\fn,-) = C^{\dot}(\fn,-)[\dim \fn] \otimes \det(\fn)$
(with $C_{\dot}$ being Lie algebra homology),
so 
$\Psi^{fin}(U(\fg)) = \ind_{\fn}^{\fg}(-\psi)[-\dim \fn] \otimes \det(\fn)^{\vee}$
(the sign occurs in switching between right and left actions).
So we compute that
$\ul{\End}_{\fg\mod}(\Psi^{fin}(U(\fg)))$ is
$C^{\dot}\big(\fn,C_{\dot}(\fn, U(\fg) \otimes -\psi) \otimes \psi\big)$.}
One has:

\begin{thm}[$\approx$ Kostant \cite{kostant-whittaker}]\label{t:w-fin-facts}

\begin{enumerate}

\item $\sW^{fin}$ is concentrated in cohomological degree $0$,
i.e., $\sW^{fin} = H^0(\sW^{fin})$.

\item $\sW^{fin}$ carries a canonical filtration whose
associated graded is \emph{slightly non-canonically}
isomorphic to the algebra of functions on
the Kostant slice $f+\fb^e \simeq f+\fb/N \simeq \fg//G$.
Here $f$ is a principal nilpotent element related to
$\psi$, $e$ fits into a principal $\sl_2$ with $f$ with
$[e,f] \in \fb$. We use the quotient symbol $/$ to indicate
the stack quotient (which happens to be an
affine scheme in this case), and $//$ to indicate the GIT quotient.

This isomorphism is completely determined by
a choice of $\Ad$-invariant isomorphism $\fg \simeq \fg^{\vee}$.
Then for $\pi:\fg \to \fg^{\vee} \to \fn^{\vee}$, $f$ should
be the unique nilpotent element in $\pi^{-1}(\psi)$.
 
\end{enumerate}

\end{thm}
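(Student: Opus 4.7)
The plan is to exploit the filtered deformation structure of $\sW^{fin}$: I would introduce the Kazhdan filtration on $U(\fg)$, compute the associated graded of $\sW^{fin}$ using classical geometric results of Kostant on the adjoint quotient, and then lift these conclusions back to $\sW^{fin}$ itself via a standard spectral sequence argument.

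As a first step, I would rewrite $\sW^{fin}$ in a tractable form. By tensor--hom adjunction,
\begin{equation*}
\sW^{fin} = \ul{\End}_{\fg\mod}\bigl(\ind_\fn^\fg(k_{-\psi})\bigr) \simeq \on{RHom}_{\fn\mod}\bigl(k_{-\psi}, \ind_\fn^\fg(k_{-\psi})\bigr) \simeq C^{\dot}\bigl(\fn, \ind_\fn^\fg(k_{-\psi}) \otimes \psi\bigr),
\end{equation*}
matching the formula of the footnote. Next, pick an $\Ad$-invariant identification $\fg \simeq \fg^{\vee}$, realize $\psi$ as the Killing-dual of a principal nilpotent $e \in \fn$, and embed this in a principal $\sl_2$-triple $(e,h,f)$. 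Form the Kazhdan filtration on $U(\fg)$ by combining the PBW filtration with the $h$-grading, normalized so that $\psi$ has filtration degree $2$ and the defining relations $x + \psi(x)$ of $\ind_\fn^\fg(k_{-\psi})$ become homogeneous after passing to the graded. A direct computation then gives
\begin{equation*}
\gr \ind_\fn^\fg(k_{-\psi}) \simeq \Sym(\fg)/(x + \psi(x) : x \in \fn) \simeq \cO(\mu^{-1}(-\psi)) \simeq \cO(f + \fb),
\end{equation*}
where $\mu: \fg^{\vee} \to \fn^{\vee}$ is the restriction moment map.

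The key geometric input is Kostant's transversality theorem: the adjoint action $N \times (f + \fb^e) \to f + \fb$, $(n, x) \mapsto \Ad(n)x$, is an $N$-equivariant isomorphism of affine varieties (with $N$ acting by left translation on the source). This rests on the regularity of $f$, which makes the infinitesimal action $\fn \to \fg/\fb^e$ an isomorphism at each point of the slice, together with standard unipotent-group arguments promoting this to a global isomorphism. Granted this, $\cO(f+\fb) \simeq \cO(N) \otimes \cO(f+\fb^e)$ as $\fn$-modules, and since $\cO(N)$ is cofree as a $U(\fn)$-module under left translation, its Chevalley cohomology is concentrated in degree $0$ at $k$. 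Therefore
\begin{equation*}
\gr \sW^{fin} \simeq C^{\dot}\bigl(\fn, \cO(f + \fb)\bigr) \simeq \cO(f + \fb^e),
\end{equation*}
which is concentrated in cohomological degree $0$ and equal to functions on the Kostant slice; the identification of this slice with $\cO(\fg//G)$ follows from the Chevalley restriction theorem combined with Kostant's theorem that the slice maps isomorphically to the adjoint quotient.

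To conclude, I would invoke a standard filtered-to-graded argument: since the Kazhdan filtration is exhaustive, separated, and bounded below, concentration of $\gr \sW^{fin}$ in degree $0$ forces the same for $\sW^{fin}$ itself, and the identification of associated graded passes through as stated. The ``slightly non-canonical'' qualifier in (2) then reflects the dependence on the initial choice of $\Ad$-invariant form needed to identify $f$ with (the dual of) $\psi$. The main obstacle is the geometric step above---Kostant's transversality theorem for the principal slice; the remainder is essentially bookkeeping, modulo careful setup of the Kazhdan filtration so that the Koszul-theoretic identifications at the level of associated graded are valid.
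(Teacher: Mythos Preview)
Your approach is correct and is exactly the Kazhdan--Kostant filtration argument the paper points to (via \cite{gan-ginzburg} \S 4 and \S \ref{ss:kk-start} of the appendix): pass to the associated graded, invoke Kostant's transversality $N \times (f+\fb^e) \isom f+\fb$ to compute it as $\cO(f+\fb^e)$ in degree $0$, and lift back. One small slip: $\psi$ cannot be the Killing-dual of an element $e \in \fn$, since $\fn$ is isotropic for any invariant form; rather $\psi$ corresponds to $f \in \fn^-$ under $\fg \simeq \fg^{\vee} \to \fn^{\vee}$, consistent with your subsequent use of $f+\fb$ and $f+\fb^e$.
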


The proof is quick using \emph{Kazhdan-Kostant} filtrations, 
c.f. \cite{kostant-whittaker} \S 1-2, \cite{gan-ginzburg} \S 4,
or \S \ref{ss:kk-start} from the appendix of the present paper.

\begin{rem}

In fact, it is straightforward to show using these methods that
the canonical map $Z(\fg) \to \sW^{fin}$ is an isomorphism;
in particular, $\sW^{fin}$ is commutative. 
(This is also all but proved in \cite{kostant-whittaker}.)

We encourage the
reader to forget this fact as much as possible. The
affine $\sW$-algebras are not (usually) commutative. 
There is a more subtle point as well: this identification is not
true \emph{derivedly}, i.e., there are non-vanishing 
higher Hochschild cohomology groups for $U(\fg)$.
(From this perspective, the algebra $\sW^{fin}$ can be thought of
as a \emph{construction} of the \emph{usual} (non-derived) 
center of $U(\fg)$ adapted to derived settings.)

\end{rem}

\subsection{Skryabin's theorem}\label{ss:fin-skry}

One has the following description of the category of modules
over $\sW^{fin}$.

Let $\fg\mod^{N,\psi} \subset \fg\mod$ denote the full subcategory
consisting of twisted Harish-Chandra modules, i.e,. the
full subcategory consisting of complexes on whose cohomologies
the operators $x-\psi(x)$ act locally nilpotently for every $x \in \fn$.

\begin{thm}[Skryabin's theorem]\label{t:fin-skryabin}

There is a canonical $t$-exact equivalence of DG categories
$\fg\mod^{N,\psi} \isom \sW^{fin}\mod$ fitting into a commutative 
diagram:

\[
\xymatrix{
\fg\mod^{N,\psi} \ar[rr]^{\simeq} \ar[dr]^{\Psi^{fin}} && 
\sW^{fin}\mod \ar[dl]_{\Oblv} \\
& \Vect.
}
\]

\noindent Here $\Oblv$ denotes the forgetful functor.

\end{thm}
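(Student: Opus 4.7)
By Frobenius reciprocity, setting $W := \ind_\fn^\fg(\psi) \in \fg\mod$ gives a natural equivalence $\Psi^{fin}(M) \simeq \ul{\Hom}_{\fg\mod}(W, M)$, so $\Psi^{fin}$ is corepresented by $W$. Yoneda identifies $\sW^{fin} = \End(\Psi^{fin})$ with $\ul{\End}_{\fg\mod}(W)^{\mathrm{op}}$, and the functor $\Psi^{fin}$ canonically enriches to a continuous functor $\fg\mod \to \sW^{fin}\mod$ compatible with the forgetful functors to $\Vect$. The cyclic vector $1 \in W$ is $(\fn, \psi)$-equivariant, so $W$ lies in $\fg\mod^{N,\psi}$ and the enriched functor restricts to the desired $\Psi^{fin}: \fg\mod^{N,\psi} \to \sW^{fin}\mod$; the commutative triangle with $\Oblv$ is built into this construction.

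To prove the restricted functor is an equivalence, I would construct its inverse as a left adjoint. The subcategory $\fg\mod^{N,\psi} \subset \fg\mod$ is closed under all colimits (filtered colimits and direct sums are immediate; cofibers follow from a short argument chasing the long exact cohomology sequence to transfer local nilpotence across connecting maps). Hence the continuous left adjoint $V \mapsto V \otimes_{\sW^{fin}} W$ of $\Psi^{fin}: \fg\mod \to \sW^{fin}\mod$ factors through a functor $L: \sW^{fin}\mod \to \fg\mod^{N,\psi}$ left adjoint to the restricted $\Psi^{fin}$. The unit $V \to \Psi^{fin}L(V)$ is tautologically invertible at $V = \sW^{fin}$, hence at all $V$ by continuity, so $L$ is fully faithful. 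It remains to show $L$ is essentially surjective, equivalently that $L(\sW^{fin}) = W$ generates $\fg\mod^{N,\psi}$ under colimits. Since $\fn$ is finite-dimensional, the Chevalley-Eilenberg complex $C^{\dot}(\fn, -\otimes -\psi)$ preserves colimits, so $W$ is compact in $\fg\mod$ and hence in $\fg\mod^{N,\psi}$; thus generation reduces to \emph{conservativity} of $\Psi^{fin}$ on $\fg\mod^{N,\psi}$.

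The heart of the matter is this conservativity, which I would prove via the Kazhdan-Kostant filtration together with a preliminary $t$-exactness of $\Psi^{fin}|_{\fg\mod^{N,\psi}}$ established by the same technique. $t$-exactness then reduces conservativity to showing $C^{\dot}(\fn, M \otimes -\psi) \neq 0$ for any nonzero $M \in \fg\mod^{N,\psi, \heart}$. Equip $M$ with a good filtration compatible with the Kazhdan-Kostant filtration on $U(\fg)$: then $\gr M$ is a nonzero quasicoherent sheaf on $\fg^* = \Spec \Sym\fg$, and the $(\fn,\psi)$-equivariance of $M$ passes at the graded level to the linear condition cutting out the affine translate $f + \fn^{\perp} = f + \fb \subset \fg^*$ (the shift by $\psi$ becomes the constant $f$ because the Kazhdan-Kostant grading is chosen so that $\psi(n)$ and the symbol of $n$ lie in the same degree). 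Kostant's slice theorem gives an isomorphism $N \times (f + \fb^e) \isom f + \fb$ via the adjoint action, under which the graded of $\Psi^{fin}$ becomes restriction to the slice $f + \fb^e$, with higher $\fn$-cohomology vanishing (since $N$ acts freely on $f + \fb$). Hence $\gr \Psi^{fin}(M) \neq 0$, and the filtered spectral sequence, degenerating at $E_1$ for the same reason, lifts this to $\Psi^{fin}(M) \neq 0$.

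The main obstacle is the careful execution of this Kazhdan-Kostant passage: one must install a good filtration on an \emph{arbitrary} object of $\fg\mod^{N,\psi, \heart}$ (not a priori finitely generated), track the shift that puts $\psi$ and the principal symbol of $\fn$ in the same degree, and verify convergence and $E_1$-degeneration of the resulting spectral sequence for $\Psi^{fin}(M)$. These are the standard maneuvers carried out in \cite{gan-ginzburg} and in the appendix \S\ref{ss:kk-start} of the present paper, where the geometric input is the transversality of $f + \fb^e$ to the principal nilpotent orbit together with Kostant's freeness theorem. Once conservativity is in hand, the equivalence $\fg\mod^{N,\psi} \simeq \sW^{fin}\mod$ follows as above, the commutative triangle over $\Vect$ is automatic, and the $t$-exactness claim is precisely the $t$-exactness of $\Psi^{fin}$ already obtained.
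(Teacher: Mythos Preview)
Your proof is correct, but you have worked much harder than necessary, and along a genuinely different route from the paper.

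The paper's proof is a single paragraph: the induced module $W = \ind_{\fn}^{\fg}(\psi)$ is a compact generator of $\fg\mod^{N,\psi}$, so by general DG principles $\fg\mod^{N,\psi} \simeq \ul{\End}(W)\mod = \sW^{fin}\mod$. The only nontrivial point is generation, and the paper dispatches it with the phrase ``it generates because $N$ is unipotent.'' Unpacking this: for nonzero $M \in \fg\mod^{N,\psi,\heart}$, the twisted module $M \otimes -\psi$ lies in $\Rep(N)^{\heart}$, so by unipotence it has nonzero $N$-invariants (Engel) and vanishing higher group cohomology; hence $\Psi^{fin}(M) = C^{\dot}(\fn, M \otimes -\psi)$ is nonzero and concentrated in degree $0$. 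This simultaneously gives conservativity and $t$-exactness. The paper even emphasizes (in the remark following the proof) that non-degeneracy of $\psi$ plays no role in the equivalence itself --- only in making $\sW^{fin}$ classical.

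By contrast, you prove conservativity and $t$-exactness via the Kazhdan-Kostant filtration and Kostant's slice theorem. This machinery is what is needed to establish Theorem~\ref{t:w-fin-facts} (that $\sW^{fin}$ is concentrated in degree $0$ with the expected associated graded), but it is overkill for Skryabin's theorem proper, where the elementary unipotence argument suffices. Your route also creates the convergence and filtration-existence obstacles you flag in your last paragraph (installing a good KK filtration on an arbitrary, possibly non-finitely-generated $M$), whereas the paper's argument has no such issues. The upshot: your approach \emph{does} work and is the right template for the affine version later in the paper (where unipotence arguments are unavailable), but for the finite-dimensional statement it obscures how formal the result really is.
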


The proof is easy: the induced module 
$\ind_{\fn}^{\fg}(\psi) \in \fg\mod^{N,\psi}$
is a compact generator of this DG category
(it generates because $N$ is unipotent), so by general
DG category principles, we have an equivalence:

\[
\ul{\Hom}_{\fg\mod^{N,\psi}}(\ind_{\fn}^{\fg}(\psi),-):
\fg\mod^{N,\psi} \simeq \End_{\fg\mod^{N,\psi}}(\ind_{\fn}^{\fg}(\psi))\mod.
\]

\noindent By definition, this functor is $\Psi^{fin}$,
and $\sW^{fin}$ was defined as these endomorphisms.

\begin{rem}

In particular, we see that the \emph{only} role played by the 
non-degeneracy of the character
$\psi$ here is to make $\sW^{fin}$ a classical (i.e., non-DG)
associative algebra. The result remains true in general, as long
as we systematically work in the DG setting.
This will not be the case anymore once we pass to the affine setting.

\end{rem}

\subsection{Some references}

There are a many good places to learn more about finite $\sW$-algebras,
and we name just a few here for the reader's convenience. 
However, we note that many authors are interested in a more subtle
generalization taking an arbitrary (i.e., possibly non-principal) 
nilpotent element in $\fg$ as input.

The original source is \cite{kostant-whittaker}, as noted above,
and it contains most of the ideas indicated above.
The general definition may be found in \cite{premet},
whose appendix by Skryabin contains the original proof
of Theorem \ref{t:fin-skryabin}.

There are many convenient surveys, e.g. 
\cite{losev-survey} and \cite{wang}. See also 
\cite{arakawa-survey} and \cite{kac-desole}
for treatments that emphasize the affine point of view as well.

\subsection{Affine $\sW$-algebras}

Here $\fg$ is replaced by the loop algebra
$\fg((t)) \coloneqq \fg \otimes_k k((t))$.

More generally, recall that an $\Ad$-invariant bilinear form
$\kappa$ defines the \emph{affine Kac-Moody} algebra
$\widehat{\fg}_{\kappa}$,
which is a central extension of $\fg((t))$ by the abelian 1-dimensional
Lie algebra $k$. The form $\kappa$ is called the \emph{level}.
We recall that the Kac-Moody cocycle vanishes on $\fg[[t]]$ 
and $\fn((t))$.
When we speak of modules for the loop/Kac-Moody algebras,
we agree that they are \emph{discrete} (or \emph{smooth}), 
i.e., every vector is
annihilated by $t^N\fg[[t]]$ for some $N$ (depending on the vector),
and that the generator of the central $k$ in the Kac-Moody algebra
acts by the identity.

\begin{rem}

The reader may ignore the level $\kappa$, reading
$\widehat{\fg}_{\kappa}$ as $\fg((t))$ everywhere, and not miss out
on much fun in this paper. 
The major downside is that the level plays a key role in 
Feigin-Frenkel duality, which is a major source of motivation here.

\end{rem}

In the affine theory, the role of 
$\fn$ is then replaced by $\fn((t))$, and the character
$\psi:\fn \to k$ is replaced by the (conductor $0$) \emph{Whittaker}
character:

\[
\fn((t)) \to \fn/[\fn,\fn] ((t)) = \oplus_{i \in \cI_G} k((t)) \xar{\sum} k((t)) 
\xar{\on{Res}} k
\]

\noindent where $\on{Res}$ is the residue with respect to 
$dt$\footnote{It
more general settings relevant for global geometric Langlands, 
it can be important to remove the choice of $1$-form by incorporating
twists. C.f. \cite{cpsi} or \cite{fgkv}.} 
and $\cI_G$ is the set of simple roots (alias: vertices of the Dynkin diagram of $G$).
Abusing notation, we let $\psi:\fn((t)) \to k$ denote the corresponding character.

\begin{rem}

Affine $\sW$-algebras were introduced in mathematical
physics by Zamolodchikov \cite{zamolodchikov}
in the case $\fg = \sl_3$.  (The above perspective via
Lie algebras and quantum Hamiltonian reduction 
is found in Feigin-Frenkel \cite{ff-ds}
and its antecedents, which include
\cite{drinfeld-sokolov} and \cite{bershadsky-ooguri} and others.)
We refer to \cite{w-symmetry} for further discussion of the
role of $\sW$-algebras in physics.

\end{rem}

\begin{rem}

The relevance in local geometric Langlands 
is that for $K = k((t))$, the loop group\footnote{The author
prefers to denote loop Lie algebras as $\fg((t))$ and loop Lie groups
as $G(K)$.} $G(K)$ plays the role of the $p$-adic reductive group
in usual harmonic analysis. Then $\fg((t))$ is its Lie algebra, and it
is not surprising that one accesses the group through its Lie algebra.
In fact, in the Frenkel-Gaitsgory philosophy (\cite{fg2}, \cite{frenkel-book}), the DG category of modules
$\fg((t))\mod$ (or in truth, the Kac-Moody representations
of \emph{critical} level)
is regarded as an important canonical \emph{categorified} representation
of $G(K)$. The goal of the affine Skryabin theorem below is to explain
that its Whittaker model is closely tied to affine $\sW$-algebras.

\end{rem}

\subsection{}

Below, we use $\fg((t))\mod$ and $\widehat{\fg}_{\kappa}\mod$
to denote the DG categories of modules over the loop and
Kac-Moody algebras. These categories are defined in
\cite{dmod-aff-flag}, and there are subtle points that we discuss
in \S \ref{ss:minus-infty}.
 
However, for now the reader may ignore these points, and we will discuss
them in what follows. It is enough to know that they have $t$-structures
with hearts the relevant abelian categories of (discrete) modules,
and that they are almost-but-not-quite 
the derived categories of these abelian categories.

\subsection{}

We have not actually defined the affine $\sW$-algebra above,
and we will not quite do this below.
It is not so easy as in the finite situation because of the use of topological
algebras, which are well-adapted to abelian categories 
but not so well to DG categories.
Because (semi-infinite) Lie algebra cohomology plays such a key role,
(even in the finite case), one has to be careful balancing abelian and
derived categories. (This is 
one of the major technical issues
settled by the affine Skryabin theorem.)

But still, let us review the major features in what follows.

\subsection{Semi-infinite cohomology}

One has the \emph{semi-infinite} cohomology functor:

\[
H^{\sinf}(\fn((t)),\fn[[t]],-):\fn((t))\mod \to \Vect.
\]

\noindent 
We refer to \S \ref{ss:sinf-intro} in the
appendix for a simple construction via Lie algebra cohomology. 
We simply remark that it can be thought of as mixing
Lie algebra cohomology for $\fn[[t]]$ with Lie algebra homology
for $\fn((t))/\fn[[t]]$, except that the latter does not make sense.
(Properly, one takes a direct limit over cohomologies for an increasing 
sequence of compact open subalgebras.)

We obtain the (quantum) \emph{Drinfeld-Sokolov} functor:

\[
\Psi = H^{\sinf}(\fn((t)),\fn[[t]],(-) \otimes -\psi):
\widehat{\fg}_{\kappa}\mod \to \Vect.
\]

\noindent (Here we recall that there is a forgetful
functor $\widehat{\fg}_{\kappa}\mod \to \fn((t))\mod$
because the Kac-Moody cocycle vanishes
on $\fn((t)) \subset \fg((t))$.)

\subsection{}

The $\sW$-algebra at level $\kappa$ comes in two
guises: as a vertex algebra $\sW_{\kappa}$
and as a topological associative algebra
(in the sense of \cite{chiral}) $\sW_{\kappa}^{as}$.
We remind that
whatever a vertex algebra structure is, it allows one to construct
a topological associative algebra, and this is the relationship between
the two constructions above.
The algebra $\sW_{\kappa}^{as}$ is analogous to the completion of the 
enveloping algebra of $\fg((t))$ defined by the
left ideals generated by $t^N\fg[[t]]$ over all $N \geq 0$;
note that discrete modules for this algebra are the same as discrete
modules for $\fg((t))$ as defined before, and there is a similar
relationship here.

The vertex algebra $\sW_{\kappa}$ is easier to define: 
$\sW_{\kappa} = \Psi(\bV_{\kappa}) (= H^0 \Psi(\bV_{\kappa}))$,
where 
$\bV_{\kappa} = \ind_{\fg[[t]]}^{\widehat{\fg}_{\kappa}}(k) \in
\widehat{\fg}_{\kappa}\mod^{\heart}$ is the \emph{vacuum}
representation, which we remind is a vertex algebra with the
same abelian category of modules as the Kac-Moody algebra itself. 

Either perspective defines the same abelian category
of modules, and we denote it by $\sW_{\kappa}\mod^{\heart}$.
The relationship to the above is that the cohomologies
of $\Psi(M)$ for any $M \in \widehat{\fg}_{\kappa}\mod$ 
are acted on by the $\sW$-algebra (at level $\kappa$).

\begin{rem}

We refer to \cite{fbz} \S 15 and \cite{chiral} \S 3.8 for more complete 
discussion on the constructions. See also \S \ref{ss:w-recollection} below.

\end{rem}

\subsection{}

We now review the major properties of affine $\sW$-algebras
for the reader's convenience. 
Note the parallel with Theorem \ref{t:w-fin-facts}.

\begin{thm}\label{t:w-aff-facts}

\begin{enumerate}

\item $\sW_{\kappa}$ is concentrated in cohomological degree $0$,
i.e., $\sW_{\kappa} = H^0\Psi(\bV_{\kappa})$.

\item $\sW_{\kappa}$ and $\sW_{\kappa}^{as}$ carry
canonical filtrations whose
associated graded are \emph{slightly non-canonically}
isomorphic to the algebra of functions on
the affine Kostant slices 
$f+\fb^e[[t]] \simeq f+\fb[[t]]/N(O) \simeq (\fg//G)(O)$ and
 $f+\fb^e((t)) \simeq f+\fb((t))/N(K) \simeq (\fg//G)(K)$
respectively.

These isomorphisms are completely determined by
a choice of $\Ad$-invariant isomorphism $\fg \simeq \fg^{\vee}$
and the non-vanishing 1-form $dt$ on the formal disc.

\item (Feigin-Frenkel duality, \cite{feigin-frenkel-duality}.)
There is a canonical vertex algebra isomorphism:

\[
\sW_{\fg,\kappa} \simeq \sW_{\ld{\fg},\ld{\kappa}}
\]

\noindent where the notation indicates the affine $\sW$-algebras
for $\fg$ and for the Langlands dual Lie algebra. 
The construction of the 
dual level $\ld{\kappa}$ is reviewed in \S \ref{ss:ff-duality-intro}. 
(Note Warning \ref{w:ff-ratl}.)
 
\end{enumerate}

\end{thm}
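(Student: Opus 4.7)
The plan is to treat the three parts in the order stated, with parts (1) and (2) falling out of a single Kazhdan--Kostant filtration argument modeled on the finite case (Theorem \ref{t:w-fin-facts}), and part (3)---Feigin--Frenkel duality---requiring a separate input via free field realizations.

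For parts (1) and (2), the idea is as follows. Equip $\bV_{\kappa}$ with the Kazhdan--Kostant filtration: begin with the PBW filtration on $U(\widehat{\fg}_{\kappa})$, then shift weights using the grading coming from the principal $\sl_2$-triple $(e,h,f)$, exactly as in the finite setting. The associated graded of $\bV_{\kappa}$ is then $\Sym(\fg((t))/\fg[[t]])$, i.e., polynomial functions on the continuous dual of $\fg[[t]]$, and the semi-infinite Chevalley complex computing $\Psi(\bV_{\kappa})$ inherits a compatible filtration whose associated graded differential reduces to a Koszul-type differential cutting out the equation $x|_{\fn((t))}=\psi$. By the classical Kostant slice statement applied pointwise to jets, $N(O)$ acts freely on $f+\fb[[t]]$ with quotient the affine Kostant slice $f+\fb^e[[t]]$; consequently the associated graded Koszul complex is a resolution of functions on that slice, concentrated in cohomological degree $0$. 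The resulting spectral sequence degenerates at $E_1$ for degree reasons, giving both the concentration of $\sW_{\kappa}$ in degree $0$ claimed in (1) and the identification of its associated graded in (2). The canonicity up to the choices of $\fg \simeq \fg^{\vee}$ and of $dt$ is visible from the form of the differential, since these are precisely the choices needed to identify $\fg((t))/\fg[[t]]$ with $\fg[[t]]^{*}$ and to pick out $f$ from $\psi$. The argument for $\sW_{\kappa}^{as}$ is parallel, replacing $\bV_{\kappa}$ by the appropriate topological completion of $U(\widehat{\fg}_{\kappa})$ and jets by loops, so that the slice $f+\fb^e((t))$ appears instead.

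For part (3), the plan is to follow the Feigin--Frenkel strategy via screening operators. Both $\sW_{\fg,\kappa}$ and $\sW_{\ld{\fg},\ld{\kappa}}$ can be embedded, via a Wakimoto-type (free field) realization, into a Heisenberg vertex algebra $\pi_{\kappa}^{\fh}$ attached to the Cartan, and each is then characterized as the joint kernel $\bigcap_{i \in \cI_G} \ker(S_i)$ of simple-root screening operators $S_i$ whose precise form depends on $\kappa$. Under the Langlands dual identification $\fh \simeq \ld{\fh}^{*}$, the Heisenberg algebras at levels $\kappa$ and $\ld{\kappa}$ become canonically isomorphic---this is precisely what forces the shifted-reciprocal formula for $\ld{\kappa}$ recalled in \S \ref{ss:ff-duality-intro}---and a direct OPE computation shows that under this isomorphism the screening operators $S_i$ for $(\fg,\kappa)$ correspond to $\ld{S}_i$ for $(\ld{\fg},\ld{\kappa})$. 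Taking joint kernels then yields the desired vertex algebra isomorphism.

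The main obstacle is part (3): the key delicate step is matching the screening operators under Langlands duality, a genuinely nontrivial computation mixing the combinatorics of root data with the OPE structure of a Heisenberg vertex algebra, and it is what pins down the exact form of $\ld{\kappa}$. Parts (1)--(2) are comparatively mechanical once the Kazhdan--Kostant filtration is set up; the only real care needed is in handling convergence and degeneration of the spectral sequence in the semi-infinite setting (in particular for $\sW_{\kappa}^{as}$, where one works with topological filtrations on a completed enveloping algebra and must check compatibility with Beilinson--Drinfeld's vertex-algebra-to-topological-algebra construction).
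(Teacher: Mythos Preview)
Your outline for parts (1)--(2) is the right one and matches the paper's approach: equip $\bV_\kappa$ with the Kazhdan--Kostant filtration, compute the associated graded of the semi-infinite complex as functions on $f+\fb[[t]]/N(O) \simeq f+\fb^e[[t]]$, and conclude. The gap is your sentence ``the resulting spectral sequence degenerates at $E_1$ for degree reasons.'' This is exactly the hard step, and you have not justified it. The KK filtration on $\bV_\kappa$ (hence on the semi-infinite Chevalley complex) is \emph{not bounded from below} and \emph{not complete}: the $-\check\rho$-grading on $\fg((t))/\fg[[t]]$ is unbounded in both directions, so twisting the PBW filtration by it destroys lower-boundedness (the paper flags this in Warning~\ref{w:kk-bdd}). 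Without boundedness or completeness, knowing that $\gr^{KK}_\bullet$ is concentrated in degree $0$ does not by itself force the original complex to be.

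The paper's fix (proof of Theorem~\ref{t:ds-no-w-str}, case $n=0$) is to run a \emph{bi}filtration: couple the KK filtration with the PBW filtration, which \emph{is} bounded below, and show via an elementary comparison that the induced KK filtration \emph{on $\Psi(\bV_\kappa)$} is bounded below even though the one on $\bV_\kappa$ is not. The traditional argument (\cite{dbt}, \cite{fbz} \S15) instead exhibits an explicit ``tensor product decomposition'' of the BRST complex into a quasi-isomorphic subcomplex on which the KK filtration is visibly bounded; the paper remarks that this decomposition is hard to motivate conceptually. Either route works, but you need one of them---``for degree reasons'' is not enough.

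For part (3) your plan is the standard Feigin--Frenkel screening argument, and the paper does not reprove it; it simply cites \cite{feigin-frenkel-duality} (with the caveat of Warning~\ref{w:ff-ratl} about rational levels). So there is nothing to compare there.
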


\begin{rem}

The proofs of the first two results 
are similar to the finite-dimensional case up to
a subtlety about the convergence of a spectral sequence.
The result is due originally to \cite{dbt}. See also
\cite{fbz} Chapter 15, \cite{arakawa-survey}. 
(Below, \S \ref{s:ds} provides a slightly non-standard approach to
dealing with the convergence of the spectral sequence.)

\end{rem}

\begin{rem}

The vertex algebra $\sW_{\kappa}$ is filtered \emph{as a vertex algebra},
and the associated graded is commutative here, so it makes
sense to speak of an algebra structure on it.

\end{rem}

\begin{rem}

For $\kappa = \kappa_{crit}$ \emph{critical}, i.e., $-\frac{1}{2}$ times the 
Killing form for $\fg$, $\sW_{crit} \coloneqq \sW_{\kappa_{crit}}$
(resp. $\sW_{crit}^{as}$) is commutative and coincides 
with $\bV_{crit}^{G(O)}$ (resp. the center of the topological enveloping
algebra of $\widehat{\fg}_{crit}$). Here the Feigin-Frenkel duality
relates the center with opers for the Langlands dual group.

At other levels, the affine
$\sW$-algebra is non-commutative (e.g., it contains a copy of
the Virasoro algebra).

\end{rem}

\begin{rem}

The Feigin-Frenkel duality is highly suggestive in local geometric
Langlands. We refer to \cite{fg2} and \cite{frenkel-book} for a discussion
of its role in the traditional subject, and to 
\cite{quantum-langlands-summary} for a formulation 
of \emph{quantum} local geometric Langlands,
which is a general conjectural framework that helps to explain the
meaning of the Feigin-Frenkel result.

\end{rem}

\subsection{Affine Skryabin, group actions, and Whittaker}

Frenkel-Gaitsgory proposed a framework for local geometric Langlands
based on the idea of \emph{group actions on categories} for
group indschemes like $G(K)$. At a certain point, it became clear that
the natural class of categories to be acted on are cocomplete DG categories.

For derived categories of abelian categories, an appropriate substitute
for the general theory was developed in the appendices to \cite{fg2}.
But this theory is inadequate for the general local Langlands framework, 
and Gaitsgory and his collaborators have spent a number of years 
and expended a great deal of energy developing the 
necessary language and methods here.

His ideas about actions of group indschemes (such as $G(K)$ or $N(K)$)
on DG categories were developed in detail in \cite{dario-*/!}, and we
refer the reader there to learn this material.
In particular, for $\sC \in \DGCat_{cont}$ acted on (strongly) by $G(K)$
(or the twisted version of this notion 
incorporating the level $\kappa$),
one can form the \emph{Whittaker category} $\Whit(\sC) \in \DGCat_{cont}$
as \emph{invariants} (or coinvariants: see \S \ref{ss:whit?})
for $N(K)$ with respect to the character
$\psi:N(K) \to \bG_a$.

Although the definitions in this theory are simple,
the idea to take this very infinite-dimensional (and potentially
quite pathological) construction seriously was a quite nontrivial one.
This breakthrough was made by Gaitsgory in
his notes \cite{lesjours}, where
he showed several quite nontrivial results, including a comparison
with an ad hoc definition with good properties used in
\cite{fgv}. 
These ideas were further advanced by Beraldo in \cite{dario-*/!}, who 
extended Gaitsgory's results and thereby gave more evidence
that the Whittaker construction is a good one. 
(The most psychologically difficult points about this formalism
are highlighted below in \S \ref{ss:minus-infty}.)

We comment more on the Whittaker construction in \S \ref{ss:whit?}. 
For now, we merely state: 

\begin{itemize}

\item Basic examples of $\sC$ acted on by $G(K)$ are
$D$-modules on a suitably nice space acted on by
$G(K)$, or more relevantly for us, $\widehat{\fg}_{\kappa}\mod$.

\item One can fairly write
$\Whit(\sC) = \sC^{N(K),\psi}$
in such a way that the general formalism in the finite-dimensional
setting would produce 
$\fg\mod^{N,\psi}$ from \S \ref{ss:fin-skry}.

\item There are canonical functors $\Whit(\sC) \to \sC$ and
$\sC \to \Whit(\sC)$.

However, they do \emph{not} satisfy any adjunction.
This reflects of the infinite-dimensional nature of $N(K)$,
specifically that it is an indscheme but not a scheme.
The functor $\Whit(\sC) \to \sC$ should be regarded
as the forgetful functor. The functor $\sC \to \Whit(\sC)$
can be informally thought of as ``$\Av_*^{\psi}$ plus
an infinite cohomological shift," although
neither $\Av_*^{\psi}$ nor the infinite cohomological shift
make sense.

\end{itemize}

\subsection{}

We can now formulate the main theorem of this paper:

\begin{thm*}[Affine Skryabin theorem, Thm. \ref{t:aff-skry}]

There is a canonical equivalence 
$\Whit(\widehat{\fg}_{\kappa}\mod) \simeq \sW_{\kappa}\mod$
such that:

\[
\widehat{\fg}_{\kappa}\mod \to \Whit(\widehat{\fg}_{\kappa}\mod) \to
\sW_{\kappa}\mod \to \Vect
\]

\noindent is computed by the Drinfeld-Sokolov functor $\Psi$.

\end{thm*}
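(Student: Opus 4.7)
The plan is to prove the equivalence by reducing to a compatible family of finite-dimensional Skryabin-type arguments via the adolescent Whittaker construction flagged in the introduction, then passing to the colimit. This avoids the hopeless task of producing a direct compact generator of $\Whit(\widehat{\fg}_{\kappa}\mod)$ in one stroke: unlike $N$, the group $N(K)$ is an ind-scheme, and even at the level of abelian categories a naive ``induced Whittaker module'' does not exist.

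First, I would promote $\Psi$ to a functor $\widetilde{\Psi}: \Whit(\widehat{\fg}_{\kappa}\mod) \to \sW_{\kappa}\mod$. The factorization through $\Whit$ is automatic, since the definition of $\Psi$ via semi-infinite cohomology with coefficients in $-\psi$ already encodes the Whittaker condition on $\fn((t))$. Promoting to an $\sW_{\kappa}$-module structure uses the vertex algebra structure on $\sW_{\kappa} = H^0\Psi(\bV_{\kappa})$: a chain-level BRST model for semi-infinite cohomology provides an action of the BRST complex computing $\Psi(\bV_{\kappa})$ on $\Psi(M)$ for every $M$, and one then verifies this lifts to an action of the topological associative algebra $\sW_{\kappa}^{as}$ at the derived level.

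Second, I would exhibit $\Whit(\widehat{\fg}_{\kappa}\mod)$ as a filtered colimit $\colim_n \Whit^{\leq n}(\widehat{\fg}_{\kappa}\mod)$, where the adolescent stage $\Whit^{\leq n}$ is defined using a compact pro-unipotent subgroup $N_n \subset N(K)$ of the form $\Ad_{t^{-n\rho^{\vee}}}(N(O))$ together with the restriction of the Whittaker character. Because each $N_n$ is a group scheme (not an ind-scheme) and $\psi$ restricts to a non-degenerate character on the relevant finite-dimensional quotient of its Lie algebra, the classical Skryabin argument goes through essentially verbatim at each stage: each $\Whit^{\leq n}(\widehat{\fg}_{\kappa}\mod)$ admits a canonical compact generator --- an appropriately twisted induction from $N_n$ --- and Morita theory identifies the category with modules over its endomorphism DG algebra $\sW_{\kappa}^{(n)}$.

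Finally, one identifies $\colim_n \sW_{\kappa}^{(n)}\mod$ with $\sW_{\kappa}\mod$ by showing that the transition maps between the $\sW_{\kappa}^{(n)}$ realize $\sW_{\kappa}^{as}$ as the appropriate topological completion, matching the colimit presentations on the two sides. The main obstacle I expect is precisely this matching of colimits with semi-infinite cohomology and with the topological structure of $\sW_{\kappa}^{as}$: at each finite stage the Skryabin argument is essentially classical, but gluing the stages to recover the Whittaker category on one hand, and $\sW_{\kappa}^{as}\mod$ on the other, requires carefully balancing the abelian- and DG-categorical frameworks for topological algebras --- exactly the technical difficulty the introduction announces is resolved by this theorem. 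The geometric tools advertised in the abstract ($D$-modules on the loop group and the geometry of the affine Grassmannian) should enter here to realize the adolescent filtration geometrically and thereby make the colimit well-behaved.
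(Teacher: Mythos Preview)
Your high-level architecture matches the paper's: filter $\Whit(\widehat{\fg}_{\kappa}\mod)$ by adolescent stages and track compact generators through $\Psi$. But a correction first: the adolescent subgroups are the compact open $\o{I}_n \subset G(K)$, not subgroups of $N(K)$; this is essential for the identity $\iota_{n,m,!} \simeq \iota_{n,m,*}[2(m-n)\Delta]$ (Theorem~\ref{t:!-avg}) on which everything rests. More seriously, your stage-wise Skryabin step does not go through as stated. The classical argument rests on an acyclicity (coming from Kostant's slice) with no direct analogue here: the endomorphism DG algebra $A_n$ of the generator $G_n = \ind_{\Lie\o{I}_n}^{\widehat{\fg}_{\kappa}}(\psi)$ in $\Whit^{\le n}$ bears no a priori relation to $\sW_{\kappa}$. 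And the transition functors $\iota_{n,m,!}$, while fully faithful, do not carry $G_n$ to $G_m$, so no algebra maps between the $A_n$ arise and there is no mechanism to present $\sW_{\kappa}^{as}$ as their limit.

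The paper fills this gap by a different route, working at the abelian-categorical level. That identity, combined with amplitude bounds for averaging, shows the shifted functors $\iota_{n,m,!}[-(m-n)\Delta]$ are $t$-exact, so the colimit $\Whit(\widehat{\fg}_{\kappa}\mod)$ inherits a $t$-structure. One then shows $\Psi^{\Whit}$ is $t$-exact and conservative on bounded-below objects by exhibiting it as corepresented by a pro-object $\{\preprime\sW_{\kappa}^n\}$ in the heart whose structure maps are epimorphisms (Lemma~\ref{l:alpha-epi}). The heart is then identified by Tannakian reconstruction: $\End(\Psi^{\Whit,\heart}) = \lim_n \Psi^{\Whit}(\preprime\sW_{\kappa}^n) = \lim_n \sW_{\kappa}^n = \sW_{\kappa}^{as}$ (Theorem~\ref{t:ds-n/m}). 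The single-degree concentration of $\Psi(G_n)$ (Theorem~\ref{t:ds-no-w-str}), on which all of this rests, is itself a delicate spectral-sequence argument your outline does not address.
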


As a consequence\footnote{At this point, we should note that
the category $\sW_{\kappa}\mod$ has not been properly defined:
the issues are discussed in \S \ref{ss:minus-infty} and \S \ref{ss:generators}.
For this reason, Theorem \ref{t:ff} requires somewhat more input
than just Theorem \ref{t:aff-skry}: this is the content of \S \ref{s:free-field}.}
of affine Skryabin and Feigin-Frenkel duality, we obtain:

\begin{thm*}[Categorical Feigin-Frenkel duality, Thm. \ref{t:ff}]

There is a canonical equivalence of categories:

\[
\Whit(\widehat{\fg}_{\kappa}\mod) \simeq 
\Whit(\widehat{\ld{\fg}}_{\ld{\kappa}}\mod).
\]

\end{thm*}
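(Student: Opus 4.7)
The plan is to assemble the desired equivalence from three pieces:
\[
\Whit(\widehat{\fg}_{\kappa}\mod) \simeq \sW_{\fg,\kappa}\mod \simeq \sW_{\ld{\fg},\ld{\kappa}}\mod \simeq \Whit(\widehat{\ld{\fg}}_{\ld{\kappa}}\mod),
\]
where the two outer equivalences are applications of the affine Skryabin theorem (Theorem \ref{t:aff-skry}), and the middle equivalence comes from the Feigin-Frenkel vertex algebra isomorphism $\sW_{\fg,\kappa} \simeq \sW_{\ld{\fg},\ld{\kappa}}$ of Theorem \ref{t:w-aff-facts}(3). Formally, the statement is a diagram chase once the middle arrow is defined; all of the new content is concentrated in making sense of that arrow at the DG level.

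Indeed, as flagged in the footnote to the statement, $\sW_{\kappa}\mod$ has been introduced only as an abelian category, extracted from the topological associative algebra $\sW_{\kappa}^{as}$, and not as a DG category with a canonical enhancement. The first task is thus to fix a DG model of $\sW_{\kappa}\mod$ within which both ends of the middle equivalence can be compared. The cleanest option is to \emph{take} $\sW_{\kappa}\mod := \Whit(\widehat{\fg}_{\kappa}\mod)$ via affine Skryabin, but then one must independently produce a model built directly from the vertex algebra $\sW_{\kappa}$ --- the role of \S \ref{s:free-field} --- and verify that the two constructions agree. Once such a ``vertex-algebraic'' DG model is in place, the Feigin-Frenkel isomorphism of vertex algebras automatically transports it to the dual side.

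The main obstacle is therefore the construction and characterization of this intrinsic DG category of $\sW_{\kappa}$-modules. A natural route, which I expect \S \ref{s:free-field} to implement, is via a free-field-type embedding of $\sW_{\kappa}^{as}$ into a Heisenberg-like topological algebra whose DG module category is manageable, and then to carve out $\sW_{\kappa}\mod$ by a Whittaker-style condition compatible with the embedding. Two compatibilities must then be checked: (i) the resulting DG category is equivalent, through $\Psi$, to $\Whit(\widehat{\fg}_{\kappa}\mod)$ --- i.e., the affine Skryabin theorem lifts to the intrinsic model; and (ii) the Feigin-Frenkel isomorphism is realized at the level of such embeddings (as in the original approach of \cite{feigin-frenkel-duality}), so that its categorification is immediate. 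With (i) and (ii) in hand, the chain of equivalences above yields the theorem; the hard part is isolating an enhancement rigid enough to enjoy both properties simultaneously.
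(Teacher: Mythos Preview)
Your three-step architecture is exactly the paper's, and you correctly locate the only nontrivial content in the middle equivalence. But your guess about the mechanism is more elaborate than what actually happens. The DG category $\sW_{\kappa}\mod$ is \emph{already} defined intrinsically at the start of \S\ref{s:skryabin}: one takes $\sW_{\kappa}\mod^c \subset D^+(\sW_{\kappa}\mod^{\heart})$ to be the full subcategory generated under cones and shifts by the specific objects $\sW_{\kappa}^n$, and then $\sW_{\kappa}\mod \coloneqq \Ind(\sW_{\kappa}\mod^c)$. Since the vertex algebra isomorphism $\sW_{\fg,\kappa} \simeq \sW_{\ld{\fg},\ld{\kappa}}$ immediately gives an equivalence of hearts, hence of bounded-below derived categories, the entire question reduces to a single concrete claim: the generators $\sW_{\fg,\kappa}^n$ and $\sW_{\ld{\fg},\ld{\kappa}}^n$ correspond under this equivalence (Lemma~\ref{l:wn-match}).

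This is where \S\ref{s:free-field} enters, and its role is narrower than you anticipate. It does not build an alternative DG model or carve anything out by a Whittaker-type condition; it simply proves (Theorem~\ref{t:free-field}) that $\sW_{\kappa}^n$ is the cyclic $\sW_{\kappa}$-submodule of the Heisenberg module $\bV_{\ft,\kappa'}^n$ generated by the vacuum vector, via the free-field map $\vph:\sW_{\kappa} \to \bV_{\ft,\kappa'}$. This characterization uses only the vertex algebra $\sW_{\kappa}$ and its free-field realization, not the $\fg$-side Drinfeld--Sokolov construction from \S\ref{s:ds}. Since Feigin--Frenkel duality is compatible with the free-field maps on both sides (the commutative square in the proof of Lemma~\ref{l:wn-match}), and since the Heisenberg modules $\bV_{\ft,\kappa'}^n$ and $\bV_{\ld{\ft},1/(\kappa-\kappa_{crit})}^n$ manifestly correspond, the generators match. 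So the ``rigid enhancement'' you are looking for is just the datum of the family $\{\sW_{\kappa}^n\}$ inside the abelian category, and the free-field realization is the tool that makes this family visibly Langlands-self-dual.
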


At critical level, this identifies 
$\Whit(\widehat{\fg}_{crit}\mod)$ with $\QCoh(\Op_{\ld{G}}(\o{D}))$,
i.e., the DG category of quasi-coherent sheaves on the
indscheme of opers for the Langlands dual group.

\begin{rem}

Categorical Feigin-Frenkel duality, especially at the critical
level, was the initial motivation for the present paper.
Although it was anticipated for a long time, 
it does not seem that there is a more
straightforward approach than the one given in this paper.

Although it may seem quite formal given usual Feigin-Frenkel duality, 
it allows one to easily convert 
results about the Whittaker model from local geometric Langlands  
to statements about Kac-Moody algebras. This allows for
simpler arguments (and many extensions) of the Frenkel-Gaitsgory
work at critical level.

\end{rem}

\begin{rem}

Each of the above theorems will come as no surprise to experts
in the area. It has long been known that something like this
must be true. For example, immediately after the relevant definitions
were given in \cite{dmod-aff-flag} and \cite{lesjours},
Gaitsgory explicitly postulated both results in 
\cite{quantum-langlands-summary}.\footnote{
However, we note that
away from the commutative cases, Theorem \ref{t:aff-skry} could 
not have been formulated as an honest conjecture because it was not
known how to define the full derived category $\sW_{\kappa}\mod$,
c.f. \S \ref{ss:generators}.}\footnote{
Perhaps each of the results above should be attributed to Gaitsgory
as conjectures.
I learned this circle of ideas as his graduate student, and am 
not completely sure 
where the boundary between folklore and his ideas is
on this particular point. In any case, the necessary 
language to formulate such a result was developed by him 
and his collaborators explicitly so that such theorems could be formulated
and proved.} 

\end{rem}

\subsection{Why wasn't this result proved sooner?}

There are several reasons, discussed in more detail in what follows:

\begin{itemize}

\item The $\Whit$ notation is a priori ambiguous. (See \S \ref{ss:whit?}.)

\item Representation theorists have not encountered objects 
of $\Whit(\widehat{\fg}_{\kappa}\mod)$ before.
(See \S \ref{ss:minus-infty}.)

\item In the proof of the classical Skryabin theorem, we compared
both sides by matching $\Ext$s (as $A_{\infty}$-algebras) 
between compact generators. Who are the compact
generators of $\Whit(\widehat{\fg}_{\kappa}\mod)$?
Who are the compact generators of $\sW_{\kappa}\mod$?\footnote{Here
the fact that $\sW_{\kappa}^{as}$ is a topological algebra and
not a discrete one rears its head. For a usual algebra,
$A$ is a compact generator of $A\mod$. For a topological algebra,
$A$ does not make sense as an object of $A\mod$ (which is not
even a clearly defined DG category in all cases, as we discuss below).}
(See \S \ref{ss:generators}.)

\end{itemize}

\begin{rem}

We wish to highlight: the finite Skryabin theorem
may lead us to believe that the affine Skryabin theorem is something
formal, which can be established one way or another by 
a brute force argument. Experience shows that this
is not the case, and that the above issues
(especially the last one) are substantial ones.

However, there is a miracle to highlight early on: all of these issues
are resolved almost at once by the \emph{adolescent Whittaker} 
construction, described briefly in \S \ref{ss:intro-adolescent}
below, and in more detail in \S \ref{s:adolescence}. 

\end{rem}

\subsection{What is the Whittaker category?}\label{ss:whit?}

The discussion here follows \cite{lesjours} and \cite{dario-*/!}.
We refer to the latter for details on the definitions and constructions.

There are two a priori candidates for $\Whit(\sC)$: invariants
and coinvariants, denoted by $\sC^{N(K),\psi}$ and 
$\sC_{N(K),\psi}$ respectively. The invariants are 
equipped with a tautological fully-faithful functor to $\sC$, and the 
coinvariants receive a canonical functor from $\sC$. Each
satisfy natural universal properties in the language of group actions
on categories.

In the formalism of group actions on categories, invariants
and coinvariants coincide when taken with respect to a
group \emph{scheme}, such as $N(O)$ (or $G(O)$). But
since $N(K)$ is a group indscheme, it is easy to
see that there is no such equivalence in general.

So there is a serious question: do we mean Whittaker
invariants or coinvariants? 

\subsection{}

There is a canonical functor $\sC_{N(K),\psi} \to \sC^{N(K),\psi}$
constructed by Gaitsgory in \cite{lesjours} (it is denoted
by $\Theta$ in \cite{dario-*/!}). 
Gaitsgory conjectured that for $\sC$ acted on by $G(K)$
(as opposed to merely by $N(K)$), this functor is an equivalence,
meaning that $\Whit(\sC)$ could be understood unambiguously.
This conjecture was shown by Gaitsgory for $G = GL_2$ and
by Beraldo for $G = GL_n$.

The first major result of this paper settles the problem 
for general reductive $G$:

\begin{thm*}[Thm. \ref{t:inv-coinv}]

For any reductive $G$ and any $\sC$ acted on by $G(K)$
(possibly with level $\kappa$),
Gaitsgory's functor
yields an equivalence $\sC_{N(K),\psi} \simeq \sC^{N(K),\psi}$.

\end{thm*}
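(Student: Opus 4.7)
The plan is to use the adolescent Whittaker construction to present both $\sC^{N(K),\psi}$ and $\sC_{N(K),\psi}$ as, respectively, a limit and a colimit of the \emph{same} filtered family of Whittaker categories for prounipotent group subschemes of $N(K)$. Concretely, I would introduce $N_n \coloneqq \on{Ad}_{t^{-n\check{\rho}}}(N(O)) \subset N(K)$, where $\check{\rho}$ is the half-sum of positive coroots; these form an exhaustive increasing filtration $N_0 \subset N_1 \subset \cdots$ of $N(K)$ by (pro-)group schemes. The character $\psi$ restricts to each $N_n$, and because $N_n$ is a group scheme the general formalism of \cite{dario-*/!} gives a canonical equivalence $\sC^{N_n,\psi} \simeq \sC_{N_n,\psi}$; call the common category $\Whit_n(\sC)$.

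Next I would unfold both sides of the target equivalence through this filtration. Since $N(K) = \colim_n N_n$, the invariants form a limit $\sC^{N(K),\psi} = \lim_n \Whit_n(\sC)$ with transitions $\Whit_{n+1}(\sC) \to \Whit_n(\sC)$ given by restriction along $N_n \hookrightarrow N_{n+1}$; dually, $\sC_{N(K),\psi} = \colim_n \Whit_n(\sC)$ under the corresponding left-adjoint averaging transitions $\Whit_n(\sC) \to \Whit_{n+1}(\sC)$. Gaitsgory's canonical map $\Theta$ refines to a map between these two presentations of the same system, and the problem becomes to show it is an equivalence.

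The crucial step — and the point where the full $G(K)$-action enters, as opposed to merely the $N(K)$-action — is that conjugation by $t^{\check{\rho}}$ is an auto-equivalence of $\sC$ that intertwines $\Whit_n(\sC)$ with $\Whit_{n+1}(\sC)$ and, after the index shift, converts the restriction transition maps into the averaging ones. Once each transition map in the filtered system becomes an equivalence up to an index shift, the limit and colimit in $\DGCat_{cont}$ are canonically identified, and a direct check on adolescent levels shows that this identification is precisely $\Theta$.

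The hard part is the geometric input underlying the adolescent Whittaker construction: one must verify that the subgroups $N_n$ genuinely exhaust $N(K)$, that conjugation by $t^{\check{\rho}}$ respects $\psi$ in the correctly normalized sense (a statement about how $\psi$ restricts across semi-infinite orbits on $\Gr_G$), and that the comparison of transition systems matches $\Theta$ coherently rather than just on objects. For $GL_n$ Beraldo was able to proceed by explicit matrix combinatorics; the point of the adolescent Whittaker theorem (\S\ref{s:adolescence}) is to supply this framework uniformly for a reductive group via the geometry of the affine Grassmannian. Once that input is in hand, the invariants-versus-coinvariants comparison becomes a formal consequence of the identification of limits and colimits of eventually-equivalent adjoint systems in $\DGCat_{cont}$.
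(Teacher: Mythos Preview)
Your proposal has a genuine gap at the ``crucial step.'' The claim that conjugation by $t^{\check\rho}$ intertwines $\Whit_n(\sC)$ with $\Whit_{n+1}(\sC)$ is false: while $\Ad_{t^{\check\rho}}$ does carry $N_n$ to $N_{n+1}$, it does \emph{not} preserve the Whittaker character. Concretely, $\psi\circ\Ad_{t^{-\check\rho}}$ differs from $\psi$ on $N_{n+1}$ because the residue defining $\psi$ sees a shift in the $t$-degree on each simple root space. So the pairs $(N_n,\psi|_{N_n})$ are not pairwise conjugate inside $G(K)$, and the auto-equivalence of $\sC$ given by $t^{\check\rho}$ does not identify the $\psi$-twisted invariant categories. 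Even setting this aside, the further claim that conjugation ``converts the restriction transition maps into the averaging ones'' does not type-check: a self-equivalence of the tower sends restriction maps to restriction maps at a shifted index, not to their adjoints. Having all $\Whit_n(\sC)$ abstractly equivalent would not, by itself, identify the limit with the colimit; one needs the \emph{transition functors} to be part of an adjoint pair in a specific way.

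The paper's argument addresses exactly this point, but with different subgroups and a different mechanism. It works with the compact open subgroups $\o{I}_n\subset G(K)$ (not subgroups of $N(K)$), sets $\Whit^{\leq n}(\sC)=\sC^{\o{I}_n,\psi}$, and proves (Theorem~\ref{t:!-avg}) that the $*$-averaging transitions $\iota_{n,m,*}$ coincide with the $!$-averaging functors $\iota_{n,m,!}$ up to the explicit shift $[2(m-n)\Delta]$. This is the substantive content: it says the structure maps in the colimit presentation of $\sC_{N(K),\psi}$ are, after a uniform shift, \emph{left adjoint} to the structure maps in the limit presentation of $\sC^{N(K),\psi}$, whence the colimit equals the limit in $\DGCat_{cont}$. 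The proof of Theorem~\ref{t:!-avg} is where the $G(K)$-action is genuinely used, via a cleanness argument on $\ol{N(K)G(O)^n}$ and the affine Springer/centralizer geometry of \S\ref{s:rodier} (Lemma~\ref{l:rodier}). Your sketch neither identifies this mechanism nor offers a substitute for it; the reference to ``geometric input underlying the adolescent Whittaker construction'' points in the right direction but the specific use you make of it (via conjugation) is the part that fails.
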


So we unambiguously have $\Whit(\sC)$, and we have
the canonical functors $\Whit(\sC) \simeq \sC^{N(K),\psi} \to \sC$ and
$\sC \to \sC_{N(K),\psi} \simeq \Whit(\sC)$.

\begin{rem}

This result is obviously of a technical nature, and may not especially
excite the reader more interested in $\sW$-algebras than in
Whittaker categories.
But in fact, the proof, which uses the adolescent Whittaker construction,
is more interesting, and has significant implications for
affine $\sW$-algebras. This is discussed further in what follows (and
in \S \ref{s:ds}).

\end{rem}

\subsection{Where are these Whittaker modules?}\label{ss:minus-infty}

Recall that the heart of the $t$-structure on 
$\fg\mod^{N,\psi}$ consisted of $\fg$-modules $M \in \Vect^{\heart}$ 
such that $x-\psi(x)$ act locally nilpotently for every $x \in \fn$.
Moreover, $\fg\mod^{N,\psi}$ is the derived category of this
abelian category.

We might expect to interpret $\Whit(\widehat{\fg}_{\kappa}\mod)$
similarly. However, \emph{there are no discrete modules
of $\widehat{\fg}_{\kappa}$ for which
$x-\psi(x)$ acts locally nilpotently for every $x \in \fn((t))$}.

\subsection{}\label{ss:km-reps-intro} Why isn't this a contradiction of the affine Skryabin 
theorem? 

The reason lies in subtleties of the
DG category $\widehat{\fg}_{\kappa}\mod$. We use the
the (``renormalized") version of this DG category defined
in \cite{dmod-aff-flag}. We refer to the notes \cite{km-indcoh} 
for a more detailed treatment. The point is that
$\widehat{\fg}_{\kappa}\mod$ has a $t$-structure
such that the bounded below derived category
$\widehat{\fg}_{\kappa}\mod^+$ is the bounded below
derived (DG) category of the abelian category
$\widehat{\fg}_{\kappa}\mod^{\heart}$. However, there
are ``many" objects ``in cohomological degree $-\infty$,"\footnote{Formally,
this means the object lies in $\widehat{\fg}_{\kappa}\mod^{\leq -N}$
for all $N$.}
and in fact, all Whittaker objects (for $\fg$ nonabelian) have this
property.

So, somewhat remarkably, the affine Skryabin theorem finds 
all of the representations of the $\sW$-algebra in the 
``invisible" part of the DG category.

\begin{rem}

That Whittaker $D$-modules lie in cohomological degree
$-\infty$ should not come as a surprise, as this happens
geometrically as well. Indeed, e.g., the Whittaker
$D$-module on $\Gr_N = N(K)/N(O)$ is the
dualizing $D$-module twisted by the exponential character.
Since $\Gr_N$ is isomorphic to (ind-)infinite-dimensional affine
space, its dualizing $D$-module unsurprisingly lies in cohomological
degree $-\infty$. 

\end{rem}

\subsection{}

One word on the definition of $\widehat{\fg}_{\kappa}\mod$,
in case it helps.
We are used to thinking of $A$-modules as abelian groups
with an action of $A$. How should we think about 
objects of $\widehat{\fg}_{\kappa}\mod$?

The answer is that for $M \in \widehat{\fg}_{\kappa}\mod$,
and for every $N \geq 0$, we have the ability to form:

\[
C^{\dot}(t^N \fg[[t]],M) \in \Vect
\]

\noindent where this notation indicates the Lie algebra cohomology
of $t^N \fg[[t]]$ with coefficients in $M$. Understanding
this properly: that these functors should be continuous in
the variable $M$ and should satisfy certain functoriality properties,
one obtains the definition from \cite{dmod-aff-flag}.

Note that one recovers the vector space underlying $M$
as:

\[
\underset{N}{\colim} \, C^{\dot}(t^N\fg[[t]],M).
\]  

\noindent So the mechanism for non-zero objects to 
exist in cohomological degree $-\infty$ is that they 
have non-vanishing Lie algebra cohomologies with
respect to some $t^N\fg[[t]]$, but in the direct limit this cohomology
dies.

\begin{rem}\label{r:km-defin}

It will be helpful in what follows to more formally
review a definition of $\widehat{\fg}_{\kappa}\mod$.
One takes $D^+(\widehat{\fg}_{\kappa}\mod)$ the
(DG) bounded below derived category, takes the full subcategory
generated under cones by the induced modules 
$\ind_{t^N\fg[[t]]}^{\widehat{\fg}_{\kappa}}(k)$, and then forms
the ind-category of this. According to \cite{dmod-aff-flag} \S 23,
this has a $t$-structure with the anticipated bounded below part.

We mention this definition to highlight the key role played by
the family of modules $t^N\fg[[t]]$ in the definition.

\end{rem}

\subsection{Who are the compact generators?}\label{ss:generators}

A more serious problem is to identify compact generators on both
sides. Because $N(K)$ is a group indscheme (not a group scheme),
it is not at all clear that $\Whit(\widehat{\fg}_{\kappa}\mod)$ is
compactly generated.

As far as I know, even the compact generators in 
$\sW_{\kappa}\mod$ were not previously constructed, even though
this basic problem about affine $\sW$-algebras can be understood
without any categorical formalism. 
The natural expectation is that there are modules
$\sW_{\kappa}^n \in \sW_{\kappa}\mod^{\heart}$ similar
to the modules 
$\ind_{t^n\fg[[t]]}^{\widehat{\fg}_{\kappa}} k \in 
\widehat{\fg}_{\kappa}\mod^{\heart}$. (So for $n = 0$, we should
have the ``vacuum" representation $\sW_{\kappa}$, and for
the Virasoro algebra, the construction is clear;
at critical level, there is a theory of \emph{opers with singularities}
due to \cite{hitchin} that settles the issue;
for higher rank $\fg$ and $n >0$, the construction is not so clear.)

We refer the interested reader 
to the beginning of \S \ref{s:ds} where the naive
expectations are formulated in detail; this material does not require
having read the preceding parts of the paper except, at a certain
point, needing some elementary Lie theoretic notation from 
\S \ref{s:adolescence}.

\begin{rem}

As in Remark \ref{r:km-defin}, the modules $\sW_{\kappa}^n$
actually play an essential role in defining the
DG category $\sW_{\kappa}\mod$. That is,
$\sW_{\kappa}\mod$ has a $t$-structure for
which $\sW_{\kappa}\mod^+$ is the bounded below derived
category of $\sW_{\kappa}\mod^{\heart}$, but the actual
definition of the full derived category takes the modules
$\sW_{\kappa}^n$ as input.

\end{rem}

\subsection{The adolescent Whittaker construction}\label{ss:intro-adolescent}

Each of the problems above are naturally solved using the
\emph{adolescent Whittaker construction}, which shows
that the Whittaker category is more highly structured than was previously
known. This construction gives a \emph{stratification} of the Whittaker
category by simpler pieces.
We refer to \S \ref{s:adolescence}, most notably Theorem \ref{t:!-avg}, 
where the results are constructions are formulated in detail.
Here we give a more tactile description. 

\begin{example}[C.f. Cor. \ref{c:ff-crit}]\label{e:adolescent-crit}

Recall from categorical Feigin-Frenkel duality
that $\Whit(\widehat{\fg}_{crit}\mod)$ is equivalent
to $\QCoh(\Op_{\ld{G}}(\o{\cD}))$. 
For $n>0$, define 
$\Whit^{\leq n}(\widehat{\fg}_{crit}\mod) \subset \Whit(\widehat{\fg}_{crit}\mod)$
to be the full subcategory corresponding to the subcategory
of quasi-coherent sheaves set-theoretically supported
on $\Op_{\ld{G}}^{\leq n} \subset \Op_{\ld{G}}(\o{\cD})$, 
the subscheme of opers \emph{with singularity $\leq n$}
(in the sense of \cite{hitchin} \S 3.8, see also \cite{fg2}).

\end{example}

The surprising fact is that the above construction makes
sense for every $\sC$ acted on by $G(K)$ (possibly with level $\kappa$).
Namely, we define categories $\Whit^{\leq n}(\sC)$ for all $n \geq 0$.
These are connected by adjoint functors:

\[
\Whit^{\leq n}(\sC) \rightleftarrows \Whit^{\leq n+1}(\sC) \in \DGCat_{cont}.
\]

\noindent The direct limit over $n$ (formed in $\DGCat_{cont}$)
is $\Whit(\sC)$. For $n > 0$, the induced functor
$\Whit^{\leq n}(\sC) \to \Whit(\sC)$ is fully-faithful.

For low values of $n$, this construction was previously known:

\begin{itemize}

\item For $n = 0$, $\Whit^{\leq 0}(\sC) = \sC^{G(O)}$.

\item For $n = 1$, $\Whit^{\leq 1}(\sC)$ is the \emph{baby Whittaker}
category of \cite{arkhipov-bezrukavnikov}. (This is the
reason for the terminology \emph{adolescent}.) 

\end{itemize}

Also, for $G = T$ a torus, we have:

\begin{itemize}

\item $\Whit(\sC) = \sC$ (tautologically) and 
$\Whit^{\leq n}(\sC) = \sC^{\cK_n}$ for $\cK_n \subset T(O)$
the $n$th congruence subgroup.

\end{itemize}

\begin{rem}

For $\kappa$ integral, local geometric Langlands predicts that
$\Whit(\sC)$ is a \emph{category over $\LocSys_{\ld{G}}(\o{\cD})$}
(see \cite{locsys} for a discussion of this notion).
Motivated by the case of critical level Kac-Moody representations
and the Frenkel-Gaitsgory philosophy,
we expect $\Whit^{\leq n}(\sC)$
to be the base-change of $\Whit(\sC)$ along
$\LocSys_{\ld{G}}^{\leq n-1} \to \LocSys_{\ld{G}}(\o{\cD})$,
where $\LocSys_{\ld{G}}^{\leq n-1}$ is the locus of 
local systems with slope $\leq n-1$; by definition,
the map $\LocSys_{\ld{G}}^{\leq n-1} \to \LocSys_{\ld{G}}(\o{\cD})$
is formally \'etale for $n \neq 0$, and for $n = 0$
we agree $\LocSys_{\ld{G}}^{\leq -1} = \bB \ld{G}$, i.e., the
trivial local system.

This gives a conceptual explanation for the use of
the baby Whittaker category in \cite{arkhipov-bezrukavnikov}:
they are working with regular singular (alias: slope $0$) 
local systems, so the baby Whittaker category $\Whit^{\leq 1}$
is enough.

\end{rem}

\begin{rem}

In the classical framework of $p$-adic groups,
a similar construction was given by Rodier in
\cite{rodier} (although his normalizations meaningfully
differ for $\on{rank}(G)>1$).

\end{rem}

\subsection{}

In general, $\Whit^{\leq n}(\sC)$ is defined as invariants
with respect to a compact open subgroup of $G(K)$.
Since the functors $\Whit^{\leq n}(\sC) \to \Whit(\sC)$ admit
continuous right adjoints, this means in practice
that \emph{$\Whit(\sC)$ is as good as invariants
with respect to a group scheme}. 
E.g., these functors preserve compact objects,
so compact generation with respect to
congruence subgroups implies it for the Whittaker category.
This is true for $\sC = D(X)$ for $G(K)$ acting on $X$ a
reasonable indscheme, or for $\sC = \widehat{\fg}_{\kappa}\mod$.

Since the functor 
$\Psi:\Whit(\widehat{\fg}_{\kappa}\mod) \to \sW_{\kappa}\mod$
is supposed to preserve compacts (being an equivalence),
we obtain candidates for the modules $\sW_{\kappa}^n$.
This idea turns out to be fruitful, and is pursued in \S \ref{s:ds}.

As an another instance of the above idea, 
the comparison Theorem \ref{t:inv-coinv} between invariants
and coinvariants falls out immediately from the adolescent
formalism, as does Gaitsgory's functor between them. 

\begin{example}\label{e:biwhit-cpt}

For instance, replacing $G$ by $G \times G$,
we find that the DG category of $D$-modules on $G(K)$
with both left and right Whittaker equivariance is compactly
generated. Local geometric Langlands predicts that
at integral level, this category is equivalent to 
$\QCoh(\LocSys_{\ld{G}}(\o{\cD})$. The corresponding
compact generation on the spectral side was
obtained in \cite{locsys}.

\end{example}

\subsection{Key construction: $t$-structures on Whittaker categories}

In the proof of Theorem \ref{t:aff-skry}, we use a general construction
of $t$-structures on Whittaker categories, which we highlight here because
it may be of interest outside of the theory of $\sW$-algebras. 

The point is that (up to shift) the canonical functors
$\Whit^{\leq n}(\sC) \to \Whit^{\leq n+1}(\sC)$ are
$t$-exact essentially whenever $\sC$ has a $t$-structure
compatible with the action of $G(K)$. 
Roughly, this is because by Theorem \ref{t:!-avg},
this functor may be realized either as the top
possible cohomology of a $*$-averaging functor, or the
bottom possible cohomology of a $!$-averaging functor,
and therefore is exact.\footnote{Naively,
the argument here is standard: see
e.g. \cite{bbm}.
We refer to Appendix \ref{a:av} for a discussion of technical
points.}

Ultimately, this is what allows us to prove Theorem \ref{t:aff-skry}:
being topological algebras, $\sW$-algebras are of abelian categorical
nature (as remarked above), and we analyze
$\Whit(\widehat{\fg}_{\kappa}\mod)$ with abelian categories via
this $t$-structure.

\subsection{}

In particular, if $D(X)$ is a reasonable indscheme with
$X$ acted on by $G(K)$, then $\Whit(D(X))$ has a canonical $t$-structure.

For instance, in the setting of
Example \ref{e:biwhit-cpt}, one finds
that the category of bi-Whittaker equivariant $D$-modules on $G(K)$ 
has a canonical $t$-structure.\footnote{The proof of the affine
Skryabin theorem can be modified to identify the corresponding
abelian category with (classical) chiral modules over the chiral algebra
$(\Psi \boxtimes \Psi)(CDO_{\kappa})$, where
$CDO_{\kappa}$ is the level $\kappa$
chiral differential operators, as constructed e.g. in \cite{dmod-loopgroup}.} 

Therefore, we should anticipate there
being a canonical $t$-structure on $\QCoh(\LocSys_{\ld{G}}(\o{\cD}))$.
Can it be constructed independently of
local geometric Langlands? This would generalize 
the perverse coherent $t$-structure on the nilpotent cone 
constructed in \cite{perverse-coherent} by Bezrukavnikov,
but to a much more complicated setting.

\begin{rem}

More generally, $\QCoh(\LocSys_{\ld{G}}(\o{\cD}))$ can be regarded
as the $\kappa \to \infty$ limit of the categories
$\widehat{\fg}_{\kappa}\mod^{G(K),w}$ of 
\emph{Harish-Chandra bimodules} for the Kac-Moody algebra.
The (conjectural) perverse coherent
$t$-structure on $\QCoh(\LocSys_{\ld{G}}(\o{\cD}))$ 
should deform in this setting.
Indeed, local geometric Langlands (in the quantum setting) continues
to predict that Harish-Chandra bimodules should be equivalent to 
bi-Whittaker $D$-modules at an appropriate level.

\end{rem}

\subsection{Summary}

The above story may be summarized as follows:
the adolescent Whittaker method from \S \ref{s:adolescence}, which
arises from the geometry of loop groups and
``pure"\footnote{That is, that part that explicitly ties to
the traditional arithmetic Langlands program, 
so e.g. does not mention Kac-Moody algebras.} 
local geometric Langlands, can be imported
to the setting of affine $\sW$-algebras to illuminate
the subject and solve some basic problems. 

But we may have reversed the logic in this presentation. The existence
of the modules $\sW_{\kappa}^n \in \sW_{\kappa}\mod^{\heart}$ 
was readily anticipated
by anyone who considered the issue. Moreover,
the affine Skryabin theorem predicts the existence
of the $t$-structure on $\Whit(\widehat{\fg}_{\kappa}\mod)$.
The theory of opers with singularities (c.f. Example \ref{e:adolescent-crit})
was long known.
So perhaps our knowledge of affine $\sW$-algebras
should rather have anticipated the adolescent Whittaker theory.

\subsection{Structure of the paper}\label{ss:str-paper}

We now outline the contents of the paper, trying
to highlight what parts may be of interest to different types of readers.

The adolescent Whittaker theory is developed in 
\S \ref{s:adolescence}. Some supporting results about loop
groups are given in \S \ref{s:rodier}. The latter material includes
some calculations about affine Springer fibers and centralizers
of elements of the ``integral Kostant slice" $f+\Lie(I)$ for
$I$ being the Iwahori subgroup (and $\Lie(I)$ being its Lie algebra).
This material, plus the analogue for $D$-module categories 
of the $t$-structure construction from \S \ref{s:skryabin},
is the new material on local geometric Langlands 
from this paper that does not mention $\sW$-algebras.

In \S \ref{s:ds}, we solve the problem from \S \ref{ss:generators}
of finding generators for the affine $\sW$-algebra.
The construction uses ideas from \S \ref{s:adolescence},
but another construction using more classical ideas
is given in \S \ref{s:free-field}. These two sections
do not use any categorical machinery; rather, they formulate and
solve problems purely about affine $\sW$-algebras.

The affine Skryabin theorem is proved in \S \ref{s:skryabin}.
This combines the categorical methods with Kac-Moody algebras.

Finally, some applications, such as the categorical
Feigin-Frenkel theorem, are given in \S \ref{s:applications}.
We discuss exactness properties of the Drinfeld-Sokolov
functor here. 

There are two appendices. Appendix \ref{a:hc} gives
background material on homological algebra for
Tate Lie algebras. It includes supporting material
on semi-infinite cohomology and its calculation. All the material
there is standard, but perhaps it warranted an updated exposition.
Appendix \ref{a:av} is about $t$-exactness of $!$-averaging
functors, which as indicated above, plays a key role. 

\subsection{Notation and conventions}

We assume the reader is familiar with the
theory of $D$-modules in infinite type and group actions
on (cocomplete DG) categories. We refer to 
\cite{dario-*/!} and \cite{dmod} for these subjects.
For a group $H$ acting on $\sC$, we let $\sC^H$ denote
the strong invariants, $\sC^{H,w}$ denote the weak 
invariants, and $\sC^{H,\psi}$ denote the invariants
twisted by the exponential $D$-module along
a character $\psi:H \to \bG_a$ (c.f. \cite{dario-*/!} \S 2.5.4).

We use homotopical algebra methods freely, 
although this is inessential at some points.
Our default categorical
language is the $\infty$-categorical language of Lurie. 
So \emph{category} means $(\infty,1)$-category, and the rest
of the categorical notions (notably limits
and colimits) are understood correspondingly.

We use $\otimes$ to denote the natural symmetric
monoidal operation on $\DGCat_{cont}$. 

\subsection{Kac-Moody groups}\label{ss:km-conv}

We let: 

\[
1 \to Z_{KM} \to \widehat{G(K)} \to G(K) \to 1
\]

\noindent denote the Kac-Moody extension. Some explanation
is in order.

Here $Z_{KM}$ is a certain (finite-dimensional) torus,
which is $\bG_m$ for $G$ a simple group.
Precisely, it suffices to define its character lattice;
it will be a certain sublattice of the $k$-vector space
of \emph{levels}, i.e., $\Ad$-invariant symmetric bilinear
forms on $\fg$. 

Recall that $\fg = \fz_{\fg} \oplus [\fg,\fg]$ where
$\fz_{\fg}$ is the center. Moreover, recall that
$[\fg,\fg]$ is canonically a direct sum of simple Lie algebras $\fg_i$
(its minimal normal subalgebras). For any level $\kappa$,
the spaces $\fz_{\fg}$ and $\fg_i$ are pairwise orthogonal.
We then take the lattice of levels such that
$\kappa|_{\fz_{\fg}}$ is \emph{even}, i.e., the pairing of
any two coweights of $G/[G,G]$ is an even integer,
and $\kappa|_{\fg_i}$ is an integral multiple of the
Killing form. Clearly such levels $k$-span the space
of all levels.

For any level of the above type, we obtain a central
extension of $G(K)$ by $\bG_m$ whose Lie algebra
is the Kac-Moody extension $\widehat{\fg}_{\kappa}$ of 
$\fg((t))$. Indeed, the adjoint group $G^{ad}$ is canonically
a product of groups $G_i$, so we obtain
$G \to G/[G,G] \times \prod_i G_i$; our extension of $G(K)$
is the Baer sum of the extensions
of $G/[G,G](K)$ defined by the Contou-Carr\'ere symbol
and the extensions of $G_i(K)$ defined by the usual
determinant line method. This is additive in the level,
so it is equivalent to say that 
there is an extension $\widehat{G(K)}$ of $G(K)$ by
$Z_{KM}$ whose pushout by a character
$Z_{KM} \to \bG_m$ (equivalently, a level of the specified type) 
is the one we just explained.

\subsection{}

For \emph{any} level $\kappa$, there is a canonical
multiplicative $D$-module on $Z_{KM}$ whose
underlying $\sO$-module is multiplicatively trivialized.
E.g., if $Z_{KM} = \bG_m$, this is the $D$-module ``$z^{\lambda}$"
for $\lambda$ the given scalar. 
We let $D_{\kappa}(G(K))$ denote the corresponding
category of twisted $D$-modules: by definition,
this is the category of $D$-modules on $\widehat{G(K)}$
$Z_{KM}$-equivariant against our multiplicative character.
This DG category is equipped with a convolution monoidal structure.

As in \cite{dario-*/!}, this allows us to speak about DG categories
acted on at level $\kappa$ (for a general level $\kappa$).

\subsection{Acknowledgements}

I have benefited from discussions with a number of people
in working on this material. I'm happy to thank
Dima Arinkin, Sasha Beilinson, David Ben-Zvi,
Dario Beraldo, Roman Bezrukavnikov, Sasha Braverman,
Vladimir Drinfeld, Giorgia Fortuna, Edward Frenkel,
Reimundo Heluani, Masoud Kamgarpour, 
Ivan Mirkovic, and Xinwen Zhu
for their support, encouragement, and for their generosity
in sharing their ideas.

Special thanks are due to Dennis Gaitsgory. 
This spirit of representation theory was invented
by him, and his singular influence extends throughout this work.

\section{Compact approximation to the Whittaker model}\label{s:adolescence} 

\subsection{} 

In this section, we will prove the following theorem:

\begin{thm}\label{t:inv-coinv}

For every $\sC$ acted on by $G(K)$ at level $\kappa$,
there is an equivalence:

\[
\sC_{N(K),\psi} \isom \sC^{N(K),\psi}
\]

\noindent functorial in $\sC$.

\end{thm}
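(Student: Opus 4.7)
The plan is to reduce the comparison of $\sC^{N(K),\psi}$ with $\sC_{N(K),\psi}$ to the analogous comparison for (pro-unipotent) group \emph{schemes}, where twisted invariants and twisted coinvariants are canonically identified in the formalism of strong group actions on DG categories. The vehicle for this reduction is the adolescent Whittaker filtration $\Whit^{\leq n}(\sC)$ of \S\ref{ss:intro-adolescent}, in which by definition each term is twisted invariants against a character of a compact open subgroup $H_n \subset G(K)$.

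Explicitly, I would first construct a sequence $H_0 \supset H_1 \supset \cdots$ of compact open subgroups of $G(K)$ together with compatible characters $\chi_n: H_n \to \bG_a$, modeled on Rodier's compact-open approximations (with $H_0 = G(O)$ and $\chi_0 = 0$, $H_1$ a suitable Iwahori conjugate whose $\chi_1$ restricts $\psi$, and $H_n$ obtained by conjugating deeper congruence subgroups by powers of $t^{-\ld{\rho}}$ so that $\psi$ extends coherently). Because each $H_n$ is a group scheme, the canonical comparison map
\[
\sC_{H_n,\chi_n} \to \sC^{H_n,\chi_n}
\]
is an equivalence by the scheme-case formalism of \cite{dario-*/!}, and this equivalence is manifestly functorial in $\sC$. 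So each rung of the adolescent ladder is \emph{unambiguously} a Whittaker-type category.

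The heart of the argument is then to show that both sides of the sought equivalence are canonically the same colimit in $\DGCat_{cont}$ of the common categories $\sC^{H_n,\chi_n} \simeq \sC_{H_n,\chi_n}$ along appropriate transition functors. For the invariants side, one uses the continuous right adjoints to the restriction functors $\Whit^{\leq n}(\sC) \to \Whit^{\leq n+1}(\sC)$ coming from $H_{n+1} \subset H_n$ and checks that the resulting colimit recovers $\sC^{N(K),\psi}$; this uses $N(K) = \bigcup_n (N(K) \cap H_n)$ together with the universal property of invariants. For the coinvariants side, one runs the dual argument using the forgetful (left adjoint) functors and exploits that $(-)_{N(K),\psi}$ is itself a left adjoint, hence commutes with the relevant colimits formed inside $G(K)$-categories.

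The step I expect to be the main obstacle is matching these two colimit presentations: the transition functors on the invariants side (right adjoints) and on the coinvariants side (left adjoints) must be shown to coincide canonically up to a fixed cohomological shift, so that the two colimit diagrams are the same. This is precisely the content of the $*$-averaging vs.\ $!$-averaging identification of Theorem \ref{t:!-avg}, which realizes each transition functor simultaneously as the top cohomology of a $*$-averaging and the bottom cohomology of a $!$-averaging between the appropriate compact-open subgroups. Once that identification is in place, Gaitsgory's functor $\Theta$ is realized as the map induced on colimits by the identity, and hence is forced to be an equivalence:
\[
\sC_{N(K),\psi} \simeq \colim_n \sC_{H_n,\chi_n} \simeq \colim_n \sC^{H_n,\chi_n} \simeq \sC^{N(K),\psi},
\]
with functoriality in $\sC$ inherited stage by stage.
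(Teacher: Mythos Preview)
Your overall strategy is essentially the paper's: present both $\sC^{N(K),\psi}$ and $\sC_{N(K),\psi}$ as co/limits of the adolescent categories $\Whit^{\leq n}(\sC)=\sC^{\o{I}_n,\psi}$, invoke Theorem~\ref{t:!-avg} to identify the two systems of transition functors up to shift, and then use the colimit-under-left-adjoints $=$ limit-under-right-adjoints trick. That is exactly what happens in \S\ref{ss:inv-coinv-pf}.

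There is, however, a genuine error in your setup that would derail the argument as written: the subgroups are \emph{not} nested. As recorded in Example~\ref{e:01infty}, $\o{I}_n$ neither contains nor is contained in $\o{I}_{n+1}$, and in fact no decreasing chain of compact open subgroups can satisfy $N(K)=\bigcup_n(N(K)\cap H_n)$, since $N(K)$ is a group indscheme and not a group scheme. Consequently the transition functors are not ``restriction functors'' with an obvious adjoint pair: both $\iota_{n,m,*}$ and $\iota_{n,m}^!$ are compositions $\Oblv$-then-$\Av_*^{\psi}$ through $\sC^{\o{I}_n\cap\o{I}_m,\psi}$, and there is \emph{no a priori adjunction} between them. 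The paper's Lemma~\ref{l:n->infty} shows that $\sC_{N(K),\psi}$ is the colimit under $\iota_{n,m,*}$ while $\sC^{N(K),\psi}$ is the \emph{limit} under $\iota_{n,m}^!$; Theorem~\ref{t:!-avg}\,\eqref{i:!-avg=*-avg} then identifies $\iota_{n,m,*}[2(m-n)\Delta]$ with $\iota_{n,m,!}$, the genuine left adjoint of $\iota_{n,m}^!$, and a cohomological-shift reindexing converts the $*$-colimit into the $!$-colimit, which equals the $!$-limit. Once you drop the nestedness assumption and replace your ``restriction'' picture with these averaging functors, your argument becomes the paper's.
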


\begin{rem}

The functor\footnote{This functor admits a very simple description
in the language \cite{dmod} of $D$-modules on infinite type (ind)schemes.
First, note that convolution by any $(N(K),\psi)$-biequivariant $D$-module
on $G(K)$ induces a functor $\sC_{N(K),\psi} \to \sC^{N(K),\psi}$.
Then we should take the convolution with the
\emph{renormalized} pushforward from $N(K)$ to $G(K)$
of the character sheaf.
We remind that the word \emph{renormalized}
indicates something specific to the infinite type setup, and in particular 
that it indicates
that we have chosen trivializations of dimension torsors.

In particular, we see that this functor makes sense for any
category acted on by $N(K)$, i.e., the $G(K)$ action is not
necessary. However, we remind that a peculiarity of the
infinite type framework is that if we took $\sC = \Vect\cdot \psi$
(i.e., the $N(K)$ action on $\Vect$ corresponding to the
character $\psi$), then this functor is zero, even though
$\sC_{N(K),\psi} = \sC^{N(K),\psi} = \Vect$ (the identification
being realized here by a different functor).

In any case, the explicit description of the functor will
be immediate from the proof given below, and we do not
particularly emphasize it.
}
 realizing this equivalence was constructed by
Gaitsgory in \cite{lesjours}, and conjectured
to be an equivalence. This theorem was proved in 
\emph{loc. cit}. for rank 1 groups, and in \cite{dario-*/!}
for $G = GL_n$.

\end{rem}

\begin{rem}

From the onset, we 
draw the reader's attention to Theorem \ref{t:!-avg}, which is the
essential tool in proving Theorem \ref{t:inv-coinv}, and which plays
a key role throughout this paper.

\end{rem}

\subsection{The adolescent Whittaker constructions}

The main tool for proving Theorem \ref{t:inv-coinv}
is the following sequence of subgroups of $G(K)$.

\begin{defin}

For $n \geq 0$, we let $\o{I}_n$ denote the subgroup
$\Ad_{-n\check{\rho}(t)}(G(O) \times_{G(O/t^n)} N(O/t^n))$.

\end{defin}

In other words, we take elements of $G(O)$ that lie in $N$ modulo
$t^n$, and conjugate this subgroup in a way that enlarges
congruence subgroups
of $N(K)$, fixes $T(K)$, and shrinks congruence subgroups of
$N^-(K)$.

\begin{example}\label{e:01infty}

For $n = 0$, $\o{I}_0 = G(O)$.\footnote{This is the only
case where $\o{I}_n$ is not prounipotent. So for many problems,
a claim about all $\o{I}_n$ is proved by treating the $n = 0$ case 
separately, where the claim may be degenerate anyway.
Despite this clumsiness, it seems to be most natural to
include the $n = 0$ case on equal footing wherever possible.}  
For $n = 1$, $\o{I}_1$ is conjugated by $\check{\rho}(t)$
from the radical of Iwahori.
While $\o{I}_n$ neither contains nor is contained in 
$\o{I}_{n+1}$, these groups limit to $N(K)$. 

\end{example}

\begin{notation}

Motivated by the limiting behavior above, we add
$N(K)$ to this family of subgroups 
by setting $\o{I}_n = N(K)$ for $n = \infty$.

\end{notation}

\begin{example}

For $G = GL_2$ and $0<n<\infty$, $\o{I}_n$ is the subgroup of matrices:

\[
\begin{pmatrix}
1+t^n a & t^{-n} b \\
t^{2n} c & 1+t^n d
\end{pmatrix} 
\]

\noindent for $a,b,c,d \in O$. Similarly, for $G = GL_3$, we obtain
the subgroup of matrices:

\[
\begin{pmatrix}
1+t^n a & t^{-n} b & t^{-2n} c \\
t^{2n} d & 1+t^n e & t^{-n} f \\
t^{3n} g & t^{2n} h & 1+t^n i
\end{pmatrix}.
\]

\end{example}

\begin{example}

For $G = T$ a torus, $\o{I}_n$ is the $n$th congruence subgroup.

\end{example}

\begin{rem}[Triangular decomposition]

We will repeatedly use the following fact without mention.
For all $n>0$, note that $\o{I}_n$ admits a 
\emph{triangular decomposition}:

\[
\o{I}_n = (\o{I}_n \cap N^-(K)) 
\times (\o{I}_n \cap T(K)) \times
(\o{I}_n \cap N(K))
\]

\noindent with the isomorphism induced by the multiplication map.
The same is true if we reverse the order of the factors. 

\end{rem}

\begin{rem}[Splitting the Kac-Moody extension]\label{r:whit-km}

Note that $\widehat{G(K)} \to G(K)$ is canonically split over
$G(O)$. By transport of structure, it is canonically split over 
$\Ad_{-n\check{\rho}(t)}(G(O))$ as well, and in particular,
over $\o{I}_n$. 

Varying $n$, these splittings coincide on all intersections
$\o{I}_n \cap \o{I}_m$. Indeed, we may safely assume one of
$n$ and $m$ is non-zero, in which case this intersection is
prounipotent. But the splittings differ by a homomorphism
$\o{I}_n \cap \o{I}_m \to \bG_m$, which must be trivial
by prounipotence.

Therefore, $D_{\kappa}(\o{I}_n) \simeq D(\o{I}_n)$ as
monoidal categories, 
and as $n$ varies these equivalences are compatible with restriction
to intersections between the subgroups $\o{I}_n$. 

\end{rem}

\subsection{}

One has the following straightforward construction
of characters of the $\o{I}_n$.

\begin{lem}

For every $n$, there is a unique homomorphism
$\psi_{\o{I}_n}:\o{I}_n \to \bG_a$ annihilating 
$B^-(O)\cap \o{I}_n$ and, with
$\psi_{\o{I}_n}|_{N(K) \cap \o{I}_n} = \psi|_{\o{I}_n\cap N(K)}$. 
For a pair of integers $n,m$, the
corresponding characters coincide on
the intersection $\o{I}_n \cap \o{I}_m$.

\end{lem}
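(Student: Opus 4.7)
The plan is to reduce the existence-and-uniqueness claim to a Lie-algebra question via pro-unipotence, construct the character using the triangular decomposition of $\fI_n := \Lie(\o{I}_n)$, and verify the bracket vanishing by a direct $t$-valuation count. For $n \geq 1$ (and $n = \infty$), $\o{I}_n$ is pro-unipotent, so continuous homomorphisms $\o{I}_n \to \bG_a$ correspond bijectively to continuous $k$-linear maps $\fI_n \to k$ annihilating $[\fI_n,\fI_n]$. The case $n = 0$ is essentially degenerate: since residues of elements of $O$ vanish, $\psi|_{\fn(O)} = 0$, so the only candidate is $\psi_{\o{I}_0} = 0$; its uniqueness reduces to the fact that any character $G(O) \to \bG_a$ factors through $G(O)^{\on{ab}}$, which is a quotient of $T(O) \subset B^-(O)$ and is hence killed by the vanishing hypothesis.

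For $n \geq 1$, computing the conjugation by $\check\rho(t)^{-n}$ gives the triangular decomposition
\[
\fI_n = \Bigl(\bigoplus_{\alpha > 0} t^{n(\on{ht}(\alpha)+1)} \fn_{-\alpha}[[t]]\Bigr) \oplus t^n \ft[[t]] \oplus \Bigl(\bigoplus_{\alpha > 0} t^{-n\on{ht}(\alpha)} \fn_\alpha[[t]]\Bigr).
\]
Define the candidate $\tilde\psi_n : \fI_n \to k$ to be zero on the first two summands and to equal $\psi$ on the third. The main technical step is to check that $\tilde\psi_n$ vanishes on $[\fI_n,\fI_n]$. Writing $x = x_- + x_0 + x_+$ and similarly for $y$, the term $[x_+, y_+]$ lies in $[\fn,\fn]((t))$ and is automatically killed by $\psi$. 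For each of the four remaining mixed terms, the simple-root components have $t$-valuations forced to be non-negative by the defining exponents: in $[x_0, y_{\alpha_i}]$ the product $\alpha_i(x_0)\,y_{\alpha_i}$ lies in $t^n \cdot t^{-n} \cdot k[[t]] \cdot e_i = k[[t]] \cdot e_i$; and for $[x_{-\alpha}, y_\beta]$ contributing to the $\alpha_i$-component (so $\beta = \alpha + \alpha_i$), the exponents $n(\on{ht}(\alpha)+1)$ and $-n\on{ht}(\beta)$ cancel exactly, placing the coefficient in $k[[t]]\cdot e_i$. In every case the simple-root coefficient is regular at $t=0$, so its residue against $dt$ vanishes.

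Uniqueness for $n \geq 1$ is then immediate from the triangular decomposition, since each summand sits entirely in $\fb^-(O)$ or in $\fn(K)$, pinning the character down summand-by-summand. Compatibility on intersections runs on the same rails: $\fI_n \cap \fI_m$ inherits a componentwise triangular decomposition, its summands remain contained in $\fb^-(O)$ or $\fn(K)$, and both $\psi_{\o{I}_n}$ and $\psi_{\o{I}_m}$ restrict to characters of $\o{I}_n \cap \o{I}_m$ meeting the same pinning conditions, so they coincide. The sole obstacle throughout is the bracket-vanishing calculation, which amounts to bookkeeping of $t$-valuations hard-wired into $\o{I}_n$ by the conjugation twist; everything else is a formal consequence of pro-unipotence and linear algebra.
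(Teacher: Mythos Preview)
Your proof is correct and is precisely the natural implementation of what the paper has in mind: the paper states this lemma without proof, calling the construction ``straightforward,'' and immediately beforehand records the triangular decomposition of $\o{I}_n$ as a fact to be ``repeatedly use[d] \ldots\ without mention.'' Your reduction via pro-unipotence to a Lie-algebra character, followed by the valuation bookkeeping to check vanishing on brackets, is exactly the intended argument.

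One minor remark on the $n=0$ case: the assertion that $G(O)^{\on{ab}}$ is a quotient of $T(O)$ is true but takes a moment to justify; you can sidestep it entirely by observing that a character vanishing on both $N(O)$ and $B^-(O)$ vanishes on the open big cell $N(O)\cdot B^-(O)$, hence everywhere on the irreducible scheme $G(O)$.
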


\begin{notation}

To encourage the reader to think of the characters $\psi_{\o{I}_n}$
and $\psi$ as all being ``the same," we denote them all by
$\psi$ when there is no risk of confusion.

\end{notation}

\begin{example}

Since the $n = 0$ case can be the most confusing: the character
is trivial in this case.

\end{example}

\subsection{}

In the remainder of this section, $\sC \in \DGCat_{cont}$ is
equipped with a $G(K)$ action of level $\kappa$.

\begin{defin}

For $0 \leq n \leq \infty$, 
we define the $n$th \emph{adolescent Whittaker category}
as:\footnote{Note that this category makes sense because of
Remark \ref{r:whit-km}.}

\[
\Whit^{\leq n}(\sC) \coloneqq \sC^{\o{I}_n,\psi}.
\]

\end{defin}

\begin{rem}

For $n>0$, $\Whit^{\leq n}(\sC)$ is a subcategory of $\sC$
since $\o{I}_n$ is prounipotent if $n<\infty$ and ind-prounipotent
if $n =\infty$.

\end{rem}

\begin{rem}

In line with Example \ref{e:01infty}, $\Whit^{\leq 0}(\sC) = \sC^{G(O)}$
is the \emph{spherical} category, $\Whit^{\leq 1}(\sC)$ is\footnote{At
least if the center of $G$ is connected, so that
$\o{I}_1$ is actually conjugate to the radical of Iwahori
by an element of $G(K)$ (and not merely $G^{ad}(K)$), but the reader is advised
to ignore this point.} the \emph{baby} Whittaker category 
of \cite{arkhipov-bezrukavnikov}, and in the limit as $n \to \infty$,
we have $\Whit^{\leq \infty}(\sC) = \sC^{N(K),\psi}$ the
(grown-up) Whittaker category.

\end{rem}

\subsection{How are the categories $\Whit^{\leq n}$ related as
we vary $n$?}

Since the $\o{I}_n$ groups are not contained one in another, 
we can only relate these categories via averaging. This is to
say, they are not related so directly, though not so indirectly either.
But in Theorem \ref{t:!-avg}, we will show that this question
is interesting than it appears.

\subsection{}

For $n \leq m < \infty$, we have a functor:

\[
\iota_{n,m,*}: \Whit^{\leq n}(\sC) \to \Whit^{\leq m}(\sC)
\]

\noindent given as the composition:

\[
\sC^{\o{I}_n,\psi} \xar{\Oblv} \sC^{\o{I}_n \cap \o{I}_m,\psi}
\xar{\Av_*^{\psi}} \sC^{\o{I}_m,\psi} 
\]

\noindent and for all $n \leq m$ we have a functor:

\[
\iota_{n,m}^!: \Whit^{\leq m}(\sC) \to \Whit^{\leq n}(\sC)
\]

\noindent given as the composition:

\[
\sC^{\o{I}_m,\psi} \xar{\Oblv} \sC^{\o{I}_n \cap \o{I}_m,\psi}
\xar{\Av_*^{\psi}} \sC^{\o{I}_n,\psi}.
\]

\begin{notation}

We use the following convention systematically:
for $m = \infty$, we suppress $m$ from the notation.
So we use the notation $\iota_n^!$ instead of 
$\iota_{n,\infty}^!$.

\end{notation}

\begin{rem}

We remind that $*$-averaging only makes sense for group schemes,
not group indschemes, so $\iota_{n,*} = \iota_{n,\infty,*}$ 
does not make sense. (Here \emph{makes sense} means that while
there is a non-continuous right adjoint for formal reasons, this
functor is pathological.)

\end{rem}

\begin{rem}

These functors compose well: e.g., for 
$\ell \leq n \leq m<\infty$, we have
$\iota_{n,m,*} \circ \iota_{\ell,n,*}= \iota_{\ell,m,*}$.

\end{rem}

\begin{rem}

There is no a priori adjunction between the functors
$\iota_{n,m,*}$ and $\iota_{n,m}^!$, since
$\Oblv$ is a left adjoint while $\Av_*^{\psi}$ is a right adjoint.
Rather, their relationship, in the terminology of \cite{dgcat}, 
is that these functors
are \emph{dual} to one other (when $\sC$ is
dualizable).

\end{rem}

\begin{warning}\label{w:adolescent-forget}

Neither of the constructions $\iota_{n,m,*}$ or $\iota_{n,m}^!$
is compatible with forgetful functors to $\sC$.
We advise to mostly forget about the forgetful
functors to $\sC$ and to remember these functors instead.

\end{warning}

\subsection{Formulation of the main result}\label{ss:!-avg}

Let $\Delta \coloneqq 2(\check{\rho},\rho) \in \bZ^{\geq 0}$.
Note that:\footnote{
Just for fun, we remark that $\Delta$ can also be 
calculated as
$\sum_i \begin{pmatrix} d_i \\ 2 \end{pmatrix}$, where
the $d_i$ are the exponents of the semisimple Lie algebra $[\fg,\fg]$. 
}

\[
n\Delta = \dim(\Ad_{-n\check{\rho}(t)} N(O)/N(O)).
\]

The main result of this section is the following.

\begin{thm}\label{t:!-avg}

\begin{enumerate}

\item\label{i:!-avg-exists}

For all $n \leq m \leq \infty$, the functor $\iota_{n,m}^!$ admits a 
left adjoint $\iota_{n,m,!}$. 

\item\label{i:!-avg-ff}

For all $0<n \leq m \leq \infty$, $\iota_{n,m,!}$ is fully-faithful.

\item\label{i:!-avg=*-avg}

If $m \neq \infty$, there is a canonical
 isomorphism:
 
\[
\iota_{n,m,!} \simeq 
\iota_{n,m,*}[2(m-n)\Delta].
\]

\noindent These isomorphisms are compatible with compositions,
e.g. the induced isomorphisms between 
$\iota_{n,m,*} \circ \iota_{\ell,n,*} = \iota_{\ell,m,*}$
and $\iota_{n,m,!} \circ \iota_{\ell,n,!} = \iota_{\ell,m,!}$ canonically
coincide.

\end{enumerate}

\end{thm}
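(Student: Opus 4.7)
My plan is to handle $m<\infty$ first via a twisted averaging comparison, and then pass to $m=\infty$ by taking a colimit.

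For $0 \leq n \leq m < \infty$, set $K_{nm} := \o{I}_n \cap \o{I}_m$. The triangular decomposition identifies $K_{nm} = (\o{I}_m \cap N^-(K)) \times T(O) \times (\o{I}_n \cap N(K))$, so $K_{nm} \subset \o{I}_m$ is a prounipotent inclusion whose quotient $\o{I}_m/K_{nm} \cong (\o{I}_m \cap N(K))/(\o{I}_n \cap N(K))$ is a finite-dimensional affine space of dimension $d := (m-n)\Delta$. By construction $\iota_{n,m}^! = \Av_*^\psi \circ \Oblv$ for the forgetful $\sC^{\o{I}_m,\psi} \to \sC^{K_{nm},\psi}$, and the candidate left adjoint is $\iota_{n,m,!} := \Av_!^\psi \circ \Oblv$ for the opposite forgetful $\sC^{\o{I}_n,\psi} \to \sC^{K_{nm},\psi}$. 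The essential input, to be proved in Appendix \ref{a:av}, is a general averaging lemma: for an inclusion $H' \subset H$ of prounipotent group schemes with smooth finite-dimensional quotient of dimension $d$, equipped with a compatible character, the left adjoint $\Av_!^\psi$ exists and is canonically isomorphic to $\Av_*^\psi[2d]$. This delivers (i) and (iii) for $m < \infty$, and compatibility with compositions is immediate from the naturality of the construction.

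For (ii) with $0 < n \leq m < \infty$, the unit $\id \to \iota_{n,m}^! \circ \iota_{n,m,!}$ is analyzed by first substituting via (iii), reducing to a statement about $\iota_{n,m}^! \circ \iota_{n,m,*}$ up to the explicit $[2d]$ shift. By base change, the middle composite $\Oblv \circ \Av_*^\psi$ is computed as an averaging over $\o{I}_m/K_{nm}$; together with the outer averaging, and exploiting that all groups in play are prounipotent (which requires $n>0$) together with the triangular decomposition, this degenerates to an averaging over a single copy of the affine space $\o{I}_m/K_{nm}$, producing a cohomological shift that exactly cancels the $2d$ from (iii). One can alternatively induct on $m-n$ by composition and perform the direct computation only in the one-step case $m = n+1$.

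For $m = \infty$: I claim the natural functor $\colim_{m < \infty} \Whit^{\leq m}(\sC) \to \Whit(\sC) = \sC^{N(K),\psi}$ in $\DGCat_{cont}$, with transitions the (now fully-faithful) $\iota_{n,m,!}$, is an equivalence. Granting this, $\iota_{n,\infty,!}$ is the insertion into the colimit; it is fully-faithful because the transitions are, and its right adjoint agrees with $\iota_n^!$ by compatibility with the finite-stage adjunctions. The colimit identification uses $N(K) = \bigcup_m (\o{I}_m \cap N(K))$ as an increasing union of prounipotent subgroups, together with the compatibility of $(N(K),\psi)$-invariants with this increasing union, itself reduced to the finite-$m$ fully-faithfulness already established. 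The main obstacle I expect is the averaging lemma itself, particularly the twisted version with a nontrivial character $\psi$ and the precise dimension shift; the passage to $m = \infty$ is essentially formal once the finite case is in hand.
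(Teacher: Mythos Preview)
Your strategy reverses the paper's: you attempt $m<\infty$ first and then pass to $m=\infty$ by colimit, whereas the paper establishes $m=\infty$ first (via a cleanness argument on $\overline{\Gr_N}$) and deduces finite $m$ afterward. There is a genuine gap, and it lies in your argument for (ii).

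First, two smaller points. Your ``averaging lemma'' ($\Av_!^\psi \simeq \Av_*^\psi[2d]$ for a prounipotent inclusion with affine quotient) is \emph{not} what Appendix~\ref{a:av} proves; that appendix concerns $t$-exactness (Lemma~\ref{l:amp}) and in fact \emph{uses} part~(iii) rather than establishing it. The lemma you want is plausible for group schemes and would indeed give (i) and (iii) for $m<\infty$ directly, but it needs its own justification. Also, your description of $K_{nm}$ has $T(O)$ in the middle; it should be the congruence subgroup $\o{I}_m\cap T(K)$.

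The serious problem is your claim that $\iota_{n,m}^!\circ\iota_{n,m,*}$ ``degenerates to an averaging over a single copy of $\o{I}_m/K_{nm}$.'' Unwinding, this composite integrates $(h,g)\mapsto \psi(hg)^{-1}(hg\cdot\sF)$ over $h\in \o{I}_n/K_{nm}$ and $g\in \o{I}_m/K_{nm}$, with $\sF$ only $(\o{I}_n,\psi)$-equivariant. For the $h$-integral to collapse you need, for each $g$ in $(\o{I}_m\cap N(K))\setminus(\o{I}_n\cap N(K))$, that $\Av_*^{\o{I}_n,\psi}(g\cdot\sF)=0$. This vanishing holds precisely when $\psi$ and $\psi\circ\Ad_{g^{-1}}$ disagree on $\o{I}_n\cap\Ad_g\o{I}_n$ --- which is exactly Lemma~\ref{l:rodier}, the key geometric input proved in \S\ref{s:rodier} via affine Springer fibers and loop-rotation dynamics. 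Nothing about prounipotence or the triangular decomposition alone forces this; one can write down inclusions of prounipotent groups with characters where the analogous statement fails. Since your $m=\infty$ step relies on the transitions $\iota_{n,m,!}$ being fully faithful, the whole argument collapses without this input. The paper's proof of (ii) (first for $m=\infty$, then deducing finite $m$) uses Lemma~\ref{l:rodier} in exactly the same essential way; you have not bypassed it, only hidden it.
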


\begin{rem}

The method below is also used in a finite-dimensional
situation in \cite{bbm}, so we consider
this an affine analogue of the first part of their Theorem 1.5 (1). 

\end{rem}

\begin{rem}

The functor $\iota_{n,m,!}$ is tautologically computed by
forgetting down to $\sC^{\o{I}_n \cap \o{I}_m,\psi}$ and then
$!$-averaging to $\sC^{\o{I}_m,\psi}$: the claim in this theorem
is that the $!$-averaging is actually defined.

\end{rem}

\begin{rem}

In the $n = 0$ and $m = \infty$ case
this result says that we can $!$-average spherical objects to obtain 
Whittaker equivariant objects. This is an old observation that has
been known for as long as the words have made sense. (Actually,
the $\o{I}_n$ groups were found by reverse engineering
while trying to generalize that
argument along the lines of the
proof of Theorem \ref{t:!-avg} \eqref{i:!-avg-exists} given below
in the $m = \infty$ case.)

\end{rem}

\begin{rem}

A quite similar pattern appeared long ago
in the $p$-adic setting in \cite{rodier}, though with a (mildly)
different series of subgroups in place of the $\o{I}_n$.

\end{rem}

\begin{rem}[Relationship to the work of Beraldo-Gaitsgory]

In the case $G = GL_r$ (resp. $G = GL_2$), 
these results all follow from \cite{dario-*/!}
(resp. \cite{lesjours}). Indeed, in \emph{loc. cit}., the
authors construct closed subgroups\footnote{For example, for
$GL_2$, one has:

\[
H_n = \Big\{ 
\begin{pmatrix} 1+ t^na & t^{-n}b \\ 0 & 1 \end{pmatrix}
\mid a,b \in O\Big\}.
\]
\noindent In the general case, $H_n$ is the intersection of
$\o{I}_n$ with the mirabolic subgroup.
}
$H_n \subset \o{I}_n$
(specific to $GL_r$) such that the maps:

\[
H_n/H_n \cap H_m \to \o{I}_n/\o{I}_n \cap \o{I}_m
\] 

\noindent are isomorphisms for all $m \geq n$. Moreover, using
Fourier techniques reminiscent of the mirabolic theory, 
Beraldo shows that $*$-averaging 
$\sC^{H_{n+1},\psi} \to \sC^{H_n,\psi}$ is an equivalence
for all $\sC$ as above, with inverse given by the appropriately
shifted $*$-averaging functor. These facts are easily seen
to imply Theorem \ref{t:!-avg} in this case.

The methods for a general reductive group 
are (by necessity) quite different.

(We also remark that for the application to
$\sW$-algebras, it is essential to work with
compact \emph{open} subgroups of $G(K)$.) 

\end{rem}

\subsection{}

The remainder of this section is structured as follows.
First, in \S \ref{ss:inv-coinv-pf}, we will deduce Theorem \ref{t:inv-coinv}
from Theorem \ref{t:!-avg}. Then,
modulo a lemma whose proof will be delayed to \S \ref{s:rodier}, 
the remainder of this
section will be dedicated to the proof of Theorem \ref{t:!-avg}.

\subsection{Application to co/invariants}\label{ss:inv-coinv-pf}

We begin by making precise the sense in which the
adolescent Whittaker constructions limit to the usual Whittaker construction.

Let $\sC$ be as always. For all $0<n\leq m<\infty$, observe that the diagrams:

\[
\vcenter{
\xymatrix{
\sC^{N(K) \cap \o{I}_m,\psi} \ar[r]^{\Oblv} 
\ar[d]^{\Av_*^{\psi}} &
\sC^{N(K) \cap \o{I}_n,\psi}\ar[d]^{\Av_*^{\psi}} \\
\sC^{\o{I}_m,\psi} = \Whit^{\leq m}(\sC)  \ar[r]^{\iota_{n,m}^!} & 
\Whit^{\leq n}(\sC)  = \sC^{\o{I}_n,\psi}  
}}
\]

\noindent and:

\begin{equation}\label{eq:n->infty-str/coinv}
\vcenter{
\xymatrix{
\sC^{\o{I}_n,\psi} = \Whit^{\leq n}(\sC) \ar[r]^{\iota_{n,m,*}} \ar[d]^{\Oblv} &
\Whit^{\leq m}(\sC)  = \sC^{\o{I}_m,\psi} \ar[d]^{\Oblv}
\\
\sC^{N(K) \cap \o{I}_n,\psi} \ar[r]^{\Av_*^{\psi}} & 
\sC^{N(K) \cap \o{I}_m,\psi}
}}
\end{equation}

\noindent commute.

\begin{lem}\label{l:n->infty}

The induced functors in $\DGCat_{cont}$:

\[
\begin{gathered}
\sC^{N(K),\psi} \coloneqq
\underset{n,\Oblv}{\lim} \, 
\sC^{N(K) \cap \o{I}_n,\psi}  \to 
\underset{n,\iota_{n,m}^!}{\lim} \, \Whit^{\leq n}(\sC) \\
\underset{n,\iota_{n,m,*}}{\colim} \, \Whit^{\leq n}(\sC) \to 
\underset{n,\Av_*^{\psi}}{\colim} \, \sC^{N(K) \cap \o{I}_n,\psi} \eqqcolon
\sC_{N(K),\psi} 
\end{gathered}
\]

\noindent are equivalences. 

\end{lem}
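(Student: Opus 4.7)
The plan is to use Theorem \ref{t:!-avg} to identify both the limit and colimit formulations of the adolescent Whittaker categories with a single DG category, and then to carry out a single core ``exhaustion'' argument.

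By Theorem \ref{t:!-avg} \eqref{i:!-avg-exists}, each transition $\iota_{n,m}^!$ admits a left adjoint $\iota_{n,m,!}$, so the standard formalism in $\DGCat_{cont}$ for passing between limits and colimits (by replacing transition functors with their left adjoints) yields a canonical equivalence $\lim_{n,\iota_{n,m}^!} \Whit^{\leq n}(\sC) \simeq \colim_{n,\iota_{n,m,!}} \Whit^{\leq n}(\sC)$. By Theorem \ref{t:!-avg} \eqref{i:!-avg=*-avg}, $\iota_{n,m,!}$ differs from $\iota_{n,m,*}$ only by a cohomological shift depending on $m-n$; these shifts can be absorbed by regrading each $\Whit^{\leq n}(\sC)$, so the colimits over $\iota_{n,m,!}$ and over $\iota_{n,m,*}$ are canonically equivalent. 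Thus the two ``adolescent'' categories appearing in the lemma are the same DG category in disguise.

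Given this reduction, both statements of the lemma follow from a single core equivalence---say, that the map $\sC^{N(K),\psi} \to \lim_n \Whit^{\leq n}(\sC)$ is an equivalence; the coinvariants statement is proved by the dual argument (using the second commutative diagram) or by a duality argument in $\DGCat_{cont}$. Under the identification above, this map corresponds to the map $\sC^{N(K),\psi} \to \colim_n \Whit^{\leq n}(\sC)$ whose components are the left adjoints $\iota_{n,!}$. By Theorem \ref{t:!-avg} \eqref{i:!-avg-ff}, each such $\iota_{n,!}$ (for $n \geq 1$) is fully faithful, so the task becomes showing that the resulting ascending chain of fully faithful subcategories exhausts $\sC^{N(K),\psi}$; equivalently, for each $d \in \sC^{N(K),\psi}$, the counit maps $\iota_{n,!}\iota_n^!(d) \to d$ should assemble into an equivalence $\colim_n \iota_{n,!}\iota_n^!(d) \isom d$.

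I would reduce this to an elementary statement in $\sC$ by tracking definitions through the triangular decomposition $\o{I}_n = (\o{I}_n \cap N^-(K))(\o{I}_n \cap T(K))(\o{I}_n \cap N(K))$. Since $\psi$ vanishes on $\o{I}_n \cap B^-(K)$, the composite $\iota_{n,!}\iota_n^!$ should, up to the cohomological shifts of Theorem \ref{t:!-avg} \eqref{i:!-avg=*-avg}, be identifiable with a normalized $!$-averaging functor against the pro-unipotent subgroup $\o{I}_n \cap B^-(K) \subset G(K)$. As $\bigcap_n (\o{I}_n \cap B^-(K)) = \{1\}$, one expects the colimit of these averagings to recover the identity. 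The main obstacle is rigorously executing this last step: the conceptual picture---that $N(K)$-equivariant objects arise from adolescent approximations by averaging away a shrinking pro-unipotent ``tail''---is transparent, but the explicit identification of $\iota_{n,!}\iota_n^!$ with a $B^-$-averaging, together with control of its colimit behavior, requires careful compatibility checks across the non-nested subgroups $\o{I}_n$, and relies crucially on the fully faithfulness of \eqref{i:!-avg-ff} to ensure that the cohomological shifts of \eqref{i:!-avg=*-avg} can be cleanly absorbed.
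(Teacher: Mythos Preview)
Your approach differs substantially from the paper's, and is considerably more roundabout. The paper's proof of this lemma is short and elementary, and does \emph{not} invoke Theorem~\ref{t:!-avg} at all. For the coinvariants statement, it introduces auxiliary subgroups $\o{I}_{n,m} \coloneqq (\o{I}_m \cap B(K)) \cdot (N(K) \cap \o{I}_n)$ for $\infty > m \geq n > 0$, observes that for fixed $n$ these decrease in $m$ with $\bigcap_m \o{I}_{n,m} = N(K) \cap \o{I}_n$, and applies the general fact that $\colim_i \sC^{\Gamma_i,\psi} \simeq \sC^{\cap_i \Gamma_i,\psi}$ for decreasing $\Gamma_i$ (proved via $\delta$-$D$-modules). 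A double colimit over $\{(n,m) : m \geq n\}$ then collapses both sides of the coinvariants statement to the same diagram. The invariants statement follows from this by a one-line duality trick (computing $G(K)$-equivariant functors out of $D(G(K))_{N(K),\psi}$ in two ways). In particular, the identification of $\lim_{\iota^!}$ with $\colim_{\iota_*}$ via Theorem~\ref{t:!-avg} that you place at the start of your argument is exactly what the paper does in the \emph{next} step, using this lemma as input, to deduce Theorem~\ref{t:inv-coinv}.

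There is a genuine gap in your proposal. By folding the content of Theorem~\ref{t:inv-coinv} into this lemma, you are forced to supply the ``exhaustion'' step $\colim_n \iota_{n,!}\iota_n^!(d) \isom d$ directly, and you do not: you give only a heuristic that $\iota_{n,!}\iota_n^!$ should be a shifted averaging over the shrinking groups $\o{I}_n \cap B^-(K)$. Making this precise is not easier than the paper's direct argument; indeed, controlling such a colimit of averagings over a shrinking family is precisely the kind of statement the paper's $\o{I}_{n,m}$ trick is designed to handle. So even if your route can be completed, the missing step would essentially reproduce the paper's proof as a sub-argument, while additionally importing the much deeper Theorem~\ref{t:!-avg} (whose proof requires the affine Springer fiber computations of \S\ref{s:rodier}). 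A minor slip as well: the functors $\iota_{n,!}$ assemble into a map $\colim_n \Whit^{\leq n}(\sC) \to \sC^{N(K),\psi}$, not the direction you wrote.
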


\begin{proof}

First, we treat the coinvariants statement.

Suppose $\Gamma_i \subset G(K)$ is a decreasing sequence
and let $\Gamma_{\infty}  \coloneqq \cap_i \Gamma_i$ such that:

\[
G(K) / \Gamma_{\infty} = \underset{i}{\lim} \, G(K)/\Gamma_i.
\]

Then we claim that:

\[
\underset{i}{\colim} \,  \sC^{\Gamma_i} = \sC^{\Gamma}
\]

\noindent where the structure maps in the
colimit are forgetful functors. Indeed, this follows
because the $D$-module $\delta_{\Gamma} \in D(G(K))$ 
is the colimit of the $\delta_{\Gamma_i}$, and these
convolution with these $\delta$ $D$-modules gives the appropriate
(colocalizing) averaging functors.

Now for $\infty>m \geq n>0$, define $\o{I}_{n,m}$ as
$(\o{I}_m \cap B(K)) \cdot (N(K)\cap \o{I}_n)$.
Note that $\o{I}_{n,m} \supseteq \o{I}_{n,m+1}$,
and $\cap_m \o{I}_n = N(K) \cap \o{I}_n$.

Moreover, for a pair $(m^{\prime},n^{\prime}) \in \bZ^{>0} \times \bZ^{>0}$
with $m^{\prime} \geq n^{\prime},m$, the functor:

\[
\Av_*^{\psi}: 
\sC^{N(K) \cap \o{I}_n,\psi} \to \sC^{N(K) \cap \o{I}_{n^{\prime}},\psi}
\]

\noindent takes the subcategory $\sC^{\o{I}_{n,m},\psi}$ into
$\sC^{\o{I}_{n^{\prime},m^{\prime}},\psi}$.
Therefore, we have:

\[
\underset{n,\Av_*^{\psi}}{\colim} \, \sC^{N(K) \cap \o{I}_n,\psi} =
\underset{m \geq n}{\colim} \, \sC^{\o{I}_{n,m},\psi} =
\underset{n}{\colim} \, \sC^{\o{I}_n,\psi}
\]

\noindent as desired.

The version for invariants follows formally from the coinvariants
version: compute the DG category of $G(K)$-equivariant
functors $D(G(K))_{N(K),\psi} = 
\underset{n}{\colim} \, D(G(K))^{\o{I}_n,\psi}$ to $\sC$
in two different ways.

\end{proof}

\begin{proof}[Proof that Theorem \ref{t:!-avg}
implies Theorem \ref{t:inv-coinv}]

By Lemma \ref{l:n->infty}, 
$\sC_{N(K),\psi}$ is the colimit in $\DGCat_{cont}$
of the $\Whit^{\leq n}(\sC)$ with the functors
$\alpha_{n,m,*}$ as structural functors.
Intertwining via the autoequivalences:

\[
\begin{gathered}
\Whit^{\leq n}(\sC) \isom \Whit^{\leq n}(\sC) \\
\sF \mapsto \sF [2 n \Delta]
\end{gathered}
\]

\noindent we see that $\sC_{N(K),\psi}$ is also equivalent
to the colimit where instead we use the structural
functors:

\[
\iota_{n,m,*}[2(m-n)\Delta]
\overset{Thm. \ref{t:!-avg} \eqref{i:!-avg=*-avg}}
\simeq \iota_{n,m,!}.
\]

Now this is a colimit in $\DGCat_{cont}$ under left adjoints,
so it is equivalent to the limit under the right adjoints  
$\iota_{n,m}^!$. Applying Lemma \ref{l:n->infty} again
gives this limit as $\sC^{N(K),\psi}$, as desired.

\end{proof}

We record the following consequence of the argument.

\begin{cor}

The equivalent categories $\sC^{N(K),\psi}$ and $\sC_{N(K),\psi}$
are obtained from the categories
$\Whit^{\leq n}(\sC)$ by either taking the colimit under the functors
$\iota_{n,m,!}$, or the limit under their right adjoints $\iota_{n,m}^!$.

\end{cor}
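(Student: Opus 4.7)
My plan is to extract both of the claimed descriptions directly from the argument just given for the implication Theorem \ref{t:!-avg} $\Rightarrow$ Theorem \ref{t:inv-coinv}. The limit description comes essentially for free: Lemma \ref{l:n->infty} already identifies $\sC^{N(K),\psi}$ with $\lim_n \sC^{N(K)\cap\o{I}_n,\psi}$, and the map from this into $\lim_{n,\iota_{n,m}^!}\Whit^{\leq n}(\sC)$ is shown there to be an equivalence. So for the first half of the corollary, there is nothing new to do beyond citing the lemma.

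For the colimit description, the strategy is to combine the coinvariants half of Lemma \ref{l:n->infty}, which gives $\sC_{N(K),\psi}\simeq\colim_n\Whit^{\leq n}(\sC)$ with structure functors $\iota_{n,m,*}$, with the cohomological shift trick already used in the proof of Theorem \ref{t:inv-coinv}. Concretely, I would apply the autoequivalences $\sF\mapsto\sF[2n\Delta]$ separately on each $\Whit^{\leq n}(\sC)$; these intertwine the diagram with structural functors $\iota_{n,m,*}$ into the diagram with structural functors $\iota_{n,m,*}[2(m-n)\Delta]$, which by Theorem \ref{t:!-avg}\eqref{i:!-avg=*-avg} are canonically identified with $\iota_{n,m,!}$. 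Since the shifts are autoequivalences, the colimit is unchanged, and the colimit description follows.

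The step requiring the most care, and the only potentially nontrivial one, is the verification that the shift identifications genuinely promote to an equivalence of diagrams (and not merely a collection of pointwise isomorphisms). This amounts to the compatibility-with-composition clause in Theorem \ref{t:!-avg}\eqref{i:!-avg=*-avg}, which is stated there precisely in order to license exactly this kind of reindexing. Once this is in hand, one may further observe that the resulting colimit is taken along a diagram of left adjoints whose right adjoints are the $\iota_{n,m}^!$, so by the standard dictionary between colimits and limits of adjoint diagrams in $\DGCat_{cont}$ it agrees with $\lim_{n,\iota_{n,m}^!}\Whit^{\leq n}(\sC)$. Combining with Theorem \ref{t:inv-coinv} gives all four descriptions simultaneously, closing the corollary.
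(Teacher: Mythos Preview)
Your proposal is correct and follows essentially the same route as the paper: the corollary is recorded there without separate proof, as an immediate consequence of the argument just given for Theorem \ref{t:inv-coinv}, which combines Lemma \ref{l:n->infty}, the shift autoequivalences, Theorem \ref{t:!-avg}\eqref{i:!-avg=*-avg}, and the standard colimit/limit adjoint dictionary in $\DGCat_{cont}$. Your care about the compatibility-with-composition clause in Theorem \ref{t:!-avg}\eqref{i:!-avg=*-avg} is exactly right, and is precisely what the paper invokes to upgrade the pointwise identifications to an equivalence of diagrams.
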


\begin{rem}

For the sake of clarity: note that we did not use
Theorem \ref{t:!-avg} \eqref{i:!-avg-ff} in this
deduction. But this result \emph{will} be used
in proving the other parts of the theorem.

\end{rem}

\subsection{Notation} 

In the remainder of this section, we prove Theorem \ref{t:!-avg}.

To keep the notation from becoming too overburdened:
for a subgroup $H \subset G(K)$, we use the notation
$H^n$ for $\Ad_{-n\check{\rho}(t)}(H)$. 
For example, $N(O)^n = N(K) \cap \o{I}_n$.

We also use the notation $\Whit(\sC)$ to indicate
Whittaker \emph{invariants} 
$\sC^{N(K),\psi} = \Whit^{\leq \infty}(\sC)$.

\subsection{Proof of Theorem \ref{t:!-avg} \eqref{i:!-avg-exists} for $m = \infty$}

We begin by showing that objects of $\Whit^{\leq n}(\sC)$
can be {$!$-averaged} to $\Whit(\sC)$.

\subsection{}

We begin by giving the argument in the geometric setting, 
where it is easier to understand, and
later will explain how to adapt to the categorical setup.  
So suppose $\sC = D(X)$
for $X$ a suitably nice (e.g., ind-finite type) 
indscheme acted on by $G(K)$, ignoring the 
irrelevant level for the time being.\footnote{If we were not
forgetting the level, we would equip $X$ with a
$Z_{KM}$-torsor $\sP$ with  
an action of $\widehat{G(K)}$ extending the given action
of the central $Z_{KM}$.} 

For $\sF \in \Whit^{\leq n}(D(X))$, we want to show that 
$\act_!(\psi \widetilde{\boxtimes} \sF)$ is defined,
where:\footnote{Here $N(K) \overset{N(O)^n} {\times} X$
is the standard notation for 
the quotient of $N(K) \times X$ by the diagonal action of $N(O)^n$
acting on the right on $N(K)$ and ``on the left" on $X$.}

\[
\psi \widetilde{\boxtimes} \sF \in
D(N(K) \overset{N(O)^n} {\times} X)
\]

\noindent is the descent of $\psi \boxtimes \sF$ using 
equivariance of $\sF$ and $\act$ denotes the action map
$N(K) \overset{N(O)^n} {\times} X \to X$.

So first we should compactify the map $\act$.
Note that:

\[
N(K) \overset{N(O)^n} {\times} X =
N(K) \o{I}_n \overset{\o{I}_n} {\times} X =
N(K)G(O)^n
\overset{G(O)^n} {\times} X
\]

\noindent since 
$G(O)^n\cap N(K) = N(K) \cap \o{I}_n = N(O)^n$.

Then define
$\ol{N(K)G(O)^n}$ as the pullback
to $G(K)$ of the closure of the $N(K)$ orbit through $1$
in $G(K)/G(O)^n$. Since the latter is
isomorphic to the affine Grassmannian, so is ind-proper,
$\ol{N(K)G(O)^n}/G(O)^n$
is ind-proper as well. Therefore, the map:

\[
\ol{\act}:
\ol{N(K)G(O)^n}
\overset{G(O)^n} {\times} X \to X
\]

\noindent is ind-proper as well.\footnote{The notation is
potentially confusing: $\ol{\act}$ is just induced by the
usual action map $G(K) \times X \to X$.}

Let $j$ denote the open embedding:

\[
N(K) \overset{N(O)^n} {\times} X =
N(K)G(O)^n
\overset{G(O)^n} {\times} X \into 
\ol{N(K)G(O)^n}
\overset{G(O)^n} {\times} X.
\]

\noindent Then it remains to verify the \emph{cleanness} result: 

\[
j_!(\psi \widetilde{\boxtimes} \sF ) \isom j_{*,dR}(\psi \widetilde{\boxtimes} \sF ).
\]

\noindent In particular, the left hand side is defined.

To this end, we begin with a lemma, where we reintroduce the
level $\kappa$ for clarity in the generalization.

\begin{lem}\label{l:cleanness}

Let $\cK_n \subset G(O)$ denote the $n$th congruence subgroup, 
so $\cK_n^n = \Ad_{-n\check{\rho}(t)}(\cK_n)$ by our convention.

Then $\Whit(D_{\kappa}(\ol{N(K)G(O)^n}/\cK_n^n)) = 
\Whit(D_{\kappa}(N(K)G(O)^n/\cK_n^n))$, i.e., every Whittaker equivariant
$\kappa$-twisted 
$D$-module on $N(K)G(O)^n/\cK_n^n$ cleanly extends
to the closure.

\end{lem}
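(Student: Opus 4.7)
The plan is a standard Whittaker cleanness argument. Writing $U \coloneqq N(K)G(O)^n/\cK_n^n$ and $\ol U$ for its closure, the open-closed localization triangle for $j\colon U \hookrightarrow \ol U$ reduces the asserted equivalence of Whittaker categories to the vanishing $\Whit(D_\kappa(\partial)) = 0$, where $\partial \coloneqq \ol U \setminus U$ is the boundary.

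The first step is to translate the problem to the affine Grassmannian. The assignment $g\cK_n^n \mapsto g t^{-n\check\rho}\cK_n$ defines an $N(K)$-equivariant isomorphism $G(K)/\cK_n^n \xrightarrow{\sim} G(K)/\cK_n$ (well-defined because $\cK_n^n = t^{-n\check\rho}\cK_n t^{n\check\rho}$), under which $\ol U$ corresponds to the preimage $q^{-1}(\ol{S^{-n\check\rho}})$ of the semi-infinite orbit closure under $q\colon G(K)/\cK_n \to \Gr_G$, with $S^\mu \coloneqq N(K) \cdot t^\mu G(O)$. The standard closure relation $\ol{S^\mu} = \bigsqcup_{\nu \leq \mu} S^\nu$ in the dominance order on coweights (a consequence of the lower semicontinuity of the Iwasawa coweight on $\Gr_G$) then identifies $\partial$ with $\bigsqcup_{\nu < -n\check\rho} q^{-1}(S^\nu)$, each summand $N(K)$-stable. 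It therefore suffices to show $\Whit(D_\kappa(q^{-1}(S^\nu))) = 0$ for every coweight $\nu$ strictly anti-dominant to $-n\check\rho$.

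The second step is the stabilizer computation. For a point $t^\nu g \cK_n \in q^{-1}(S^\nu)$ with $g \in G(O)$, the normality of $\cK_n$ inside $G(O)$ makes the $N(K)$-stabilizer independent of $g$: it equals $\Ad_{t^\nu}(\cK_n) \cap N(K)$, which on each positive root subgroup $N_\alpha(K)$ restricts to $N_\alpha(t^{n+\langle\alpha,\nu\rangle}O)$. Writing $\nu = -n\check\rho - \sum_j c_j \check\alpha_j$ with integers $c_j \geq 0$ not all zero, positive-definiteness of the Cartan matrix $C$ of $[\fg,\fg]$ forces $(C\vec c)_i \geq 1$ for some simple index $i$, and hence $n + \langle\alpha_i, \nu\rangle \leq -1$. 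Consequently the stabilizer on $N_{\alpha_i}(K)$ contains $u_{\alpha_i}(t^{-1})$, on which $\psi$ evaluates to the non-zero residue $1$. Since the Kac--Moody central extension splits compatibly over the prounipotent parts of $N(K)$, the twist $\kappa$ does not interfere with $\psi$, and the standard vanishing principle for $(N(K),\psi)$-equivariant twisted $D$-modules on a single orbit (which are equivalent to $\Vect^{\operatorname{Stab},\psi} = 0$ whenever $\psi$ restricts non-trivially to the stabilizer) completes the proof.

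The main obstacle is the semi-infinite closure relation for $\ol{S^{-n\check\rho}}$: because $-n\check\rho$ is itself strictly anti-dominant (unlike the classical case $\mu = 0$ treated in FGV), this closure is less familiar than the standard ones appearing in the literature, and one must verify both the stratification of $\partial$ into preimages of semi-infinite orbits and the reduction of Whittaker vanishing on $\partial$ to the pointwise stabilizer calculation, ensuring no pathology arises from the ind-scheme structure at infinity.
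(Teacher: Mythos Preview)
Your proof is correct and follows essentially the same strategy as the paper's: stratify the closure by semi-infinite $N(K)$-orbits, and on each boundary stratum exhibit an element of the stabilizer on which $\psi$ is nonzero. The paper works directly in the $(-n\check\rho)$-conjugated frame (stratifying $\ol{N(K)G(O)^n}$ by $N(K)\check\lambda(t)G(O)^n$ for $\check\lambda \in -\check{\Lambda}^{pos}$), whereas you first translate by right multiplication by $t^{-n\check\rho}$ to sit over the standard $\Gr_G$; after this translation the two arguments match exactly, with your $\nu$ equal to the paper's $\check\lambda - n\check\rho$ and your condition $n+\langle\alpha_i,\nu\rangle\le -1$ coinciding with the paper's $(\check\lambda,\alpha_i)<0$.

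Two small points. First, the relation $\ol{S^\mu}=\bigsqcup_{\mu-\nu\in\check{\Lambda}^{pos}}S^\nu$ is the \emph{coroot} order, not the dominance order; your formula $\nu=-n\check\rho-\sum c_j\check\alpha_j$ is the correct one, so this is only a wording slip. Second, your appeal to positive-definiteness of the Cartan matrix works but is slightly roundabout (the Cartan matrix is not symmetric in general, so one really uses the symmetrized form); the paper's phrasing is perhaps cleaner: since $(\check\lambda,\rho)=-\sum c_j<0$ and $\rho$ lies in the positive span of the simple roots, some $(\check\lambda,\alpha_i)$ must be negative. The ``obstacle'' you flag about the closure relation for anti-dominant $\mu$ is not a genuine difficulty: the semi-infinite stratification of $\ol{S^\mu}$ is uniform in $\mu$ and standard.
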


\begin{proof}

Recall that $\ol{N(K)G(O)^n}$ is stratified by
the locally closed $N(K) \check{\la}(t) G(O)^n$ for
$\check{\la} \in - \check{\La}^{pos}$. So it suffices to show
that $\Whit(D_{\kappa}(N(K) \check{\la}(t) G(O)^n/\cK_n^n)) = 0$ for
all $0 \neq \check{\la} \in -\check{\La}^{pos}$.

Using:

\[
N(K) \check{\la}(t) G(O)^n/\cK_n^n = 
\check{\la}(t) N(K)G(O)^n/\cK_n = 
\check{\la}(t) N(K) \overset{N(O)^n}{\times} G(O)^n/\cK_n 
\]

\noindent we obtain:

\[
\Whit(D_{\kappa}(N(K) \check{\la}(t) G(O)^n/\cK_n^n)) =
D(G(O)^n/\cK_n^n)^{N(O)^n,\psi^{\check{\la}}}
\]

\noindent where 
$\psi^{\check{\la}} \coloneqq \psi \circ \Ad_{\check{\la}(t)}$.
(We forget $\kappa$ because we are dealing with a conjugate
of $G(O)$ now.)

As $\cK_n^n \subset G(O)$ is normal, it
acts trivially on $G(O)^n/\cK_n^n$. 
Because $N(O) \subset \cK_n^n$, we see that
the $N(O)$ action on $G(O)^n/\cK_n^n$ is trivial.
Therefore, it suffices to see that $\psi^{\check{\la}}|_{N(O)}$
is non-trivial for $0\neq \check{\la} \in -\check{\La}^{pos}$.

This is standard:
$\psi^{\check{\la}}(\exp(t^m e_i)) = 
\psi(\exp(t^{m+(\check{\la},\alpha_i)} e_i)$, which is
$1$ if $m+(\check{\la},\alpha_i) = -1$.
So it suffices to show that $(\check{\la},\alpha_i)<0$ for some $i \in \cI_G$
(then take $m = -1 - (\check{\la},\alpha_i) \geq 0$).
Since $(\check{\la},\rho)<0$ by assumption on $\check{\la}$, this
is clear.

\end{proof}

\begin{rem}

More generally, this argument shows that 
any $D$-module on $\overline{N(K)G(O)^n} \times X$
satisfying Whittaker equivariance for the left action on the
first factor and $\cK_n^n$-equivariance for the right action
on the first factor is cleanly extended from $N(K)G(O)^n \times X$.
Note that the action of $G(K)$ on $X$ is not used here.
(In the categorical setting, one should instead note that
for any $\sC \in \DGCat_{cont}$ considered with a
trivial $G(K)$-action,
$\Whit(D_{\kappa}(\ol{N(K)G(O)^n}/\cK_n^n) \otimes \sC) = 
\Whit(D_{\kappa}(N(K)G(O)^n/\cK_n^n) \otimes \sC)$.)

\end{rem}

Therefore, it suffices to see that
the pullback of $\psi \widetilde{\boxtimes} \sF$ to
$N(K)G(O)^n \times X$ is Whittaker equivariant
for the action of $N(K)$ on the first factor, and
$\cK_n^n$-equivariant for the right action on the first factor.
Note that the map 
$N(K)G(O)^n \times X \to N(K) \overset{N(O)^n}{\times} X$
is given by the formula $(g_1\cdot g_2,x) \mapsto (g_1,g_2\cdot x)$,
so the first claim is obvious.

For the second: the $n = 0$ case is easy to deal with directly,
so we assume $n>0$. Then by prounipotence of
$\cK_n^n$, it suffices
to check equivariance after we pullback further
to $N(K) \times G(O)^n \times X$. Our map
to $N(K) \overset{N(O)^n}{\times} X$ then lifts
to $N(K) \times X$. Now observe that $\psi \boxtimes \sF$
is $\cK_n^n$-equivariant for the action of $\cK_n^n$ on
the $X$-factor, since $\cK_n^n \subset \o{I}_n$
and $\psi_{\o{I}_n}|_{\cK_n^n}$ is trivial.
Moreover, the map:

\[
N(K) \times G(O)^n \times X \xar{(g_1,g_2,x) \mapsto (g_1,g_2\cdot x)}
N(K) \times X \to N(K) \times X/\cK_n^n
\]

\noindent descends to a map from 
$N(K) \times G(O)^n/\cK_n^n \times X$, by normality
of $\cK_n^n \subset G(O)^n$. This gives the claim, completing the
argument.

\subsection{}\label{ss:clean-cat}

We now indicate what changes should be made in the general
categorical setting. So let $\sC$ be acted on by $G(K)$ at level
$\kappa$.

We have a coaction functor:

\[
\sC \to D(N(K)) \otimes \sC
\]

\noindent (obtained from the coaction functor
$\sC \to D_{\kappa}(G(K)) \otimes \sC$ encoding the action
of $G(K)$ by $!$-restriction along $N(K) \into G(K)$). This induces
a functor:

\[
\act^!:
\Whit^{\leq n}(\sC) = \sC^{\o{I}_n,\psi} \to 
D(N(K)) \overset{N(O)^n}{\otimes} \sC 
\]

\noindent where the funny notation indicates we take the
$N(O)^n$-invariants for the diagonal action mixing
the action on $\sC$ with the right action on $D(N(K))$. 

Similarly, we have 
$\act_{*,dR}: D(N(K)) \overset{N(O)^n}{\otimes} \sC \to
\sC$.

As before, we need to show that the left adjoint $\act_!$ to
$\act^!$ is
defined on $\psi \widetilde{\boxtimes} \sF$ for every
$\sF \in \Whit^{\leq n}(\sC)$. Again, it suffices to show the corresponding
clean extension property for:\footnote{Because the
Kac-Moody cocycle is non-trivial on 
$\fn((t)) \times \Ad_{-n\check{\rho}(t)}\fg[[t]]$,
there is risk of thinking that we should be including $\kappa$ in
the middle term here. But in fact, the $Z_{KM}$-torsor
$\widehat{G(K)} \to G(K)$ is canonically
$N(K) \times G(O)^n$-equivariantly
trivial over this locus, essentially because the determinant line bundle is
canonically trivial over $\Gr_N \subset \Gr_G$. So there is
no risk of making a mistake here.}

\[
\psi \widetilde{\boxtimes} \sF \in 
D(N(K)) \overset{N(O)^n}{\otimes} \sC =
D(N(K)G(O)^n)
\overset{G(O)^n} {\otimes} \sC \overset{j_{*,dR}}{\subset}
D_{\kappa}(\ol{N(K)G(O)^n})
\overset{G(O)^n} {\otimes} \sC.
\]

From here, the argument proceeds as explained in the geometric
setting.

\subsection{Proof of Theorem \ref{t:!-avg} \eqref{i:!-avg-ff} for $m = \infty$}

Next, we discuss the fully-faithfulness of $\iota_{n,!}$.
Throughout this section, $n>0$ (as this was an obviously necessary
hypothesis).

We will reduce the claim to the following geometric result:

\begin{lem}\label{l:rodier}

For $g \in N(K)$, we have:

\[
\psi_{\o{I}_n}|_{\o{I}_n \cap \Ad_g \o{I}_n} =
\psi_{\o{I}_n} \circ \Ad_{g^{-1}}|_{\o{I}_n \cap \Ad_g \o{I}_n}
\]

\noindent if and only if $g \in N(K) \cap \o{I}_n$.

In other words, if $g \in N(K)$ but $ \not\in \o{I}_n$,
there exists $h \in \o{I}_n \cap \Ad_g\o{I}_n$ with
$\psi_{\o{I}_n}(h) \neq \psi_{\o{I}_n}(g^{-1}hg)$.

\end{lem}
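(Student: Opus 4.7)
The \emph{if} direction is immediate: when $g \in N(O)^n = \o{I}_n \cap N(K)$, $\Ad_{g^{-1}}$ preserves $\o{I}_n$ as a group, and $\psi_{\o{I}_n}$, being a homomorphism to the abelian group $\bG_a$, is automatically invariant under such inner automorphisms. The content of the lemma is the converse, and for this I would adopt the approach foreshadowed by the introduction's reference to affine Springer fibers and the integral Kostant slice $f + \Lie(I)$.

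First, I would reformulate via a residue pairing. Fix a non-degenerate $\Ad$-invariant form $\kappa$ on $\fg$ normalized so that $\kappa(f_i,e_{\alpha_i}) = 1$, where $f = \sum_i f_i \in \fn^-$ is the principal nilpotent paired to $\psi$. The linear functional $\psi_f(X) := \on{Res}_{t=0}\kappa(f,X)\,dt$ extends $\psi$ from $\fn((t))$ to all of $\fg((t))$, and a direct check using the triangular decomposition of $\fp_n := \Lie(\o{I}_n)$ shows that $\psi_f|_{\fp_n}$ is precisely the Lie-algebra differential of $\psi_{\o{I}_n}$: indeed $\kappa$ pairs $\fn^-$ only with $\fn$ and $\ft$ only with $\ft$, so $\psi_f$ vanishes on $\fp_n \cap \fb^-[[t]]$ as required, and on $\fp_n \cap \fn((t))$ it reproduces the residue-of-simple-root-components formula defining $\psi$. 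Since $n>0$ makes $\o{I}_n$ prounipotent, a $\bG_a$-valued character is linear in its Lie-algebra parameter: no higher BCH corrections intervene. Using $\Ad$-invariance of $\kappa$, the condition of the lemma translates exactly into
\[
\on{Res}\,\kappa(\Ad_g f - f,\,X)\,dt = 0 \quad\text{for all } X \in \fp_n \cap \Ad_g\fp_n.
\]

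Second, I would reduce matters to a stabilizer computation. The annihilator $(\fp_n \cap \Ad_g\fp_n)^{\perp}$ with respect to the residue pairing on $\fg((t))$ is an explicit sum of affine root spaces determined by the triangular decomposition of $\o{I}_n$ and the way $\Ad_g$ shears it. Viewing $f$ as a distinguished point of the integral Kostant slice $f + \Lie(I) \subset \fg((t))$, the question becomes whether the stabilizer in $N(K)$ of the class of $f$ modulo $(\fp_n \cap \Ad_g\fp_n)^{\perp}$ is exactly $N(O)^n$. This is the centralizer/orbit problem for the affine Springer fiber over $f$, to be settled by the techniques of $\S\ref{s:rodier}$. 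Once the stabilizer is pinned down at $N(O)^n$, the witness $h$ is produced immediately: pick any $X \in \fp_n \cap \Ad_g\fp_n$ on which the residue pairing is nonzero and set $h := \exp(X)$; by prounipotence $h \in \o{I}_n$, the condition $X \in \Ad_g\fp_n$ gives $g^{-1}hg = \exp(\Ad_{g^{-1}}X) \in \o{I}_n$, and the difference $\psi_{\o{I}_n}(h) - \psi_{\o{I}_n}(g^{-1}hg)$ equals $\on{Res}\,\kappa(\Ad_g f - f,\,X)\,dt \neq 0$.

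The main obstacle is the stabilizer step. Promoting the Lie-algebraic identification of $\psi_{\o{I}_n}$ with the residue pairing into a clean group-theoretic statement about $N(K)$-orbits on the integral Kostant slice is where the peculiar triangular shape of the $\o{I}_n$ -- designed precisely so that $f$ cuts out these orbits transversely -- is meant to pay off, and where the detailed loop-group geometry of $\S\ref{s:rodier}$ is indispensable.
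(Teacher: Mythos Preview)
Your proposal is correct and follows essentially the same route as the paper. The paper's version of your second step is phrased as the nonemptiness of $(f+\Lie\o{I}_n^{\perp})\cap\Ad_g(f+\Lie\o{I}_n^{\perp})$ rather than as a ``stabilizer'' (your quotient $(\fp_n\cap\Ad_g\fp_n)^{\perp}$ depends on $g$, so it is not literally a group-theoretic stabilizer condition, though the two formulations are equivalent since $(\fp_n\cap\Ad_g\fp_n)^{\perp}=\fp_n^{\perp}+\Ad_g\fp_n^{\perp}$); this intersection is then handled by the chain Corollary~\ref{c:intersection-2} $\Leftarrow$ Corollary~\ref{c:intersection-1} $\Leftarrow$ Corollary~\ref{c:reg-cent} $\Leftarrow$ Theorem~\ref{t:spr-fib}: conjugate by $t^n\Ad_{n\check{\rho}(t)}$ to land in the integral Kostant slice $f+\Lie I$, use the centralizer bound $Z(\xi)\cap N(K)G(O)\subset G(O)$ there, and derive that bound from the affine Springer computation $\Spr^{\xi}\cap\overline{\Gr}_N=\{1\}$, itself proved by a loop-rotation degeneration to $\xi=f$.
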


The proof of this lemma will be the subject of \S \ref{s:rodier}.

\begin{proof}[Proof that Lemma \ref{l:rodier} implies 
Theorem \ref{t:!-avg} \eqref{i:!-avg-ff} for $m = \infty$]

\step

First, note that $\iota_{n,!}$ must be given 
by convolution with a kernel 
$\sK_n \in D_{\kappa}(G(K))^{(N(K),\psi),(\o{I}_n,-\psi)}$,
where the notation indicates that 
the kernel is $(N(K),\psi)$-equivariant for the left action.
and $(\o{I}_n,-\psi)$-equivariant for the right action.

Indeed, consider the $D_{\kappa}(G(K))$ as a
$D_{-\kappa}(G(K))$-module\footnote{Since we are exclusively
working with $D$-modules in this section and not
quasi-coherent sheaves, there is no need to incorporate
any critical twist here.} 
category via the right action, so that this action
commutes with the $D_{\kappa}(G(K))$-module structure. With
invariants understood with respect to the \emph{left} action,
we have the functor:

\[
\iota_{n,!}:
D_{\kappa}(G(K))^{\o{I}_n,\psi} \to 
D_{\kappa}(G(K))^{N(K),\psi}
\] 

\noindent 
which is a morphism of $D_{-\kappa}(G(K))$-module categories
for formal reasons\footnote{Namely, the proof
of Theorem \ref{t:!-avg} \eqref{i:!-avg-exists} ($m = \infty$)
shows that $\iota_{n,!}$ upgrades to a natural
transformation between the functors
$\Whit^{\leq n},\Whit:D_{\kappa}(G(K))\mod \to \DGCat_{cont}$
considered as morphisms of $\DGCat_{cont}$-enriched categories.
Then use the fact that $D_{-\kappa}(G(K))$ acts on
$D_{\kappa}(G(K)) \in D_{\kappa}(G(K))\mod$.}
Since the $(\o{I}_n,\psi)$-invariants
coincide with coinvariants, universal properties give
an object $\sK_n \in D_{\kappa}(G(K))^{(N(K),\psi),(\o{I}_n,-\psi)}$.

It is tautological that for $\sC = D_{\kappa}(G(K))$,
$\iota_{n,!}$ is given by convolution by
$\sK_n$. For general $\sC$, this follows formally by
functoriality, since:

\[
\iota_{n,!}: \sC^{\o{I}_n,\psi} =
D_{\kappa}(G(K))^{\o{I}_n,\psi} 
\underset{D_{\kappa}(G(K))}{\otimes} \sC \to 
D_{\kappa}(G(K))^{N(K),\psi} 
\underset{D_{\kappa}(G(K))}{\otimes} \sC \to 
\sC^{N(K),\psi}.
\]

\step

We are trying to show that the unit map
$\id \to \iota_n^! \iota_{n,!}$ is an equivalence.
Let us rewrite this goal in terms of kernels. 

Note that $\iota_n^!$ is given as the composition:
$\Whit(\sC) = \sC^{N(K),\psi} \xar{\Oblv} \sC \xar{\Av_*^{\psi}} 
\sC^{\o{I}_n,\psi} = \Whit^{\leq n}(\sC)$.\footnote{We are
using the hypothesis that $n>0$ here, so that $\o{I}_n$
is prounipotent.} 
In other words,
$\iota_n^!$ is given by convolution with
$\delta_{\o{I}_n}^{\psi} \in D_{\kappa}(G(K))$, where
this notation indicates the
de Rham pushforward of the character sheaf on 
$\o{I}_n$. 

Therefore, it suffices to show that
$\delta_{\o{I}_n}^{\psi} \isom 
\delta_{\o{I}_n}^{\psi} \star \sK_n =
\iota_n^! \iota_{n,!}(\delta_{\o{I}_n}^{\psi})$
is an equivalence.

\step 

Next, we observe that $\sK_n$ can be readily
calculated:

Namely, it suffices to calculate $\iota_{n,!}$
applied to $\delta_{\o{I}_n}^{\psi}$ (this is the
de Rham pushforward of the character sheaf on 
$\o{I}_n$). 
This object is obtained by $!$-averaging, so is
obtained by $!$-extending $\delta_{N(K)\o{I}_n}^{\psi}$,
the (pullback of the)
exponential $D$-module from $N(K)\o{I}_n$.\footnote{Because
the given splittings of the Kac-Moody extension for $N(K)$ and
$\o{I}_n$ coincide on their intersection, and because 
the characters $\psi$ coincide here as well, this twisted $D$-module 
makes sense.
}
Note that by Lemma \ref{l:cleanness}, this extension is clean,
i.e., the $!$-extension coincides with the $*$-extension.

\step 

By the cleanness noted above, the convolution we are trying to
compute is 
the renormalized $D$-module pushforward
of $\delta_{\o{I}_n}^{\psi} \boxtimes \delta_{N(K)\o{I}_n}^{\psi}$
along the multiplication map:

\[
\o{I}_n \times N(K)\o{I}_n \to G(K).
\]

We claim that this $D$-module is supported on $\o{I}_n$, i.e.,
is obtained by de Rham pushforward of some $D$-module
on this subscheme. In this step, we will show this
for $\kappa = 0$, and in the next step, we will show it
for general $\kappa$.

Indeed, since this $D$-module is equivariant for compact open
subgroups, this is really a problem about $D$-modules on
ind-finite type schemes. So it suffices to show that the
$!$-fibers of this convolution at all geometric points
vanish outside of $\o{I}_n$. 

For $\gamma \in G(K)$ a geometric
point, the $!$-fiber obviously vanishes unless 
$\gamma$ can be written as  $h_1 g h_2$ with $h_i \in \o{I}_n$
and $g \in N(K)$. It suffices to show that the fiber
vanishes unless $g \in N(K) \cap \o{I}_n$.

We will do this using the following paradigm:
if $H$ is a prounipotent group acting on $X$, 
$\psi:H \to \bG_a$ is a character, and $x \in X$ is a geometric point
with stabilizer $H_x \subset H$, then if 
$\psi|_{H_x}$ is non-trivial, any $(H,\psi)$-equivariant
$D$-module on $X$ vanishes along the $H$-orbit through $x$.

So consider $G(K)$ as acted on by 
$\o{I}_n \times \o{I}_n$. Note that our convolution
is equivariant with respect to the character
$(\psi,-\psi)$ (the minus sign occurring due to the
sign appearing for the right action). Note that the stabilizer
of $g$ for this action is the subgroup: 

\[
\o{I}_n \cap \Ad_g(\o{I}_n) 
\overset{h \mapsto (h,\Ad_{g^{-1}} h)}{\into}
\o{I}_n \times \o{I}_n.
\]

\noindent By Lemma \ref{l:rodier}, if 
$g \not \in N(K) \cap \o{I}_n$, then the character
$(\psi,-\psi)$ restricted to this stabilizer subgroup is
non-trivial, so our $!$-fiber vanishes, as desired.

\step

Next, we explain how the above calculation works 
for $\kappa$-twisted
$D$-modules.

Recall that $\kappa$-twisted $D$-modules are $D$-modules
on $\widehat{G(K)}$ satisfying some equivariance
with respect to the Kac-Moody center, which will actually
not be relevant here. We want to show that
any $(\o{I}_n,\psi)$-biequivariant\footnote{Here ``biequivariant"
should certainly be understood with the sign change on the
character on the right.} has vanishing $!$-fiber 
at any point $\widetilde{g} \in \widehat{G(K)}$ mapping to
$g \in N(K) \subset G(K)$ with
$g \not \in \o{I}_n$. (We remind that this
equivariance makes sense in the first place because
the Kac-Moody extension is split over $\o{I}_n$.)
 
So we need a version of Lemma \ref{l:rodier} for
$\widetilde{g}$ instead of $g$. 
Let $\sigma:\o{I}_n \to \widehat{G(K)}$
denote the splitting. Then we want the conclusion
of Lemma \ref{l:rodier} but for some $h \in \o{I}_n$ with

\[
\sigma(h) \in \sigma(\o{I}_n) \cap \Ad_g\sigma(\o{I}_n)
\]

\noindent instead. Note here that 
$\Ad_g$ makes sense on the left hand side, and coincides with
$\Ad_{\widetilde{g}}$, because $Z_{KM}$ is central
in $\widehat{G(K)}$. We will actually show:

\begin{equation}\label{eq:sigma-stab}
\sigma(\o{I}_n) \cap \Ad_g\sigma(\o{I}_n) = \sigma(\o{I}_n \cap \Ad_g\o{I}_n)
\end{equation}

\noindent which immediately gives the claim by Lemma \ref{l:rodier}.

Let $\pi$ denote the projection $\widehat{G(K)} \to G(K)$.
This map is $G(K)$-equivariant for the adjoint actions,
which immediately implies that the left hand side of
\eqref{eq:sigma-stab} is contained in the right hand side.

Now note that $\o{I}_n \cap \Ad_g \o{I}_n$ acts
on $\pi^{-1}(g)$ via the action map:

\[
\begin{gathered}
\o{I}_n \cap \Ad_g \o{I}_n \times \pi^{-1}(g) \to \pi^{-1}(g) \\
(h,\widetilde{g}) \mapsto \sigma(h)\widetilde{g}\sigma(g^{-1}h^{-1}g). 
\end{gathered}
\]

\noindent The right hand side obviously lies in $\pi^{-1}(g)$,
and this honestly defines an action map because
$\sigma$ is a homomorphism, and
$\sigma \circ \Ad_{g^{-1}}:\Ad_g(\o{I}_n) \to \widehat{G(K)}$
is too. 

Moreover, this action commutes with the $Z_{KM}$-action
on the fiber, so is induced by a homomorphism 
$\o{I}_n \to Z_{KM}$. This homomorphism must be trivial because
$\o{I}_n$ is prounipotent while $Z_{KM}$ is a torus.
Therefore, we obtain:

\[
\sigma(h)\widetilde{g}\sigma(g^{-1}h^{-1}g) = \widetilde{g}
\]

\noindent i.e.:

\[
\Ad_{g^{-1}}(\sigma(h)) = \sigma(\Ad_{g^{-1}}(h)), \hspace{.5cm}
h \in \o{I}_n \cap \Ad_g(\o{I}_n).
\]

This implies that the right hand side
of \eqref{eq:sigma-stab} is contained in the left hand side,
since for $h$ as above, 
$\sigma(h) = \Ad_g(\sigma(\Ad_{g^{-1}} h)) \in \Ad_g(\sigma(\o{I}_n))$.

\step 

At this point, we have seen that 
$\delta_{\o{I}_n}^{\psi} \star \sK_n$ is
supported on $\o{I}_n$. Obviously, it is $(\o{I}_n,\psi)$-equivariant,
and its $!$-fiber at the identity is $k$ by prounipotence of
$\o{I}_n$. Therefore, this convolution is isomorphic
to $\delta_{\o{I}_n}^{\psi}$ and the unit map is an isomorphism,
as desired.

\end{proof}

\subsection{Proof of Theorem \ref{t:!-avg} \eqref{i:!-avg-exists} and \eqref{i:!-avg-ff} 
for $m$ general}

Next, we claim that the work we have done so far implies 
the corresponding results on $\iota_{n,m,!}$ for $m$ general.

First, we need:

\begin{lem}\label{l:ladj-composition}

Let $G_2: \sC_2 \to \sC_1$ and $G_1: \sC_1 \to \sC_0$ be
functors such that $G_2$ admits a fully-faithful left adjoint $F_2$
and $G_1 \circ G_2$ admits a left adjoint $\Xi$. 
Then $G_1$ admits a left adjoint, which is computed as
$G_2 \circ \Xi$.

\end{lem}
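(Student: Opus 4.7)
The plan is to set $F_1 := G_2 \circ \Xi$ and verify directly that $F_1$ is left adjoint to $G_1$, using the unit $\eta^{\Xi}_{c_0}: c_0 \to G_1 G_2 \Xi c_0 = G_1 F_1(c_0)$ of the given adjunction $\Xi \dashv G_1 G_2$ as the unit of $F_1 \dashv G_1$.

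To produce an inverse to the resulting natural map
\[
\alpha: \Hom_{\sC_1}(F_1 c_0, c_1) \longrightarrow \Hom_{\sC_0}(c_0, G_1 c_1), \quad f \mapsto G_1(f) \circ \eta^{\Xi}_{c_0},
\]
I would proceed as follows. Given $\phi: c_0 \to G_1 c_1$, first use fully-faithfulness of $F_2$ — equivalently, that the unit $\eta^{F_2}_{c_1}: c_1 \to G_2 F_2 c_1$ is an isomorphism — to translate $\phi$ into a morphism $\phi' := G_1(\eta^{F_2}_{c_1}) \circ \phi: c_0 \to G_1 G_2 F_2 c_1$. Transposing $\phi'$ under $\Xi \dashv G_1 G_2$ produces a unique morphism $\psi: \Xi c_0 \to F_2 c_1$ in $\sC_2$; applying $G_2$ and then inverting $\eta^{F_2}_{c_1}$ yields the candidate inverse $\beta(\phi) := (\eta^{F_2}_{c_1})^{-1} \circ G_2(\psi): G_2 \Xi c_0 \to c_1$.

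Verifying $\alpha \circ \beta = \id$ is immediate, requiring only the defining property of transposition for $\Xi \dashv G_1 G_2$ and the invertibility of $\eta^{F_2}$. Verifying $\beta \circ \alpha = \id$ amounts to checking that when $\phi = G_1(f) \circ \eta^{\Xi}_{c_0}$, the transpose $\psi$ satisfies $G_2(\psi) = \eta^{F_2}_{c_1} \circ f$; both $G_2(\psi)$ and $\eta^{F_2}_{c_1} \circ f$ map to the common element $\phi' \in \Hom_{\sC_0}(c_0, G_1 G_2 F_2 c_1)$ under $h \mapsto G_1(h) \circ \eta^{\Xi}_{c_0}$, and an appeal to the uniqueness in the adjunction $\Xi \dashv G_1 G_2$ (combined with the $F_2 \dashv G_2$ triangle identity $G_2 \epsilon^{F_2}_{\Xi c_0} \circ \eta^{F_2}_{G_2 \Xi c_0} = \id$) forces the two to agree. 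The main potential obstacle is just the bookkeeping of several units and counits across three categories, but nothing DG-specific intervenes and the argument is purely formal $(\infty,1)$-categorical adjunction calculus. In the subsequent application one takes $\sC_2 = \Whit(\sC)$, $\sC_1 = \Whit^{\leq m}(\sC)$, $\sC_0 = \Whit^{\leq n}(\sC)$, $G_2 = \iota_m^!$, $G_1 = \iota_{n,m}^!$, $F_2 = \iota_{m,!}$, and $\Xi = \iota_{n,!}$, extracting the expected left adjoint $\iota_{n,m,!} = \iota_m^! \circ \iota_{n,!}$ — i.e., part \eqref{i:!-avg-exists} of Theorem \ref{t:!-avg} for general $m$ from the $m = \infty$ case already established.
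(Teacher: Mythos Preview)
Your verification of $\alpha \circ \beta = \id$ is fine, but the argument for $\beta \circ \alpha = \id$ has a gap. You assert that $G_2(\psi)$ and $\eta^{F_2}_{c_1} \circ f$ must agree because both map to $\phi'$ under $h \mapsto G_1(h) \circ \eta^{\Xi}_{c_0}$, and then invoke ``uniqueness in the adjunction $\Xi \dashv G_1 G_2$.'' But that adjunction furnishes a bijection $\Hom_{\sC_2}(\Xi c_0, F_2 c_1) \cong \Hom_{\sC_0}(c_0, G_1 G_2 F_2 c_1)$, not injectivity of the map $\Hom_{\sC_1}(G_2 \Xi c_0, G_2 F_2 c_1) \to \Hom_{\sC_0}(c_0, G_1 G_2 F_2 c_1)$ in which your two elements actually live. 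The triangle identity you cite only shows that $G_2(\epsilon^{F_2}_{\Xi c_0})$ is invertible, not that $\epsilon^{F_2}_{\Xi c_0}$ itself is; and without the latter you cannot rewrite $\eta^{F_2}_{c_1} \circ f = G_2 F_2(f) \circ \eta^{F_2}_{G_2 \Xi c_0}$ in the form $G_2(\text{something})$, which is what your uniqueness appeal would require.

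The paper's proof supplies exactly this missing ingredient: it shows directly that the counit map $F_2 G_2 \Xi \to \Xi$ is an isomorphism (equivalently, that $\Xi$ lands in the essential image of the fully faithful $F_2$), by exhibiting an inverse $\Xi \to F_2 G_2 \Xi$ adjoint to the unit $\id_{\sC_0} \to G_1 G_2 \Xi = G_1 G_2 F_2 G_2 \Xi$. Once $\epsilon^{F_2}_{\Xi c_0}$ is known to be invertible your hom-set calculation does go through, but as written the actual content of the lemma has been assumed rather than proved.
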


\begin{proof}

It suffices to show that $\Psi$ maps $\sC_0$ into the subcategory
$F_2(\sC_1) \subset \sC_2$, which is equivalent to saying that:

\[
F_2G_2\Xi \to \Xi
\]

\noindent is an isomorphism. But note that we have a map:

\[
\Xi \to F_2G_2\Xi
\]

\noindent induced by adjunction from the unit map:

\[
\id_{\sC_0} \to G_1G_2F_2G_2\Xi = G_1G_2\Xi
\]

\noindent where we are using $\id_{\sC_1} = G_2F_2$. It
is straightforward to show that this map is inverse to
the given one.\footnote{Here is a more conceptual argument
that does not require checking anything, but which
feels too abstract for such a simple claim. For notational
reasons, suppose all finite colimits exist and are preserved by
every functor in sight (otherwise, use opposite Yoneda categories
instead of $\Pro$-categories), and assume all categories
are accessible (otherwise, play with universes).
Then $\Pro(G_1):\Pro(\sC_1) \to \Pro(\sC_0)$ admits a left
adjoint $\widetilde{F}_1$, and we have
$\widetilde{F}_1 = \Pro(G_2) \Pro(F_2) \widetilde{F_1} =
\Pro(G_2) \Pro(\Xi)$. The right hand side obviously maps
$\sC_0$ into $\sC_1$, so we obtain $\widetilde{F}_1 = \Pro(G_2 \Xi)$
as desired.}

\end{proof}

Regarding the theorem, we may suppose $m>n$, so in particular,
$m \neq 0$ and $\iota_{m,!}$ is fully-faithful. Then
the lemma implies $\iota_{n,m,!} = \iota_m^! \circ \iota_{n,!}$.
Moreover, if $n>0$, then because 
$\iota_{m,!} \circ \iota_{n,m,!} = \iota_{n,!}$ and
both $\iota_{m,!}$ and $\iota_{n,!}$ are fully-faithful,
clearly $\iota_{n,m,!}$ is as well.

\subsection{Proof of Theorem \ref{t:!-avg} \eqref{i:!-avg=*-avg}}\label{ss:!=*-pf}

It remains to show that for $n\leq m<\infty$, $\iota_{n,m,!}$
and $\iota_{n,m,*}$ differ by a cohomological shift. We may safely
assume $n<m$, so that $m \neq 0$.

Let $\delta_{\o{I}_m\o{I}_n}^{\psi} \in D(\o{I}_m\o{I}_n)$
denote the pullback of the exponential $D$-module
along the (well-defined) map $\psi: \o{I}_m\o{I}_n \to \bG_a$.
Note that the Kac-Moody extension canonically splits over
$\o{I}_m\o{I}_n$, since the splittings over each of these
subgroups coincides on their intersection; in particular,
$\delta_{\o{I}_m\o{I}_n}^{\psi} $ can be considered
as $\kappa$-twisted.

The main geometric result is:

\begin{lem}\label{l:mn-clean}

The $D$-module $\delta_{\o{I}_m\o{I}_n}^{\psi}$ cleanly
extends to $D_{\kappa}(G(K))$.

\end{lem}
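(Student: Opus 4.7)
The approach is to adapt the strategy of Lemma~\ref{l:cleanness}, with a Bruhat-type decomposition of the closure replacing the semi-infinite orbit stratification. The key structural input is that $\o{I}_m \o{I}_n / \o{I}_n \simeq V/U_n^+ \simeq \mathbb{A}^{(m-n)\Delta}$, where $V := \o{I}_m \cap N(K)$ and $U_n^+ := \o{I}_n \cap N(K)$: this follows from the triangular decomposition $\o{I}_m = (\o{I}_m \cap N^-(K)) \cdot T(O) \cdot V$ combined with $\o{I}_m \cap N^-(K) \subset \o{I}_n \cap N^-(K)$ and $\o{I}_m \cap T(K) = T(O) = \o{I}_n \cap T(K)$ (both valid since $m \geq n$), showing that the non-positive factors of $\o{I}_m$ already lie in $\o{I}_n$. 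Consequently the image of $\o{I}_m \o{I}_n$ in $G(K)/\o{I}_n$ sits inside the $N(K)$-orbit $N(K) \cdot e$, so its closure lies inside the closure of that orbit, which by the semi-infinite orbit theory invoked in Lemma~\ref{l:cleanness} is stratified as $\bigsqcup_{\check{\la} \in -\check{\La}^{pos}} N(K) \check{\la}(t) \cdot e$. Pulling back along $G(K) \to G(K)/\o{I}_n$, the boundary $\ol{\o{I}_m \o{I}_n} \setminus \o{I}_m \o{I}_n$ is covered by $(\o{I}_m \times \o{I}_n)$-orbits $\o{I}_m \check{\la}(t) \o{I}_n$ with $\check{\la} \neq 0$.

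On each such boundary stratum, the clean extension candidates are $(\o{I}_m \times \o{I}_n)$-equivariant against $(\psi,-\psi)$, so it suffices to show that this character restricts non-trivially to the stabilizer of $\gamma = \check{\la}(t)$, which is identified with $\o{I}_m \cap \Ad_{\check{\la}(t)} \o{I}_n$ via projection to the first factor, the character becoming $h \mapsto \psi(h) - \psi(\Ad_{\check{\la}(t)^{-1}}(h))$. Computing on the simple root subgroup $U_{\alpha_i}$, one finds $\o{I}_m \cap \Ad_{\check{\la}(t)} \o{I}_n \cap U_{\alpha_i}(K) = t^{\max(-m,\,-n+(\check{\la},\alpha_i))} U_{\alpha_i}(O)$ (using $(\check{\rho},\alpha_i) = 1$); for $h = \exp(t^{-1} e_{\alpha_i})$ — which lies in this subgroup precisely when $m \geq 1$ (automatic as $m > n \geq 0$) and $(\check{\la},\alpha_i) \leq n-1$ — one gets $\psi(h) = 1$ while $\psi(\Ad_{\check{\la}(t)^{-1}}(h)) = 0$ whenever $(\check{\la},\alpha_i) \neq 0$. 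Since $\check{\la} \in -\check{\La}^{pos} \setminus \{0\}$ forces some simple root $\alpha_i$ with $(\check{\la},\alpha_i) \leq -1$ (else $(\check{\la},\rho) \geq 0$, contradicting $\check{\la} \neq 0$ antidominant, exactly as in Lemma~\ref{l:cleanness}), both conditions hold. The standard prounipotence paradigm then forces the $D$-module to vanish on $\o{I}_m \check{\la}(t) \o{I}_n$, giving cleanness.

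The argument can be viewed as a version of Lemma~\ref{l:rodier} for the doubly-indexed case. The main technical obstacle is verifying the precise structure of the boundary, i.e., that every $(\o{I}_m \times \o{I}_n)$-orbit in $\ol{\o{I}_m \o{I}_n} \setminus \o{I}_m \o{I}_n$ meets a coweight translate $\check{\la}(t)$ with $\check{\la} \in -\check{\La}^{pos} \setminus \{0\}$; to stay in ind-finite type while making this precise, one works inside $G(K)/\cK_N^N$ for a sufficiently deep congruence subgroup $\cK_N^N \subset \o{I}_n$, exactly as in Lemma~\ref{l:cleanness}. The Kac-Moody twist is handled in the same manner as in Remark~\ref{r:whit-km} and Step~5 of the proof of Theorem~\ref{t:!-avg}\eqref{i:!-avg-ff}: the canonical splittings of $\widehat{G(K)} \to G(K)$ over $\o{I}_m$, $\o{I}_n$, and their intersections all agree by prounipotence, so $\delta_{\o{I}_m \o{I}_n}^{\psi}$ is well-defined as a $\kappa$-twisted object and the entire argument transfers verbatim.
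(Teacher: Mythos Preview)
Your approach is genuinely different from the paper's. The paper argues \emph{indirectly}: since both $j_!(\delta_{\o{I}_m\o{I}_n}^{\psi})$ and $j_{*,dR}(\delta_{\o{I}_m\o{I}_n}^{\psi})$ are $(\o{I}_m,\psi)$-equivariant on the left, and since $\iota_{m,!}$ is already known to be fully faithful (Theorem~\ref{t:!-avg}\eqref{i:!-avg-ff}), it suffices to compare them after $!$-averaging to full $(N(K),\psi)$-equivariance; the resulting sheaves are supported on $\ol{N(K)\o{I}_n}$, Lemma~\ref{l:cleanness} shows they are cleanly extended from $N(K)\o{I}_n$, and a fiber check at $1$ finishes. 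The paper even remarks afterward that a direct analysis of $\ol{\o{I}_m\o{I}_n}$ ``presumably'' exists --- that is what you are attempting.

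Your argument, however, has a genuine gap at exactly the point you flag as ``the main technical obstacle'': the claim that every $(\o{I}_m\times\o{I}_n)$-orbit in $\ol{\o{I}_m\o{I}_n}\setminus\o{I}_m\o{I}_n$ is of the form $\o{I}_m\check{\la}(t)\o{I}_n$. Your appeal to Lemma~\ref{l:cleanness} does not give this. That lemma controls the $N(K)\times G(O)^n$-orbit stratification of $\ol{N(K)G(O)^n}$ by semi-infinite cells $N(K)\check{\la}(t)G(O)^n$; it says nothing about the much finer $(\o{I}_m\times\o{I}_n)$-decomposition. Since $\o{I}_m\cap N(K)=N(O)^m$ is a proper compact open in $N(K)$ and $\o{I}_n\subsetneq G(O)^n$, a single semi-infinite stratum $N(K)\check{\la}(t)G(O)^n$ typically breaks into many $(\o{I}_m\times\o{I}_n)$-orbits, and you give no reason why only the one through $\check{\la}(t)$ appears in the closure. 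Your stabilizer computation (which is correct, modulo the harmless slip that $\o{I}_m\cap T(K)$ is the $m$th congruence subgroup of $T(O)$, not $T(O)$ itself) handles only that particular orbit. The paper's device of passing to $N(K)$-equivariance via $\iota_{m,!}$ is precisely what collapses this finer orbit combinatorics back to the tractable semi-infinite picture.
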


\begin{proof}

Let $\ol{\o{I}_m\o{I}_n}$ denote the closure in $G(K)$,
and let $j:\o{I}_m\o{I}_n \into \ol{\o{I}_m\o{I}_n}$ 
denote the open embedding. We want to show:

\begin{equation}\label{eq:mn-clean}
j_!(\delta_{\o{I}_m\o{I}_n}^{\psi} ) \isom 
j_{*,dR}(\delta_{\o{I}_m\o{I}_n}^{\psi}).
\end{equation}

\noindent Note that these $\kappa$-twisted 
$D$-modules are $(\o{I}_m,\psi)$
equivariant with respect to the left action of $G(K)$ on itself.
Therefore, by Theorem \ref{t:!-avg} \eqref{i:!-avg-ff},
it suffices to show that this map is an isomorphism
after applying $\iota_{m,!}$, i.e., $!$-averaging to Whittaker
with respect to the left action.

Note that $\o{I}_m\o{I}_n \subset N(K)\o{I}_n$, so the same
is true of their closures. Therefore, the $!$-averages
of both sides of \eqref{eq:mn-clean} are supported
on $\ol{N(K)\o{I}_n}$. 

We claim that 
$\ol{N(K)\o{I}_n} \cap N(K) \Ad_{-n\check{\rho}(t)}G(O) = N(K)\o{I}_n$.
Indeed, it suffices to show that 
$N(K)\o{I}_n \subset N(K)\Ad_{-n\check{\rho}(t)}G(O)$ is closed,
and for this it suffices to show this mod $N(K)$, i.e.,
that: 

\[
\o{I}_n/\Ad_{-n\check{\rho}(t)} N(O) \to 
\Ad_{-n\check{\rho}(t)}G(O)/\Ad_{-n\check{\rho}(t)} N(O)
\]

\noindent is a closed embedding. The $n = 0$ case is obvious, and
for $n>0$, this is the embedding of an orbit for a prounipotent
group on a quasi-affine scheme, so is a closed embedding.

Therefore, by Lemma \ref{l:cleanness},
our $!$-averages are cleanly extended from $N(K)\o{I}_n$. Moreover,
they are $(N(K),\psi)$ and $(\o{I}_n,-\psi)$-equivariant, so it
suffices to show that our map induces an isomorphism
between fibers at $1 \in G(K)$, which is clear.

\end{proof}

\begin{rem}

At the end of the day, we have shown that an $(\o{I}_m,\psi)$
and $(\o{I}_n,\psi)$-equivariant $D$-module on
$\ol{\o{I}_m\o{I}_n}$ is cleanly extended from the $\o{I}_m\o{I}_n$
by a somewhat indirect argument, using
Theorem \ref{t:!-avg} \eqref{i:!-avg-ff} to reduce to
the case $m = \infty$ and then using our explicit knowledge
of double coset representatives for $N(K)$ and $G(O)$.
Presumably there is a more direct argument 
analyzing $\ol{\o{I}_m\o{I}_n}$ as is. Such an argument would
mean that Theorem \ref{t:!-avg} \eqref{i:!-avg-ff}
(hence \S \ref{s:rodier}) would play an inessential
(if still philosophically important) role in proving Theorem \ref{t:!-avg}.

\end{rem}

We now conclude the proof of Theorem \ref{t:!-avg}.
As in the proof of Theorem \ref{t:!-avg} \eqref{i:!-avg-ff} that
$\iota_{n,m,!}$ and $\iota_{n,m,*}$ are defined by kernels:
in the notation above, the former is given by
$j_!(\delta_{\o{I}_m\o{I}_n}^{\psi})$ and the latter
by 
$j_{*,dR}(\delta_{\o{I}_m\o{I}_n}^{\psi} )
[-2(m-n)\Delta]$,
where the shift appears since in the finite-dimensional
setting, $*$-averaging is given by convolution with the
constant sheaf, not the dualizing sheaf.

\section{Some supporting calculations}\label{s:rodier}

\subsection{}

In this section, we prove Lemma \ref{l:rodier}.\footnote{This
result is essentially \cite{rodier} Lemma 4, except that
he works with a slightly different series of subgroups
(but with similar enough properties that the same arguments
should work uniformly for both). Unfortunately, I couldn't understand
Rodier's argument: his result
relies on Lemma 13 from \emph{loc. cit}., which in particular says that
every element of $f+t^N\fg[[t]]$ can be conjugated into a Borel;
this is not true since we can approximate $f$ by elliptic
elements in the Kostant slice (e.g. $G = GL_2$ and
take $\begin{pmatrix} 0 & t^{2N+1} \\ 1 & 0 \end{pmatrix}$).
Unfortunately, the arguments we give below use loop rotation in an
essential way, so cannot be adapted to Rodier's
$p$-adic setting.} 

The argument is presented according to its internal
logical development, so the reader who cannot see beyond 
Lemma \ref{l:rodier} should read this section backward, beginning
in \S \ref{ss:rodier-pf}.

\subsection{} Throughout this section, we fix
$\kappa$ an $\Ad$-invariant identification $\fg \simeq \fg^{\vee}$.
For $i \in \cI_G$, 
we let $f_i \in \fg$ be a non-zero vector of weight
$-\alpha_i$ normalized so that $\kappa(e_i,f_i) = 1$. 
We let $f = \sum f_i$, so that $f$ maps to our fixed character
of $\fn$.

We obtain an identification $\fg((t))^{\vee} \simeq \fg((t))dt$ via the residue
pairing. Note that the image of $f dt$ in $\Lie\o{I}_n^{\vee}$
is $\psi_{\o{I}_n}$ for all $0 \leq n \leq \infty$.

\subsection{}

It is convenient to use the notation $I \subset G(O)$ for the
Iwahori subgroup (i.e., $G(O) \times_G B$). The reader is cautioned
that this is not a typo: the groups $\o{I}_n$ will not appear until later in this 
section.

\subsection{Some affine Springer theory}

The main geometric input be the following result on
affine Springer fibers.

Let $\xi \in \fg((t))$. Recall that $\Spr^{\xi} \subset \Gr_G = G(K)/G(O)$
is:

\[
\{g \in G(K) \mid \Ad_{g^{-1}}(\xi) \in \fg[[t]]\}/G(O).
\]

\noindent Note that $\Spr^{\xi}$ is closed in $\Gr_G$.

\begin{thm}\label{t:spr-fib}

For $\xi \in f + \Lie(I) \subset \fg[[t]]$, 
$\Spr^{\xi} \cap \ol{\Gr}_{N} = \{1\} \in \Gr_G$. Here
$\ol{\Gr}_N$ is the usual closure of the semi-infinite orbit
$\Gr_N \subset \Gr_G$.

\end{thm}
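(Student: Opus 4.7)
The plan is to stratify $\ol{\Gr}_N = \bigsqcup_{\check{\la} \in -\check{\La}^{pos}} \Gr_N^{\check{\la}}$ by semi-infinite orbits $\Gr_N^{\check{\la}} = N(K)\check{\la}(t)G(O)/G(O)$ and to show that $\Spr^\xi \cap \Gr_N^{\check{\la}} = \emptyset$ for $\check{\la} \ne 0$, while $\Spr^\xi \cap \Gr_N^{0} = \{1\}$. Writing a candidate point as $g = n \check{\la}(t)$ modulo right $G(O)$ with $n \in N(K)$, the proof splits into these two steps.

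For the first step, I would project $\Ad_{g^{-1}}\xi$ onto $\fn^-$. Because $[\fn,\fn^-] \subseteq \fb$, conjugation by elements of $N(K)$ preserves the $\fn^-$-component of any $\fn^-$-valued vector modulo a $\fb$-shift, so the $\fn^-$-part of $\Ad_{n^{-1}}(f+x)$ equals $f$ plus the $\fn^-$-part of $x$, the latter lying in $t\fn^-[[t]]$ since $x \in \Lie(I) = \fb + t\fg[[t]]$. Applying $\Ad_{\check{\la}(t)^{-1}}$, the coefficient of $f_i$ becomes $t^{\alpha_i(\check{\la})}(1 + u_i)$ with $u_i \in tk[[t]]$, which sits in $k[[t]]$ exactly when $\alpha_i(\check{\la}) \geq 0$. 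For any nonzero $\check{\la} \in -\check{\La}^{pos}$ some simple root $\alpha_i$ must pair strictly negatively against $\check{\la}$ (since the simple roots span the dual of the semisimple part of the coroot lattice), contradicting $\Ad_{g^{-1}}\xi \in \fg[[t]]$.

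For the second step, with $g = n \in N(K)$, write $n = \exp(y)$ and decompose $y = \sum_{w \geq 1} y^{(w)}$ by $\check{\rho}$-height in $\fn$. I would prove $y^{(w)} \in \fn_w[[t]]$ by induction on $w$. The crucial bookkeeping is that in the weight-$w$ component of $\Ad_{\exp(-y)}\xi - \xi$, the only term involving $y^{(w+1)}$ is the linear term $-\ad(y^{(w+1)})(f)$: all other weight-$w$ contributions from the BCH series must have total $y$-weight $\le w+1$ starting from some $\xi_{w'}$ with $w' \ge -1$, forcing them to be polynomial in $y^{(1)},\ldots,y^{(w)}$ alone. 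By the inductive hypothesis these nonlinear terms lie in $\fg_w[[t]]$, so the membership $\Ad_{\exp(-y)}\xi \in \fg[[t]]$ reduces to $\ad(f)(y^{(w+1)}) \in \fg_w[[t]]$; the injectivity of $\ad(f) \colon \fn_{w+1} \to \fg_w$ (a restriction of the well-known injectivity of $\ad(f)$ on all of $\fn$, which comes from $\fn \cap \fg^f = 0$ for regular nilpotent $f$) then gives $y^{(w+1)} \in \fn_{w+1}[[t]]$. The induction terminates at the maximal height in $\fn$ and yields $y \in \fn[[t]]$, so $n \in N(O)$ and $gG(O)$ is the basepoint.

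The main obstacle is the height-wise decoupling in the second step: the BCH expansion is polynomially nonlinear in $y$, and without decomposing by $\check{\rho}$-weight the pole analysis is intractable because of cross-terms between the leading pole of $y$ and subleading pieces. The point of the weight grading is precisely that nonlinear contributions at height $w$ only involve strictly smaller heights of $y$, which have already been controlled, so the equation decouples into a sequence of linear problems each resolved by the injectivity of $\ad(f)$ on a single height piece of $\fn$.
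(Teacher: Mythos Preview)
Both steps have a genuine gap, and the common source is the $t\fn^-[[t]]$ part of $\Lie(I) = \fb + t\fg[[t]]$. In Step~1, the assertion $[\fn,\fn^-] \subseteq \fb$ is false once $\on{rank}(G) \geq 2$: already in $\sl_3$, $[e_{\alpha_1}, f_{\alpha_1+\alpha_2}]$ is a nonzero multiple of $f_{\alpha_2}$. While $\fb((t))$ is $N(K)$-stable, the induced $N(K)$-action on $\fg((t))/\fb((t))$ is not trivial, and the height $\leq -2$ components of $x$ (which lie in $t\fn^-[[t]] \subset \Lie(I)$) get carried by $\Ad_{n^{-1}}$ into the $\fg_{-\alpha_i}$-directions with coefficients that acquire poles when $n$ does; so the $f_i$-coefficient of $\Ad_{n^{-1}}\xi$ need not lie in $1 + tk[[t]]$, and your dominance conclusion for $\check\la$ does not follow. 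In Step~2, the same $t\fn^-[[t]]$ gives $\xi$ nonzero components at every height $w'$ down to $-(\check\rho,\alpha_{max})$, contradicting your hypothesis ``$w' \geq -1$''; the linear part of the height-$w$ equation then involves $[y^{(w+j)}, \xi_{-j}]$ for all $1 \leq j \leq (\check\rho,\alpha_{max})$, not just $j=1$, and your induction does not isolate $y^{(w+1)}$.

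Your argument does go through verbatim for $\xi \in f + \fb[[t]]$, and in particular for $\xi = f$. The paper exploits exactly this: it first treats $\xi = f$ directly (using the \emph{opposite} Iwasawa decomposition $g = \gamma\check\la(t)G(O)$ with $\gamma \in N^-(K)$, where $\Ad_{\gamma^{-1}}(f) \in f + [\fn^-,\fn^-]((t))$ is honest, together with $\Gr_N \cap \Gr_{N^-} = \{1\}$), and then reduces general $\xi \in f + \Lie(I)$ to $\xi = f$ via a $\bG_m$-action combining loop rotation, $\Ad_{\check\rho(\lambda)}$, and homothety. This action fixes $f$, contracts $\Lie(I)$ to $0$, preserves $\ol{\Gr}_N$, and carries $\Spr^\xi$ into $\Spr^{\lambda\star\xi}$; ind-properness of $\Gr_G$ forces any $g \in \Spr^\xi \cap \ol{\Gr}_N$ to flow as $\lambda \to 0$ into $\Spr^f \cap \ol{\Gr}_N = \{1\}$, and a direct weight computation shows that only $g=1$ in $\Gr_N$ flows to $1$.
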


\begin{rem}

We only
need this result at the level of reduced indschemes and so treat
it as such, but one can show that this intersection is actually transverse
by an infinitesimal version of what follows: 
see Remark \ref{r:inf-spr}.

\end{rem}

\begin{rem}

Note that elements of $f+\Lie(I) \subset \fg[[t]] \subset \fg((t))$ 
are regular (but certainly not necessarily semisimple),
since their reductions mod $t$ are by usual Kostant theory.
It is helpful to think of $f+\Lie(I)$ as a fattened version of the
scheme $f+\fb[[t]]$, which is the jet space of the scheme $f+\fb$ 
appearing in and well-understood by usual Kostant theory.

\end{rem}

\begin{rem}\label{r:spr-fdim-1}

The above can be regarded as an affine version of the statement
that (usual) Springer fibers 
of points of $f+\fb$ lie in the open Bruhat cell $B w_0 B \subset G/B$,
which can be proved by similar methods.

\end{rem}

\begin{proof}[Proof of Theorem \ref{t:spr-fib}]

The method is the following: we explicitly compute in the case
$\xi = f$, and then use dynamical methods to reduce to it.

\step\label{st:f-spr}

First, we treat the case $\xi = f$.

Suppose $g \in \Spr^f$. By Iwasawa, we can write
$g = \gamma \check{\la}(t) G(O)$ for $\gamma \in N^-(K)$ and 
$\check{\la} \in \check{\La}$. This point lies in our Springer fiber
if and only if:

\[
\Ad_{-\check{\la}(t)} \Ad_{\gamma^{-1}}(f) \in \fg[[t]].
\]

\noindent Note that $\Ad_{\gamma^{-1}}(f) \in f + [\fn^-,\fn^-]((t))$.
Since $\Ad_{-\check{\la}(t)}$ preserves $[\fn^-,\fn^-]((t))$, 
we obtain:

\[
\Ad_{-\check{\la}(t)} \Ad_{\gamma^{-1}}(f) \in 
\sum_{i \in \cI_G} t^{(\check{\la},\alpha_i)} f_i + [\fn^-,\fn^-]((t)).
\]

\noindent If this point lies in $\fg[[t]]$, then each
$(\check{\la},\alpha_i) \geq 0$, i.e., $\check{\la}$ is dominant.

On the other hand, recall that $\ol{\Gr}_N$ is stratified
by 
$\Gr_B^{\check{\mu}} = N(K) \check{\mu}(t) G(O)/G(O)$ for $\check{\mu} \in -\check{\La}^{pos}$.
Also, recall that:

\[
N^-(K) \check{\la}(t) G(O) \cap
N(K) \check{\mu}(t) G(O) \neq \emptyset \Leftrightarrow
\check{\mu} - \check{\la} \in \check{\La}^{pos}
\]

\noindent (this is easiest to see from the perspective of Zastava spaces,
e.g., in the formalism of \cite{mirkovic-notes} or \cite{cpsi} \S 2).

Therefore, our point $g$ above can only lie in $\ol{\Gr}_N$
if $\check{\mu} - \check{\la} \in \check{\La}^{pos}$. 
On the one hand, this implies $(\rho,\check{\mu}-\check{\la}) \geq 0$.
On the other hand, $\check{\mu} \in -\check{\La}^{pos}$
means $(\rho,\check{\mu}) \leq 0$, and $\check{\la}$ being
dominant (and $\rho \in \frac{1}{2} \La^{pos}$) means
$(\rho,-\check{\la}) \leq 0$. But since 
$\check{\mu} - \check{\la}, -\check{\mu} \in \check{\La}^{pos}$,
this implies that they are both zero, so $\check{\la} = 0$ as well.

Now we recall\footnote{Let us recall the proof for the sake
of completeness. Note that $\Gr_N \cap \Gr_{N^-}$ parametrizes
maps from the formal disc $\widehat{\cD} \to N^-\backslash G/N$
with a trivialization on the formal punctured disc $\o{\cD}$
(where we have algebraized the former to obtain a map from the
adic disc, i.e., $\Spec$ instead of $\Spf$). Then $N^-N \subset G$
is \emph{closed}, since $N^- \cdot 1 \in G/N$ is a unipotent 
orbit on a quasi-affine variety, and these are always closed.
Therefore, since $\o{\cD}$ maps to $1 \in N^- \backslash G/N$,
$\widehat{\cD}$ must map through $N^-\backslash N^-N/N = \Spec(k)$.}
that $\Gr_N \cap \Gr_{N^-} = \{1\}$
to deduce the claim in this case.

\step 

Next, we construct a useful action of $\bG_m$ on the indschemes
$\fg((t))$. For this, for technical
reasons, we need to fix an auxiliary integer
$h$ that is ``large enough": $h \geq (\check{\rho},\alpha)$
for all positive roots $\alpha$ suffices. (I.e., we can
take $h$ to be the Coxeter number of $G$ minus 1.)

For $\lambda \in \bG_m$, we let $\xi \mapsto \presup{\lambda}{\xi}$
denote the action by loop rotation (i.e., if $\xi = \xi(t)$, 
$\presup{\lambda}{\,\xi}(t) = \xi(\lambda t)$).

Then our action on $\fg((t))$ is given by the formula:

\[
\lambda \star \xi \coloneqq 
\lambda \cdot \Ad_{\check{\rho}(\lambda)} \presup{\lambda^h}{\xi}.
\]

\noindent (Here the $\lambda$ in front is the usual homothety
action on the vector space $\fg((t))$.) Note that: 

\[
\lambda \star f = \lambda \cdot \Ad_{\check{\rho}(\lambda)} \presup{\lambda^h}{f} =
\lambda \cdot \Ad_{\check{\rho}(\lambda)} (f) = 
\lambda \cdot \lambda^{-1} f = f.
\]

Moreover, for $\xi \in \fb[[t]]$, we clearly
have $\lim_{\lambda \to 0} \lambda \star \xi = 0$.
We claim that the same holds for $\xi \in t\fn^{-}[[t]]$.
Indeed, since this action preserves the decomposition into
weight spaces, and also the decomposition via $t\partial_t$-eigenspaces,
we can assume $\xi = t^m f_{\alpha}$ ($m>0$)
for some $\alpha > 0$, $f_{\alpha} \in \fn^-$ a $-\alpha$ root vector.

\[
\lambda \star t^m  f_{\alpha} = 
\lambda^{1-(\check{\rho},\alpha)+m\cdot h} \cdot t^m f_{\alpha}.
\]

\noindent Since $1-(\check{\rho},\alpha)+m\cdot h \geq
1-(\check{\rho},\alpha)+h \geq 1$, we obtain the claim by assumption
on $h$.

\step 

We need actions of $\bG_m$ on $G(K)$ and $\Gr_G$.

Again, for $g \in \Gr_G$ or $G(K)$ and $\lambda \in \bG_m$,
we let $\presup{\lambda}{g}$ denote the 
element obtained by loop rotation. 
Then our action on 
$\Gr_G$
is given by:

\[
\lambda \star g \coloneqq 
\check{\rho}(\lambda) \cdot \presup{\lambda^h}{g}, \hspace{.5cm}
g \in \Gr_G, \lambda \in \bG_m,
\]

\noindent and our action on $G(K)$ is given by:

\[
\lambda \star g \coloneqq 
\Ad_{\check{\rho}(\lambda)} \presup{\lambda^h}{g}, \hspace{.5cm}
g \in G(K), \lambda \in \bG_m.
\]

\noindent Because $\check{\rho}(\lambda) \in T \subset G(O)$,
the projection map $G(K) \to \Gr_G$ is $\bG_m$-equivariant.
(This is also the reason $\check{\rho}(\lambda) \cdot g$ makes sense
in the first place for $g \in \Gr_G$, even if 
$\check{\rho} \not\in \check{\La}$.)

Note that this action is compatible with
our Lie algebra action in the sense that for 
$g \in G(K)$ and $\xi \in \fg((t))$:

\[
\Ad_{\lambda \star g}(\lambda \star \xi) = 
\lambda \star \Ad_g(\xi)
\]

\noindent Indeed, the $\bG_m$-actions of loop rotation
and conjugation by $\check{\rho}(-)$ each have this
property, so the $\star$-action does as well.

In particular, since the $\star$-action
preserves $\fg[[t]]$ (since it mixes homotheties and conjugation
by elements of $T \subset G(O)$), 
we deduce that if $g \in \Spr^{\xi} \subset \Gr_G$,
then $\lambda \star g \in \Spr^{\lambda \star \xi}$. 

\step 

Now suppose that $\xi \in f+\Lie(I)$ and $g \in \Spr^{\xi} \subset \Gr_G$.
Then $\lim_{\lambda \to 0}(\lambda \star g)$ must exist,
since $\Gr_G$ is ind-proper. Moreover, since
$\lim_{\lambda \to 0} \lambda \star \xi = f$ and
$\lambda \star g \in \Spr^{\lambda \star \xi}$,
$\lim_{\lambda \to 0}(\lambda \star g) \in \Spr^f$.\footnote{Here
is the actual argument for this fact:

Let $\bA_{\lambda}^1$ denote $\bA^1$ with coordinate $\lambda$.
Then we have a
closed sub-indscheme 
$\Spr^{\xi,dynam} \subset \bA_{\lambda}^1 \times \Gr_G$ parametrizing 
$(\lambda \in \bA^1, g \in \Spr^{\lambda \star \xi})$. Indeed, it
is obtained by base-change along
$\bA_{\lambda}^1 \xar{\lambda \mapsto \lambda \star \xi} \fg((t))$ 
from the relevant
\emph{universal affine Springer fiber}, which itself is closed in $\fg((t)) \times \Gr_G$ since it is the fiber product
$\big(\fg((t))\times \Gr_G \big)\times_{\fg((t))/G(O)} \fg[[t]]/G(O)$, where we are using the map 
$\fg((t)) \times \Gr_G \xar{(\vph,g)\mapsto \Ad_{g^{-1}}(\vph)} \fg((t))/G(O)$.

Therefore, the projection $\Spr^{\xi,dynam} \to \bA_{\lambda}^1$
is ind-proper, since $\Gr_G$ is ind-proper. Then for
$g \in \Spr^{\xi}$, the map $\lambda \to \lambda \star g$
defines a section of this projection over $\bA_{\lambda}^1 \setminus 0$,
and by the valuative criterion, this section must extend
over $\bA_{\lambda}^1$. Then the value at $0 \in \bA_{\lambda}^1$
obviously equals $\lim_{\lambda \to 0}(\lambda \star g)$ and
lies in $\Spr^{\xi,dynam} \times_{\bA_{\lambda}^1} 0 =
\Spr^{\lim_{\lambda \to 0} \lambda \star \xi} = \Spr^{f}$.}

Now, observe that the $\star$-action preserves $\Gr_N$,
and so $\ol{\Gr}_N$ as well. Therefore, 
if $g \in \Spr^{\xi} \cap \overline{\Gr}_N$,
we deduce that:

\[
\lim_{\lambda \to 0} \lambda \star g \in \Spr^f \cap \ol{\Gr}_N 
\overset{\text{Step \ref{st:f-spr}}}{=} \{1\},
\]

\noindent i.e., $\lim_{\lambda \to 0} \lambda \star g = 1$.
Therefore, it suffices to show that if $g \in \overline{\Gr}_N$
satisfies $\lim_{\lambda \to 0} \lambda \star g = 1$,
then $g = 1$.

Note that this condition obviously implies $g \in \Gr_N$.
Then we have an isomorphism
$\exp:\fn((t))/\fn[[t]] \isom \Gr_N$ that is $\bG_m$-equivariant
for the $\star$-action on the target and the
action $\lambda^{-1} \cdot \lambda \star (-)$ on the source
(i.e., we need to cancel out the homothety occurring in the
definition of the $\star$-action on $\fg((t))$). 

Since this action on $\fn((t))/\fn[[t]]$ preserves the decomposition
into weight spaces and $t\partial_t$-eigenspaces,
$\lim_{\lambda \to 0}(\xi) = 0$ for $\xi \in \fn((t))/\fn[[t]]$ if and only
if this is true for each graded component of $\xi$.
Therefore, it suffices to show that either the limit as
$\lambda \to 0$ of $\lambda^{-1}(\lambda \star t^{-m} e_{\alpha})$ ($m>0$)
does not exist, or if it does exist, is not $0$. We then
compute:

\[
\lambda^{-1}(\lambda \star t^{-m} e_{\alpha}) = 
\lambda^{-mh + (\check{\rho},\alpha)} t^{-m} e_{\alpha}
\]

\noindent and since 
$-mh + (\check{\rho},\alpha) \leq -m + (\check{\rho},\alpha) \leq 0$,
we obtain the claim.

\end{proof}

\begin{rem}\label{r:inf-spr}

This remark will not be needed in what follows, but
as promised earlier, let us indicate the proof that the
intersection above is transverse. 

The tangent space
of the intersection is 
$\mathfrak{spr}^{\xi} \cap \fn((t))/\fn[[t]] \subset \fg((t))/\fg[[t]]$,
where $\mathfrak{spr}^{\xi} \coloneqq 
\{\vph \in \fg((t)) \mid [\xi,\vph] \in \fg[[t]]\}/\fg[[t]]$.\footnote{The
fact that $[\xi,\fg[[t]]] \subset \fg[[t]]$ is of course because
$\xi \in \fg[[t]]$, which corresponds to the fact that
$1\in \Spr^{\xi}$ in the first place.}

For $0 \neq \vph \in \mathfrak{spr}^{\xi}$, the 1-parameter family
of lines $k \cdot (\lambda \star \vph) \in \bP(\fg((t))/\fg[[t]])$ must
admit a limit as $\lambda \to 0$, and this limit must lie
in $\bP(\mathfrak{spr}^f)$.\footnote{To justify the 
limiting process here, we claim that the family of subspaces
$\mathfrak{spr}^{\xi} \subset \fg((t))/\fg[[t]]$ is well-behaved as we vary
$\xi \in \fg[[t]] \times_{\fg} \fg^{reg}$. To see this,
show that $\mathfrak{spr}^{\xi}$ as $\fz(\xi) + \fg[[t]]/\fg[[t]]$
for $\fz(\xi)$ the centralizer of $\xi$, 
and then use the smoothness of the group scheme of regular
centralizers.}

By representation
theory of the principal $\sl_2$ $(e,2\Lie(\check{\rho})(1),f)$), 
$\mathfrak{spr}^f \cap \fn((t))/\fn[[t] = \{0\} \subset \fg((t))/\fg[[t]]$,
i.e., $\bP(\mathfrak{spr}^f) \cap \bP(\fn((t))/\fn[[t]]) = \emptyset$.
Since $\bP(\fn((t))/\fn[[t]]) \subset \bP(\fg((t))/\fg[[t]])$ is closed,
preserved under the $\star$-action,
$\vph$ could not have been in $\fn((t))/\fn[[t]]$.

\end{rem}

\subsection{Regular centralizers} 

We now deduce the following result.

\begin{cor}\label{c:reg-cent}

For $\xi \in f+\Lie(I)$, let $Z(\xi) \subset G(K)$ denote its
centralizer. Then $Z(\xi) \cap N(K)G(O) \subset G(O)$.

\end{cor}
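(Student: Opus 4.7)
The plan is to deduce the corollary directly from Theorem \ref{t:spr-fib} by observing that centralizers of $\xi \in \fg[[t]]$ inside $G(K)$ sit naturally inside the affine Springer fiber $\Spr^{\xi}$ after passing to $\Gr_G$.

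More precisely, let $g \in Z(\xi) \cap N(K)G(O)$. Since $\xi \in f+\Lie(I) \subset \fg[[t]]$ and $\Ad_{g^{-1}}(\xi) = \xi$, we have $\Ad_{g^{-1}}(\xi) \in \fg[[t]]$, so by definition the coset $gG(O) \in \Gr_G$ lies in $\Spr^{\xi}$. On the other hand, the assumption $g \in N(K)G(O)$ means exactly that $gG(O) \in \Gr_N \subset \overline{\Gr}_N$. Therefore $gG(O) \in \Spr^{\xi} \cap \overline{\Gr}_N$, which by Theorem \ref{t:spr-fib} consists of the single point $\{1\} \in \Gr_G$. Hence $gG(O) = G(O)$, i.e., $g \in G(O)$, which is the desired conclusion.

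Since Theorem \ref{t:spr-fib} is already established, there is essentially no obstacle here; the entire content of the corollary is a repackaging of the Springer fiber calculation into a statement about group-theoretic centralizers. The one small thing to keep in mind is that the intersection in Theorem \ref{t:spr-fib} is stated only at the reduced level, but this is harmless because the corollary is a set-theoretic assertion about $k$-points (and functorially about $R$-points for any test ring $R$, handled in exactly the same way). No finer transversality information from Remark \ref{r:inf-spr} is needed.
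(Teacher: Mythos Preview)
Your proof is correct and follows essentially the same approach as the paper: both observe that the $Z(\xi)$-orbit through $1 \in \Gr_G$ lies in $\Spr^{\xi}$, so any $g \in Z(\xi) \cap N(K)G(O)$ has image in $\Spr^{\xi} \cap \ol{\Gr}_N = \{1\}$ by Theorem \ref{t:spr-fib}, forcing $g \in G(O)$. Your version is just a slightly more explicit unpacking of the same one-line argument.
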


\begin{proof}

Clearly the $Z(\xi)$-orbit through $1 \in \Gr_G$ is contained
in $\Spr^{\xi}$. Therefore, Theorem \ref{t:spr-fib} implies
$Z(\xi)G(O) \cap N(K) G(O) = G(O)$, giving the claim.

\end{proof}

\begin{rem}\label{r:spr-fdim-2}

This may be compared to the finite-dimensional result
that for $\xi \in f+\fb$, $Z(\xi) \cap B = Z(G)$.
Let us deduce this claim from the result from 
Remark \ref{r:spr-fdim-1}.

Note that $Z(\xi)$ acts trivially on the (usual)
Springer fiber for $\xi$. Indeed, by regularity of $\xi$,
$Z(\xi)$ is generated by
the center of $G$ and its connected component $Z(\xi)^0$.
The former obviously acts trivially, and the latter acts trivially
because this Springer fiber is finite (by regularity again).

Therefore, $Z(\xi)$ is contained in any Borel $B^{\prime}$ whose
Lie algebra contains $\xi$. But since the Springer
fiber is contained in $Bw_0B/B$, $B^{\prime}$ is opposed to
$B$, so $Z(\xi) \cap B \subset T$. But clearly for
$\xi$ of this form, $Z(\xi) \cap T = Z(G)$.

\end{rem}

\subsection{}

Next, we obtain:

\begin{cor}\label{c:intersection-1}

$g \in N(K)$ satisfies:

\[
( f+\Lie(I)) \cap \Ad_g(f+\Lie(I)) \neq \emptyset
\]

\noindent if and only if $g \in N(O)$.

\end{cor}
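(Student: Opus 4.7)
\medskip

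\noindent\textbf{Proof proposal.} The plan is to deduce both directions of the corollary almost directly from Theorem~\ref{t:spr-fib}; Corollary~\ref{c:reg-cent} is a shadow of the same statement, but the unrefined form of Theorem~\ref{t:spr-fib} is all that is needed.

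For the ``if'' direction, the strategy is to check the stronger inclusion $\Ad_g(f+\Lie(I)) \subseteq f+\Lie(I)$ for every $g \in N(O)$. First I would note that $N(O)$ is contained in the Iwahori $I$, and hence $\Ad_{N(O)}$ preserves $\Lie(I)$. Second, for $n \in \fn(O)$ one computes $\ad(n)(f) \in \fb(O)$ (since $[\fn,f] \subset \fb$, as $[e_i,f_j] \in \delta_{ij}\ft$ and the higher iterates stay in $\fb$). Hence $\Ad_g(f) \in f+\fb(O) \subset f+\Lie(I)$, which gives the inclusion and in particular the nonempty intersection.

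For the ``only if'' direction, suppose $\xi \in (f+\Lie(I)) \cap \Ad_g(f+\Lie(I))$ for some $g \in N(K)$. Then both $\xi$ and $\Ad_{g^{-1}}(\xi)$ lie in $f+\Lie(I) \subset \fg[[t]]$, so by the very definition of the affine Springer fiber, the coset $g\cdot G(O) \in \Gr_G$ lies in $\Spr^{\xi}$. Since $g \in N(K)$, its image in $\Gr_G$ also lies in $\Gr_N \subset \overline{\Gr}_N$. Applying Theorem~\ref{t:spr-fib} to $\xi \in f+\Lie(I)$ gives
\[
g \cdot G(O) \in \Spr^{\xi} \cap \overline{\Gr}_N = \{1\},
\]
so $g \in G(O)$. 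Combined with $g \in N(K)$, this yields $g \in N(K) \cap G(O) = N(O)$.

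The only non-trivial step is the appeal to Theorem~\ref{t:spr-fib}, which was established by the loop-rotation/dynamical argument in the preceding subsection; given that result, the corollary is a one-line consequence, and no further obstacle arises. In particular, I would not invoke Corollary~\ref{c:reg-cent} here, as it was tailored to the centralizer formulation needed in Lemma~\ref{l:rodier} rather than to this statement about the Kostant-type slice $f+\Lie(I)$.
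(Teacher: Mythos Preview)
Your proof is correct and in fact more direct than the paper's. The paper's argument for the ``only if'' direction passes through two intermediate results: it first invokes Lemma~\ref{l:reg-conj} to find some $\gamma \in G(O)$ with $\Ad_{\gamma}(\xi) = \Ad_g(\xi)$, and then applies Corollary~\ref{c:reg-cent} to the element $g^{-1}\gamma \in Z(\xi) \cap N(K)G(O)$ to force $g^{-1}\gamma \in G(O)$, hence $g \in G(O)$. Your argument bypasses both steps by observing directly that $g\cdot G(O) \in \Spr^{\xi} \cap \Gr_N$ and invoking Theorem~\ref{t:spr-fib}. Since Corollary~\ref{c:reg-cent} is itself an immediate consequence of Theorem~\ref{t:spr-fib}, and Lemma~\ref{l:reg-conj} requires a separate (if short) argument about lifting conjugacy from $G(K)$ to $\o{I}$, your route is genuinely shorter; the paper's detour through centralizers and Lemma~\ref{l:reg-conj} is unnecessary for this corollary, though Lemma~\ref{l:reg-conj} does not reappear elsewhere either. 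The ``if'' direction is handled the same way in both arguments.
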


First, we need:

\begin{lem}\label{l:reg-conj}

Any $\xi_1,\xi_2 \in f+\Lie(I)$ are conjugate by an element
of $G(K)$ if and only
if they are conjugate by an element of 
$\o{I} \coloneqq G(O) \underset{G}{\times} N$. 

\end{lem}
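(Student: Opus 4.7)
The plan is to reduce $G(K)$-conjugacy to equality via a Kostant slice argument. Let $S \coloneqq f + \fb^e[[t]] \subset \fg[[t]]$, taking $\fb^e = \fg^e$ (the centralizer of $e$ in the principal $\sl_2$ containing $f$, which is known to lie in $\fb$). Then $S \subset f + \Lie(I)$, since $\fb^e[[t]] \subset \fb + t\fg[[t]] = \Lie(I)$, and also $S \subset f + \fb^e((t)) \subset \fg((t))$. The usual Kostant theorem applied over the base ring $k((t))$ makes $f + \fb^e((t))$ into a section of the adjoint quotient $\fg((t))^{reg} \to (\fg//G)((t))$; hence two elements of $f + \fb^e((t))$ are $G(K)$-conjugate if and only if they coincide. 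I also note that $\o{I}$ preserves $f + \Lie(I)$: since $[\fn + t\fg[[t]], f] \subseteq \fb + t\fg[[t]]$, we have $[\Lie(\o{I}), f] \subseteq \Lie(I)$.

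The heart of the argument is the following sublemma: \emph{every $\xi \in f + \Lie(I)$ is $\o{I}$-conjugate to an element of $S$}. Granting this, the lemma follows formally. Given $\Ad_g(\xi_1) = \xi_2$ with $g \in G(K)$, choose $h_i \in \o{I}$ with $\ol{\xi}_i \coloneqq \Ad_{h_i}(\xi_i) \in S$; then $\ol{\xi}_1$ and $\ol{\xi}_2$ are $G(K)$-conjugate elements of $f + \fb^e((t))$, hence equal, so $\Ad_{h_2^{-1} h_1}(\xi_1) = \xi_2$ with $h_2^{-1} h_1 \in \o{I}$.

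To prove the sublemma, I use the decomposition $\o{I} = N \cdot \cK_1$, where $N \subset \o{I}$ consists of constant loops and $\cK_1 = \ker(G(O) \to G)$ is the first congruence subgroup. The finite-dimensional Kostant theorem furnishes $n_0 \in N$ with $\Ad_{n_0}(\xi|_{t=0}) \in f + \fb^e$, so after replacing $\xi$ by $\Ad_{n_0}(\xi)$ I may assume $\xi \in f + \fb^e + t\fg[[t]]$. I then induct on $k \geq 1$: assuming $\xi \in f + \sum_{j<k} t^j \fb^e + t^k \fg[[t]]$, I write its $t^k$-coefficient $b_k \in \fg$ as $b_k = v + [f, Y]$ with $v \in \fb^e$ and $Y \in \fg$ (using the Kostant transversality $\fg = \fb^e \oplus [f, \fg]$), and conjugate by $u_k \coloneqq \exp(t^k Y) \in \cK_k$. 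A direct computation shows $\Ad_{u_k}(\xi) \equiv \xi + t^k [Y, f] \pmod{t^{k+1}}$, so this operation leaves the lower $t$-coefficients unchanged and replaces $b_k$ by $v \in \fb^e$. The infinite product $\prod_{k \geq 1} u_k$ converges in the prounipotent (equivalently, $t$-adically complete) group $\cK_1$, producing the desired element of $\o{I}$.

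The main obstacle is the inductive step above; however, once the Kostant transversality $\fg = \fb^e \oplus [f, \fg]$ is in hand from principal $\sl_2$ representation theory, the argument reduces to a formal order-by-order $t$-adic approximation, and the required convergence is automatic from the prounipotence of $\cK_1$. This essentially constructs an ``affine Kostant slice" inside $f + \Lie(I)$, paralleling the transversality reasoning sketched in Remark \ref{r:inf-spr}.
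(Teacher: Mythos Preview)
Your overall strategy matches the paper's: show that every element of $f + \Lie(I)$ is $\o{I}$-conjugate to a unique element of the affine Kostant slice $f + \fb^e[[t]]$, then observe that $G(K)$-conjugate elements of the slice have the same characteristic polynomial and hence coincide. The paper packages your sublemma more abstractly --- it writes $f+\Lie(I)/\o{I}$ as the fiber product $\fg[[t]]/G(O) \times_{\fg/G} (f+\fb)/N$ and invokes formal smoothness of $(f+\fb)/N \to \fg/G$ to lift points to $f+\fb[[t]]/N(O) \simeq (\fg//G)(O)$ --- but this is just the Hensel argument you spell out, in stack-theoretic clothing.

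There is, however, a computational slip in your inductive step. After the initial $N$-conjugation, the constant term of $\xi$ is $f + v_0$ with $v_0 \in \fb^e$ not necessarily zero, so the correct formula is
\[
\Ad_{u_k}(\xi) \equiv \xi + t^k[Y,\, f + v_0] \pmod{t^{k+1}},
\]
with an extra $t^k[Y, v_0]$ term. Consequently the transversality you actually need is $\fg = \fb^e \oplus [f + v_0, \fg]$, not the one at $f$. This still holds: since $f + v_0$ lies in the Kostant slice and is therefore regular, the action map $G \times (f + \fb^e) \to \fg^{reg}$ is smooth, so its differential $(Y, w) \mapsto [Y, f+v_0] + w$ at $(1, f+v_0)$ is surjective, and the dimension count $\dim \fb^e = \rk(\fg) = \dim \fz(f+v_0)$ makes the sum direct. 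With this correction your induction goes through; as written, though, choosing $Y$ from the decomposition $b_k = v + [f, Y]$ leaves the new $t^k$-coefficient equal to $v - [v_0, Y]$, which need not lie in $\fb^e$.
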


\begin{proof}

Recall that the morphism $(f+\fb)/N \to \fg/G$
is smooth. Writing $f+\Lie(I)/\o{I}$ as 
$\fg/G(O) \times_{\fg/G} f+\fb/N$, formal smoothness implies that
any point of $f+\Lie(I)/\o{I}$ lifts to $f+\fb[[t]]/N(O)$.

This lift is completely determined by the characteristic polynomial,
since by Kostant's theory \cite{kostant-slice},
the composition of $(f+\fb)/N \to \fg/G$ with the projection
$\chi:\fg/G \to \fg//G$ is an isomorphism (here $\fg//G$ is the GIT quotient 
$\coloneqq\Spec(\Sym(\fg^{\vee})^G)$), so
$f+\fb[[t]]/N(O) \isom (\fg//G)(O)$. 

Now the claim follows, since the characteristic
polynomial is determined by the underlying $G(K)$-conjugacy class.

\end{proof}

\begin{proof}[Proof of Corollary \ref{c:intersection-1}]

Suppose $\xi \in f+\Lie(I)$ with $\Ad_g(\xi) \in f+\Lie(I)$ as well.
By Lemma \ref{l:reg-conj}, we can find $\gamma \in G(O)$
with $\Ad_{\gamma}(\xi) = \Ad_g(\xi)$. 
Then $g^{-1} \gamma \in Z(\xi)$, but also in $N(K)G(O)$,
so lies in $G(O)$ by Corollary \ref{c:reg-cent}. Since $\gamma$
was already in $G(O)$,
this gives $g \in G(O) \cap N(K) = N(O)$, as desired.

(The converse direction is clear: for $g \in N(O)$, 
$\Ad_g$ preserves $f+\Lie(I)$.)

\end{proof}

\subsection{Return to adolescence}\label{ss:nostalgia}

We now essentially rewrite Corollary \ref{c:intersection-1},
but using the subgroups $\o{I}_n$ instead.

In what follows, for simplicity, we use $dt$ (and $\kappa$) to identify 
$\fg((t))^{\vee} = \fg((t))dt $ with $\fg((t))$. For a subspace
$V \subset \fg((t))$, we use $V^{\perp}$ to indicate the
subspace $(\fg((t))/V)^{\vee} \subset \fg((t))^{\vee} = \fg((t))$.

\begin{cor}\label{c:intersection-2}

For $n>0$, $g \in N(K)$ satisfies:

\[
( f+\Lie\o{I}_n^{\perp}) \cap \Ad_g(f+\Lie\o{I}_n^{\perp}) \neq \emptyset
\]

\noindent if and only if 
$g \in N(K) \cap \o{I}_n = \Ad_{-n\check{\rho}(t)} N(O)$.

\end{cor}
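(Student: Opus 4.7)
The plan is to reduce Corollary \ref{c:intersection-2} to Corollary \ref{c:intersection-1} by a simultaneous conjugation-and-rescaling trick. Write $H_n := G(O) \times_{G(O/t^n)} N(O/t^n)$, so that $\o{I}_n = \Ad_{-n\check\rho(t)}(H_n)$, and introduce $J_n := G(O) \times_{G(O/t^n)} B(O/t^n)$; note $J_n \subseteq I$ for $n \geq 1$, since an element of $G(O)$ reducing to $B$ modulo $t^n$ certainly reduces to $B$ modulo $t$. The ``if'' direction is essentially automatic: the functional $\psi_{\o{I}_n}$ is the derivative of a group character $\o{I}_n \to \bG_a$, so $\psi_{\o{I}_n}(\Ad_g v) = \psi_{\o{I}_n}(v)$ for all $g \in \o{I}_n$; since $f + \Lie \o{I}_n^\perp$ is precisely the fiber of $\fg((t)) \twoheadrightarrow \Lie \o{I}_n^\vee$ over $\psi_{\o{I}_n}$, it is preserved by $\Ad_g$ for every $g \in \o{I}_n$, making the required intersection equal to (and thus in particular a subset of) this nonempty affine space.

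For the forward direction, set $\gamma := \check\rho(t)^n$ and $h := \Ad_\gamma(g)$, which again lies in $N(K)$ because $T(K)$ normalizes $N(K)$. Because the residue pairing is $G(K)$-invariant, $\Ad_\gamma(\Lie \o{I}_n^\perp) = \Lie H_n^\perp$; combined with the elementary computation $\Ad_\gamma(f_i) = t^{-n} f_i$, the hypothesis translates to
\[
(t^{-n} f + \Lie H_n^\perp) \cap \Ad_h(t^{-n} f + \Lie H_n^\perp) \neq \emptyset.
\]
Multiplying through by the scalar $t^n$, which commutes with the $k((t))$-linear operator $\Ad_h$, converts this to $(f + t^n \Lie H_n^\perp) \cap \Ad_h(f + t^n \Lie H_n^\perp) \neq \emptyset$.

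The crucial identification is $t^n \Lie H_n^\perp = \Lie J_n$. Indeed, a routine root-by-root residue-pairing computation yields $\Lie H_n^\perp = \sum_{k=-n}^{-1} t^k \fb + \fg[[t]]$, whose $t^n$-multiple is $\fb + t\fb + \cdots + t^{n-1}\fb + t^n \fg[[t]] = \Lie J_n$. Since $\Lie J_n \subseteq \Lie I$, we deduce $(f + \Lie I) \cap \Ad_h(f + \Lie I) \neq \emptyset$, so Corollary \ref{c:intersection-1} forces $h \in N(O)$. Unwinding, $g = \Ad_{\gamma^{-1}}(h) \in \Ad_{-n\check\rho(t)}(N(O)) = N(K) \cap \o{I}_n$, as desired. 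The only content beyond straightforward bookkeeping is the explicit calculation of $\Lie H_n^\perp$ together with the inclusion $\Lie J_n \subseteq \Lie I$; no new geometric input beyond Corollary \ref{c:intersection-1} (hence ultimately Theorem \ref{t:spr-fib}) is required.
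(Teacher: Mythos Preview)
Your proof is correct and follows essentially the same approach as the paper's: apply the automorphism $t^n \cdot \Ad_{n\check\rho(t)}$ to reduce to Corollary \ref{c:intersection-1}, using the computation $t^n \Ad_{n\check\rho(t)}(\Lie\o{I}_n^\perp) = \fb[[t]] + t^n\fg[[t]] \subset \Lie(I)$. Your additional naming of the subgroups $H_n, J_n$ and the clean treatment of the ``if'' direction via $\Ad$-invariance of the differential of a character are minor presentational choices, not a different argument.
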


\begin{proof}

We compute the image of both terms of the intersection
under the automorphism $t^n\cdot\Ad_{n\check{\rho}(t)}$ of 
$\fg((t))$ (where $t^n$ is acting by homotheties on this $K$-vector space).

Since the identification $\fg((t)) \simeq \fg((t))^{\vee}$ is
$G(K)$-equivariant, 
$\Ad_{n\check{\rho}(t)}(f+\Lie\o{I}_n^{\perp})$ =
$t^{-n}f + (t^n\fg[[t]]+\fn[[t]])^{\perp}$
(recall that by definition, 
$t^n\fg[[t]]+\fn[[t]] = \Ad_{n\check{\rho}(t)}(\Lie\o{I}_n)$).
This dual lattice is easy to compute:

\[
(t^n\fg[[t]]+\fn[[t]])^{\perp} = \fg[[t]] + t^{-n}\fb[[t]].
\]

\noindent So finally, we obtain:

\[
t^n \Ad_{n\check{\rho}(t)}(f+\Lie\o{I}_n^{\perp}) = 
f+t^n\fg[[t]] + \fb[[t]] \subset f+\Lie(I)
\]

\noindent (since $n>0$).

Moreover, we have:

\[
\begin{gathered}
t^n \Ad_{n\check{\rho}(t)}\Ad_g(f+\Lie\o{I}_n^{\perp}) = 
\Ad_{(n\check{\rho})(t) \cdot g \cdot (-n\check{\rho})(t)}
t^n\Ad_{n\check{\rho}(t)}(f+\Lie\o{I}_n^{\perp}) = \\
\Ad_{(n\check{\rho})(t) \cdot g \cdot (-n\check{\rho})(t)}(f+t^n\fg[[t]] + \fb[[t]] ) \subset
\Ad_{(n\check{\rho})(t) \cdot g \cdot (-n\check{\rho})(t)}(f+\Lie(I)).
\end{gathered}
\]

So if the original intersection is non-trivial, then:

\[
f+\Lie(I) \cap \Ad_{(n\check{\rho})(t)(g)(-n\check{\rho})(t)}(f+\Lie(I)) \neq \emptyset
\]

\noindent so that Corollary \ref{c:intersection-1} gives:

\[
(n\check{\rho})(t)(g)(-n\check{\rho})(t) \in N(O) \Leftrightarrow
g \in \Ad_{-n\check{\rho}(t)} N(O).
\]

\end{proof}

\subsection{}\label{ss:rodier-pf}

We now proceed to our original objective.

\begin{proof}[Proof of Lemma \ref{l:rodier}]

Let $g \in N(K)$.
Note that:

\[
\psi_{\o{I}_n}|_{\o{I}_n \cap \Ad_g \o{I}_n} =
\psi_{\o{I}_n} \circ \Ad_{g^{-1}}|_{\o{I}_n \cap \Ad_g \o{I}_n}
\]

\noindent if and only if their derivatives coincide on the 
corresponding Lie algebras, i.e.:

\[
\Lie(\psi_{\o{I}_n})|_{\Lie\o{I}_n \cap \Ad_g \Lie\o{I}_n} =
\Lie(\psi_{\o{I}_n}) \circ \Ad_{g^{-1}}|_{\Lie\o{I}_n \cap \Ad_g \Lie\o{I}_n}.
\]

\noindent This occurs if and only if there exists 
$\xi \in \fg((t))^{\vee}$ with:

\[
\begin{gathered}
\xi|_{\Lie\o{I}_n} =
\Lie(\psi_{\o{I}_n})|_{\Lie\o{I}_n} \\
\xi|_{\Ad_g\Lie\o{I}_n} =
\Lie(\psi_{\o{I}_n}) \circ \Ad_{g^{-1}}|_{\Ad_g\Lie\o{I}_n}.
\end{gathered}
\]

Now observe that 
$f+\Lie\o{I}_n^{\perp} \subset \fg((t)) = \fg((t))^{\vee}$
is the subspace of functionals on $\fg((t))$ whose
restriction to $\o{I}_n$ is $\Lie(\psi_{\o{I}_n})$.
Similarly, $\Ad_g(f+\Lie\o{I}_n^{\perp})$
is the space of functionals whose restriction to
$\Ad_g\Lie\o{I}_n$ is $\Lie(\psi_{\o{I}_n}) \circ \Ad_{g^{-1}}$.
Therefore:

\[
\xi \in ( f+\Lie\o{I}_n^{\perp}) \cap \Ad_g(f+\Lie\o{I}_n^{\perp})
\]

\noindent meaning that $g \in N(K) \cap \o{I}_n$ by
Corollary \ref{c:intersection-2}.

\end{proof}

\subsection{Application: Semi-classical Theorem \ref{t:!-avg}}

We now give one more application of the above results, which 
will be used in \S \ref{s:skryabin}.

\begin{thm}\label{t:!-avg-cl}

For every $0\leq n \leq m \leq \infty$, the morphism:

\[
f+\Lie\o{I}_n^{\perp} \cap \Lie\o{I}_m^{\perp}/\o{I}_n\cap \o{I}_m \to
f+\Lie\o{I}_m^{\perp}/\o{I}_m
\]

\noindent is a finitely presented closed 
embedding.\footnote{For $m = \infty$ and from the DAG perspective, we
should really say ind-finitely presented, as for $0 \into \colim_n \bA^n$.}

\end{thm}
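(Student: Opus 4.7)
The plan is to reformulate the statement as a question about schemes and then verify injectivity on points, tangent injectivity, and closedness and finite presentation of the image, in parallel with the proof of Theorem \ref{t:!-avg}. Setting $V_m := \Lie\o{I}_m^{\perp}$ and $V_{n,m} := \Lie\o{I}_n^{\perp} \cap \Lie\o{I}_m^{\perp}$, the asserted map of stacks is a finitely presented closed embedding iff the associated morphism
\[
\Phi\colon \o{I}_m \times^{\o{I}_n \cap \o{I}_m} (f+V_{n,m}) \to f+V_m, \qquad (g,\xi) \mapsto \Ad_g(\xi),
\]
is one of (ind-)schemes.

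The heart of the proof is pointwise injectivity of $\Phi$: given $g \in \o{I}_m$ and $\xi,\, \Ad_g(\xi) \in f+V_{n,m}$, one must show $g \in \o{I}_n \cap \o{I}_m$. Using the inclusion $\o{I}_m\o{I}_n \subset N(K)\o{I}_n$ established in the proof of Lemma \ref{l:mn-clean} (which in particular yields $\o{I}_m \subset N(K)\cdot\o{I}_n$), factor $g = n\cdot h$ with $n \in N(K)$ and $h \in \o{I}_n$. Then $\xi' := \Ad_h(\xi) \in f+V_n$ by $\o{I}_n$-equivariance of $f+V_n$, and $\Ad_n(\xi') = \Ad_g(\xi) \in f+V_{n,m} \subset f+V_n$; thus $\xi'$ and $\Ad_n(\xi')$ both lie in $f+V_n$ and are related by $n \in N(K)$, so Corollary \ref{c:intersection-2} forces $n \in N(K) \cap \o{I}_n$. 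Hence $g = nh \in \o{I}_n$, and combined with $g \in \o{I}_m$ gives $g \in \o{I}_n \cap \o{I}_m$. The same argument over any test algebra makes $\Phi$ a monomorphism.

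Tangent injectivity at $(g,\xi)$ amounts to: for $X \in \Lie\o{I}_m$ with $[X,\xi] \in V_{n,m}$, we have $X \in \Lie(\o{I}_n \cap \o{I}_m)$. Linearizing $\o{I}_m \subset N(K)\o{I}_n$ gives $\Lie\o{I}_m \subset \fn((t)) + \Lie\o{I}_n$, so write $X = Y + Z$ with $Y \in \fn((t))$ and $Z \in \Lie\o{I}_n$; then $[Z,\xi] \in V_n$ as $\o{I}_n$ preserves $f+V_n$, whence $[Y,\xi] = [X,\xi] - [Z,\xi] \in V_n$. The affine Kostant transversality $\fg((t)) = V_n \oplus [\fg((t)),\xi]$ (valid for regular $\xi \in f+V_n$) then forces $[Y,\xi] = 0$, so $Y \in \fz(\xi) \cap \fn((t))$; transporting $\xi$ via the conjugation $t^n\Ad_{n\check\rho(t)}$ into $f+\Lie(I)$ and applying Corollary \ref{c:reg-cent} gives $\fz(\xi) \cap \fn((t)) \subset \Lie\o{I}_n$, so $Y \in \Lie\o{I}_n$, whence $X = Y+Z \in \Lie\o{I}_n$ and therefore $X \in \Lie(\o{I}_n \cap \o{I}_m)$.

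Closedness of the image and finite presentation require more care. The source $\o{I}_m \times^{\o{I}_n \cap \o{I}_m}(f + V_{n,m})$ fibers as an affine bundle over $\o{I}_m/(\o{I}_n\cap\o{I}_m) \simeq \o{I}_m\o{I}_n/\o{I}_n \hookrightarrow G(K)/\o{I}_n$, and the analysis of the closure $\ol{\o{I}_m\o{I}_n}$ carried out in Lemma \ref{l:mn-clean} can be ported to the semi-classical setting to show the image of $\Phi$ is closed in $f+V_m$. Finite presentation then follows from identifying the codimension of $V_{n,m}$ in $V_m$ with $\dim(\o{I}_n/(\o{I}_n\cap\o{I}_m)) = 2(m-n)\Delta$ for $m < \infty$ (matching the cohomological shift in Theorem \ref{t:!-avg} \eqref{i:!-avg=*-avg}), with the $m = \infty$ case handled as the colimit of finite approximations. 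The main obstacle is this closedness step: the pointwise and tangent injectivity drop out immediately from our work so far, but upgrading a monomorphism to an honest closed embedding requires the structural input of Lemma \ref{l:mn-clean} in its semi-classical guise.
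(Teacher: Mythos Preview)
Your pointwise injectivity argument is correct and is essentially the same computation the paper performs: decompose $g \in \o{I}_m$ using $\o{I}_m \subset N(K)\cdot\o{I}_n$ and apply Corollary \ref{c:intersection-2}. (For $n=0$ you need Corollary \ref{c:intersection-1} instead, but this is minor.)

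However, your tangent injectivity argument contains a genuine error. The asserted decomposition $\fg((t)) = V_n \oplus [\fg((t)),\xi]$ is false: for regular $\xi$ the image $[\fg((t)),\xi]$ has codimension $\mathrm{rk}(\fg)$ in $\fg((t))$, while $V_n$ is a lattice, so the two subspaces overlap enormously (e.g.\ for $n=0$, $[\fg[[t]],\xi] \subset \fg[[t]] = V_0$). Consequently you cannot conclude $[Y,\xi]=0$ from $[Y,\xi] \in V_n \cap [\fg((t)),\xi]$, and the reduction to $Y \in \fz(\xi)$ collapses. The correct infinitesimal input is not the centralizer statement of Corollary \ref{c:reg-cent} but the infinitesimal Springer transversality of Remark \ref{r:inf-spr}: after transporting by $t^n\Ad_{n\check\rho(t)}$, the condition $[Y,\xi]\in V_n$ becomes $[Y',\xi']\in\fg[[t]]$ with $\xi'\in f+\Lie(I)$, and Remark \ref{r:inf-spr} gives $\mathfrak{spr}^{\xi'}\cap \fn((t))/\fn[[t]]=0$.

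More seriously, your closedness and finite presentation step is not an argument but a hope: injectivity on points and tangent vectors does not by itself produce a closed embedding of infinite-type stacks, and ``porting Lemma \ref{l:mn-clean} to the semi-classical setting'' is exactly what needs to be done. The paper bypasses this entirely by a structural identification: it shows (Steps \ref{st:perp-conj}--\ref{st:!-avg-cl-zn-kn}) that for $n>0$ the stack $f+\Lie\o{I}_n^{\perp}/\o{I}_n$ is the classifying stack of a pro-smooth group scheme $Z_n^{abs}$ of regular centralizers over the affine slice $f+t^{-n}\Ad_{-n\check\rho(t)}\fb^e[[t]]$, and similarly for the mixed quotient. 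The theorem then reduces to a Cartesian diagram whose bottom row is the evident finitely presented closed embedding of affine Kostant slices; the Cartesian property is exactly the statement that centralizers in $\o{I}_m$ of elements in the small slice already lie in $\o{I}_n$, which is where the triangular decomposition and Corollary \ref{c:intersection-2} enter. This structural approach gives closedness and finite presentation for free, whereas your direct approach leaves them as the hardest unfinished step.
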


\begin{proof}

\step\label{st:perp-conj}

Suppose $n>0$ for the moment.
Then we claim that every element of 
$f+\Lie\o{I}_n^{\perp}$ can be conjugated under 
$\o{I}_n$ into $f+t^{-n}\Ad_{-n\check{\rho}(t)}\fb^e[[t]]$.
(Of course, $e$ here fits into our favorite principal $\sl_2$
as before.)

Using $\Ad_{-n\check{\rho}(t)}N(O)$, we can conjugate
any element of $f+t^{-n}\Ad_{-n\check{\rho}(t)}\fb[[t]]$
into $f+t^{-n}\Ad_{-n\check{\rho}(t)}\fb[[t]]$. Therefore, 
it suffices to show that every element of 
$f+\Lie\o{I}_n^{\perp}$ can be conjugated into
$f+t^{-n}\Ad_{-n\check{\rho}(t)}\fb[[t]]$ under $\o{I}_n$;
in fact, it suffices to use $\Ad_{-n\check{\rho}(t)} \cK_n$
for $\cK_n\subset G(O)$ the $n$th congruence subgroup.

Indeed, by a Hensel argument, this reduces to the corresponding
infinitesimal statement. Applying $t^n\Ad_{n\check{\rho}(t)}$,
this statement is:

\[
[\, t^n\fg[[t]],f+\fb[[t]] \, ] + \fb[[t]] = f+t^n\fg[[t]]+\fb[[t]] =
t^n\Ad_{n\check{\rho}(t)}(f+\Lie\o{I}_n^{\perp}).
\]

\noindent (That is a Lie bracket in the first term.)
But this is standard representation of our principal $\sl_2$.

\step

We will need the following construction.

Let $Z^{abs} \subset G(O) \times f+\fb^e[[t]]$ denote the group
scheme of (integral) regular centralizers. Recall that this group scheme
is obtained by taking jets of a smooth group scheme over $f+\fb^e$.

We obtain a pro-smooth group scheme over 
$f+t^{-n}\Ad_{-n\check{\rho}(t)}\fb^e[[t]]$ defined as:

\[
\Ad_{-n\check{\rho}(t)} Z^{abs} \subset 
\Ad_{-n\check{\rho}(t)} G(O) \times f+t^{-n}\Ad_{-n\check{\rho}(t)} \fb^e[[t]].
\]

\noindent Finally, we set 
$Z_n^{abs}$ to be the intersection of
$\Ad_{-n\check{\rho}(t)} Z^{abs} $ with $\o{I}_n$.\footnote{Here
intersecting with $\o{I}_n$ means with the
corresponding constant group scheme. We abuse terminology
in this way throughout.}
For example, $Z_0^{abs} = Z^{abs}$.

\step\label{st:!-avg-cl-zn-kn}

We will now show that $Z_n^{abs}$ can also be obtained
by intersecting $\Ad_{-n\check{\rho}(t)} Z^{abs}$ with
$\Ad_{-n\check{\rho}(t)} \cK_n$.
Assuming this, note that $Z_n^{abs}$ is pro-smooth:
it is conjugated from the $n$th congruence subgroup of 
$Z^{abs}$, which is jets on a smooth group scheme.

By Remark \ref{r:spr-fdim-2}, the group scheme
of regular centralizers over $f+\fb^e$ 
has trivial intersection with $N$. Moreover, it is
straightforward to see that this intersection is transverse;
this is a strictly simpler Lie algebra calculation.

Applying finite order jets, 
we find that no non-trivial element of $N(O/t^n)$
can stabilize an element of $f+\fb[[t]]/t^n\fb[[t]] \subset \fg[[t]]/t^n\fg[[t]]$.

Let us prove the claim from above now. Applying 
$t^n\Ad_{n\check{\rho}(t)}$, it is the same as to show that:

\[
Z^{abs} \cap G(O) \times_{G(O/t^n)} N(O/t^n) = Z^{abs} \cap \cK_n.
\]

\noindent But this is exactly what was shown above:
an element of the left hand side is a pair
$\xi \in f+\fb^e[[t]]$ and $\gamma \in G(O) \times_{G(O/t^n)} N(O/t^n)$
stabilizing it; reducing this equation modulo $t^n$ and applying
our earlier analysis, we find
that $\gamma$ must be the identity modulo $t^n$. 

\step 

Now suppose again that $n>0$.
Note that we have a canonical map:

\[
f+t^{-n}\Ad_{-n\check{\rho}(t)} \fb^e[[t]] \to 
f+\Lie\o{I}_n^{\perp}/\o{I}_n.
\]

\noindent We claim that this map realizes the target
as the classifying stack of $Z_n^{abs}$,
with the corresponding section being given by the Kostant section.

Indeed, this follows immediately from the definition
of $Z_n^{abs}$ and Step \ref{st:perp-conj}.

\step 

Now suppose $n<m$, so $m \neq 0$.
Then similar analysis applies to the quotient stack:

\[
f+\Lie\o{I}_n^{\perp} \cap \Lie\o{I}_m^{\perp}/\o{I}_n\cap \o{I}_m.
\]

\noindent Namely, this is again a classifying stack, this
time over: 

\[
\big(f+\Lie\o{I}_n^{\perp} \cap \Lie\o{I}_m^{\perp}\big) 
\cap \big(f+\fb^e((t))\big) =
f+t^{-n}\Ad_{-n\check{\rho}(t)} \fb^e[[t]].
\]

\noindent The relevant 
group scheme is now the group scheme of centralizers
that lie in $\o{I}_n \cap \o{I}_m$. 

\step

We now prove the theorem. We may assume $n<m$, so $m \neq 0$.

We claim that there is a Cartesian diagram:

\[
\xymatrix{
f+\Lie\o{I}_n^{\perp} \cap \Lie\o{I}_m^{\perp}/\o{I}_n\cap \o{I}_m 
\ar[rr] \ar[d] & &
f+\Lie\o{I}_m^{\perp}/\o{I}_m \ar[d] \\
f+t^{-n}\Ad_{-n\check{\rho}(t)}\fb^e[[t]] \ar@{^(->}[rr] & &
f+t^{-m}\Ad_{-m\check{\rho}(t)}\fb^e[[t]]
}.
\]

\noindent Here the vertical arrows are induced by the Kostant section.
Obviously this claim would imply the theorem.

By our above analysis, this Cartesian product 
is the classifying stack of $Z_m^{abs}$ restricted to
$f+t^{-n}\Ad_{-n\check{\rho}(t)}\fb^e[[t]]$. 

On the other hand, the top left term in this diagram
is the classifying stack of centralizers lying in 
$\o{I}_n \cap \o{I}_m$. 

So we need to show that for any
$\xi \in f+t^{-n}\Ad_{-n\check{\rho}(t)}\fb^e[[t]]$ and any
element $\gamma \in \o{I}_m$ centralizing it, this element lies in
$\o{I}_n$. 

For such $\gamma$, we have a triangular decomposition
$\gamma = \gamma^+ \cdot \gamma^-$ with
$\gamma^+ \in N(K)$ and $\gamma^- \in B^-(K)$
(because $m>0$). 
Because $m$ is larger than $n$, $\gamma^-$ lies in $\o{I}_n$.
Therefore, we need to show that $\gamma^+ \in N(O)$.

For this, observe that:

\[
\xi = \Ad_{\gamma}(\xi) = \Ad_{\gamma^+} \big(\Ad_{\gamma^-}(\xi)\big).
\] 

\noindent Because $\gamma^- \in \o{I}_n$ (as noted above),
$\Ad_{\gamma^-}(\xi) \in f+\Lie\o{I}_n^{\perp}$. But $\xi$ satisfied
the same property, Corollary \ref{c:intersection-2} implies that
$\gamma^+ \in \o{I}_n$ as desired.

\end{proof}

\begin{rem}\label{r:!-avg-cl}

We consider this as a semi-classical version of 
Theorem \ref{t:!-avg}; let us explain why. The reader 
may safely skip this. It freely uses some ideas from 
\S \ref{s:ds} and Appendix \ref{a:hc}.

First, note that Theorem \ref{t:!-avg} \eqref{i:!-avg-exists}
can be reformulated as saying that $\iota_{n,m,*}$
preserves compacts.

The category $\widehat{\fg}_{\kappa}\mod^{\o{I}_n,\psi}$
has the Kazhdan-Kostant filtration with \emph{semi-classical}
category $\QCoh^{ren}(f+\Lie\o{I}_n^{\perp}/\o{I}_n)$
(see \emph{loc. cit}. for the notation). The $*$-averaging
functor $\iota_{n,m,*}:\widehat{\fg}_{\kappa}\mod^{\o{I}_n,\psi} \to
\widehat{\fg}_{\kappa}\mod^{\o{I}_m,\psi}$ has 
associated semi-classical functor given
\textemdash{} up to a mild correction \textemdash{} by
pull-push along the correspondence:

\[
\xymatrix{
& f+\Lie\o{I}_n^{\perp} \cap \Lie\o{I}_m^{\perp}/\o{I}_n \cap \o{I}_m \ar[dl] \ar[dr] & \\
f+\Lie\o{I}_n^{\perp}/\o{I}_n &&
f+\Lie\o{I}_m^{\perp}/\o{I}_m.
}
\] 

\noindent The pullback along the left arrow obviously preserves
compacts, whereas pushforward along the right arrow does because
it is a (finitely-presented) regular embedding.

A word on the \emph{mild correction}: running the calculation
properly, one finds that the pullback should actually 
be a $*$-pullback followed by a $!$-pullback for the morphisms:

\[
f+\Lie\o{I}_n^{\perp} \cap \Lie\o{I}_m^{\perp}/\o{I}_n \cap \o{I}_m \to
f+\Lie\o{I}_n^{\perp}/\o{I}_n \cap \o{I}_m \to 
f+\Lie\o{I}_n^{\perp}/\o{I}_n. 
\]

\noindent But this does not affect the discussion above.
(The reason this appears is that $\iota_{n,m,*}$
is the composition 
$\widehat{\fg}_{\kappa}\mod^{\o{I}_n,\psi} \xar{\Oblv}
\widehat{\fg}_{\kappa}\mod^{\o{I}_n \cap \o{I}_m,\psi} \xar{\Av_*^{\psi}}
\widehat{\fg}_{\kappa}\mod^{\o{I}_n,\psi}$;
the semi-classical version of a forgetful functor involves a 
$*$-pullback along a correspondence, while the semi-classical
version of averaging involves a $!$-pullback along a regular embedding.)

\end{rem}

\section{Drinfeld-Sokolov realization of the generalized vacuum representations}\label{s:ds}

\subsection{A question}\label{ss:ds-intro}

We begin this section with a basic question about $\sW$-algebras:
who are the \emph{generalized vacuum} modules?
By this, at first pass, we mean that we expect a
projective system of modules 
$\sW_{\kappa}^n \in \sW_{\kappa}\mod^{\heart}$ ($n \geq 0$)
playing a similar role to the modules 
$\ind_{t^n\fg[[t]]}^{\widehat{\fg}_{\kappa}}(k)$ for the Kac-Moody algebra.

In more detail, we want that:

\begin{itemize}

\item $\sW_{\kappa}^0$ is the vacuum representation, 
i.e., $\sW_{\kappa}^0 = \sW_{\kappa} \in \sW_{\kappa}\mod^{\heart}$. 

\item The inverse limit of the $\sW_{\kappa}^n$ is the
topological chiral\footnote{This funny name is taken
from \cite{beilinson-top-alg}, who asks that it be used.
It means an associative algebra with respect to the
$\overset{\rightarrow}{\otimes}$-monoidal product
on $\Pro(\Vect^{\heart})$ from \emph{loc. cit}. Note that
in \cite{chiral}, such a thing is called a 
\emph{topological associative algebra}.}
 algebra $\sW_{\kappa}^{as}$ 
associated with the vertex algebra $\sW_{\kappa}$
(c.f. \cite{chiral} \S 3.6.2).

\item For $\fg = \sl_2$, so that $\sW_{\kappa}$ is a Virasoro
algebra, the modules $\sW_{\kappa}^n$ should be induced
from the subalgebra $t^{2n}\on{Der}(\cD)$.\footnote{The $2$ here
is needed for compatibility with the next expectation. At
the critical level, this might
be compared to the fact that the Sugawara element
corresponding to the derivation $t^n\partial_t$ ($n\geq 0$)
acts by zero on the module 
$\ind_{t^{\lfloor{\frac{n}{2}}\rfloor}\fg[[t]]}^{\widehat{\fg}_{crit}}(k)$,
c.f. \cite{hitchin} Theorem 3.7.9.}\footnote{The other expectations
in this list will be verified in this section, but not this one. 
For this comparison,
see Example \ref{e:virasoro}.}

\item The module $\sW_{\kappa}^n$ should have a canonical
filtration\footnote{We maintain the conventions of Appendix \ref{a:hc},
so filtrations are assumed exhaustive. Moreover, since we are speaking
about objects of an abelian category, we are tacitly assuming here
that each $F_i \sW_{\kappa}^n \to \sW_{\kappa}^n$ is injective.}
 $F_{\dot}\sW_{\kappa}^n$ 
compatible with the filtration on $\sW_{\kappa}$ and with
$F_{-1}\sW_{\kappa}^n = 0$.
Recalling that the associated graded of $\sW_{\kappa}^{as}$
is\footnote{This is not quite canonical:
we are using our choice of $dt$ and some choice
of non-degenerate $\Ad$-invariant bilinear form on $\fg$ to make 
$\fg((t))$ self-dual. More canonical would be to take
$\mu^{-1}(\psi)/N(K)$ for $\mu:\fg((t))^{\vee} \to \fn((t))^{\vee}$ the
canonical map.} the algebra of functions on the indscheme 
$f+\fb((t))/N(K)$, the associated graded of $\sW_{\kappa}^n$
should be identified with the structure sheaf
of the closed subscheme:\footnote{Recall that
$f+\fb((t))/N(K) = f+\fb^e((t))$ (for $e$ fitting into
the $\sl_2$-triple $(e,2\Lie(\check{\rho})(1),f)$)
with $f+\fb[[t]]/N(O) = f+\fb^e[[t]]$. So for $n = 0$, we really
do get a closed subscheme, and for general $n$ this follows
because $t^{-n} \Ad_{-n\check{\rho}(t)}$ is an automorphism
of our indscheme.}

\[
t^{-n} \Ad_{-n\check{\rho}(t)}\big(f+\fb[[t]]/N(O)\big) \subset 
f+\fb((t))/N(K).
\]

\noindent The morphism $\sW_{\kappa}^{n+1} \to \sW_{\kappa}^n$
should be strictly compatible with filtrations and should
induce the restriction of functions map when we identify
the associated graded as above.

\item The $\sW_{\kappa}^n$ should form a flat family of
modules as we vary $\kappa$, and moreover, this family
should extend to allow $\kappa \to \infty$ for $\kappa$
non-degenerate. In this
case, recall that $\sW_{\infty}^{as}$ is the algebra of functions
on the indscheme $\Op_G(\o{\cD})$ of opers, i.e., $(f+\fb((t)))dt/N(K)$ where
$N(K)$ acts by the \emph{gauge} action (not the adjoint action).
Then $\sW_{\infty}^n$ should be the structure sheaf
of the subscheme $\Op_G^{\leq n}$ of \emph{opers with singularities of order 
$\leq n$} as defined in \cite{hitchin} \S 3.8
(see also \cite{fg2} \S 2). We remind that this subscheme
is:\footnote{One uses $\check{\rho}(t)$ to define the
structure maps as we vary $n$.}

\[
\Op_G^{\leq n} \coloneqq 
\big(f+\Ad_{-n\check{\rho}(t)}(\fb[[t]])\big)dt/\Ad_{-n\check{\rho}(t)} N(O)
\]

\noindent with $\Ad_{-n\check{\rho}(t)} N(O)$ acting through the gauge 
action.\footnote{So from an \emph{opers-centric} worldview,
the modules $\sW_{\kappa}^n$ are quantizations
of opers with singularity $\leq n$. The existence of such
quantizations implies that the subschemes 
$\Op_G^{\leq n} \subset \Op_G(\o{\cD})$ are
coisotropic with respect to the canonical Poisson
structure on $\Op_G(\o{\cD})$. 
This is straightforward to see from the given description of
$\Op^{\leq n}$,
c.f. \cite{fg2} Lemma 4.4.1
and various points in the discussion of \cite{hitchin} \S 3.6-3.8.}

\end{itemize}

The purpose of this section and the next 
is to construct such modules, which
seem not to exist elsewhere in the literature.

\subsection{}

In fact, we will give two constructions of these modules.

We will present the two constructions somewhat out of order: first, we give
a construction via Drinfeld-Sokolov reduction, and then in 
\S \ref{s:free-field}
give a (perhaps) more elementary construction via the
\emph{free-field realization} of the $\sW$-algebra. 
The reason is that the former is the one relevant for proving
the affine Skryabin theorem. The latter is included in this paper 
for the sake of completeness, and because it 
plays a technical role in deducing 
the categorical Feigin-Frenkel theorem from the
affine Skryabin theorem. 

\subsection{Adolescent Whittaker construction}

The main idea of this section is to take:

\[
\sW_{\kappa}^n = \Psi(\ind_{\Lie \o{I}_n}^{\widehat{\fg}_{\kappa}} \psi_{\o{I}_n})
\]

\noindent up to a cohomological shift, and show that with this
construction satisfies our expectations.

\begin{rem}

To make some of our treatment more elementary, this result
is split across Theorem \ref{t:ds-no-w-str}
and Theorem \ref{t:ds-wstr}; the former does not explicitly mention
$\sW$-algebras, while the latter does.

\end{rem}

\begin{rem}

This construction is motivated by the following considerations.

Note that: 

\[
\ind_{\Lie \o{I}_n}^{\widehat{\fg}_{\kappa}}(\psi) \in 
\widehat{\fg}_{\kappa}\mod^{\o{I}_n,\psi} =
\Whit^{\leq n}(\widehat{\fg}_{\kappa}\mod)
\]

\noindent is compact, and
generates this category whenever $n>0$. Therefore, as
we vary $n$, the objects 
$\iota_{n,!}(\ind_{\Lie \o{I}_n}^{\widehat{\fg}_{\kappa}}(\psi)) \in 
\Whit(\widehat{\fg}_{\kappa}\mod)$ give compact generators.

Anticipating the affine Skryabin theorem
$\Whit(\widehat{\fg}_{\kappa}\mod) \simeq \sW_{\kappa}\mod$
and its compatibility with the functor $\Psi$,
we are computing the images of our compact generators
as modules over the $\sW$-algebra.

\end{rem}

\begin{rem}

For $n = 0$, this theorem says that $\Psi(\bV_{\kappa}) = \sW_{\kappa}$,
which is the definition of the right hand side. The argument
given below recovers
the fundamental results on $\sW_{\kappa}$: 
that it is in cohomological degree
$0$, and that it is a filtered vertex algebra\footnote{
One should use factorization techniques for this, 
c.f. \cite{chiral} \S 3.8.}
with commutative
associated graded the algebra of functions on $f+\fb[[t]]/N(O)$.

The argument is somewhat more conceptual than appears elsewhere
(c.f. \cite{dbt} and \cite{fbz} Chapter 15). It substantially
overlaps with the presentation given in the recent 
survey \cite{arakawa-survey},
but the issue of the convergence of the spectral sequence
is dealt with by a different argument. The key advantage of the
method below is that it avoids the \emph{tensor product decomposition}
appearing in other treatments (even in \cite{fg-weyl}), which is 
always justified by explicit formulae I have not been able to
understand conceptually.\footnote{Though it may well be that a conceptual
explanation of this method exists and I just could not find it. Or perhaps
the argument we give here for the convergence, 
which has some remarkable similarities with the tensor product argument,
is that conceptual explanation.}

\end{rem}

\subsection{Drinfeld-Sokolov reduction}

Recall that there is a continuous functor:\footnote{We remind at this point that e.g.
$\Vect$ denotes the \emph{DG category} of chain complexes
of $k$-vector spaces, and that \emph{canonical} means
defined up to canonical quasi-isomorphism 
(in the $\infty$-categorical sense).
}

\[
\Psi:\widehat{\fg}_{\kappa}\mod \to \Vect
\]

\noindent defined as the composition:

\[
\widehat{\fg}_{\kappa}\mod \overset{\Oblv}{\to}
\fn((t))\mod \xar{C^{\sinf}(\fn((t)),\fn[[t]],(-) \otimes -\psi)}
\Vect
\]

\noindent where $-\psi$ is the 1-dimensional 
$\fn((t))$-module corresponding to the character
$-\psi$, and
$C^{\sinf}$ is the semi-infinite cohomology
functor as defined in \S \ref{ss:sinf}. 
We remind that $\fn[[t]]$ appears in the notation, but
plays a very mild role. 

\subsection{}

The first main result of this section is the following.

\begin{thm}\label{t:ds-no-w-str}

For every $n \geq 0$, the complex
$\Psi(\ind_{\Lie \o{I}_n}^{\widehat{\fg}_{\kappa}} \psi_{\o{I}_n})$
is concentrated in cohomological degree
$-n\Delta$. (Here $\Delta$ is used as in
\S \ref{ss:!-avg}, i.e., $\Delta = 2(\rho,\check{\rho})$.)

\end{thm}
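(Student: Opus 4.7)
The plan is to reduce the claim to the semi-classical computation of Theorem~\ref{t:!-avg-cl} via a Kazhdan--Kostant filtration. Write $M_n = \ind_{\Lie\o{I}_n}^{\widehat{\fg}_{\kappa}}(\psi_{\o{I}_n})$. First, I would equip $M_n$ with its canonical KK filtration $F_{\bullet}M_n$: using the principal grading from $\check{\rho}$ together with the PBW filtration on the topological enveloping algebra of $\widehat{\fg}_{\kappa}$, one obtains an exhaustive filtration with $F_{-1}M_n = 0$ whose associated graded is canonically the structure sheaf $\sO_{f+\Lie\o{I}_n^{\perp}}$, where $\Lie\o{I}_n^{\perp} \subset \fg((t))^{\vee} \simeq \fg((t))$ is taken with respect to the residue pairing via $\kappa$ and $dt$. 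This filtration is compatible with $\Psi$ and induces a spectral sequence $E_1^{p,q} \Rightarrow H^{p+q}\Psi(M_n)$ whose $E_1$-page is given by the classical Drinfeld--Sokolov reduction, i.e., by derived Hamiltonian reduction along the moment map $\fg((t))^{\vee} \to \fn((t))^{\vee}$ at the value $\psi$.

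The $E_1$-page can be identified explicitly: the classical reduction of $\sO_{f+\Lie\o{I}_n^{\perp}}$ is the pushforward of the structure sheaf of $f+(\Lie\o{I}_n^{\perp} \cap \fb((t)))/(\o{I}_n \cap N(K))$ along its natural map to $f+\fb^e((t))$. By Theorem~\ref{t:!-avg-cl} applied with $m = \infty$, this map is a (ind-)finitely presented closed embedding, so the derived pushforward degenerates to an ordinary one and the $E_1$-page is concentrated in a single cohomological degree. Tracking normalizations of semi-infinite cohomology accounts for the shift: $C^{\sinf}(\fn((t)),\fn[[t]],-)$ is computed relative to the compact lattice $\fn[[t]]$, whereas the natural lattice for $M_n$ is $\Lie\o{I}_n \cap \fn((t)) = \Ad_{-n\check{\rho}(t)}\fn[[t]]$, and the dimension of the discrepancy $\Ad_{-n\check{\rho}(t)}\fn[[t]]/\fn[[t]]$ equals $n\sum_{\alpha>0}\langle\alpha,\check{\rho}\rangle = n\Delta$, producing the shift by $-n\Delta$.

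The main obstacle, as flagged in the excerpt, is convergence of this spectral sequence: the KK filtration is unbounded above and $\Psi$ is semi-infinite, so generic convergence theorems do not apply. Rather than invoke the tensor-product trick of \cite{fbz} and \cite{arakawa-survey}, I would exhaust: since $\Psi$ is continuous, $\Psi(M_n) = \colim_p \Psi(F_{\leq p}M_n)$, so it suffices to show each $\Psi(F_{\leq p}M_n)$ is concentrated in degree $-n\Delta$. Each $F_{\leq p}M_n$ has bounded KK filtration, so the spectral sequence converges and one inducts on $p$ using the triangle $F_{\leq p-1}M_n \to F_{\leq p}M_n \to \gr_p M_n \to$. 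The connecting morphism $\Psi(\gr_p M_n) \to \Psi(F_{\leq p-1}M_n)[1]$ is a map in $\Vect$ between objects concentrated in degrees $-n\Delta$ and $-n\Delta - 1$ respectively, hence lives in $\Ext^1_k(H^{-n\Delta}\Psi(\gr_p M_n), H^{-n\Delta}\Psi(F_{\leq p-1}M_n)) = 0$ by semisimplicity of $\Vect^{\heart}$. Passing to the filtered colimit yields the desired concentration in degree $-n\Delta$.
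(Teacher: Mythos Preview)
Your claim that the Kazhdan--Kostant filtration on $M_n = \ind_{\Lie\o{I}_n}^{\widehat{\fg}_{\kappa}}(\psi)$ satisfies $F_{-1}M_n = 0$ is false, and this is precisely the obstruction the paper is designed to overcome. Concretely, for any positive root $\alpha$ with $(\check{\rho},\alpha) > 1$ and any sufficiently large $r$, the element $e_{\alpha}/t^r \in \fg((t))/\Lie\o{I}_n$ has PBW degree $1$ and $-\check{\rho}$-degree $-(\check{\rho},\alpha)$, hence KK degree $1 - (\check{\rho},\alpha) < 0$; taking powers shows the filtration is unbounded below. (This is exactly Warning~\ref{w:kk-bdd}: KK filtrations are typically unbounded below and not even complete.) Without a bounded-below filtration on $M_n$, your induction on $p$ has no base case, and the exhaustion $\Psi(M_n) = \colim_p \Psi(F_p M_n)$ does not help: the colimit is over $p \to +\infty$, but you need control at $p \to -\infty$ to start.

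The paper's proof addresses this directly. It does \emph{not} attempt to bound the KK filtration on the input $M_n$; instead, it proves that the induced KK filtration on the \emph{output} $\Psi_n(M_n)$ is bounded below. The mechanism is a bifiltration: $M_n$ carries both PBW and KK filtrations compatibly, and $\Psi_n$ is bifiltered. The PBW filtration on $\Psi_n(M_n)$ \emph{is} bounded below (essentially because PBW is bounded below on $M_n$ and the semi-infinite complex has the right shape). One then checks three vanishings --- $F_i^{PBW}\Psi_n(M_n) = 0$ for $i<0$, $F_i^{KK}\gr^{PBW}\Psi_n(M_n) = 0$ for $i<0$ (via an explicit expanding $\bG_m$-action on the relevant stack), and $F_i^{PBW}\gr^{KK}\Psi_n(M_n) = 0$ for $i<0$ --- and a short formal argument with bifiltered vector spaces then forces $F_i^{KK}\Psi_n(M_n) = 0$ for $i<0$. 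Your identification of the KK associated graded via Theorem~\ref{t:!-avg-cl} with $m=\infty$ is fine and agrees in content with the paper's direct Kostant-slice computation; it is the convergence step that needs the bifiltration idea you are missing.
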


The argument will be given in \S \ref{ss:ds-ss} below.

\begin{rem}

The main issue in what follows is the convergence
of a certain spectral sequence. The approach given
below seems to be the most versatile one for
$n = 0$. However, for $n>0$, the method of
\S \ref{ss:amp-rep-thry-method} can also be adapted
to this purpose and is much more flexible.

\end{rem}

\subsection{}

We will prove the above by an argument about passage to the associated
graded, using the methods Appendix \ref{a:hc}.

First, note that $G(K)$ acts on $\widehat{\fg}_{\kappa}$, so we obtain
an action of $\bG_m$ on the Kac-Moody algebra
via:

\[
-\check{\rho}:\bG_m \to G \subset G(O) \subset G(K).
\]

\noindent This action preserves $\o{I}_n$ for all $n \geq 0$.
Moreover, the character $\psi:\Lie\o{I}_n \to k$ is $\bG_m$-equivariant
if we give the target $k$ the degree $-1$ grading.

Therefore, Appendix \ref{a:hc} produces
\emph{PBW} and \emph{Kazhdan-Kostant} (KK) filtrations on
the categories
$\widehat{\fg}_{\kappa}\mod^{\o{I}_n,\psi}$
and $\fn((t))\mod^{\Ad_{-n\check{\rho}(t)} N(O),\psi}$. 
(We remind that $\Ad_{-n\check{\rho}(t)} N(O) = N(K) \cap \o{I}_n$.)

\subsection{}\label{ss:fil-summary}

We recall the basic facts about these filtrations that we will need.
We will need some of the language and notation from
Appendix \ref{a:hc}: see especially the material about
filtrations in \S \ref{ss:fil-intro} and 
renormalization in \S \ref{ss:ren-start}. 

Throughout, we fix an $\Ad$-invariant identification $\fg \simeq \fg^{\vee}$
and a 1-form $dt$ to obtain $\fg((t))^{\vee} \simeq \fg((t))$;
note that this induces:

\[
\fn((t))^{\vee} \simeq \fg((t))/\fb((t)). 
\]

As in \S \ref{ss:nostalgia}, for $V \subset \fg((t))$, we let
$V^{\perp}$ denote $(\fg((t))/V)^{\vee} \subset \fg((t))^{\vee} = \fg((t))$;
we recall from \emph{loc. cit}. that:

\[
\Lie\o{I}_n^{\perp} = 
\Ad_{-n\check{\rho}(t)}(\fg[[t]]+t^{-n}\fb[[t]]).
\]

\begin{itemize}

\item 

We have \emph{semi-classical} categories:

\[
\begin{gathered}
\widehat{\fg}_{\kappa}\mod^{\o{I}_n,\psi,PBW\mathendash{cl}} = 
\QCoh^{ren}(\Lie\o{I}_n^{\perp}/\o{I}_n) \\
\widehat{\fg}_{\kappa}\mod^{\o{I}_n,\psi,KK\mathendash{cl}} = 
\QCoh^{ren}(f+\Lie\o{I}_n^{\perp}/\o{I}_n)
\end{gathered}
\]

\noindent for $f \in \fn^-((t))$ our principal nilpotent corresponding to $\psi$.

For the $\fn((t))$-categories, observe that:

\[
\Lie\o{I}_n^{\perp}/\big(\Lie\o{I}_n^{\perp} \cap \fb((t))\big) =
\Ad_{-n\check{\rho}(t)}(\fg[[t]]+t^{-n}\fb[[t]]).
\]

and:\footnote{At various points in our discussion, the abundance of symbols
$\Ad_{-n\check{\rho}(t)}$ means something is canonically isomorphic
to the same expression with all such symbols removed. We 
still retain the notation since it is an important bookkeeping device,
especially as we vary $n$.}

\[
\begin{gathered}
\fn((t))\mod^{N(K) \cap \o{I}_n,\psi,PBW\mathendash{cl}} = 
\QCoh^{ren}\Big(
\Ad_{-n\check{\rho}(t)} (\fg[[t]]/\fb[[t]])/
\Ad_{-n\check{\rho}(t)} N(O)
\Big)
\\ 
\fn((t))\mod^{N(K) \cap \o{I}_n,\psi,KK\mathendash{cl}} = 
\QCoh^{ren}\Big(
f+\Ad_{-n\check{\rho}(t)} (\fg[[t]]/\fb[[t]])/
\Ad_{-n\check{\rho}(t)} N(O)
\Big)
\end{gathered}
\]

\noindent where these arise from the identification:

\[
\Lie\o{I}_n^{\perp}/\big(\Lie\o{I}_n^{\perp} \cap \fb((t))\big) = 
\Ad_{-n\check{\rho}(t)}\big((\fg[[t]]+t^{-n}\fb[[t]])/t^{-n}\fb[[t]]\big) =
\Ad_{-n\check{\rho}(t)} (\fg[[t]]/\fb[[t]])
\]

\item 

The forgetful functor:

\[
\widehat{\fg}_{\kappa}\mod^{\o{I}_n,\psi} \to
\fn((t))\mod^{\Ad_{-n\check{\rho}(t)} N(O),\psi}
\]

\noindent is filtered for either filtration. Its underlying
PBW semi-classical functor:

\[
\QCoh^{ren}(\Lie\o{I}_n^{\perp}/\o{I}_n)
\to 
\QCoh^{ren}\Big(
\Ad_{-n\check{\rho}(t)} (\fg[[t]]/\fb[[t]])/
\Ad_{-n\check{\rho}(t)} N(O)
\Big)
\]

\noindent is given by pullback/pushforward along
the obvious structure maps from:

\[
\Lie\o{I}_n^{\perp}/\Ad_{-n\check{\rho}(t)} N(O).
\]

\item 

The functor:

\begin{equation}\label{eq:psi-n-sinf}
C^{\sinf}(\fn((t)),\Ad_{-n\check{\rho}(t)} N(O),
(-) \otimes -\psi): 
\fn((t))\mod^{\Ad_{-n\check{\rho}(t)} N(O),\psi} \to \Vect
\end{equation}

\noindent is filtered for each of these filtrations. 
Here we remind the reader of Notation \ref{notation:h_0=k},
which says:

\[
\begin{gathered}
C^{\sinf}(\fn((t)),\Ad_{-n\check{\rho}(t)} N(O),-) = 
C^{\sinf}(\fn((t)),\Ad_{-n\check{\rho}(t)} \fn[[t]],
\Ad_{-n\check{\rho}(t)} N(O),-) = \\
C^{\sinf}(\fn((t)),\Ad_{-n\check{\rho}(t)} \fn[[t]],-)
\end{gathered}
\]

\noindent where the first equality is a definition
and the second equality is only an equality of functors,
not of filtered functors (the fact that it is an equality of functors
is a consequence of the prounipotence of our group scheme
$\Ad_{-n\check{\rho}(t)} N(O)$).

Its underlying 
PBW (resp. KK) semi-classical functors
are given by $*$-restriction to:

\[
0/\Ad_{-n\check{\rho}(t)} N(O) \subset
\Ad_{-n\check{\rho}(t)} (\fg[[t]]/\fb[[t]])/
\Ad_{-n\check{\rho}(t)} N(O)
\]

(resp. $f/\Ad_{-n\check{\rho}(t)} N(O)
 \subset f+\Ad_{-n\check{\rho}(t)} (\fg[[t]]/\fb[[t]])/
\Ad_{-n\check{\rho}(t)} N(O)$) followed by
global sections, i.e., group cohomology with respect
to $\Ad_{-n\check{\rho}(t)} N(O)$.

\item These two filtrations fit into a \emph{bifiltration} (c.f. \S \ref{ss:kk-bifilt}).
 
In the language of \S \ref{ss:kk-bifilt},
the KK filtration on the PBW-semi-classical categories 
is induced via Example \ref{e:fil-gm} from the 
action:\footnote{Note that in Appendix \ref{a:hc},
we use the sometimes confusing
convention that the action of $\bG_m$ on a scheme
is \emph{expanding} if functions are non-negatively
graded. The reason is that if any group acts on a
scheme, there is an inverse sign for the induced
action on functions. But e.g., the grading
on $\Sym(\fg) = \gr_{\dot}U(\fg)$ corresponds to 
the action of $\bG_m$ by inverse homotheties on
$\fg^{\vee}$.} 

\[
\lambda \cdot x = 
\lambda^{-1}\cdot\Ad_{\check{\rho}(\lambda^{-1})}(x) 
\]

\noindent of $\bG_m$ on 
$\Lie\o{I}_n^{\perp}/\o{I}_n$ and 
$\Ad_{-n\check{\rho}(t)} (\fg[[t]]/\fb[[t]])/
\Ad_{-n\check{\rho}(t)} N(O)$ respectively (c.f. Remark \ref{r:psi-filts-on-cl}).

\end{itemize}

\subsection{}\label{ss:ds-ss}

With these preliminaries aside, we are prepared to prove
the above result.

First, we introduce the following notation, which significantly
reduces the burden in what follows.

\begin{notation}\label{n:det-lines}

For all $n \geq 0$, let $\ell^n$ denote the line
$\det(\Ad_{-n\check{\rho}(t)}\fn[[t]]/\fn[[t]])$. For $n \leq m$,
we let:

\[
\ell^{n,m} \coloneqq 
\det(\Ad_{-m\check{\rho}(t)}\fn[[t]]/\Ad_{-n\check{\rho}(t)}\fn[[t]]) =
\ell^m \otimes \ell^{n,\vee}.
\]

\end{notation}

\begin{proof}[Proof of Theorem \ref{t:ds-no-w-str}]

Here is the strategy: we will construct a Kazhdan-Kostant
filtration on $\ind_{\Lie\o{I}_n}^{\widehat{\fg}_{\kappa}}(\psi)$
to obtain one on $\Psi(\ind_{\Lie\o{I}_n}^{\widehat{\fg}_{\kappa}}(\psi))$,
and it will be easy to show that the associated graded for the latter
is in one cohomological degree.

We would be done, but that the Kazhdan-Kostant filtration on
$\ind_{\Lie\o{I}_n}^{\widehat{\fg}_{\kappa}}(\psi)$ 
\emph{is not bounded below} (and not even complete). 
We will nevertheless show
that the induced filtration \emph{on its Drinfeld-Sokolov} reduction is 
bounded below. 
This will be achieved by a comparison with its PBW filtration,
which is bounded below (although its associated graded is
not in a single cohomological degree).

\step\label{st:ds-filts}

Observe that: 

\[
\ind_{\Lie\o{I}_n}^{\widehat{\fg}_{\kappa}}(\psi)\in 
\widehat{\fg}_{\kappa}\mod^{\o{I}_n,\psi}
\]

\noindent has a canonical KK filtration, denoted
$F_{\dot}^{KK} \ind_{\Lie\o{I}_n}^{\widehat{\fg}_{\kappa}}(\psi)$.
Indeed, this follows by functoriality from Example \ref{e:pbw-kk-k=h}.
This filtration fits into a bifiltration with the PBW filtration
$F_{\dot}^{PBW} \ind_{\Lie\o{I}_n}^{\widehat{\fg}_{\kappa}}(\psi)$.

The induced KK filtration on the vector space:

\[
\gr_{\dot}^{PBW} \ind_{\Lie\o{I}_n}^{\widehat{\fg}_{\kappa}}(\psi) =
\Sym^{\dot} \fg((t))/\Lie\o{I}_n
\]

\noindent is easy to understand concretely
\textendash{} the KK filtration is:

\[
F_i^{KK} \gr_{\dot}^{PBW} \ind_{\Lie\o{I}_n}^{\widehat{\fg}_{\kappa}}(\psi) =
\underset{i^{\prime} \leq i}{\oplus} \,
\underset{j\in \bZ}{\oplus} \, \big(\Sym^{i^{\prime}-j} \fg((t))/\Lie\o{I}_n\big)^j
\]

\noindent where the superscript $j$ indicates the
$-\check{\rho}$-grading (note the sign!). 
(For example, for a positive
root $\alpha$, $t^m e_{\alpha}$ has grading 
$-(\check{\rho},\alpha)$, so the corresponding
element lies in $F_{1-(\check{\rho},\alpha)}^{KK}$
but, if non-zero, this element will not lie in 
$F_{-(\check{\rho},\alpha)}^{KK}$.)

\step 

Note that we can compute 
$\Psi(\ind_{\Lie\o{I}_n}^{\widehat{\fg}_{\kappa}}(\psi))$
as the \emph{Harish-Chandra} version of semi-infinite cohomology
(c.f. \S \ref{ss:sinf-hc}):

\[
\Psi(\ind_{\Lie\o{I}_n}^{\widehat{\fg}_{\kappa}}(\psi)) = 
C^{\sinf}(\fn((t)),\fn[[t]],\Ad_{-n\check{\rho}(t)} N(O),
\ind_{\Lie\o{I}_n}^{\widehat{\fg}_{\kappa}}(\psi) \otimes -\psi)
\]

\noindent by prounipotence of $\Ad_{-n\check{\rho}(t)} N(O)$.

Since the functors:

\[
\widehat{\fg}_{\kappa}\mod^{\o{I}_n,\psi} \xar{\Oblv}
\fn((t))\mod^{\Ad_{-n\check{\rho}(t)} N(O),\psi} 
\xar{C^{\sinf}(\fn((t)),\Ad_{-n\check{\rho}(t)} N(O),
- \otimes -\psi)}
\Vect
\]

\noindent are bifiltered,
we obtain a bifiltration on 
$\Psi(\ind_{\Lie\o{I}_n}^{\widehat{\fg}_{\kappa}}(\psi)) \in \Vect$.
In particular, we obtain the filtrations
$F_{\dot}^{PBW}\Psi(\ind_{\Lie\o{I}_n}^{\widehat{\fg}_{\kappa}}(\psi))$
and $F_{\dot}^{KK}\Psi(\ind_{\Lie\o{I}_n}^{\widehat{\fg}_{\kappa}}(\psi))$.
We will now compute the associated graded complexes.

First, it is convenient to slightly modify the functor
$\Psi$. We define: 

\begin{equation}\label{eq:psi-n-defin}
\Psi_n(-) = 
C^{\sinf}(\fn((t)),\Ad_{-n\check{\rho}(t)}\fn[[t]],-) =
\Psi(-) \otimes 
\ell^{n,\vee} [-n\Delta]
\end{equation}

\noindent (c.f. \eqref{eq:psi-n-sinf}). 
Obviously it differs from $\Psi$ only by a cohomological shift and 
tensoring with a 1-dimensional vector space. (At the level
of filtered functors, there is also a shift in the indexing by
$n\Delta$.)

Then we claim that:

\begin{equation}\label{eq:pbw-gr}
\gr_{\dot}^{PBW} \Psi_n(\ind_{\Lie\o{I}_n}^{\widehat{\fg}_{\kappa}}(\psi)) =
C^{\dot}\big(\Ad_{-n\check{\rho}(t)} N(O),
\Fun(\Ad_{-n\check{\rho}(t)} t^{-n}\fb[[t]])\big)
\end{equation}

\noindent i.e., (derived) global sections of the structure
sheaf of the stack  
$\Ad_{-n\check{\rho}(t)} t^{-n}\fb[[t]]/\Ad_{-n\check{\rho}(t)} N(O)$.

This is a straightforward verification using \S \ref{ss:fil-summary}.
Writing $\Psi_n|_{\widehat{\fg}_{\kappa}\mod^{\o{I}_n,\psi}}$ 
as the appropriate composition:

\[
\widehat{\fg}_{\kappa}\mod^{\o{I}_n,\psi} \xar{\Oblv}
\fn((t))\mod^{\Ad_{-n\check{\rho}(t)} N(O),\psi} 
\to
\Vect
\]

\noindent we find that its semi-classical functor:

\[
\QCoh^{ren}(\Lie\o{I}_n^{\perp}/\o{I}_n) \to \Vect 
\]

\noindent is given by
successive pullbacks and (renormalized) pushforwards along the diagram:

\[
\xymatrix @C=-2pc {
& \Lie\o{I}_n^{\perp}/\Ad_{-n\check{\rho}(t)} N(O) \ar[dl] \ar[dr] & &
0/\Ad_{-n\check{\rho}(t)} N(O) \ar[dl] \ar[dr] \\
\Lie\o{I}_n^{\perp}/\o{I}_n & &
\Ad_{-n\check{\rho}(t)} (\fg[[t]]/\fb[[t]])/\Ad_{-n\check{\rho}(t)} N(O) & &
\Spec(k). 
}
\]

\noindent We recall that 
$\Lie\o{I}_n^{\perp} = \Ad_{-n\check{\rho}(t)}(\fg[[t]] + t^{-n}\fb[[t]])$.
Therefore, since
$\gr_{\dot}^{PBW} \ind_{\Lie\o{I}_n}^{\widehat{\fg}_{\kappa}}(\psi)$
is the structure sheaf of $\Lie\o{I}_n^{\perp}/\Ad_{-n\check{\rho}(t)}$,
we obtain the claim.

The same analysis applies in the KK setting, and we
obtain: 

\[
\gr_{\dot}^{KK} 
\Psi_n(\ind_{\Lie\o{I}_n}^{\widehat{\fg}_{\kappa}}(\psi)) =
C^{\dot}\big(\Ad_{-n\check{\rho}(t)} N(O),
\Fun(f+\Ad_{-n\check{\rho}(t)} t^{-n}\fb[[t]])\big) 
\]

\noindent The major difference is that  
$\Ad_{-n\check{\rho}(t)} N(O)$ acts \emph{freely} on this
locus and the quotient is an affine scheme. Indeed, by
Kostant theory we have:

\[
\begin{gathered}
f+\Ad_{-n\check{\rho}(t)} t^{-n}\fb[[t]]/\Ad_{-n\check{\rho}(t)} N(O) =
t^{-n}\Ad_{-n\check{\rho}(t)} (f+\fb[[t]]) /\Ad_{-n\check{\rho}(t)} N(O) = \\
t^{-n}\Ad_{-n\check{\rho}(t)} (f+\fb^e[[t]])
\end{gathered}
\]

\noindent for $e$ fitting into our principal $\sl_2$ as usual.

In particular, we find that $\gr_{\dot}^{KK} 
\Psi_n(\ind_{\Lie\o{I}_n}^{\widehat{\fg}_{\kappa}}(\psi)) \in \Vect^{\heart}$.
Since $\Psi_n$ differs from $\Psi$ by the cohomological shift
by $n\Delta$, we find that
$\gr_{\dot}^{KK} \Psi(\ind_{\Lie\o{I}_n}^{\widehat{\fg}_{\kappa}}(\psi))$
is concentrated in cohomological degree $-n\Delta$, as expected.

\step As indicated in the preamble, it remains to show that
$F_i^{KK} \Psi_n(\ind_{\Lie\o{I}_n}^{\widehat{\fg}_{\kappa}}(\psi)) = 0$
for all $i<0$.\footnote{Note that due to the filtering conventions,
this means that $F_i^{KK}$ of $\Psi$ of this module vanishes
for $i<n\Delta$.} 

\step 

First, we observe that
$F_i^{PBW} \Psi_n(\ind_{\Lie\o{I}_n}^{\widehat{\fg}_{\kappa}}(\psi)) = 0$ 
for $i<0$. Indeed, as in Remark \ref{r:sinf-pbw-fmla}, 
we have:

\[
\begin{gathered}
F_i^{PBW} \Psi_n(\ind_{\Lie\o{I}_n}^{\widehat{\fg}_{\kappa}}(\psi)) = \\
\underset{m \geq n}{\colim} \, 
F_{i-(m-n)\Delta}^{PBW} \,
C^{\dot}(\Ad_{-m\check{\rho}(t)} \fn[[t]],\Ad_{-n\check{\rho}(t)}N(O),
\ind_{\Lie\o{I}_n}^{\widehat{\fg}_{\kappa}}(\psi)) \otimes \ell^{n,m}
[(m-n)\Delta].
\end{gathered}
\]

Then since the PBW filtration on 
$\ind_{\Lie\o{I}_n}^{\widehat{\fg}_{\kappa}}(\psi)$ vanishes
in negative degrees, the PBW filtration
on: 

\[
C^{\dot}(\Ad_{-m\check{\rho}(t)} \fn[[t]],\Ad_{-n\check{\rho}(t)}N(O),
\ind_{\Lie\o{I}_n}^{\widehat{\fg}_{\kappa}}(\psi))
\]

\noindent vanishes in degrees $< -(m-n)\Delta$
(c.f. Remark \ref{r:hc-cplx}, i.e., this vanishing follows from using the
\emph{standard} filtration on Harish-Chandra cohomology).
This obviously gives the claim because of the shift of
filtration that occurs in the colimit.

\step Now recall that 
$\Psi_n(\ind_{\Lie\o{I}_n}^{\widehat{\fg}_{\kappa}}(\psi))$ 
is \emph{bi}filtered. In particular:

\[
\gr_{\dot}^{PBW} \Psi_n(\ind_{\Lie\o{I}_n}^{\widehat{\fg}_{\kappa}}(\psi))
\overset{\eqref{eq:pbw-gr}}{=}
\Gamma(\Ad_{-n\check{\rho}(t)} t^{-n}\fb[[t]]/\Ad_{-n\check{\rho}(t)} N(O),
\sO_{\Ad_{-n\check{\rho}(t)} t^{-n}\fb[[t]]/\Ad_{-n\check{\rho}(t)} N(O)})
\]

\noindent inherits a KK filtration. We claim that:

\[
F_i^{KK} \gr_{\dot}^{PBW} 
\Psi_n(\ind_{\Lie\o{I}_n}^{\widehat{\fg}_{\kappa}}(\psi)) 
\]

\noindent vanishes in negative degrees.

We will verify this by giving an explicit description of the KK filtration
in this case.

Note that $\bG_m \times \bG_m$ acts on:

\[
\Ad_{-n\check{\rho}(t)} t^{-n}\fb[[t]]/\Ad_{-n\check{\rho}(t)} N(O)
\]

\noindent where the action of
one factor is induced by the inverse homothety action
on the vector space $\Ad_{-n\check{\rho}(t)} t^{-n}\fb[[t]]$,
and the action of the other factor is given by
the adjoint action induced by the cocharacter $\check{\rho}$ (of
the adjoint group). In what follows, we consider this stack
as equipped with the induced \emph{diagonal} action of
$\bG_m$; in particular, the global sections of its structure sheaf
inherit a grading.

Then by Step \ref{st:ds-filts}, the KK filtration is induced
from the grading as:

\begin{equation}\label{eq:kk-pbw-gr}
\begin{gathered}
F_i^{KK} \gr_{\dot}^{PBW} 
\Psi_n(\ind_{\Lie\o{I}_n}^{\widehat{\fg}_{\kappa}}(\psi)) = \\
\oplus_{i^{\prime} \leq i}
\Gamma(\Ad_{-n\check{\rho}(t)} t^{-n}\fb[[t]]/\Ad_{-n\check{\rho}(t)} N(O),
\sO_{\Ad_{-n\check{\rho}(t)} t^{-n}\fb[[t]]/\Ad_{-n\check{\rho}(t)} N(O)})^{i^{\prime}}
\end{gathered}
\end{equation}

\noindent where the outer superscript in the global sections indicates
the grading defined above.

Now we claim that our $\bG_m$-action is \emph{expanding}.
Obviously the inverse homothety action is expanding. Moreover,
the $-\check{\rho}$ adjoint action on both
$\fb$ (so on $\fb((t))$) and on $N(K)$ are, so the same
is true for our quotient stack above. Therefore, the diagonal
action is expanding as well. 

Therefore, our grading is in non-negative degrees only.
By \eqref{eq:kk-pbw-gr}, we obtain the desired vanishing.

\step 

We now make the (simpler) observation that the PBW
filtration on 
$\gr_{\dot}^{KK} 
\Psi_n(\ind_{\Lie\o{I}_n}^{\widehat{\fg}_{\kappa}}(\psi)) $
vanishes in negative degrees.

Indeed, this PBW filtration on functions on 
the scheme
$f+\Ad_{-n\check{\rho}(t)} t^{-n}\fb[[t]]/\Ad_{-n\check{\rho}(t)} N(O)$
comes from degenerating $f$. Precisely, we have
a prestack over $\bA_{\hbar}^1/\bG_m$ defined by
the family:

\[
\hbar f+\Ad_{-n\check{\rho}(t)} t^{-n}\fb[[t]]/\Ad_{-n\check{\rho}(t)} N(O)
\]

\noindent and the\footnote{Note that we should work with 
$\QCoh^{ren}$, so the pushforward is continuous. But this is a place
where one can only feel anxiety, but cannot make a mistake:
since our complexes are bounded from below, renormalized pushforward
coincides with any other notion.}
pushforward of the structure sheaf defines the PBW filtration on our
complex.

The monoid $\bA^1$ acts on the total space of this fibration
through the homothety action of this monoid on $\fg((t))$: this implies the
claim.

\step 

From here, the claim is formal:

Suppose $V \in \BiFil \Vect$ 
is any bifiltered vector space. We denote its underlying filtrations
as $F_{\dot}^{PBW}$ and $F_{\dot}^{KK}$.
We suppose that
$F_i^{PBW} V$, $F_i^{KK} \gr_{\dot}^{PBW} V$, and
$F_i^{PBW} \gr_{\dot}^{KK}$ vanish
for $i<0$. Then we claim that $F_i^{KK} V$ vanishes for $i<0$ as well.

We denote: 

\[
F_{i,j} V = F_i^{KK} F_j^{PBW} V = F_j^{PBW} F_i^{KK} V.
\]

\noindent Then observe that:

\[
F_i^{KK} V = \underset{j}{\colim} \, F_{i,j} V.
\]

\noindent So it suffices to show that if $i<0$, then $F_{i,j} V = 0$.

Since $\Coker(F_{i,j-1} V \to F_{i,j} V) = F_i^{KK} \gr_j^{PBW} V = 0$,
$F_{i,j} V$ is independent of $j$. Therefore, it suffices to show
the above claim when $i,j<0$.

But the same argument shows that $F_{i,j} V$ is independent of
$i$ when $j$ is negative. Therefore, for $i,j<0$, we have:

\[
F_{i,j} V \isom \underset{i^{\prime}}{\colim} \, F_{i^{\prime},j} V =
F_j^{PBW} V = 0
\]

\noindent as desired.

\end{proof}

\begin{rem}

The method used here bears a striking resemblance to the
method of \emph{tensor product decomposition} used
traditionally (e.g. in \cite{dbt} and \cite{fbz}) in the $n = 0$ case above,
i.e., to compute $\Psi(\bV_{\kappa})$.

There, one finds a quasi-isomorphic subcomplex\footnote{In 
\cite{fbz}, it is introduced in \S 15.2 and denoted by
$C_k^{\dot}(\fg)_0$.}
of the usual complex of Drinfeld-Sokolov semi-infinite chains
with ``size"
$\Sym(\fb((t))/\fb[[t]]) \otimes \Lambda^{\dot} \fn[[t]]^{\vee}$;
this complex is closed under the KK filtration and bounded from below
with respect to it, which solves the boundedness problem.

The method used above settled the convergence by a comparison
with the PBW filtration, whose associated graded 
also has this ``size," since it is Lie algebra cohomology for
$\fn[[t]]$ with coefficients in $\Fun(\fb[[t]]) = \Sym(\fb((t))/\fb[[t]])$.

\end{rem}

\subsection{$\sW_{\kappa}$-module structures}

Define: 

\[
\sW_{\kappa}^n \coloneqq 
H^0 \Psi_n(\ind_{\Lie \o{I}_n}^{\widehat{\fg}_{\kappa}} \psi_{\o{I}_n}) =
H^{-n\Delta} \Psi(\ind_{\Lie \o{I}_n}^{\widehat{\fg}_{\kappa}} \psi_{\o{I}_n}) 
\otimes \ell^{n,\vee}
\]

\noindent 
By Theorem \ref{t:ds-no-w-str}, this is the only
non-vanishing cohomology group. We recall that
$\Psi_n$ was defined in \eqref{eq:psi-n-defin}.
Note that tensoring with this line is extremely mild.

We now make some observations about the 
\emph{$\sW_{\kappa}$-module} structure on $\sW_{\kappa}^n$.
(For further observations, see \S \ref{s:free-field}.)

\subsection{Recollections on $\sW$-algebras}\label{ss:w-recollection}

Before proceeding, we 
summarize what facts we will need about $\sW_{\kappa}$.

\subsection{}

Recall that $\sW_{\kappa} \coloneqq \Psi(\bV_{\kappa}) = \sW_{\kappa}^0$
is a \emph{vertex algebra}, which for our purposes means the 
vacuum representation of a chiral (or factorization) algebra in 
the sense of \cite{chiral}.

Here are two conceptual explanations of this fact. 

One may use \cite{chiral} \S 3.8 to give a model for semi-infinite cochains
that factorizes.

Alternatively, one can work as in \cite{km-indcoh} and show that
the categories $\widehat{\fg}_{\kappa}\mod$ is (the fiber of) a unital chiral
category in the sense of \cite{chiralcats} 
with unit the factorization $\bV_{\kappa}$ (or rather,
the Kac-Moody factorization algebra). Moreover, one can show
that our definition of $\Psi:\widehat{\fg}_{\kappa}\mod \to \Vect$ factorizes.
This formally implies that the image of the unit is a vertex algebra in 
the above sense.

(There is also a traditional vertex algebra description: see \cite{fbz} \S 15.)

\subsection{}

In either of the above pictures, we find that $\Psi$ upgrades to a functor:

\[
\Psi: \widehat{\fg}_{\kappa}\mod \to \sW_{\kappa}\mod^{\on{fact}}
\]

\noindent where the right hand side denotes the DG category of
\emph{factorization modules} for (the factorization algebra with
underlying vacuum representation) $\sW_{\kappa}$; this notion
is defined in \cite{chiralcats}.

Some remarks about this notion are in order.

$\sW_{\kappa}\mod^{\on{fact}}$ has a canonical $t$-structure,
and by \cite{chiral} \S 3.6, the heart is the abelian category
of discrete $\sW_{\kappa}^{as}$-modules, which coincides
with vertex modules in the usual sense. We denote this
category by $\sW_{\kappa}\mod^{\heart}$.

It is possible to compute $\sW_{\kappa}\mod^{\on{fact}}$ explicitly:
it turns out to be the left completion of the derived category
of this abelian category. However, such arguments have not
been given in the literature previously (e.g., the corresponding
fact for Kac-Moody algebras is not published), and are somewhat
involved.

Moreover, \emph{we will not need to compute this DG category}
so explicitly: we will only need to know the heart of its $t$-structure.

\subsection{}\label{ss:w-acts-on-psi}

By the above, for $M \in \widehat{\fg}_{\kappa}\mod$,
$H^k \Psi(M) \in \sW_{\kappa}\mod^{\heart}$, i.e., this
object of $\Vect^{\heart}$ has a canonical action of 
the $\sW_{\kappa}$-algebra. 

Moreover, we find that the functors $(H^k \Psi, k \in \bZ)$ are
\emph{cohomological}, i.e., for a morphism 
$f:M \to N \in \widehat{\fg}_{\kappa}\mod$, the boundary morphism:

\[
H^k(\Coker(f)) \to H^{k+1} (M)
\]

\noindent is a morphism of $\sW_{\kappa}$-modules.

\subsection{}

We also need some compatibilities with filtrations.

First, the KK filtration on $\bV_{\kappa}$ makes sense
factorizably, so defines on $\bV_{\kappa}$ a structure
of \emph{filtered} vertex algebra (c.f. \cite{chiral} \S 3.3.11
or \cite{fbz} \S 15). 

Then one can show by mixing \cite{km-indcoh} and
Appendix \ref{a:hc} that
the KK filtration on $\widehat{\fg}_{\kappa}\mod$ defines
a filtration on this category \emph{as a unital 
factorization category}
and that $\Psi$ is a factorizable functor.

Since the induced KK filtration 
on $\sW_{\kappa} = \Psi(\bV_{\kappa}) = 
H^0 \Psi(\bV_{\kappa})$ is 
a filtration in the abelian category of vector spaces,
we deduce that  $\sW_{\kappa}$ is a filtered vertex algebra. Concretely, this means
that $\sW_{\kappa}^{as}$ is filtered as a 
topological chiral algebra. 
We remind that $\gr_{\dot}^{KK} \sW_{\kappa}$ is the
commutative vertex algebra of functions on 
$f+\fb[[t]]/N(O) = f+\fb^e[[t]]$, and that the associated graded
of $\sW_{\kappa}^{as}$ is the corresponding
commutative topological algebra of functions
on the affine Kostant slice $f+\fb((t))/N(K) = f+\fb^e((t))$.

\subsection{}

Suppose $M \in \Fil^{KK} \widehat{\fg}_{\kappa}\mod$ is a 
KK filtered Kac-Moody representation. 
Recall from Appendix \ref{a:hc} that $\Psi(M)$ inherits
a KK filtration.

From the above discussion, for any integer $k$,
$\gr_{\dot}^{KK} \Psi(M) \in \gr_{\dot}^{KK} \sW_{\kappa}\mod^{\on{fact}}$,
which as before is a DG category with a $t$-structure
whose heart is $\IndCoh(f+\fb((t))/N(K))^{\heart}$.\footnote{We recall
that the heart of the natural $t$-structure here is tautologically
the same as the abelian category of discrete
modules over the algebra of functions on this indscheme.}
Therefore, for any integer $k$, 
$H^k(\gr_{\dot}^{KK} \Psi(M)) \in \IndCoh(f+\fb((t))/N(K))^{\heart}$.

\subsection{Description of the modules $\sW_{\kappa}^n$}

Now observe that the modules $\sW_{\kappa}^n$ 
are naturally filtered. Indeed, as above, each module
$\ind_{\Lie\o{I}_n}^{\widehat{\fg}_{\kappa}}(\psi)$ carries
a canonical KK filtration, so it Drinfeld-Sokolov reduction does
as well. 
However, we will renumber the filtration by shifting the
indices by $n\Delta$: this amounts to considering the natural
filtration on $H^0\Psi_n(\ind_{\Lie\o{I}_n}^{\widehat{\fg}_{\kappa}}(\psi))$
instead of on $H^{n\Delta}\Psi$ of this module.
We denote this filtration $F_{\dot}^{KK} \sW_{\kappa}^n$.

We now have the following outcome of the 
proof of Theorem \ref{t:ds-no-w-str}:

\begin{thm}\label{t:ds-wstr}

The filtration on $\sW_{\kappa}^n$ satisfies:

\begin{itemize}

\item $F_i^{KK} \sW_{\kappa}^n = 0$ for $i<0$.

\item $H^k(\gr_{\dot}^{KK} \sW_{\kappa}^n)$ vanishes for 
$k \neq 0$.

\item 
$H^0(\gr_{\dot}^{KK} \sW_{\kappa}^n) \in \IndCoh(f+\fb((t))/N(K))^{\heart}$
is the structure sheaf of the closed subscheme
$t^{-n}\Ad_{-n\check{\rho}(t)}(f+\fb[[t]]/N(O))$ with its natural grading
as a $\bG_m$-invariant subscheme.

\end{itemize}

\end{thm}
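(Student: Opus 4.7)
The plan is to harvest the three claims directly from computations already performed in the proof of Theorem~\ref{t:ds-no-w-str}; essentially no new analytic content is required. The first bullet is literally the vanishing
\[
F_i^{KK}\,\Psi_n(\ind_{\Lie\o{I}_n}^{\widehat{\fg}_\kappa}(\psi)) = 0 \quad\text{for } i<0
\]
established in the penultimate step of that proof by bifiltered bookkeeping (playing the PBW and KK filtrations against each other); the renumbering built into $\Psi_n$ versus $\Psi$ accounts for the shift in indexing conventions between the two theorems.

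For the remaining bullets, the key point is that passing to $H^0$ does not lose any filtered information. Since $\Psi_n(\ind)$ is cohomologically concentrated in degree $0$ by Theorem~\ref{t:ds-no-w-str}, and its KK-filtration is bounded below by the first bullet with associated graded (computed in the earlier proof, see \eqref{eq:pbw-gr} and its KK-analogue) also concentrated in cohomological degree $0$, an induction on $i$ using the exact triangles $F_{i-1}^{KK}\to F_i^{KK}\to \gr_i^{KK}$ shows every $F_i^{KK}\Psi_n(\ind)$ lives in degree $0$. Consequently the natural KK-filtration on $\sW_\kappa^n = H^0\Psi_n(\ind)$ coincides with that of the complex, so $\gr^{KK}\sW_\kappa^n = \gr^{KK}\Psi_n(\ind)$.

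It remains to identify this associated graded. From the proof of Theorem~\ref{t:ds-no-w-str} we have
\[
\gr^{KK}\Psi_n(\ind) \;=\; C^{\dot}\big(\Ad_{-n\check{\rho}(t)}N(O),\;\Fun(f+\Ad_{-n\check{\rho}(t)}t^{-n}\fb[[t]])\big),
\]
and Kostant theory applied to jets (already invoked there) shows $\Ad_{-n\check{\rho}(t)}N(O)$ acts freely on this locus with quotient $t^{-n}\Ad_{-n\check{\rho}(t)}(f+\fb^e[[t]])$. Hence the group cohomology is concentrated in degree $0$ and equal to the structure sheaf of the quotient, giving the second bullet and, under the identification $f+\fb((t))/N(K) = f+\fb^e((t))$, exhibiting it as the structure sheaf of the closed subscheme $t^{-n}\Ad_{-n\check{\rho}(t)}(f+\fb[[t]]/N(O))$ asserted in the third bullet.

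The one step requiring genuine care, rather than citation, is matching the $\bG_m$-grading: one must check that the $\bG_m$-action $\lambda\cdot x = \lambda^{-1}\Ad_{\check{\rho}(\lambda^{-1})}(x)$ inducing the KK filtration on the PBW semi-classical side (see \S\ref{ss:fil-summary}) restricts to the expected grading on the closed subscheme $t^{-n}\Ad_{-n\check{\rho}(t)}(f+\fb[[t]]/N(O)) \subset f+\fb^e((t))$ under Kostant's identification. This is routine but bookkeeping-heavy given the profusion of $\Ad_{-n\check{\rho}(t)}$ shifts and the passage through the free quotient by $\Ad_{-n\check{\rho}(t)}N(O)$; I expect this is the only portion of the argument that takes more than a sentence to spell out.
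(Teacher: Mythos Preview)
Your proposal is correct and matches the paper's approach exactly: the paper simply states that Theorem~\ref{t:ds-wstr} is ``the following outcome of the proof of Theorem~\ref{t:ds-no-w-str}'' and gives no further argument. Your elaboration of why passing to $H^0$ preserves the filtered structure, and your flagging of the $\bG_m$-grading compatibility, are more detail than the paper provides but are in the same spirit.
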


\begin{rem}[Cyclicity of $\sW_{\kappa}^n$]\label{r:cyclic}

Note that the algebra of functions
on $f+\fb/N = f+\fb^e$ is a polynomial algebra, and with respect
to the KK grading, is generated by elements of degree $ \geq 1$
(and $\geq 2$ if $\fg$ is semisimple). The same holds for the
affine version, or for the structure sheaf of a $\bG_m$-invariant
subscheme of $f+\fb((t))/N(K)$.

Because $F_{-1}^{KK} \sW_{\kappa}^n = 0$, we deduce that
$F_0^{KK} \sW_{\kappa}^n = \Gr_0^{KK} \sW_{\kappa}^n = k$,
where this copy of $k$ corresponds to the constant functions
on $t^{-n}\Ad_{-n\check{\rho}(t)}(f+\fb[[t]]/N(O))$. In particular,
we obtain a canonical \emph{vacuum vector} $1 \in \sW_{\kappa}^n$.

Moreover, the map $\sW_{\kappa}^{as} \to \sW_{\kappa}^n$
given by acting on this vector is surjective; indeed, this follows
because it is true at the associated graded level. Therefore,
we obtain that $\sW_{\kappa}^n$ is a \emph{cyclic} module
for the $\sW_{\kappa}$-algebra.

\end{rem}

\subsection{Varying $n$}

We conclude this section with the following result.

\begin{thm}\label{t:ds-n/m}

\begin{enumerate}

\item For $m \geq n$, there is a unique map 
$\sW_{\kappa}^m \to \sW_{\kappa}^n$ preserving the
vacuum vectors introduced in Remark \ref{r:cyclic}. 

\item This map is surjective and strictly compatible with filtrations.
On associated graded, it gives the restriction map for functions along
the canonical embedding:

\[
t^{-n}\Ad_{-n\check{\rho}(t)}(f+\fb[[t]]/N(O)) \into
t^{-m}\Ad_{-m\check{\rho}(t)}(f+\fb[[t]]/N(O)).
\]

\item\label{i:ds-lim} The compatible system of maps
$\sW_{\kappa}^{as} \to \sW_{\kappa}^n$ defined by the vacuum
vectors defines an isomorphism:

\[
\sW_{\kappa}^{as} \to \underset{n}{\lim} \, \sW_{\kappa}^n \in \Pro(\Vect^{\heart})
\]

\noindent of pro-vector spaces. 

\end{enumerate}

\end{thm}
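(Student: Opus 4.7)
The plan addresses all three parts in coordination, using the cyclicity from Remark \ref{r:cyclic} and the associated-graded description of Theorem \ref{t:ds-wstr}. For (1), uniqueness is immediate: cyclicity means any $\sW_\kappa^{as}$-linear map $\sW_\kappa^m \to \sW_\kappa^n$ is determined by where it sends the vacuum vector $1_m$. For existence, I would write $\sW_\kappa^n \simeq \sW_\kappa^{as}/J_n$ with $J_n \coloneqq \Ann_{\sW_\kappa^{as}}(1_n)$, and reduce the construction of the map $\sW_\kappa^m \to \sW_\kappa^n$ (for $m \geq n$) to establishing the containment of left ideals $J_m \subseteq J_n$.

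The containment is established via the KK filtration. On associated graded, $\gr J_n$ is the defining ideal $I(X_n)$ of the closed subscheme $X_n \coloneqq t^{-n}\Ad_{-n\check\rho(t)}(f+\fb[[t]]/N(O))$ inside $f+\fb^e((t))$, by Theorem \ref{t:ds-wstr}. The geometric inclusion $X_n \subseteq X_m$ immediately yields $I(X_m) \subseteq I(X_n)$ and hence $\gr J_m \subseteq \gr J_n$. Lifting this to a containment of the ideals themselves uses that the KK filtration on $\sW_\kappa^n$ is bounded below (first bullet of Theorem \ref{t:ds-wstr}) together with the compatibility between the KK filtration on $\sW_\kappa^{as}$ and its topological algebra structure: for any $x \in J_m$, the element $x \cdot 1_n \in \sW_\kappa^n$ has vanishing symbol in every positive KK-degree, and by boundedness below must be zero.

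Parts (2) and (3) then follow cleanly. For (2), the map from (1) is $\sW_\kappa^{as}$-linear and matches vacua, so by Theorem \ref{t:ds-wstr} its associated graded is the restriction $\sO(X_m) \onto \sO(X_n)$; surjectivity and strictness at the quantum level propagate upward from the associated-graded level using the bounded-below filtrations on both sides. For (3), the compatible system from (1) gives a map $\sW_\kappa^{as} \to \lim_n \sW_\kappa^n$ in $\Pro(\Vect^{\heart})$; passing to $\gr^{KK}$ it becomes the canonical isomorphism $\sO(f+\fb^e((t))) \cong \lim_n \sO(X_n)$ (valid because the subschemes $X_n$ exhaust $f+\fb^e((t))$ as an indscheme), and completeness of both topological algebra structures with respect to the KK filtration gives the result.

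The main obstacle lies in the filtration-lifting step in part (1): the KK filtration on $\sW_\kappa^{as}$ is not bounded below in any naive sense, and a symbol-vanishing iteration requires careful use of completeness and of the topological algebra structure. A cleaner alternative route is to construct the map before taking $H^0$, working at the level of Drinfeld-Sokolov complexes: the adolescent Whittaker framework and the fully-faithfulness of $\iota_{n,m,!}$ from Theorem \ref{t:!-avg} identify $\sW_\kappa^n$ with $\iota_{n,!}(\ind_{\Lie\o{I}_n}(\psi))$ (up to a cohomological shift and line twist) in a setting that naturally produces compatible morphisms between compact generators; the strictness properties and the identification of associated graded are then recovered a posteriori from Theorem \ref{t:ds-wstr}.
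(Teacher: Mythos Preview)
Your uniqueness argument and the handling of parts (2)--(3) match the paper. For existence in (1), your primary annihilator-ideal route has a gap beyond what you flag: even granting $\gr J_m \subseteq \gr J_n$, the symbol-vanishing argument does not cascade. From $x \in F_d^{KK}\sW_\kappa^{as} \cap J_m$ you get $\overline{x} \in I(X_n)_d$ and hence $x \cdot 1_n \in F_{d-1}^{KK}\sW_\kappa^n$, but to compute the image of $x \cdot 1_n$ in $\gr_{d-1}^{KK}\sW_\kappa^n$ you would need to rewrite it as $x' \cdot 1_n$ with $x' \in F_{d-1}^{KK}\sW_\kappa^{as}$ \emph{and} $x' \in J_m$; the natural corrections (lifting $\overline{x}$ from $I(X_n)$) land in $J_n$, not $J_m$, so the induction stalls. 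Making this work would require additional input about the pro-structure and strictness on $\sW_\kappa^{as}$ that is not available at this point.

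Your alternative route --- construct the map at the level of Kac-Moody modules before applying $\Psi$ --- is exactly what the paper does, but more elementarily than you suggest: it invokes neither Theorem \ref{t:!-avg} nor the functors $\iota_{n,m,!}$. The paper directly writes down a map
\[
\ind_{\Lie\o{I}_m}^{\widehat{\fg}_\kappa}(\psi) \;\longrightarrow\; \ind_{\Lie\o{I}_n}^{\widehat{\fg}_\kappa}(\psi) \otimes \ell^{n,m}[(m-n)\Delta]
\]
as the composite of the tautological surjection $\ind_{\Lie(\o{I}_n\cap\o{I}_m)}^{\widehat{\fg}_\kappa}(\psi) \onto \ind_{\Lie\o{I}_n}^{\widehat{\fg}_\kappa}(\psi)$ with the canonical map $M \to \ind_{\fh_1}^{\fh_2}(\Oblv(M)) \otimes \det(\fh_2/\fh_1)[\dim \fh_2/\fh_1]$ supplied by Lemma \ref{l:coh-ind-cpt-hc} (with $\fh_1 = \Lie(\o{I}_n\cap\o{I}_m) \subset \fh_2 = \Lie\o{I}_m$ and $M = \psi$). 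Applying $\Psi$ yields the desired map $\sW_\kappa^m \to \sW_\kappa^n$; the filtered formalism of Appendix \ref{a:hc} makes it automatic that this map is KK-filtered with associated graded the restriction $\sO(X_m) \onto \sO(X_n)$, and parts (2)--(3) then follow exactly as you outline.
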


\begin{proof}

\step First, we observe that these maps are unique if they 
exist. Indeed, this follows immediately from the cyclicity of the
modules $\sW_{\kappa}^m$.

\step\label{st:wm->wn} We now construct a map 
$\alpha:\sW_{\kappa}^m \to \sW_{\kappa}^n$.

First, note that we have:

\[
\ind_{\Lie (\o{I}_n \cap \o{I}_m)}^{\widehat{\fg}_{\kappa}}(\psi) \onto 
\ind_{\Lie \o{I}_n}^{\widehat{\fg}_{\kappa}}(\psi).
\]

Then we claim that there is a canonical map:

\[
\ind_{\Lie \o{I}_m}^{\widehat{\fg}_{\kappa}}(\psi) \to 
\ind_{\Lie (\o{I}_n \cap \o{I}_m)}^{\widehat{\fg}_{\kappa}}(\psi) 
\otimes
\ell^{n,m}
[(m-n)\Delta] \in \widehat{\fg}_{\kappa}\mod.
\]

\noindent It suffices to construct:

\[
\psi \to 
\ind_{\Lie (\o{I}_n \cap \o{I}_m)}^{\Lie\o{I}_m}(\psi) 
\otimes
\ell^{n,m}
[(m-n)\Delta] \in \Lie\o{I}_m\mod.
\]

\noindent 
More generally, suppose $\fh_1 \subset \fh_2$ is
an open Lie subalgebra in a profinite dimensional Lie algebra.
Then for any module $M \in \fh_2\mod$, we claim that there is
a canonical map:

\[
M \to \ind_{\fh_1}^{\fh_2}\big(\Oblv(M) \otimes \det(\fh_2/\fh_1)[\dim\fh_2/\fh_1]\big).
\]

\noindent Indeed, by the projection formula it suffices
to construct this for $M = k$ the trivial module, and then
it is given by Lemma \ref{l:coh-ind-cpt-hc}.

By composition, we obtain:

\begin{equation}\label{eq:wm->wn}
\ind_{\Lie \o{I}_m}^{\widehat{\fg}_{\kappa}}(\psi) \to 
\ind_{\Lie \o{I}_n}^{\widehat{\fg}_{\kappa}}(\psi) \otimes
\ell^{n,m}
[(m-n)\Delta].
\end{equation}

\noindent Applying $\Psi$ now gives the desired map.

Note that these morphisms compose well
as we vary $n$ and $m$.

\step By our generalities on filtrations from Appendix \ref{a:hc}, 
$\alpha:\sW_{\kappa}^m \to \sW_{\kappa}^n$ is filtered,
and on associated graded gives the
restriction map along our closed embeddings.

This observation actually implies the rest of the results.
The map preserves vacuum vectors by their construction. 
It is surjective because it is surjective at the associated graded
level. For a surjective morphism of
filtered abelian groups with filtrations bounded from below, 
surjectivity at the associated graded level
is equivalent to strictness.
Finally, the projective system $\{\sW_{\kappa}^n\}$ 
gives $\sW_{\kappa}^{as}$ because this is true at the associated
graded level.

\end{proof}

\section {Affine Skryabin theorem}\label{s:skryabin}

\subsection{}

Let $D^+(\sW_{\kappa}\mod^{\heart})$ denote the bounded
below derived category considered as a DG category.
Then define:

\[
\sW_{\kappa}\mod^c \subset D^+(\sW_{\kappa}\mod^{\heart})
\]

\noindent as the full subcategory 
generated by the objects
$\sW_{\kappa}^n \in \sW_{\kappa}\mod^{\heart}$ 
under cones and shifts; recall that these objects where
defined in \S \ref{s:ds}.
Finally, following \cite{dmod-aff-flag} \S 22-3, we define:

\[
\sW_{\kappa}\mod \coloneqq \Ind(\sW_{\kappa}\mod^c).
\]

The purpose of this section is to prove:

\begin{thm}[Affine Skryabin theorem]\label{t:aff-skry}

There is a canonical equivalence:

\[
\Whit(\widehat{\fg}_{\kappa}\mod) \simeq
\sW_{\kappa}\mod.
\]

\noindent This equivalence has the property that the composition:

\[
\widehat{\fg}_{\kappa}\mod \to 
\Whit(\widehat{\fg}_{\kappa}\mod) \simeq \sW_{\kappa}\mod 
\xar{\Oblv} \Vect
\]

\noindent is the Drinfeld-Sokolov functor $\Psi$; here the first
morphism is the canonical morphism coming from identifying
$\Whit$ with coinvariants.

\end{thm}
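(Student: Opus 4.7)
The strategy is to follow the blueprint of the finite Skryabin theorem (\S\ref{ss:fin-skry}): exhibit compact generators on both sides related by an explicit functor, and match their structure. The adolescent Whittaker technology of \S\ref{s:adolescence} and the generalized vacuum modules $\sW_{\kappa}^n$ of \S\ref{s:ds} supply the required compact generators, and a $t$-exactness argument transports the match from the abelian level (where the topological nature of $\sW_{\kappa}^{as}$ gives us traction) to the full derived equivalence.

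First I would construct the candidate functor. Since $\Psi$ upgrades to a continuous factorization functor $\widehat{\fg}_{\kappa}\mod \to \sW_{\kappa}\mod^{\on{fact}}$ (\S\ref{ss:w-acts-on-psi}) and is Whittaker coinvariant by construction, combining this with Theorem~\ref{t:inv-coinv} lets us factor it through the canonical map $\widehat{\fg}_{\kappa}\mod \to \Whit(\widehat{\fg}_{\kappa}\mod)$ to obtain a continuous functor $\bar\Psi: \Whit(\widehat{\fg}_{\kappa}\mod) \to \sW_{\kappa}\mod^{\on{fact}}$. By Lemma~\ref{l:n->infty} and Theorem~\ref{t:!-avg}, the objects $X_n := \iota_{n,!}\big(\ind_{\Lie\o{I}_n}^{\widehat{\fg}_{\kappa}}(\psi)\big)$ ($n \geq 0$) form a compact generating family in $\Whit(\widehat{\fg}_{\kappa}\mod)$; Theorems~\ref{t:ds-no-w-str} and~\ref{t:ds-wstr} identify $\bar\Psi(X_n)$ (up to a cohomological shift and twist by the line $\ell^{n,\vee}$) with $\sW_{\kappa}^n \in \sW_{\kappa}\mod^{\heart}$, so $\bar\Psi$ sends a generating family of compacts to the defining generating family of $\sW_{\kappa}\mod = \Ind(\sW_{\kappa}\mod^c)$.

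Next I would install a $t$-structure on $\Whit(\widehat{\fg}_{\kappa}\mod)$ using the adolescent stratification. Theorem~\ref{t:!-avg}(\ref{i:!-avg=*-avg}) makes the transition $\iota_{n,m,!} \simeq \iota_{n,m,*}[2(m-n)\Delta]$ between the adolescent categories $\Whit^{\leq n}(\widehat{\fg}_{\kappa}\mod) = \widehat{\fg}_{\kappa}\mod^{\o{I}_n,\psi}$ $t$-exact up to a fixed shift, allowing the natural $t$-structures on each $\Whit^{\leq n}$ (with hearts the genuine abelian categories of discrete $(\o{I}_n,\psi)$-equivariant Kac--Moody representations) to be glued. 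With this $t$-structure in place, verifying $t$-exactness of $\bar\Psi$ (up to a fixed shift) reduces, on each stratum, to the concentration statement of Theorem~\ref{t:ds-no-w-str} and the Kazhdan--Kostant filtration analysis carried out in its proof.

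The crucial step is then identifying the hearts. On the $\sW$-algebra side, Theorem~\ref{t:ds-n/m}(\ref{i:ds-lim}) realizes $\sW_{\kappa}^{as}$ as the pro-vector space limit $\lim_n \sW_{\kappa}^n$, so $\sW_{\kappa}\mod^{\heart}$ is determined by the cyclic modules $\sW_{\kappa}^n$ (Remark~\ref{r:cyclic}) together with their surjective transition maps. On the Whittaker side, the adolescent $t$-structure describes $\Whit(\widehat{\fg}_{\kappa}\mod)^{\heart}$ via the $\widehat{\fg}_{\kappa}\mod^{\o{I}_n,\psi,\heart}$'s with distinguished compact objects $X_n$ and connecting maps obtained from the $\iota_{n,m,!}$. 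Matching these two systems under $\bar\Psi$ in its filtered form yields a heart-level equivalence, and the full derived equivalence then follows by the ind-completion construction of $\sW_{\kappa}\mod$ combined with the corresponding compact-generation statement on the Whittaker side. The principal obstacle is precisely this heart-level identification: as emphasized in \S\ref{ss:minus-infty}, genuine Whittaker modules lie in cohomological degree $-\infty$ inside $\widehat{\fg}_{\kappa}\mod$, so the Whittaker heart is inaccessible without routing through the adolescent decomposition, and tracking the compatibility of KK filtrations across Drinfeld--Sokolov reductions for different $n$ is delicate even at the abelian level.
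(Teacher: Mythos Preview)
Your overall architecture matches the paper's: install a $t$-structure on $\Whit(\widehat{\fg}_{\kappa}\mod)$ via the adolescent stratification, prove $t$-exactness of a Drinfeld--Sokolov fiber functor, identify the heart with $\sW_{\kappa}\mod^{\heart}$, then ind-complete. But the step you describe as ``matching these two systems under $\bar\Psi$ yields a heart-level equivalence'' is not an argument, and the genuine content of the proof lives exactly there. The paper does not route through $\sW_{\kappa}\mod^{\on{fact}}$ (whose relation to $\sW_{\kappa}\mod$ is explicitly left open in \S\ref{ss:w-recollection}); instead it works with the $\Vect$-valued functor $\Psi^{\Whit}$ and recovers the heart by Tannakian reconstruction: one shows $\End(\Psi^{\Whit,\heart})\simeq\sW_{\kappa}^{as}$ as topological chiral algebras, using Theorem~\ref{t:ds-n/m}\eqref{i:ds-lim}. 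For this one needs $\Psi^{\Whit}$ to be exact \emph{and conservative} on the heart, and you have not addressed conservativity at all.

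Conservativity is the substantive missing ingredient. The paper establishes it (Theorem~\ref{t:psi-exact/cons}) via two results you do not mention: first, the pro-system $\{\preprime\sW_{\kappa}^n\}$ canonically corepresents $\Psi^{\Whit}$ (Proposition~\ref{p:ds-corep}); second, and crucially, the transition maps $\alpha_{n,m}:\preprime\sW_{\kappa}^m\to\preprime\sW_{\kappa}^n$ are \emph{epimorphisms} in $\Whit(\widehat{\fg}_{\kappa}\mod)^{\heart}$ (Lemma~\ref{l:alpha-epi}). The latter is delicate: it is proved by passing to associated graded via the semi-classical closed-embedding statement Theorem~\ref{t:!-avg-cl}, and then overcoming the unboundedness of the KK filtration by an auxiliary $(L_0+\check\rho)$-grading argument. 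Without the epimorphism lemma, nothing prevents $\Psi^{\Whit}$ from annihilating objects of the heart, and then neither your proposed heart equivalence nor a direct comparison of $\Ext$-groups between the compact generators can go through. Your remark that ``tracking the compatibility of KK filtrations \ldots\ is delicate'' is correct, but the place where that delicacy actually bites is precisely Lemma~\ref{l:alpha-epi}, not the concentration statement of Theorem~\ref{t:ds-no-w-str}.
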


\subsection{Construction of the $t$-structure}

The core of the proof is a construction of a canonical $t$-structure
on $\Whit(\widehat{\fg}_{\kappa}\mod)$ and an analysis
of some nice properties that it has.

\subsection{}

Recall from \S \ref{s:adolescence} that we have the
categories $\Whit^{\leq n}(\widehat{\fg}_{\kappa}\mod)$,
and adjoint functors $(\iota_{n,m,!},\iota_{n,m}^!)$
for $0 \leq n\leq m \leq \infty$.

For $n <\infty$, note that
$\Whit^{\leq n}(\widehat{\fg}_{\kappa}\mod) = 
\widehat{\fg}_{\kappa}\mod^{\o{I}_n,\psi}$ has a canonical
$t$-structure (the forgetful functor to $\widehat{\fg}_{\kappa}\mod$
is $t$-exact).

The key observation is:

\begin{lem}\label{l:amp}

For all $n\leq m<\infty$, the functor: 

\[
\iota_{n,m,!}[-(m-n)\Delta] 
\overset{Thm. \ref{t:!-avg} \eqref{i:!-avg=*-avg}}{=}
\iota_{n,m,*}[(m-n)\Delta]
\]

\noindent is $t$-exact. 
(Again, $\Delta \coloneqq 2(\rho,\check{\rho})$ as in
\S \ref{ss:!-avg}.)

\end{lem}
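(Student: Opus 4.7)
The plan is to reduce the $t$-exactness claim to a sharp amplitude statement for Whittaker averaging between equivariance categories for the nested pair of prounipotent compact-open subgroups $H \coloneqq \o{I}_n \cap \o{I}_m \subset \o{I}_m$, and then invoke the cleanness technology developed in Appendix~\ref{a:av}.

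First, both $\iota_{n,m,*}$ and $\iota_{n,m,!}$ factor through the forgetful functor $\Oblv^{H}_{\o{I}_n}: \sC^{\o{I}_n,\psi} \to \sC^{H,\psi}$, which is tautologically $t$-exact (both $t$-structures being inherited from the given one on $\sC$). Combined with the isomorphism $\iota_{n,m,!} \simeq \iota_{n,m,*}[2(m-n)\Delta]$ from Theorem~\ref{t:!-avg}~\eqref{i:!-avg=*-avg}, matters reduce to showing that $\Av_*^{\o{I}_m,\psi}[(m-n)\Delta]: \sC^{H,\psi} \to \sC^{\o{I}_m,\psi}$ is $t$-exact. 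Using the triangular decomposition, the $T$- and $N^-$-parts of $\o{I}_m$ are already contained in $H$, so the quotient $\o{I}_m/H$ is the affine space $\Ad_{-m\check{\rho}(t)}N(O)/\Ad_{-n\check{\rho}(t)}N(O)$ of dimension $(m-n)\Delta$, onto which the Whittaker character interacts nondegenerately. Standard adjunction considerations (with $\Oblv$ $t$-exact) already give that $\Av_*^{\o{I}_m,\psi}$ is left $t$-exact and $\Av_!^{\o{I}_m,\psi}$ is right $t$-exact; together with the $!/*$ shift this forces the image of the heart under $\Av_*^{\o{I}_m,\psi}$ to lie in the cohomological window $[0, 2(m-n)\Delta]$.

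The central content is then to sharpen this window to the single degree $(m-n)\Delta$. Following the template of Lemma~\ref{l:mn-clean} (and, in the finite-type setting, the BBM argument of \cite{bbm}), the averaging is realized as convolution with the exponential $D$-module on $\o{I}_m$, which cleanly extends to an appropriate subset of $G(K)$ carrying both the $(\o{I}_m,\psi)$- and $(H,-\psi)$-biequivariance; cleanness pins the resulting convolution kernel to a single cohomological degree relative to the natural $t$-structure. The technical packaging of this argument in the renormalized $D$-module/loop-group formalism is the work of Appendix~\ref{a:av}.

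Once single-degree concentration is in hand, $t$-exactness is immediate: a continuous functor of DG categories that sends heart to heart automatically preserves $\sC^{\leq 0}$ and $\sC^{\geq 0}$. Equivalently, the shifted averaging is simultaneously the top cohomology of the left $t$-exact $\Av_*^{\o{I}_m,\psi}$ (hence right exact) and the bottom cohomology of the right $t$-exact $\Av_!^{\o{I}_m,\psi}$ (hence left exact). The main obstacle is precisely the cleanness/single-degree-concentration step: pure adjunction combined with the $!/*$-shift identification only yields the $[0,2(m-n)\Delta]$ window, and collapsing this to a single degree requires the specific geometric input about the Whittaker orbit $\o{I}_m\o{I}_n \subset G(K)$ provided by the cleanness analysis.
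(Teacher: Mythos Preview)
Your proposal misidentifies the key mechanism. You correctly observe that adjunction alone (with $\Oblv$ $t$-exact) gives only that $\Av_*^{\o{I}_m,\psi}$ is left $t$-exact and $\Av_!^{\o{I}_m,\psi}$ is right $t$-exact, which combined with the $!/*$-shift yields the window $[0,2(m-n)\Delta]$. But your proposed sharpening via ``cleanness'' is circular: the cleanness of Lemma~\ref{l:mn-clean} is precisely what was already used to establish the identification $\iota_{n,m,!} \simeq \iota_{n,m,*}[2(m-n)\Delta]$ in Theorem~\ref{t:!-avg}~\eqref{i:!-avg=*-avg}. Knowing that the convolution kernel $\delta_{\o{I}_m\o{I}_n}^{\psi}$ is clean (hence sits in a single cohomological degree) does not by itself bound the amplitude of convolving with it.

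The paper's argument is different and more direct: one establishes the \emph{sharp} amplitude bounds
\[
\Av_*^{\o{I}_m,\psi}: [0,(m-n)\Delta], \qquad \Av_!^{\o{I}_m,\psi}: [-(m-n)\Delta,0]
\]
independently, and then the already-known $!/*$-shift forces $t$-exactness of the common shifted functor. The upper bound for $\Av_*$ (Lemma~\ref{l:av-*-bd}) comes from the de Rham resolution of the constant sheaf on the affine quotient $\o{I}_m/(\o{I}_n\cap\o{I}_m)$ of dimension $(m-n)\Delta$; this is elementary. The lower bound for $\Av_!$ (Lemma~\ref{l:!-avg-bd}) is the subtle point: it requires Beilinson's theorem from \cite{b-fn} on left $t$-exactness of $!$-pushforward for affine morphisms (applied to coherent $D$-modules), and moreover requires that $\o{I}_n$ and $\o{I}_m$ sit inside a common compact-open group scheme so that the $!$-averaging is governed by a finite-dimensional kernel. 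This last condition only holds for $n \geq (\check{\rho},\alpha_{\max})$, which is why the $D$-module argument in \S\ref{ss:amp-coxeter} covers only large $n$; the remaining cases are handled by the separate representation-theoretic argument in \S\ref{ss:amp-rep-thry-method} via Kazhdan--Kostant filtrations. Your proposal does not engage with either of these ingredients.
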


\begin{rem}

We prove Lemma \ref{l:amp} in Appendix \ref{a:av}
after giving a sketch below.

\end{rem}

\begin{proof}[Proof sketch for Lemma \ref{l:amp}]

The idea is that for general reasons
$*$-averaging  $\o{I}_n\cap \o{I}_m$ to $\o{I}_m$ has 
cohomological amplitude:

\[
[0,\dim \o{I}_m/\o{I}_n \cap \o{I}_n] = [0,(m-n)\Delta]
\]

\noindent while $!$-averaging has amplitude:

\[
[-\dim \o{I}_m/\o{I}_n \cap \o{I}_n,0] = [-(m-n)\Delta,0].
\]

\noindent Indeed, this follows since they are 
essentially given by $*$ and $!$-versions of de Rham 
cohomology along the \emph{affine} scheme
$\o{I}_m/\o{I}_n \cap \o{I}_n$.

Then recall that $\iota_{n,m,*}$ and $\iota_{n,m,!}$ are a
composition of a forgetful functor, which is $t$-exact,
with such an averaging functor.
So we find that
$\iota_{n,m,*}[(m-n)\Delta]$ is right $t$-exact,
while $\iota_{n,m,!}[-(m-n)\Delta]$ is left $t$-exact.
These functors coincide by Theorem \ref{t:!-avg} \eqref{i:!-avg=*-avg},
so they are $t$-exact.

\end{proof}

\subsection{}

We now have the following result:

\begin{prop}\label{p:t-str-whit}

\begin{enumerate}

\item 

There exists a unique $t$-structure on 
$\Whit(\widehat{\fg}_{\kappa}\mod)$ compatible with
filtered colimits such that
the functors:

\[
\iota_{n,!}[n\Delta]
\]

\noindent are $t$-exact.

\item\label{i:whit-derived}

With respect to this $t$-structure,
$\Whit(\widehat{\fg}_{\kappa}\mod)^+$ is the (DG) bounded below
derived category $D^+(\Whit(\widehat{\fg}_{\kappa}\mod^{\heart}))$
of its heart.

\end{enumerate}

\end{prop}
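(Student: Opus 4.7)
The plan is to treat the two parts in sequence: (1) follows formally from Lemma \ref{l:amp} together with a general colimit-of-$t$-structures principle, whereas (2) requires the fully-faithfulness statement of Theorem \ref{t:!-avg}\eqref{i:!-avg-ff} and a careful identification of bounded-below parts.

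For (1), recall from the corollary in \S \ref{ss:inv-coinv-pf} that $\Whit(\widehat{\fg}_{\kappa}\mod) = \colim_n \Whit^{\leq n}(\widehat{\fg}_{\kappa}\mod)$ in $\DGCat_{cont}$ under the structural functors $\iota_{n,m,!}$. Transport the standard $t$-structure on each $\Whit^{\leq n}$ by the autoequivalence $[-n\Delta]$ to obtain a shifted $t$-structure $t_n$; Lemma \ref{l:amp} says precisely that the transition functors $\iota_{n,m,!}$ are $t$-exact with respect to the family $\{t_n\}$. Now invoke the standard principle that a filtered colimit in $\DGCat_{cont}$ under $t$-exact transition functors carries a unique $t$-structure compatible with filtered colimits for which the structural functors are $t$-exact. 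Concretely, $\Whit^{\leq 0}$ is generated under small colimits and extensions by the images $\iota_{n,!}((\Whit^{\leq n})^{\leq 0}_{t_n})$, and (equivalently) $\Whit^{\geq 0}$ consists of those $X$ with $\iota_n^!(X) \in (\Whit^{\leq n})^{\geq 0}_{t_n}$ for every $n$, using that $\iota_n^!$ is the continuous right adjoint of $\iota_{n,!}$. Uniqueness is immediate: any $t$-structure compatible with filtered colimits and making the $\iota_{n,!}[n\Delta]$ $t$-exact must have $\Whit^{\leq 0}$ containing the above images and closed under colimits and extensions, hence containing the proposed $\Whit^{\leq 0}$; the reverse inclusion follows by a symmetric argument on $\Whit^{\geq 0}$.

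For (2), the additional input is that for $n>0$ the functor $\iota_{n,m,!}$ is fully faithful by Theorem \ref{t:!-avg}\eqref{i:!-avg-ff}. Combined with $t_n$-exactness, this shows that the induced functor $(\Whit^{\leq n})^{\heart}_{t_n} \to (\Whit^{\leq m})^{\heart}_{t_m}$ on hearts is a fully-faithful inclusion of abelian categories. Consequently $\Whit^{\heart}$ may be identified with the filtered colimit (in abelian categories) of the shifted hearts, and since by the construction of the renormalized Kac-Moody category in \cite{dmod-aff-flag} each $(\Whit^{\leq n})^+$ is the DG bounded-below derived category of its heart, the analogous statement holds for $(\Whit^{\leq n})^+_{t_n}$.

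The main obstacle, and the technically demanding point, is promoting these pieceswise identifications to the global statement $\Whit^+ \simeq D^+(\Whit^{\heart})$. The plan is to construct a comparison functor $D^+(\Whit^{\heart}) \to \Whit^+$ from the universal property of the bounded-below derived category applied to the $t$-exact inclusion of the heart, and then verify it is an equivalence. Fully-faithfulness reduces to a computation of $\Ext$-groups between objects of $\Whit^{\heart}$ inside $\Whit$; using the right adjoints $\iota_n^!$ (which are $t$-exact up to the shift by $[-n\Delta]$ onto a fully-faithful subcategory) one reduces these to $\Ext$-computations inside the pieces $\Whit^{\leq n}$, where the piecewise identification does the job. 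Essential surjectivity uses the colimit description together with the observation that any object of $\Whit^+$ has $\iota_n^!$-images which are uniformly bounded below up to the shift $[-n\Delta]$, so that it may be recovered as a bounded-below colimit of objects coming from the pieces via $\iota_{n,!}[n\Delta]$. The bookkeeping here is standard for filtered colimits of derived categories under $t$-exact fully-faithful transitions, but must be carried out carefully because the cohomological shifts $[-n\Delta]$ prevent one from appealing to a completely off-the-shelf result.
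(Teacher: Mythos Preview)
Your argument for part (1) is essentially the paper's: the paper packages the ``standard principle'' you invoke as Lemma~\ref{l:t-str}(1), applied after rewriting $\Whit(\widehat{\fg}_{\kappa}\mod)$ as the colimit under the functors $\iota_{n,m,!}[-(m-n)\Delta]$, with Lemma~\ref{l:amp} providing the $t$-exactness of the transitions.

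For part (2) your route diverges from the paper's, and there is a gap. The paper does \emph{not} use fully-faithfulness of the $\iota_{n,m,!}$ at all; instead it proves a general statement (Lemma~\ref{l:t-str}(2)) valid for any countable filtered colimit along $t$-exact functors admitting continuous right adjoints. The argument there goes by showing that an injective $I$ of $\sC^{\heart}$ is actually injective in $\sC$: one checks that each $\vph_i(I)$ is concentrated in degree $0$, and then the countability of the index set bounds $\ul{\Hom}_{\sC}(\sF,I) = \lim_i \ul{\Hom}_{\sC_i}(\vph_i\sF,\vph_i I)$ in degrees $\leq 1$, with $H^1$ killed by abelian injectivity. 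The piecewise hypothesis $\sC_i^+ = D^+(\sC_i^{\heart})$ is supplied by Lemma~\ref{l:bernstein-lunts-tate}. This is both simpler and more general than your approach.

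Your approach via fully-faithfulness is not wrong in spirit, but you have not closed the loop. First, a factual slip: $\iota_n^![-n\Delta]$ is only \emph{left} $t$-exact (as the right adjoint of a $t$-exact functor), not $t$-exact as you claim. Second, and more seriously, after reducing $\Hom_{\Whit}(X,Y[i])$ to $\Ext^i$ in $(\Whit^{\leq n})^{\heart}$ via full faithfulness and the piecewise identification, you still owe the comparison with $\Ext^i$ in the ambient heart $\Whit^{\heart}$: the inclusion $(\Whit^{\leq n})^{\heart} \hookrightarrow \Whit^{\heart}$ being fully faithful and exact does not by itself force agreement of higher Exts. One can rescue this by a Yoneda-Ext argument (any extension in the filtered union lives in some piece, and the transition maps on Ext groups are isomorphisms because they are identified with Hom's in the fully faithful DG inclusions), but you have not said this, and ``the piecewise identification does the job'' elides precisely the nontrivial step. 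Your essential surjectivity sketch is also more complicated than necessary: with full faithfulness, every object of $\Whit^+$ already lies in some $\iota_{n,!}((\Whit^{\leq n})^+_{t_n})$ by reflection of the $t$-structure, so no uniform-boundedness bookkeeping is needed.
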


\begin{convention}

Here we make the decision to always equip 
$\Whit^{\leq n}(\widehat{\fg}_{\kappa}\mod)$ with the
canonical $t$-structure on Harish-Chandra modules,
i.e., realizing the category as $\widehat{\fg}_{\kappa}\mod^{\o{I}_n,\psi}$.
This can be confusing, since we need the shifts above for
the functors to be $t$-exact; but shifting all our $t$-structures
would probably be more confusing. 

\end{convention}

Using the automorphisms 
$[n\Delta]:\Whit^{\leq n}(\widehat{\fg}_{\kappa}\mod) \to 
\Whit^{\leq n}(\widehat{\fg}_{\kappa}\mod)$,
we have:

\[
\Whit(\widehat{\fg}_{\kappa}\mod) = 
\underset{n,\iota_{n,m,!}[-(m-n)\Delta]}{\colim}
\Whit^{\leq n}(\widehat{\fg}_{\kappa}\mod).
\]

\noindent Therefore, the above result is an immediate consequence of the following
plus Lemma \ref{l:bernstein-lunts-tate} (the Bernstein-Lunts theorem).

\begin{lem}\label{l:t-str}

Suppose $i \mapsto \sC_i \in \DGCat_{cont}$ is a filtered
diagram of cocomplete DG categories, each equipped with
a $t$-structure compatible with filtered colimits. 
Suppose every structure
functor $\psi_{i,j}:\sC_i \to \sC_j$ is $t$-exact and
admits a continuous right adjoint $\vph_{i,j}$.

\begin{enumerate}

\item 

Then $\sC \coloneqq \colim_i \sC_i$ admits a unique
$t$-structure such that each $\psi_i:\sC_i \to \sC$
is $t$-exact. 

\item 

If $\sC_i^+ = D^+(\sC_i^{\heart})$ is the
bounded below derived category of its heart for each $i$,
and if the filtered category indexing our colimit is countable, then
$\sC^+ = D^+(\sC^{\heart})$. 

\end{enumerate}

\end{lem}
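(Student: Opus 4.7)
For part (1), my plan is to define the $t$-structure explicitly by generators. Let $\sC^{\leq 0}$ be the smallest full subcategory of $\sC$ containing the essential images $\psi_i(\sC_i^{\leq 0})$ for all $i$ and closed under colimits. Presentability of $\sC$ together with the adjoint functor theorem supplies a right adjoint $\tau^{\leq 0}$ to the inclusion $\sC^{\leq 0} \into \sC$, and the cofiber of the counit yields truncation triangles. The crucial auxiliary identification to prove is
\[
\sC^{\geq 0} = \{X \in \sC : \vph_i(X) \in \sC_i^{\geq 0} \text{ for all } i\},
\]
which follows from the adjunction $\ul{\Hom}_\sC(\psi_i Y, X) = \ul{\Hom}_{\sC_i}(Y, \vph_i X)$ together with the fact that $\ul{\Hom}(-, X)$ converts colimits into limits. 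From this description, $t$-exactness of each $\psi_i$ is automatic (right exactness by construction; left exactness because each $\vph_{i,j}$ is left $t$-exact as the right adjoint of a $t$-exact functor, and $\vph_j \psi_i = \colim_{k \geq i,j} \vph_{j,k} \psi_{i,k}$ is then a filtered colimit of left $t$-exact functors), and compatibility with filtered colimits is immediate from continuity of the $\vph_i$. Uniqueness is forced since any $t$-structure with the required exactness properties and compatibility with filtered colimits is pinned down by the description of $\sC^{\geq 0}$ above.

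For part (2), I reduce to a sequential diagram $\sC_0 \to \sC_1 \to \cdots$ using the countability hypothesis. The plan is to invoke Lemma \ref{l:bernstein-lunts-tate} (the cited Bernstein--Lunts criterion), which reduces the identification $\sC^+ = D^+(\sC^{\heart})$ to verifying (a) the $\Ext$ comparison $\Ext^n_{\sC^{\heart}}(X, Y) \isom \ul{\Hom}_{\sC}(X, Y[n])$ for $X, Y \in \sC^{\heart}$ and $n \geq 0$, and (b) a Postnikov convergence statement on $\sC^+$. First I identify $\sC^{\heart} = \colim_i \sC_i^{\heart}$ in abelian categories, using the description of $\sC^{\geq 0}$ from Part (1) and $t$-exactness of the $\psi_i^{\heart}$. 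For (a), the key input is the formula $\ul{\Hom}_\sC(\psi_i A, \psi_j B) = \colim_{k \geq i,j} \ul{\Hom}_{\sC_k}(\psi_{i,k} A, \psi_{j,k} B)$ on compact objects, valid because structure maps preserve compacts so filtered colimits in $\DGCat_{cont}$ descend to filtered colimits of compacts. Combined with the hypothesis $\sC_i^+ = D^+(\sC_i^{\heart})$ and the classical fact that $\Ext$ in a filtered colimit of abelian categories along exact functors is the filtered colimit of $\Ext$'s, this yields (a). Condition (b) descends from the corresponding statement in each $\sC_i$ via continuity of the $\vph_i$ and a standard colimit/limit interchange for sequential diagrams.

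The main obstacle is the $\Ext$ comparison in (a). The subtle point is that a typical object of $\sC^{\heart}$ need not be realized as $\psi_i A$ for a single $A \in \sC_i^{\heart}$; it must be presented as a filtered colimit of such, and injective (or Postnikov) resolutions must be lifted compatibly across the tower. The countability hypothesis is precisely what forces the vanishing of the resulting $\on{lim}^1$ terms, permitting a Mittag-Leffler argument to identify the derived $\Hom$ correctly. I expect the bulk of part (2) to consist of packaging these $\on{lim}^1$ vanishings into the hypotheses of Lemma \ref{l:bernstein-lunts-tate}; without countability the identification $\sC^+ = D^+(\sC^{\heart})$ can genuinely fail, so this is the load-bearing technical step.
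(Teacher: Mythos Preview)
Your argument for Part (1) is essentially identical to the paper's: same definition of $\sC^{\leq 0}$, same dual description of $\sC^{\geq 0}$ via the $\vph_i$, and the same computation $\vph_j\psi_i = \colim_{k}\vph_{j,k}\psi_{i,k}$ to verify left $t$-exactness of $\psi_i$. That part is fine.

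Part (2) has a genuine gap. You invoke Lemma~\ref{l:bernstein-lunts-tate} as ``the Bernstein--Lunts criterion, which reduces the identification $\sC^+ = D^+(\sC^{\heart})$ to verifying (a) Ext comparison \ldots\ and (b) Postnikov convergence.'' But Lemma~\ref{l:bernstein-lunts-tate} in this paper is not a general criterion of that shape: it is the specific assertion that $\fh\mod^{K,+}$ is the bounded below derived category of its heart for a Tate Harish--Chandra pair, and its proof \emph{uses} Lemma~\ref{l:t-str} (explicitly: ``$\Rep(K)^+ = D^+(\Rep(K)^{\heart})$ by Lemma~\ref{l:t-str}''). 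So your citation is circular, and the black box you are relying on does not exist in the paper.

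Setting the circularity aside, your outline via Ext comparison on compacts plus Mittag--Leffler is a plausible alternative route, but you have not supplied the load-bearing step: objects of $\sC^{\heart}$ are generally neither compact nor in the image of a single $\psi_i$, and you would need to produce compatible injective resolutions across the tower and control a $\lim^1$. The paper takes a different, more direct route: it shows that any injective $I \in \sC^{\heart}$ is ``actually injective'' in $\sC$, i.e.\ $\ul{\Hom}_\sC(-,I)$ is $t$-exact. The key technical steps are (i) for adjoint pairs $(F,G)$ with $F$ $t$-exact between categories whose bounded-below parts are derived categories of their hearts, $G$ of an injective is concentrated in degree $0$ and again injective; (ii) using (i) for the $\vph_{i,j}$ and the colimit presentation $\sF = \colim_i \psi_i\vph_i(\sF)$, one proves $\vph_i(I) = H^0\vph_i(I)$ is injective in $\sC_i^{\heart}$; (iii) then $\ul{\Hom}_\sC(\sF,I) = \lim_i \ul{\Hom}_{\sC_i}(\vph_i\sF,\vph_i I)$ is a countable limit of objects in $\Vect^{\leq 0}$, hence in $\Vect^{\leq 1}$, and its $H^1$ vanishes by injectivity of $I$ in $\sC^{\heart}$. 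Countability enters only at this last step, exactly where your $\lim^1$ instinct points; but the mechanism runs through injectives and the right adjoints $\vph_i$, not through compacts and the $\psi_i$.
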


\begin{proof}

\step 

Define a $t$-structure on $\sC$ by declaring
$\sC^{\leq 0}$ to be generated under colimits
by the subcategories $\psi_i(\sC_i^{\leq 0})$. 
It is equivalent to say that $\sF \in \sC^{\geq 0}$ if and only
if $\vph_i(\sF) \in \sC_i^{\geq 0}$ for all $i$; here
$\vph_i:\sC \to \sC_i$ is the (continuous) right adjoint to
$\psi_i$.

We want to show that the functors $\psi_i$ are 
$t$-exact. Clearly they are right $t$-exact, so it remains
to show left $t$-exactness. 
Suppose $\sF \in \sC_i^{\geq 0}$; we need to
show $\psi_i(\sF) \in \sC^{\geq 0}$. It suffices
to show $\vph_j\psi_i(\sF) \in \sC_j^{\geq 0}$ for all indices $j$.
By standard generalities about filtered co/limits (see \cite{dgcat}),
we have:

\[
\vph_j\psi_i(\sF) = 
\underset{i \to k \leftarrow j}{\colim} \, \vph_{j,k}\psi_{i,k}(\sF_i) 
\in \sC_j.
\]

\noindent But $\psi_{i,k}(\sF_i) \in \sC_k^{\geq 0}$ by $t$-exactness,
and $\vph_{j,k}(\sC_k^{\geq 0}) \subset \sC_j^{\geq 0}$ because
it is right adjoint to a $t$-exact functor, so each term in our
colimit is in degrees $\geq 0$.
Since our $t$-structures are compatible with filtered colimits, we obtain
the claim.

\step 

Now suppose that $\sC_i^+ = D^+(\sC_i^{\heart})$ is the bounded
below derived category of its heart. 
We want to see the same
property for $\sC$. 

Some technical comments first: note that the 
$t$-structure on $\sC_i$ is right separated 
by assumption. By the compatibility of the $t$-structures with filtered
colimits, it is therefore also right complete. It follows formally that
the $t$-structure on $\sC$ is right separated, and again, that it is
right complete.

Therefore, it suffices to show that an injective
object $I$ in the abelian category $\sC^{\heart}$ is 
``actually injective" in $\sC$, i.e., that the following
equivalent conditions are satisfied:

\begin{itemize}

\item $\ul{\Hom}_{\sC}(-,I): \sC \to \Vect^{op}$ is $t$-exact.

\item For $\sF \in \sC^{\geq 0}$, 
$\ul{\Hom}_{\sC}(\sF,I) \in \Vect^{\leq 0}$.

\item For $\sF \in \sC^{\heart}$, $\ul{\Hom}_{\sC}(\sF,I) \in \Vect^{\heart}$.

\end{itemize}

We will do this in what follows.

\step 

We need some general properties about homological algebra
for abelian categories.

Suppose $\sA$ and $\sB$ are Grothendieck abelian categories and
$F:D^+(\sA) \rightleftarrows D^+(\sB):G$ are adjoint with
$F$ $t$-exact. Then recall that $G$ is the derived functor
of $H^0(G): \sB \to \sA$, that is, for any injective $I \in \sB$,
$G(I) = H^0(G(I))$.

Indeed, for $\sF \in \sA$, we have:

\[
\Ext_{\sA}^i(\sF,G(I)) = \Ext_{\sB}^i(F(\sF),I) = 0 \text{ for }i>0.
\]

\noindent A standard argument shows as well that $H^0 G(I)$ is
injective. Therefore, by the long exact sequence,
$\tau^{>0} G(I) = \Coker(H^0(G(I)) \to G(I)) \in D^+(\sA)$ satisfies the above vanishing as well. But since $\tau^{>0} G(I)$ is in degrees $>0$,
the vanishing of positive $\Ext$s is enough to guarantee that it is zero.

Therefore, in our setting, 
the functors $H^0 \vph_{i,j}: \sC_j^{\heart} \to \sC_i^{\heart}$
preserve injective objects, and $\vph_{i,j} = H^0\vph_{i,j}$ when
evaluated on such an object.

\step 

Now we show that $\vph_i(I) = H^0(\vph_i(I))$ for every $i$.
(The argument is unfortunately a little indirect.)

Suppose $i \to j$ is given. We form the cone:

\[
\Coker(\vph_{i,j} H^0(\vph_j(I)) \to \vph_i(I)) = 
\Coker\big(\vph_{i,j} H^0(\vph_j(I)) \to \vph_{i,j}\vph_j(I)\big) = 
\vph_{i,j} \tau^{>0} \vph_j(I).
\]

\noindent Obviously the last term is in cohomological 
degrees $>0$. Moreover,
note that $H^0(\vph_j(I))$ is injective in $\sC_j^{\heart}$,
so the first term $\vph_{i,j} H^0(\vph_j(I)$ 
in our distinguished triangle is in cohomological
degree $0$ by the last step.
Therefore, this is a truncation sequence, and in particular 
we have:

\[
H^0(\vph_{i,j} H^0(\vph_j(I))) \isom H^0(\vph_i(I)). 
\]

We remark again that 
$H^0(\vph_{i,j} H^0(\vph_j(I))) = \vph_{i,j} H^0(\vph_j(I))$. Therefore,
the objects $j \mapsto H^0(\vph_j(I))$ defines an object
of $\lim_j \, \sC_j = \sC$ (the structure maps in this limit being the $\vph_j$).

Recall that by filteredness, any object $\sF$ of $\sC$ can be written
as:

\[
\underset{i}{\colim} \, \psi_i \vph_i(\sF).
\]

\noindent Therefore, the object of $\sC$ constructed above
is:

\[
\underset{i}{\colim} \, \psi_i H^0(\vph_j(I)) =
H^0(\underset{i}{\colim} \, \psi_i \vph_j(I)) = 
H^0(I) = I.
\]

\noindent This proves the claim.

\step 

We now conclude the argument. 
For $\sF \in \sC$, we have 
$\sF = \colim_i \psi_i \vph_i(\sF) \in \sC$,
so:

\[
\ul{\Hom}_{\sC}(\sF,I) = 
\underset{i}{\lim} \, \ul{\Hom}_{\sC_i}(\psi_i\vph_i(\sF),I) =
\underset{i}{\lim} \, \ul{\Hom}_{\sC_i}(\vph_i(\sF),\vph_i(I)).
\]

\noindent Now suppose $\sF \in \sC^{\heart}$. 
Then $\vph_i(\sF_i) \in \sC_i^{\geq 0}$ and $\vph_i(I)$ is
injective by the above, so each term in this limit is in
$\Vect^{\leq 0}$. Because this is a countable limit by
assumption, the limit is in $\Vect^{\leq 1}$. 
But now recall that:

\[
H^1(\Hom_{\sC}(\sF,I)) = 
\Ext_{\sC}^1(\sF,I) =
\Ext_{\sC^{\heart}}^1(\sF,I) = 0
\]

\noindent by injectivity of $I$, completing the argument.

\end{proof}

\begin{notation}

There is risk for notational confusion in using this $t$-structure:
e.g., $\Whit^{\leq n}(\widehat{\fg}_{\kappa}\mod)$ continues
to denote the adolescent Whittaker category, while
$\Whit(\widehat{\fg}_{\kappa}\mod)^{\leq n}$ denotes the
subcategory of $\Whit(\widehat{\fg}_{\kappa}\mod)$ consisting
of objects cohomologically bounded from above by $n$.

\end{notation}

\subsection{Fiber functor}

Observe that we have:

\[
\xymatrix{
\widehat{\fg}_{\kappa}\mod \ar[dr]^{\Psi} \ar[d] & \\
\Whit(\widehat{\fg}_{\kappa}\mod) \ar@{..>}[r]^(.6){\Psi^{\Whit}} &
\Vect 
}
\]

\noindent where we regard $\Whit$ as coinvariants. 
Indeed, this follows because each functor
$C^{\dot}(\Ad_{-n\check{\rho}(t)} \fn[[t]],(-) \otimes -\psi)$
obviously\footnote{Indeed, it suffices to show that
the natural transformation $\Oblv \Av_*^{\psi} \to \id$
induces an isomorphism on this Lie algebra cohomology;
(here everything is with respect to $\Ad_{-n\check{\rho}(t)} N(O)$
and its Lie algebra). It obviously suffices to prove this statement
for $\Ad_{-n\check{\rho}(t)} \fn[[t]] \mod$ 
in place of $\widehat{\fg}_{\kappa}\mod$. Then the 
Lie algebra cohomology functor is corepresented by the trivial
representation, so with the twist by the character it is corepresented
by the 1-dimensional module defined by $\psi$. Since this object
lies in 
$\Ad_{-n\check{\rho}(t)} \fn[[t]] \mod^{\Ad_{-n\check{\rho}(t)} N(O),\psi}$,
and since the forgetful functor from this category
to $\Ad_{-n\check{\rho}(t)} \fn[[t]]\mod$ is fully-faithful, we obtain the
result.}
factors as:

\[
\xymatrix{
\widehat{\fg}_{\kappa}\mod \ar[d]^{\Av_*^{\psi}} \ar[dr] & \\
\widehat{\fg}_{\kappa}\mod^{\Ad_{-n\check{\rho}(t)} N(O),\psi} \ar@{..>}[r] 
& \Vect.
}
\]

We will prove Theorem \ref{t:aff-skry} by an analysis of this
functor.

\begin{warning}\label{w:shifts}

The cohomological shifts can cause a great deal of confusion here.
By the above, for $m \geq n$ the following diagram commutes:

\[
\xymatrix{
\Whit^{\leq n}(\widehat{\fg}_{\kappa}\mod) =
\widehat{\fg}_{\kappa}\mod^{\o{I}_n,\psi} 
\ar[rr]^(.7){\Oblv} \ar[d]^{\iota_{n,m,*}}
& &
\widehat{\fg}_{\kappa}\mod \ar[r]^{\Psi} & 
\Vect \ar@{=}[d] \\
\Whit^{\leq m}(\widehat{\fg}_{\kappa}\mod) =
\widehat{\fg}_{\kappa}\mod^{\o{I}_m,\psi} 
\ar[rr]^(.7){\Oblv} 
& &
\widehat{\fg}_{\kappa}\mod \ar[r]^{\Psi} & \Vect.
}
\]

\noindent Then recall that in the proof of Theorem \ref{t:inv-coinv},
we used cohomological shifts by $2n\Delta$ on
$\Whit^{\leq n}$ to identify 
$\Whit(\sC) \coloneqq \colim_{n,\iota_{n,m,*}} \Whit^{\leq n}(\sC)$
with the colimit under the functors $\iota_{n,m,!}$. 

Therefore, we obtain the commutativity of the following diagram:

\[
\xymatrix{
\Whit^{\leq n}(\widehat{\fg}_{\kappa}\mod) 
\ar[rr]^{\Oblv} \ar[d]^{\iota_{n,!}} & &
\widehat{\fg}_{\kappa}\mod \ar[rr]^{\Psi[-2n\Delta]} & &
\Vect \ar@{=}[d] \\
\Whit(\widehat{\fg}_{\kappa}\mod) 
\ar[rrrr]^{\Psi^{\Whit}} & & & &
\Vect.
}
\]

\noindent This is the reason we use the notation $\Psi^{\Whit}$
(rather than $\Psi$):
we continue to use the notation $\Psi$ for the functor
$\Whit^{\leq n}(\widehat{\fg}_{\kappa}\mod) \to \Vect$ obtained
by thinking of this category as Harish-Chandra modules, and
then $\Psi^{\Whit} \circ \iota_{n,!} = \Psi[-2n\Delta]$.

(Note that the confusion arises here because we are not 
adhering to the principle espoused in Warning \ref{w:adolescent-forget}.)

\end{warning}

\subsection{}

The main observation is that 
$\Psi^{\Whit}:\Whit(\widehat{\fg}_{\kappa}\mod) \to \Vect$ is canonically 
corepresented in a nice way. 
Heuristically, if we think of the source category as the category
of modules over the $\sW_{\kappa}$-algebra, we want to show that
it is corepresented by the projective system $\{\sW_{\kappa}^n\}_{n \geq 0}$
from \S \ref{s:ds}.

\subsection{}

To make this idea precise, define 
$\preprime\sW_{\kappa}^n \in \Whit^{\leq n}(\widehat{\fg}_{\kappa}\mod)$
as:

\[
\ind_{\Lie \o{I}_n}^{\widehat{\fg}_{\kappa}}(\psi) 
\otimes \ell^{n,\vee}
[n\Delta] \in \widehat{\fg}_{\kappa}\mod^{\o{I}_n,\psi} = 
\Whit^{\leq n}(\widehat{\fg}_{\kappa}\mod).
\]

\noindent Here (and throughout this section) we are freely using
Notation \ref{n:det-lines}, so 
$\ell$ is shorthand for a determinant line.
We use the same notation for the induced
object $\iota_{n,!}(\preprime\sW_{\kappa}^n)$
of $\Whit(\widehat{\fg}_{\kappa}\mod)$.

We have the following basic properties.

\begin{itemize}

\item 
$\preprime\sW_{\kappa}^n \in \Whit(\widehat{\fg}_{\kappa}\mod)^{\heart}$.
Indeed, recall that $\iota_{n,!}[n\Delta]$ is $t$-exact, giving the claim.

\item The notation is compatible with \S \ref{s:ds} in the sense that:

\[
\Psi^{\Whit}(\preprime\sW_{\kappa}^n) = \sW_{\kappa}^n 
\in \Vect^{\heart} \subset \Vect.
\]

\noindent Indeed, this was the definition of the right hand side
(we remark that Warning \ref{w:shifts} is important here).

\end{itemize}

\subsection{}

Now observe that in the proof of
Theorem \ref{t:ds-n/m}, we in effect constructed
a canonical map 
$\alpha_{n,m}:\preprime\sW_{\kappa}^m \to \preprime\sW_{\kappa}^n$
for $m \geq n$.

Indeed, in \eqref{eq:wm->wn}, we produced a map:

\[
\ind_{\Lie \o{I}_m}^{\widehat{\fg}_{\kappa}}(\psi) \to 
\ind_{\Lie \o{I}_n}^{\widehat{\fg}_{\kappa}}(\psi) \otimes
\ell^{n,m}
[(m-n)\Delta] \in 
\widehat{\fg}_{\kappa}\mod^{\o{I}_n\cap \o{I}_m,\psi}
\]

\noindent This induces a morphism:

\begin{equation}\label{eq:almost-alpha}
\alpha_{n,m}^{\prime}:
\ind_{\Lie \o{I}_m}^{\widehat{\fg}_{\kappa}}(\psi) \to
\Av_*^{\o{I}_m,\psi} 
(\ind_{\Lie \o{I}_n}^{\widehat{\fg}_{\kappa}}(\psi) \otimes
\ell^{n,m}
[(m-n)\Delta]) \in 
\widehat{\fg}_{\kappa}\mod^{\o{I}_m,\psi}.
\end{equation}

\noindent Note that $\Av_*^{\o{I}_m,\psi}$  
is the same as $\iota_{n,m,*}$ here. Therefore, incorporating
the determinant twists and cohomological shifts, and switching
to adolescent Whittaker notation, we obtain:\footnote{
Regarding the arithmetic of cohomological shifts: 
in the last equality, up to the factor of $\Delta$
we have a contribution $(2m-n)$ from the previous
line; switching from $\iota_{n,m,*}$ to $\iota_{n,m,!}$ means
we need to add a shift by $-2(m-n)$, producing the $-n$ that is displayed.}

\[
\begin{gathered}
\alpha_{n,m}:\preprime\sW_{\kappa}^m = 
\ind_{\Lie \o{I}_m}^{\widehat{\fg}_{\kappa}}(\psi) 
\otimes \ell^{m,\vee}
[m\Delta] \to
\iota_{n,m,*} 
\big(\ind_{\Lie \o{I}_n}^{\widehat{\fg}_{\kappa}}(\psi)\big) \otimes
\ell^{n,m}
[(m-n)\Delta] \otimes \ell^{m,\vee}
[m\Delta] = \\
\iota_{n,m,!} 
\big(\ind_{\Lie \o{I}_n}^{\widehat{\fg}_{\kappa}}(\psi)
\otimes \ell^{n,\vee}
[-n\Delta]
\big)
 = \iota_{n,m,!}(\preprime\sW_{\kappa}^n) 
\in \Whit^{\leq m}(\widehat{\fg}_{\kappa}\mod).
\end{gathered}
\]

We have the following important fact.

\begin{lem}\label{l:alpha-epi}

The morphism 
$\alpha_{n,m}:\preprime\sW_{\kappa}^m \to \preprime\sW_{\kappa}^n \in 
\Whit(\widehat{\fg}_{\kappa}\mod)^{\heart}$ is an epimorphism
in this abelian category.

\end{lem}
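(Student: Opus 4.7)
The plan is to reduce to a surjectivity statement inside $\widehat{\fg}_\kappa\mod^{\o{I}_m,\psi,\heart}$ and then prove it using a Kazhdan-Kostant (KK) filtration argument whose crucial geometric input is Theorem \ref{t:!-avg-cl}.

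First, I would use Proposition \ref{p:t-str-whit} and Theorem \ref{t:!-avg}\eqref{i:!-avg-ff} to note that $\iota_{m,!}[m\Delta]$ is a $t$-exact, fully faithful embedding from the Harish-Chandra heart of $\Whit^{\leq m}(\widehat{\fg}_\kappa\mod)$ into $\Whit^\heart$; in particular it both preserves and reflects epimorphisms in the heart. Tracking the shifts via Lemma \ref{l:amp} and the identification $\iota_{n,m,!}=\iota_{n,m,*}[2(m-n)\Delta]$ of Theorem \ref{t:!-avg}\eqref{i:!-avg=*-avg}, both $\preprime\sW_\kappa^m$ and $\iota_{n,m,!}(\preprime\sW_\kappa^n)$ are concentrated in Harish-Chandra degree $-m\Delta$ inside $\Whit^{\leq m}$, and $\alpha_{n,m}$ is an epimorphism in $\Whit^\heart$ if and only if the induced map on top Harish-Chandra cohomology,
\[
\ind_{\Lie\o{I}_m}^{\widehat{\fg}_\kappa}(\psi) \longrightarrow H^{(m-n)\Delta}\bigl(\iota_{n,m,*}(\ind_{\Lie\o{I}_n}^{\widehat{\fg}_\kappa}(\psi))\bigr)\otimes\ell^{n,m},
\]
is a surjection in $\widehat{\fg}_\kappa\mod^{\o{I}_m,\psi,\heart}$, equivalently on underlying vector spaces.

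Next, I would equip both sides with their canonical KK filtrations (Appendix \ref{a:hc}, cf.\ Example \ref{e:pbw-kk-k=h} for the source). The source has associated graded the structure sheaf $\sO_{f+\Lie\o{I}_m^\perp/\o{I}_m}$. For the target, Remark \ref{r:!-avg-cl} shows that $\iota_{n,m,*}$ is a filtered functor whose semi-classical incarnation is the pull-push along the correspondence
\[
f+\Lie\o{I}_n^\perp/\o{I}_n \longleftarrow f+(\Lie\o{I}_n^\perp\cap\Lie\o{I}_m^\perp)/(\o{I}_n\cap\o{I}_m) \xrightarrow{i} f+\Lie\o{I}_m^\perp/\o{I}_m,
\]
in which the right leg $i$ is the finitely presented closed embedding furnished by Theorem \ref{t:!-avg-cl}. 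Since pullback of a structure sheaf along the left leg yields a structure sheaf, and pushforward along the closed embedding $i$ is $t$-exact, the semi-classical of $\iota_{n,m,*}(\ind_n)$ is the heart object $i_*\sO_Z$ with $Z=f+(\Lie\o{I}_n^\perp\cap\Lie\o{I}_m^\perp)/(\o{I}_n\cap\o{I}_m)$; this matches the single-degree concentration in Harish-Chandra degree $(m-n)\Delta$ forced by Lemma \ref{l:amp}. Under these identifications, the semi-classical of the map in question is the canonical surjection $\sO_X\twoheadrightarrow i_*\sO_Z$ (restriction of functions along $i$), which is manifestly an epimorphism in $\QCoh^{ren}(X)^\heart$.

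Finally, both KK filtrations are exhaustive and bounded below: the source filtration starts at $F_0=k$ by the standard KK analysis of induced modules, and the target inherits this bounded-belowness through the expanding $\bG_m$-action on $Z\subset X$ used to grade the semi-classical side. A standard inductive argument on filtered abelian groups then upgrades surjectivity on associated graded to surjectivity of the map itself, finishing the proof. The main obstacle I anticipate is the careful verification of the compatibility of the KK-filtration formalism of Appendix \ref{a:hc} with the averaging functor $\iota_{n,m,*}$ and the associated cohomological-shift bookkeeping; once this is in place, the substantive geometric content is exactly Theorem \ref{t:!-avg-cl}.
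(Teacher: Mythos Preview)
Your overall strategy matches the paper's: reduce to a surjectivity statement in $\widehat{\fg}_\kappa\mod^{\o{I}_m,\psi,\heart}$ via $t$-exactness of $\iota_{m,!}[m\Delta]$, equip both sides with their KK filtrations, and use Theorem~\ref{t:!-avg-cl} to identify the associated graded map with the restriction-of-functions surjection along a closed embedding. That part is fine and is exactly what the paper does in its first two steps.

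The genuine gap is your final paragraph. You assert that ``both KK filtrations are exhaustive and bounded below'' and that ``the source filtration starts at $F_0=k$.'' This is false: the KK filtration on $\ind_{\Lie\o{I}_m}^{\widehat{\fg}_\kappa}(\psi)$ is \emph{not} bounded below (see Warning~\ref{w:kk-bdd} and the preamble to the proof of Theorem~\ref{t:ds-no-w-str}, where this is stated explicitly). Concretely, for a positive root $\alpha$ with $(\check{\rho},\alpha)>1$, the element $t^r e_\alpha$ lies in $F_{1-(\check{\rho},\alpha)}^{KK}$ with $1-(\check{\rho},\alpha)<0$, and products of such elements push the filtration arbitrarily far down. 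You may be conflating the KK filtration on the induced module with the KK filtration on its Drinfeld--Sokolov reduction $\sW_\kappa^n$; the latter \emph{is} bounded below (Theorem~\ref{t:ds-wstr}), but only after a delicate argument. Likewise, the ``expanding $\bG_m$-action on $Z$'' controls only the semi-classical grading, not boundedness of the KK filtration on the quantum object.

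What the paper actually does to rescue the argument (its Steps~3--4) is to introduce an auxiliary $(L_0+\check{\rho})$-grading on both modules, compatible with $\alpha_{n,m}'$, and then show by an explicit estimate that on each fixed $(L_0+\check{\rho})$-graded component the KK filtration \emph{is} bounded below. Surjectivity on associated graded then gives surjectivity on each graded piece, hence on the whole. You would need to supply this (or an equivalent) mechanism; without it the ``standard inductive argument on filtered abelian groups'' does not apply.
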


\begin{proof}

\step 

By $t$-exactness of $\iota_{m,!}[m\Delta]$, it suffices to show
the corresponding fact in 
$\Whit^{\leq m}(\widehat{\fg}_{\kappa}\mod)$.
Tautologically, this reduces to a fact about
the morphism $\alpha_{n,m}^{\prime}$ from \eqref{eq:almost-alpha}.

Observe that both the source and the target have
canonical KK filtrations in $\widehat{\fg}_{\kappa}\mod^{\o{I}_m,\psi}$. 
We will first verify that the associated graded morphism
is an epimorphism. Afterwards, we will explain why this suffices
to deduce the result (the issue being that the KK filtration is
not bounded from below).

\step 

We have:

\[
\gr_{\dot}^{KK} \ind_{\Lie \o{I}_m}^{\widehat{\fg}_{\kappa}}(\psi) =
\sO_{f+\Lie\o{I}_m^{\perp}/\o{I}_m} \in 
\QCoh^{ren}(f+\Lie\o{I}_m^{\perp}/\o{I}_m).
\]

\noindent As in Remark \ref{r:!-avg-cl}, we have:

\[
\gr_{\dot}^{KK} \Av_*^{\o{I}_m,\psi} 
(\ind_{\Lie \o{I}_n}^{\widehat{\fg}_{\kappa}}(\psi) \otimes
\ell^{n,m} [(m-n)\Delta]) = 
\pi_*(\sO_{f+\Lie\o{I}_n^{\perp} \cap \Lie\o{I}_m^{\perp}/\o{I}_n \cap \o{I}_m})
\]

\noindent for $\pi:f+\Lie\o{I}_n^{\perp} \cap \Lie\o{I}_m^{\perp}/\o{I}_n \cap \o{I}_m \to f+\Lie\o{I}_n^{\perp}/\o{I}_n$ the projection;
note that the determinant twist and cohomological shift
are absorbed due to the ``mild correction" from \emph{loc. cit}.

By construction, our map is the canonical adjunction morphism.
Because $\pi$ is a closed embedding by Theorem \ref{t:!-avg-cl},
this map is an epimorphism as desired. 

\step 

Now we explain why the associated graded map being an isomorphism
suffices. 

Let $L_0 = t\partial_t$. 
We use the term \emph{$(L_0+\check{\rho})$-grading}
to refer to the corresponding grading on $\fg((t))$ induced by
taking the diagonal action of $\bG_m$ with respect to loop rotation and 
$\Ad_{\check{\rho}(-)}$. Note that 
$\psi:\o{I}_n \to k$ is graded for $k$ being given
degree $0$: the point is that $\frac{e_i}{t}$ has 
$(L_0+\check{\rho})$-degree $0$.

Therefore, $\ind_{\Lie \o{I}_n}^{\widehat{\fg}_{\kappa}}$
carries a canonical $(L_0+\check{\rho})$-grading. 
The same formally holds for:

\[
\Av_*^{\o{I}_m,\psi} 
(\ind_{\Lie \o{I}_n}^{\widehat{\fg}_{\kappa}}(\psi) \otimes
\ell^{n,m}
[(m-n)\Delta])
\]

\noindent and our map $\alpha_{n,m}^{\prime}$ is compatible with
these gradings.

Obviously it suffices to show $\alpha_{n,m}^{\prime}$ is an epimorphism
in the case $m>0$, since otherwise $m = n = 0$. 
We claim that in this case, for every integer $i$, the KK filtration on
the $i$th $(L_0+\check{\rho})$-graded component of
our modules is bounded from below.
This combined with the corresponding semi-classical statement 
obviously suffices to show the surjectivity of the morphism
$\alpha_{n,m}^{\prime}$, since it implies it on each graded component.

We show this below.

\step 

To verify the claim about the $(L_0+\check{\rho})$-grading
on $\ind_{\o{I}_n}^{\widehat{\fg}_{\kappa}}(\psi)$,
note that because the KK filtration is
compatible with the grading and separated (non-derivedly),
it suffices to show that the $(L_0+\check{\rho})$-eigenvalues
on $\gr_i^{KK} \ind_{\Lie \o{I}_m}^{\widehat{\fg}_{\kappa}}(\psi)$
are bounded above by some function of $i$ going to $-\infty$ as $i$ does. 

Recall that: 

\[
\gr_i^{KK} \ind_{\Lie \o{I}_m}^{\widehat{\fg}_{\kappa}}(\psi) = 
\underset{j}{\oplus} \Sym^j(\fg((t))/\Lie\o{I}_n)^{i-j}
\]

\noindent where the superscript indicates the $(i-j)$th graded
degree with respect to the $-\check{\rho}$-grading. 

Below, for $\alpha$ in the root lattice of $G$, we use the notation
$|\alpha|$ for $(\check{\rho},\alpha)$.

The $j$th summand above is spanned by elements of the form:

\[
\frac{e_{\alpha_1}}{t^{r_1}} \frac{e_{\alpha_2}}{t^{r_2}} \ldots
\frac{e_{\alpha_k}}{t^{r_k}} 
\frac{f_{\beta_{k+1}}}{t^{r_{k+1}}} \ldots 
\frac{f_{\beta_j}}{t^{r_j}}
\]

\noindent where $e_{\alpha_\ell} \in \fn$ is a non-zero 
vector of weight $\alpha_\ell$, $f_{\beta_\ell} \in \fb^-$ is a non-zero
vector of weight $-\beta_{\ell}$. (Note that $\beta_{\ell}$ 
can be a positive root or zero, and in the latter case $f_0$ can
be any non-zero vector in $\ft$.) That this vector has degree $i-j$
means that:

\begin{equation}\label{eq:i-j}
-\sum_{\ell = 1}^k |\alpha_{\ell}| + \sum_{\ell = k+1}^j |\beta_{\ell}|
= i-j.
\end{equation}

\noindent Finally, note that:

\[
\begin{cases} 
r_{\ell} \geq 1+m|\alpha_{\ell}| & \text{ if } 1\leq \ell \leq k \\
r_{\ell} \geq 1-m(|\beta_{\ell}|+1)
& \text{ if } k<\ell \leq j.
\end{cases}
\] 

\noindent by definition of $\o{I}_m$, and the fact that
$m>0$.

Then the $(L_0+\check{\rho})$-degree
of an element as above is:

\[
\begin{gathered}
\sum_{\ell = 1}^k -r_{\ell}+|\alpha_{\ell}| +
\sum_{\ell = k+1}^j -r_{\ell}-|\beta_{\ell}| \leq \\
\sum_{\ell = 1}^k -1-(m-1)|\alpha_{\ell}| +
\sum_{\ell = k+1}^j -1+(m-1)|\beta_{\ell}| + m 
\overset{\eqref{eq:i-j}}{=}
\\
-j+(m-1)(i-j) + m(j-k) = \\
(m-1) i - mk.
\end{gathered}
\]

\noindent If $m-1>0$, then clearly this goes to $-\infty$ as $i$ does.

To treat the general case (so additionally allowing $m = 1$), 
we need to bound $k$ in terms of $i$. 

For this, we let $\alpha_{max}$ denote the longest root of
$G$. We then apply \eqref{eq:i-j} and the fact that $j \geq 0$
(by its definition) to obtain:

\[
i \geq i-j = 
-\sum_{\ell = 1}^k |\alpha_{\ell}| + \sum_{\ell = k+1}^j |\beta_{\ell}| \geq
-k|\alpha_{\max}|.
\]

\noindent Therefore:

\[
-k \leq \frac{i}{|\alpha_{max}|}
\]

\noindent (safely assuming $G$ is not a torus, so this fraction 
makes sense).

Applying this above, we find that the $(L_0+\check{\rho})$ degrees
are at most:

\[
(m-1) i - mk \leq 
(m-1 + \frac{m}{|\alpha_{max}|}) i
\]

\noindent which does indeed go to $-\infty$ as $i$ does.\footnote{Of course,
this only makes sense if $G$ is not a torus. In that case, $\gr_i^{KK} = 0$
for $i<0$, so the game is over before it even started.}

A similar calculation treats $\Av_*^{\o{I}_m,\psi} 
(\ind_{\Lie \o{I}_n}^{\widehat{\fg}_{\kappa}}(\psi) \otimes
\ell^{n,m}
[(m-n)\Delta])$. We note that this averaging can be explicitly
described as 
$\Gamma(G(K),\delta_{\o{I}_m\o{I}_n}^{\psi})^{\o{I}_n}$
where $\delta_{\o{I}_m\o{I}_n}^{\psi}$ was defined in \S \ref{ss:!=*-pf},
and where the notation indicates that we take global sections on the
loop group with coefficients in this $D$-module, and take right 
$\o{I}_n$-invariant sections.\footnote{In other words,
the $\kappa$-twisted $D$-module
$\delta_{\o{I}_m\o{I}_n}^{\psi})$ does not descend
to $G(K)/\o{I}_n$ because of the presence of the character $\psi$.
But its underlying quasi-coherent sheaf does descend, and we are
taking its global sections (which will still be acted on by $\widehat{\fg}_{\kappa}$);
this is because the exponential $D$-module has
trivial underlying multiplicative quasi-coherent sheaf.}

\end{proof}

\subsection{}

We have the following key observation.

\begin{prop}\label{p:ds-corep}

The pro-object 
$\{\preprime\sW_{\kappa}^n\}_{n \geq 0} \in 
\Pro(\Whit(\widehat{\fg}_{\kappa}\mod))$ canonically corepresents
the functor $\Psi^{\Whit}$.

\end{prop}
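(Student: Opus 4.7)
The plan is to construct the comparison map using vacuum vectors and check it is an equivalence via a reduction to adolescent generators and a direct identification with semi-infinite cohomology.

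For the construction of the map: by the observations immediately preceding the proposition, $\Psi^{\Whit}(\preprime\sW_\kappa^n) = \sW_\kappa^n$, and each $\sW_\kappa^n$ carries the canonical vacuum vector $1 \in \sW_\kappa^n$ of Remark \ref{r:cyclic}. Functoriality of $\Psi^{\Whit}$ then yields, for each $n$, a natural transformation
\[
\ul{\Hom}_{\Whit}(\preprime\sW_\kappa^n, -) \xrightarrow{\Psi^{\Whit}} \ul{\Hom}(\sW_\kappa^n, \Psi^{\Whit}(-)) \xrightarrow{\on{ev}_1} \Psi^{\Whit}(-).
\]
Since the transition morphisms $\alpha_{n,m}$ preserve vacuum vectors by the construction in Theorem \ref{t:ds-n/m}, these assemble into the desired comparison map $\colim_n \ul{\Hom}_{\Whit}(\preprime\sW_\kappa^n, M) \to \Psi^{\Whit}(M)$.

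To check this is an equivalence, I would observe that both sides are continuous in $M$, so it suffices to verify the claim on a generating family. Using the decomposition $\Whit(\widehat{\fg}_\kappa\mod) = \colim_m \Whit^{\leq m}(\widehat{\fg}_\kappa\mod)$ via the $\iota_{m,!}$, and the compact generation of $\Whit^{\leq m} = \widehat{\fg}_\kappa\mod^{\o{I}_m, \psi}$ for $m > 0$ by the induced module $\preprime\sW_\kappa^m$ (as an object of $\Whit^{\leq m}$), I reduce to the case $M = \preprime\sW_\kappa^m$ (viewed in $\Whit$) for each $m \geq 0$, with the exceptional case $m = 0$ handled by the adjunction $(\iota_{0,1,!},\iota_{0,1}^!)$. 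On such $M$ the left-hand side equals $\sW_\kappa^m$, so the task becomes to identify the colimit on the right-hand side with $\sW_\kappa^m$ under the comparison map.

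For this identification, the strategy is to invoke the adjunction $(\iota_{n,!}, \iota_n^!)$ together with the identity $\iota_{m,n,!} = \iota_n^! \iota_{m,!}$ (valid for $m \leq n$, as in the proof of Theorem \ref{t:!-avg}\eqref{i:!-avg=*-avg}) to rewrite
\[
\ul{\Hom}_{\Whit}(\preprime\sW_\kappa^n, \preprime\sW_\kappa^m) \simeq \ul{\Hom}_{\Whit^{\leq n}}(\preprime\sW_\kappa^n, \iota_{m,n,!}(\preprime\sW_\kappa^m)),
\]
with all entries now viewed in the adolescent categories. Because $\preprime\sW_\kappa^n$ is defined as a Whittaker-induced module from $\Lie\o{I}_n$ (up to the twist $\ell^{n,\vee}[n\Delta]$), Hom from it unravels into a Chevalley--Eilenberg cochain complex for $\Ad_{-n\check\rho(t)}\fn[[t]]$ with coefficients in the $(-\psi)$-twisted module, modulo these determinant and cohomological corrections. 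By the standard realization of semi-infinite cohomology as a filtered colimit of such truncated Lie algebra cochain complexes (with the shift already encoded in Warning \ref{w:shifts}), the colimit over $n$ recovers $\Psi$ evaluated on the compact generator, namely $\sW_\kappa^m$ up to the appropriate shift. The main obstacle will be verifying that the comparison map constructed via vacuum vectors really coincides with this structural identification; this requires careful bookkeeping of Kazhdan--Kostant filtrations (Section \ref{s:ds}), determinant lines, and the cohomological shifts between the adolescent and ambient Whittaker categories.
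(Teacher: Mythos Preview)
There is a genuine gap in the verification step. You write that ``Hom from [$\preprime\sW_\kappa^n$] unravels into a Chevalley--Eilenberg cochain complex for $\Ad_{-n\check\rho(t)}\fn[[t]]$,'' but this is not what adjunction gives. By definition $\preprime\sW_\kappa^n = \iota_{n,!}\big(\ind_{\Lie\o{I}_n}^{\widehat{\fg}_\kappa}(\psi)\otimes\ell^{n,\vee}[n\Delta]\big)$, so the adjunction $(\iota_{n,!},\iota_n^!)$ together with the induction adjunction yields
\[
\ul{\Hom}_{\Whit}(\preprime\sW_\kappa^n, M) \simeq C^{\dot}\big(\Lie\o{I}_n,\, \iota_n^!(M)\otimes -\psi\big)\otimes\ell^n[-n\Delta].
\]
The Lie algebra here is $\Lie\o{I}_n$, which is strictly larger than $\Ad_{-n\check\rho(t)}\fn[[t]]$: it contains pieces of $\ft[[t]]$ and $\fn^-((t))$ as well. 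The semi-infinite cohomology defining $\Psi$ is a colimit of cohomologies for the \emph{latter} subalgebras, so your appeal to ``the standard realization of semi-infinite cohomology'' does not apply.

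What you are missing is precisely the content of the paper's proof: one first constructs the comparison map not via vacuum vectors but via restriction of invariants $C^{\dot}(\Lie\o{I}_n,-) \to C^{\dot}(\Ad_{-n\check\rho(t)}\fn[[t]],-)$, and then shows that in the colimit over $n$ this restriction becomes an isomorphism. That last step uses the intermediate subgroups $\o{I}_{n,m} = (\o{I}_m\cap B(K))\cdot(N(K)\cap\o{I}_n)$ from the proof of Lemma~\ref{l:n->infty} to interpolate between the two families of subalgebras, via the identity $\colim_m C^{\dot}(\Lie\o{I}_{n,m},-) \simeq C^{\dot}(\Ad_{-n\check\rho(t)}\fn[[t]],-)$. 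Your proposal does not address this comparison, and without it the argument does not close. (A smaller point: your ``left-hand side equals $\sW_\kappa^m$'' appears to have the sides swapped; it is $\Psi^{\Whit}(\preprime\sW_\kappa^m)$ that equals $\sW_\kappa^m$ by definition, whereas the colimit of Homs is what remains to be computed.)
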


\begin{proof}

By definition, 
$\preprime\sW_{\kappa}^n \in \Whit(\widehat{\fg}_{\kappa}\mod)$ 
corepresents
the functor:

\[
\sF \mapsto 
C^{\dot}(\Lie\o{I}_n,\iota_n^!(\sF) \otimes -\psi)\otimes \ell^n [-n\Delta].
\]

\noindent Note that this complex maps by restriction of invariants to:

\[
C^{\dot}(\Ad_{-n\check{\rho}(t)} \fn[[t]],\iota_n^!(\sF) \otimes -\psi)\otimes \ell^n [-n\Delta].
\]

\noindent If we had a shift by \emph{positive} $n\Delta$ instead,
this in turn would canonically map to 
$\Psi(\iota_n^!(\sF))$ by definition of $\Psi$.
As it is, it maps instead to:

\[
\Psi(\iota_n^!(\sF))[-2n\Delta] = \Psi^{\Whit}(\iota_n^!(\sF)).
\]

\noindent Passing to the colimit in $n$, we get a canonical morphism:

\[
\underset{n}{\colim} \, 
C^{\dot}(\Lie\o{I}_n,\iota_n^!(\sF) \otimes -\psi)\otimes \ell^n [-n\Delta] \to
\underset{n}{\colim} \, \Psi^{\Whit}(\iota_{n,!}\iota_n^!(\sF)) =
\Psi^{\Whit}(\sF).
\]

\noindent The left hand side is the functor corepresented by our
pro-object, so we need to see that this morphism is an isomorphism.

This is a straightforward verification. By definition of $\Psi^{\Whit}$, it
suffices to show that for $M \in \widehat{\fg}_{\kappa}\mod$,
the morphism:

\[
\underset{n}{\colim} \,
C^{\dot}(\Lie\o{I}_n,M \otimes -\psi)\otimes \ell^n [n\Delta] \to
\Psi(M) = 
\underset{n}{\colim} \,
C^{\dot}(\Ad_{-n\check{\rho}(t)} \fn[[t]],M \otimes -\psi)\otimes \ell^n [n\Delta]
\]

\noindent is an isomorphism. 

In the notation from the proof of Lemma \ref{l:n->infty} and
using the same method, this follows
from the identity

\[
\underset{m}{\colim} \, C^{\dot}(\Lie(\o{I}_{n,m},M) \isom 
C^{\dot}(\Ad_{-n\check{\rho}(t)} \fn[[t]],M)
\]

\noindent for $n > 0$, which is a straightforward verification
using the fact that $\fh\mod$ is a co/limit in the
standard way for $\fh$ a profinite-dimensional
Lie algebra. (We have omitted $\psi$ because this holds for
any $M \in \Lie\o{I}_n\mod$.) 

\end{proof}

\subsection{}

We obtain the following important consequence of 
the above results.

\begin{thm}\label{t:psi-exact/cons}

The functor $\Psi^{\Whit}$ is $t$-exact.
Its restriction to $\Whit(\widehat{\fg}_{\kappa}\mod)^+$
is conservative.

\end{thm}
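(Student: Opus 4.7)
The plan is to leverage the pro-corepresentation of $\Psi^{\Whit}$ from Proposition \ref{p:ds-corep},
\[
\Psi^{\Whit}(\sF) \simeq \underset{n}{\colim} \, \ul{\Hom}(\preprime\sW_\kappa^n, \sF),
\]
together with Lemma \ref{l:alpha-epi} (the transition maps $\alpha_{n,m}$ are epimorphisms in $\Whit(\widehat{\fg}_\kappa\mod)^{\heart}$).

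For $t$-exactness, left $t$-exactness is immediate since each $\preprime\sW_\kappa^n \in \Whit^{\heart}$, so every functor $\ul{\Hom}(\preprime\sW_\kappa^n, -)$ sends $\Whit^{\geq 0}$ to $\Vect^{\geq 0}$, and filtered colimits inherit this. For right $t$-exactness I would fix $\sF \in \Whit^{\heart}$ and first establish the key identification
\[
\ul{\Hom}(\preprime\sW_\kappa^n, \sF) \simeq \Oblv(\iota_n^!\sF)[-n\Delta] \otimes \ell^n
\]
for $n > 0$, which comes from $(\o{I}_n, \psi)$-equivariance of $\iota_n^!\sF$ together with pro-unipotence of $\o{I}_n$ (so that $C^{\bullet}(\Lie\o{I}_n, M \otimes -\psi) \simeq \Oblv(M)$ for $(\o{I}_n,\psi)$-equivariant $M$). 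Combined with left $t$-exactness of $\iota_n^![-n\Delta]$ from Proposition \ref{p:t-str-whit} and $t$-exactness of $\Oblv\colon \Whit^{\leq n}\to\Vect$ for $n > 0$, each term of the colimit lies in $\Vect^{\geq 0}$. The hard part will be showing that higher cohomology vanishes in the filtered colimit: using the short exact sequences $0 \to K_{n,m} \to \preprime\sW_\kappa^m \to \preprime\sW_\kappa^n \to 0$ provided by Lemma \ref{l:alpha-epi} and the induced long exact sequences on $\Ext$, I expect to show by an inductive analysis of the kernels $K_{n,m}$ that every class in $\Ext^i(\preprime\sW_\kappa^n, \sF)$ with $i \geq 1$ is eventually absorbed as $m \to \infty$.

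For conservativity, once $t$-exactness is in hand any $\sF \in \Whit^+$ with $\Psi^{\Whit}(\sF) = 0$ reduces via the cohomology-commutation $\Psi^{\Whit}(H^i \sF) = H^i \Psi^{\Whit}(\sF)$ to the heart case $\sG \in \Whit^{\heart}$. In that case $0 = H^0 \Psi^{\Whit}(\sG) = \colim_n \Hom(\preprime\sW_\kappa^n, \sG)$; since $\alpha_{n,m}$ is epi in $\Whit^{\heart}$, the pullback $\alpha_{n,m}^*$ is injective on $\Hom$, so each $\Hom(\preprime\sW_\kappa^n, \sG) = 0$. By the identification above, $H^{n\Delta}(\iota_n^!\sG) = 0$ for every $n > 0$, hence $\iota_n^!\sG[-n\Delta] \in \Whit^{\leq n, \geq 1}$. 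Applying the $t$-exact functor $\iota_{n,!}[n\Delta]$ yields $\iota_{n,!}\iota_n^!\sG \in \Whit^{\geq 1}$. Finally, $\sG \simeq \colim_n \iota_{n,!}\iota_n^!\sG$ (from the colimit description of $\Whit$ underlying Proposition \ref{p:t-str-whit}) combined with closure of $\Whit^{\geq 1}$ under filtered colimits forces $\sG \in \Whit^{\geq 1} \cap \Whit^{\heart} = 0$.
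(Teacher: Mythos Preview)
Your left $t$-exactness is fine and matches the paper. The conservativity argument is more roundabout than the paper's but essentially works, with one correction: the identification $C^{\dot}(\Lie\o{I}_n, M \otimes -\psi) \simeq \Oblv(M)$ for $(\o{I}_n,\psi)$-equivariant $M$ is \emph{false} --- prounipotence of $\o{I}_n$ does not kill higher Lie algebra cohomology. What is true, and what your argument actually needs, is the weaker statement that $H^0 C^{\dot}(\Lie\o{I}_n, M \otimes -\psi) = (M\otimes -\psi)^{\Lie\o{I}_n}$ vanishes iff $M = 0$ for $M$ in the heart, which holds because a nonzero module for a prounipotent group has nonzero invariants. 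With that patch, your deduction $\Hom(\preprime\sW_\kappa^n,\sG)=0 \Rightarrow H^{n\Delta}(\iota_n^!\sG)=0 \Rightarrow \iota_{n,!}\iota_n^!\sG \in \Whit^{\geq 1}$ goes through. (The paper avoids this detour: it shows directly that $\sF\in\Whit^{\geq 0}$ with $H^0\Psi^{\Whit}(\sF)=0$ lies in $\Whit^{>0}$, using only that the $\preprime\sW_\kappa^n$ generate $\Whit^{\leq 0}$ and the injectivity from Lemma~\ref{l:alpha-epi}.)

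The genuine gap is in right $t$-exactness. Your proposed argument, even after correcting the identification, only shows each term of the colimit lies in $\Vect^{\geq 0}$ --- that is left $t$-exactness again. You then defer the actual content to an ``inductive analysis of the kernels $K_{n,m}$'' which you do not carry out, and there is no evident mechanism by which the long exact sequences would force absorption of all higher $\Ext$-classes. You are missing the key observation, which makes right $t$-exactness a one-liner: since $\Psi^{\Whit}$ is continuous and the objects $\preprime\sW_\kappa^n$ are compact generators of $\Whit(\widehat{\fg}_\kappa\mod)$ lying in the heart, they generate $\Whit^{\leq 0}$ under colimits, so it suffices to check $\Psi^{\Whit}(\preprime\sW_\kappa^n) \in \Vect^{\leq 0}$. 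But $\Psi^{\Whit}(\preprime\sW_\kappa^n) = \sW_\kappa^n \in \Vect^{\heart}$ by Theorem~\ref{t:ds-no-w-str}. This is the paper's argument, and it bypasses entirely the analysis you were attempting.
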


\begin{proof}

\step 

By Proposition \ref{p:ds-corep}, $\Psi^{\Whit}$ is corepresented
by the pro-object $\{\preprime\sW_{\kappa}^n\}_{n \geq 0}$. 
Because each of these objects lies in the heart of the
$t$-structure, we obtain that $\Psi^{\Whit}$ is left $t$-exact.

Now note that 
$\Whit(\widehat{\fg}_{\kappa}\mod)$ is compactly generated
by the objects $\preprime\sW_{\kappa}^n$. Indeed, this
follows from the co/limit formalism and the observation
that for $n>0$, $\ind_{\Lie \o{I}_n}^{\widehat{\fg}_{\kappa}}(\psi)$
compactly generates $\widehat{\fg}_{\kappa}\mod^{\o{I}_n,\psi}$
(by prounipotence of $\o{I}_n$). 

Since these compact generators lie
in $\Whit(\widehat{\fg}_{\kappa}\mod)^{\leq 0}$, to verify the
fact that $\Psi$ is right $t$-exact, it suffices to show that
$\Psi^{\Whit}(\preprime\sW_{\kappa}^n) \in \Vect^{\leq 0}$.
But as we noted before, this object is $\sW_{\kappa}^n$, which lies
in $\Vect^{\heart}$.

\step 

Suppose $\sF \in \Whit(\widehat{\fg}_{\kappa}\mod)^{\geq 0}$
is given $\Psi^{\Whit}(\sF) \in \Vect^{>0}$. By right completeness
of the $t$-structure on $\Whit(\widehat{\fg}_{\kappa}\mod)$,
the conservativeness will follow if we can show  
$\sF \in \Whit(\widehat{\fg}_{\kappa}\mod)^{>0}$.

Because the objects 
$\preprime\sW_{\kappa}^n \in  \Whit(\widehat{\fg}_{\kappa}\mod)^{\heart}$
generate $\Whit(\widehat{\fg}_{\kappa}\mod)^{\leq 0}$ under colimits,
it suffices so show that:

\[
\ul{\Hom}_{\Whit(\widehat{\fg}_{\kappa}\mod)}
(\preprime\sW_{\kappa}^n,\sF) \in \Vect^{>0}
\]

\noindent for all $n$. Clearly this complex is in $\Vect^{\geq 0}$,
so we need to show that its $H^0$ vanishes.

Observe that:

\[
H^0 \ul{\Hom}_{\Whit(\widehat{\fg}_{\kappa}\mod)}
(\preprime\sW_{\kappa}^n,\sF) = 
\Hom_{\Whit(\widehat{\fg}_{\kappa}\mod)^{\heart}}
(\preprime\sW_{\kappa}^n,H^0(\sF)).
\]

\noindent Therefore, by Lemma \ref{l:alpha-epi}, the map:

\[
H^0 \ul{\Hom}_{\Whit(\widehat{\fg}_{\kappa}\mod)}
(\preprime\sW_{\kappa}^n,\sF) \to 
H^0 \ul{\Hom}_{\Whit(\widehat{\fg}_{\kappa}\mod)}
(\preprime\sW_{\kappa}^{n+1},\sF) 
\]

\noindent of restriction along $\alpha_{n,n+1}$ is \emph{injective}.
Therefore, it suffices to show that the colimit under $n$ vanishes.
But we have:

\[
\underset{n}{\colim} \, 
H^0 \ul{\Hom}_{\Whit(\widehat{\fg}_{\kappa}\mod)}
(\preprime\sW_{\kappa}^n,\sF) = 
H^0 \big(\underset{n}{\colim} \, 
\ul{\Hom}_{\Whit(\widehat{\fg}_{\kappa}\mod)}
(\preprime\sW_{\kappa}^n,\sF) ) =
H^0 \Psi^{\Whit}(\sF) = 0
\]

\noindent by assumption, giving the result.

\end{proof}

\subsection{Affine Skryabin}

We now prove the result with which we began this section.

\begin{proof}[Proof of Theorem \ref{t:aff-skry}]

\step 

The main step is to compute the heart of our
$t$-structure on $\Whit(\widehat{\fg}_{\kappa}\mod)$.

We will do this using the following paradigm.
Suppose $\sA$ is a Grothendieck abelian category
and $F: \sA \to \Vect^{\heart}$ is a conservative exact functor
that commutes with colimits.

Recall that endomorphisms of the functor $F$ can naturally
be considered as a pro-vector space
$\End(F) \in \Pro(\Vect^{\heart})$. To compute it
explicitly, take $\{\sF_i\} \in \sA$ pro-representing the functor
$F$ and then evaluate $F(\sF_i)$ as a pro-vector space. 
It is standard that $\End(F)$ is a topological chiral algebra
and that the canonical functor:

\[
\sA \to \End(F)\mod(\Vect^{\heart})
\]

\noindent is an equivalence. (C.f. \cite{chiral} \S 3.6.)

We apply this with 
$\sA = \Whit(\widehat{\fg}_{\kappa}\mod)^{\heart}$ and
$F = \Psi^{\Whit,\heart}:\Whit(\widehat{\fg}_{\kappa}\mod)^{\heart} \to \Vect^{\heart}$. 
Note that $\Psi^{\Whit,\heart}$
is exact and conservative by Theorem \ref{t:psi-exact/cons}.
We want to show the topological chiral algebra
defined by this data is $\sW_{\kappa}^{as}$, i.e., the 
one associated with the vertex algebra $\sW_{\kappa}$.

We have a canonical morphism of topological chiral algebras

\[
\sW_{\kappa}^{as} \to \End(\Psi^{\Whit,\heart})
\]

\noindent because $\sW_{\kappa}^{as}$ acts on the cohomologies
of $\Psi$ of any object of $\widehat{\fg}_{\kappa}\mod$
(c.f. \S \ref{ss:w-acts-on-psi}).

Therefore, to show that this map is an isomorphism, we just need to show
it at the level of pro-vector spaces. 
Because $\{\preprime\sW_{\kappa}^n\}_{n \geq 0}$ corepresents
$\Psi^{\Whit,\heart}$, $\End(\Psi^{\Whit,\heart})$ is the
pro-vector space given by:

\[
\{\Psi^{\Whit}(\preprime\sW_{\kappa}^n)\}_{n \geq 0} = 
\{\sW_{\kappa}^n\}_{n \geq 0} = \sW_{\kappa}^{as}
\]

\noindent where the last equality is 
Theorem \ref{t:ds-n/m} \eqref{i:ds-lim}. Clearly this identification
is compatible with the map above, so we obtain the claim.

\step

From here, the theorem is straightforward. 
By Proposition \ref{p:t-str-whit} \eqref{i:whit-derived} and
the above, we have:

\[
\Whit(\widehat{\fg}_{\kappa}\mod)^+ \simeq 
D^+(\sW_{\kappa}\mod^{\heart}).
\]

\noindent Because $\Whit(\widehat{\fg}_{\kappa}\mod)$
is compactly generated by the objects:

\[
\preprime\sW_{\kappa}^n \in \Whit(\widehat{\fg}_{\kappa}\mod)^+
\]

\noindent which correspond under this equivalence to
$\sW_{\kappa}^n \in D^+(\sW_{\kappa}\mod^{\heart})$,
we obtain the theorem by definition of $\sW_{\kappa}\mod$.

\end{proof}

\section{Free-field realization of the generalized vacuum representations}\label{s:free-field}

\subsection{}

In this section, we give another construction of the modules
$\sW_{\kappa}^n$. Though the construction is interesting
in its own right, it also plays a technical role in 
the proof of Theorem \ref{t:ff}.

\subsection{}

Let $\kappa^{\prime} = -\kappa + \kappa_{crit}$.\footnote{The 
correction by $\kappa_{crit}$ plays an essentially 
negligible role in what follows;
see \cite{fg2} \S 10 and \cite{km-indcoh} for some explanations
why it is needed.} 
Let $\widehat{\ft}_{\kappa^{\prime}}$ denote the
Heisenberg extension:

\[
0 \to k \to \widehat{\ft}_{\kappa^{\prime}} \to \ft((t)) \to 0.
\]

\noindent (This is another name for the Kac-Moody extension
associated to $\kappa^{\prime}$ considered
as a symmetric bilinear form for $\ft$.) 
Let $\bV_{\ft,\kappa^{\prime}}$ denote the vacuum representation
$\ind_{\ft[[t]]}^{\widehat{\ft}_{\kappa^{\prime}}}(k)$, considered
as a vertex algebra.

Recall that there is an injective \emph{free-field} homomorphism:

\[
\vph: \sW_{\kappa} \to \bV_{\ft,\kappa^{\prime}} 
\]

\noindent that is a map of vertex algebras; its construction
is recalled in \S \ref{ss:free-field-constr}. In particular, this means
that any module over $\widehat{\ft}_{\kappa^{\prime}}$ can
be considered as a module over $\sW_{\kappa}$ by restriction.

\subsection{}

Now let us revisit the problem from \S \ref{ss:ds-intro}.
We have the generalized vacuum representations:

\[
\bV_{\ft,\kappa^{\prime}}^n \coloneqq 
\ind_{t^n \ft[[t]]}^{\widehat{\ft}_{\kappa^{\prime}}}(k)
\]

\noindent of the Heisenberg algebra. 
We can use these to construct cyclic modules over $\sW_{\kappa}$:
$\bV_{\ft,\kappa^{\prime}}^n$ is a $\sW_{\kappa}$-module by
restriction along $\vph$, and we can take the sub-$\sW_{\kappa}$-module
generated by the canonical vacuum vector in $\bV_{\ft,\kappa^{\prime}}^n$.

The main result of this section is:

\begin{thm}\label{t:free-field}

This construction produces $\sW_{\kappa}^n$ equipped with its
canonical vacuum vector.

\end{thm}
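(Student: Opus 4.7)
The plan is to produce a filtered $\sW_{\kappa}$-linear isomorphism from $\sW_{\kappa}^n$ to the cyclic submodule $M_n \subset \bV_{\ft,\kappa^{\prime}}^n$ generated by the vacuum vector, sending the canonical vacuum of $\sW_{\kappa}^n$ (Remark \ref{r:cyclic}) to the vacuum of $\bV_{\ft,\kappa^{\prime}}^n$. The strategy is to reduce the comparison, via compatible filtrations, to a classical computation about the Miura transformation at the associated graded level. The relevant filtrations are: on $\sW_{\kappa}^n$, the canonical KK filtration of Theorem \ref{t:ds-wstr}, whose associated graded is $\sO_{X_n}$ for $X_n \coloneqq t^{-n}\Ad_{-n\check{\rho}(t)}(f+\fb^e[[t]])$; on $\bV_{\ft,\kappa^{\prime}}^n$, the standard PBW filtration, whose associated graded is $\sO_{Y_n}$ for $Y_n \coloneqq t^{-n}\ft[[t]] \subset \ft((t))$ (using $\kappa^{\prime}$ and $dt$ to identify $\ft((t))^{\vee}$ with $\ft((t))$). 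The free-field map $\vph$ is filtered, and its associated graded is the classical Miura map $\mu: \ft((t)) \to f+\fb^e((t))$ sending $h$ to the unique element of $f+\fb^e((t))$ in the $N((t))$-orbit of $f+h$.

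First I would construct a map $\alpha: \sW_{\kappa}^n \to \bV_{\ft,\kappa^{\prime}}^n$. By the cyclicity of $\sW_{\kappa}^n$, it suffices to check that the left annihilator $J \subset \sW_{\kappa}^{as}$ of the vacuum $1 \in \sW_{\kappa}^n$ also annihilates the vacuum $v_n \in \bV_{\ft,\kappa^{\prime}}^n$. By the strict filtration compatibility in Theorem \ref{t:ds-n/m}, $\gr J$ equals the vanishing ideal of $X_n$ inside $\Fun(f+\fb^e((t)))$. Now for $j \in F_k J$, the leading symbol of $j \cdot v_n$ is the restriction of $\mu^{*}(\gr_k j)$ to $Y_n$, which vanishes provided $\mu(Y_n) \subset X_n$; iterating and using the separatedness of the PBW filtration gives $j \cdot v_n = 0$, producing $\alpha$. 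For the isomorphism $\alpha: \sW_{\kappa}^n \to M_n$: surjectivity is tautological from the construction, and injectivity follows from injectivity of $\gr \alpha = \mu_n^{*}: \Fun(X_n) \to \Fun(Y_n)$ along the restricted Miura $\mu_n: Y_n \to X_n$, combined with the filtration on $\sW_{\kappa}^n$ being exhaustive, bounded-below, and separated (Theorem \ref{t:ds-wstr}). The injectivity of $\mu_n^{*}$ holds because $\mu$ realizes $f+\fb^e((t))$ as a $W$-quotient of $\ft((t))$ (the loop version of the Chevalley-Kostant isomorphism $\ft/W \simeq f+\fb^e$) and $W$ preserves $Y_n$, so $\mu_n$ is a finite dominant map. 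That $\alpha$ sends vacuum to vacuum is immediate from its construction as the $\sW_{\kappa}^{as}$-action on $v_n$.

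The main obstacle is the classical containment $\mu(Y_n) \subset X_n$ together with the dominance of $\mu_n$. Both statements reduce to explicit pole-order accounting for the interaction between the $t^{-n}$ shift defining $Y_n$, the $\Ad_{-n\check{\rho}(t)}$ twist defining $X_n$, and the $N((t))$-gauge correction appearing in $\mu$. The key input is that $\Ad_{-\check{\rho}(t)}$ multiplies a weight-$\alpha$ vector by $t^{-(\check{\rho},\alpha)}$, and this precisely balances the pole orders contributed by gauging $f+h$ for $h \in Y_n$ into $f+\fb^e$ normal form; the argument here resembles Step \ref{st:perp-conj} of the proof of Theorem \ref{t:!-avg-cl}, with the weight decomposition of $\fb^e$ induced by the principal $\sl_2$ matching the filtration of $X_n$ under $\Ad_{-n\check{\rho}(t)}$. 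Once this bookkeeping is carried out cleanly, the rest of the argument is formal manipulation of filtrations.
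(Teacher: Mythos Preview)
Your construction of the map $\alpha$ has a genuine gap. The ``iterating'' step does not work: from $j \in F_k J$ and $\gr_k(j \cdot v_n) = \mu^*(\gr_k j)|_{Y_n} = 0$ you correctly deduce $j \cdot v_n \in F_{k-1}\bV_{\ft,\kappa'}^n$, but there is no way to continue, since $j \notin F_{k-1}\sW_{\kappa}^{as}$ and so $\gr_{k-1}(j\cdot v_n)$ is not expressible in terms of any symbol of $j$. In general, $\gr J \subset \gr(\mathrm{Ann}(v_n))$ does \emph{not} imply $J \subset \mathrm{Ann}(v_n)$. A toy counterexample: take $A = k[x]$ with the PBW filtration, $J = (x^2)$, and let $v$ be the cyclic generator of $k[x]/(x^2-1)$; then $\gr J = (x^2) = \gr(\mathrm{Ann}(v))$, yet $x^2 \cdot v = v \neq 0$. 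So knowing only that $\sW_{\kappa}^n$ is cyclic with a prescribed associated graded does not give you a map out of it; you would need something like an explicit presentation of $J$ by generators you can check kill $v_n$, and Theorems~\ref{t:ds-wstr} and~\ref{t:ds-n/m} do not provide this.

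The paper sidesteps this entirely by constructing the map $\vph_n$ \emph{before} passing to symbols, using generalized Wakimoto modules: one writes $\ind_{\Lie \o{I}_n}^{\widehat{\fg}_{\kappa}}(\psi) = \Gamma(G(K),\delta_{\o{I}_n}^{\psi})^{\o{I}_n}$, maps it into a semi-infinite cohomology $\bW_{\kappa}^n$ built from $\Gamma(G(K),\delta_{\o{I}_n}^{\psi})$ (this is a morphism of honest $\widehat{\fg}_{\kappa}$-modules, coming from geometry on the loop group), and then applies $\Psi_n$. Functoriality of $\Psi_n$ produces $\vph_n:\sW_{\kappa}^n \to \Psi_n(\bW_{\kappa}^n) = \bV_{\ft,\kappa'}^n$ as a genuine $\sW_{\kappa}$-module map. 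Once the map exists, your remaining argument --- that $\vph_n$ is filtered with $\gr\vph_n$ the pullback along the Miura map $Y_n \to X_n$, and that dominance of this map gives injectivity --- is essentially the paper's proof of the key lemma.
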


\begin{rem}

Note that because we are comparing cyclic modules with preferred generators,
this theorem uniquely characterizes the isomorphism it describes.

\end{rem}

The above result is quite useful in practice for computing the modules
$\sW_{\kappa}^n$ in cases where $\sW_{\kappa}$ has an explicit description.

\begin{example}\label{e:virasoro}

For $\fg = \sl_2$, this theorem and
the explicit formulae\footnote{
For the reader's convenience, if $L_n = -t^{n+1} \partial_t$
in the Virasoro algebra and $h_i \in \widehat{\ft}_{\kappa}^{\prime}$
is defined by the element $t^i \in k((t)) = \ft((t)) \subset \widehat{\ft}_{\kappa}^{\prime}$ (recalling that the Heisenberg algebra has a canonical vector
space splitting, i.e., it is defined by a 2-cocycle), these formulae say:

\[
\vph(L_n) = \sum_{i+j = n} :h_i h_j: -(n+1)\lambda h_n 
\]

\noindent for an appropriate scalar $\lambda$ depending on the level.
Here $:h_i h_j:$ is the normally-ordered product, so $h_i h_j$ if
$j \geq i$ and $h_j h_i$ otherwise.}
for $\vph$ from \cite{fbz} \S 15.4.14 together with the above result
allow to recover the explicit description of the modules 
$\sW_{\kappa}^n$ proposed in this case in \S \ref{ss:ds-intro}.

\end{example}

\begin{example}\label{e:wn-crit}

At the critical level $\kappa = \kappa_{crit}$, this theorem implies that
$\sW_{\kappa}^n$ is the structure sheaf of $\Op_{\ld{G}}^{\leq n}$
under Feigin-Frenkel. See Lemma \ref{l:wn-match}
for more on this.

\end{example}

\subsection{Proof sketch}\label{ss:free-field-sketch}

The proof of the theorem is based on a straightforward
generalization of the map $\vph$. Namely, we will construct
maps:

\[
\vph_n: \sW_{\kappa}^n \to \bV_{\ft,\kappa^{\prime}}^n
\]

\noindent of $\sW_{\kappa}^n$-modules.  
We will show that it is injective and preserves vacuum vectors
by constructing filtrations and computing this map at the associated
graded level (recall that this is how the vacuum vector in $\sW_{\kappa}^n$
was constructed).

Obviously this would suffice to prove the theorem: a 
generator-preserving
injective map between cyclic modules is an isomorphism.

\begin{rem}[Screening operators?]

Dennis Gaitsgory has suggested that $\vph_n$ might be the first
map in a resolution of $\sW_{\kappa}^n$, as in the $n = 0$ case
(at least in the irrational and critical level cases,
see \cite{ff-critical} and \cite{fg-wakimoto}). 
We record his idea here as a sort-of-conjecture. 

\end{rem}

\subsection{The Wakimoto vertex algebra}

The construction of $\vph$ passes through the theory of 
Wakimoto modules. Since we are trying to generalize this construction,
we must review these. We also refer the reader to
\cite{fg2} \S 10-11 and \cite{frenkel-wakimoto} for some other introductions.

We also use the theory of global sections of $D$-modules on the loop
group, but only in a minor way. The reader familiar with
\cite{dmod-loopgroup} (c.f. also \cite{fg2} \S 21 and \cite{km-indcoh})
will have more than enough information at hand for these constructions.

Let $I$ and $I^-$ denote the Iwahori subgroups defined by 
$B$ and $B^-$. We then form:

\[
\Gamma(G(K),j_{*,dR}(\omega_{I \cdot I^-})
\]

\noindent where this pushforward is as a $\kappa$-twisted $D$-module.
Note that because $I \cdot I^- \subset G(O)$ is open
(it is jets on the open cell $B B^-$),
the theory of $D$-modules on the loop group normalizes
this object to lie in cohomological degree $0$. 
There is a left action on this vector space by 
$\widehat{\fg}_{\kappa}$ (with the central element acting by the
identity), and a commuting right action by\footnote{We will
only need the action of $\widehat{\fb}_{-\kappa+2\kappa_{crit}}^-$,
the induced central extension of $\fb^-((t))$. Moreover, the action
of $\fn^-((t))+\fb^-[[t]]$, which is substantially easier to construct,
will play the main role.}
$\widehat{\fg}_{-\kappa+2\kappa_{crit}}$.

We define $\bW_{\kappa}$ as the semi-infinite cohomology: 

\[
C^{\sinf}\Big(\fn^-((t))+\fb^-[[t]],B(O),
\Gamma\big(G(K),j_{*,dR}\big(\omega_{I \cdot I^-})\big)\Big)
\]

\noindent formed with respect to the right action.
Because we have the commuting left action, 
$\bW_{\kappa} \in \widehat{\fg}_{\kappa}\mod$.

\begin{rem}

In a suitable sense, this is global sections of the 
semi-infinite flag variety $\sFl = G(K)/N^-(K)T(O)$ with coefficients
in the $D$-module on it induced by 
$j_{*,dR}(\omega_{I \cdot I^-}) \in D_{\kappa}(G(K))$.
See \cite{cpsii}, where some of these ideas are introduced.
(But I do not mean to suggest that this perspective 
is especially enlightening.)

\end{rem}

There is a canonical morphism:

\begin{equation}\label{eq:vac-waki}
\bV_{\kappa} \to \bW_{\kappa} 
\end{equation}

\noindent induced by the composition:

\begin{equation}\label{eq:v->w}
\begin{gathered}
\bV_{\kappa} =
\Gamma(G(K),\delta_{G(O)})^{G(O)} \to 
\Gamma(G(K),\delta_{G(O)})^{B^-(O)} \to \\
\Gamma(G(K),j_{*,dR}(\omega_{I \cdot I^-}))^{B^-(O)} \to
C^{\sinf}\Big(\fn^-((t))+\fb^-[[t]],B(O),\Gamma\big(G(K),j_{*,dR}(\omega_{I \cdot I^-})\big)\Big).
\end{gathered}
\end{equation}

\noindent Here $\delta_{G(O)}$ is the pushforward of
$\omega_{G(O)}$, considered as a $\kappa$-twisted $D$-module,
and invariants are for the right actions.

\begin{rem}

Factorization shows that $\bW_{\kappa}$ is a vertex algebra,
and $\bV_{\kappa} \to \bW_{\kappa}$ is a morphism of vertex algebras.

\end{rem}

\subsection{}

We now compute $\bW_{\kappa}$ more explicitly. 
Note that:

\[
\Gamma(G(K),j_{*,dR}(\omega_{I \cdot I^-}))\big) =
\Gamma(N(K),\delta_{N(O)}) \otimes \Gamma(B^-(K),\delta_{B^-(O)}).
\]

\noindent This is compatible with the left action of $\fn((t))$ and the
right action of $\widehat{\fb}_{-\kappa+2\kappa_{crit}}$. (Each of these 
global sections is usually called a \emph{CDO} for the respective groups.)

Note that:

\[
C^{\dot}(\fn^-((t))+\fb^-[[t]],\Gamma(B^-(K),\delta_{B^-(O)})) = 
\bV_{\ft,\kappa^{\prime}}.
\]

\noindent Indeed, the invariants $B(O)$ leave us with a vacuum
representation for a central extension of $\fb((t))$, and 
the rest of the semi-infinite cohomology reduces us to a central
extension of $\ft((t))$ (c.f. \cite{dmod-loopgroup} Theorem 5.5).

The calculation of the exact level
has to do with finer points about Tate extensions: we refer
to the sources above.\footnote{In \cite{fg2}, there
is a potentially frustrating typo in the beginning of \S 10.2 that
might thwart the reader who turns there: what is denoted
$\kappa^{\prime}$ there should be 
$-\kappa+2\kappa_{crit}$ (the sign is wrong there
in the second summand).}

So we find $\bW_{\kappa}$ is isomorphic to a tensor
product of the CDO for $N$ and a Heisenberg algebra;
in particular, it lies in cohomological degree zero.

\begin{rem}\label{r:waki-inj}

The morphism $\bV_{\kappa} \to \bW_{\kappa}$ is injective:
see \cite{frenkel-wakimoto} Theorem 5.1. The argument is proved
by constructing filtrations and passing to the associated graded;
we will essentially reconstruct it (and generalize it) in what follows.

\end{rem}

\subsection{Construction of the free-field homomorphism}\label{ss:free-field-constr}

Note that:

\[
\Psi(\Gamma(N(K),\delta_{N(O)})) = k
\]

\noindent by a similar calculation as above.
Therefore, $\Psi(\bW_{\kappa}) = \bV_{\ft,\kappa^{\prime}}$.

By functoriality, we obtain the morphism 
$\vph:\sW_{\kappa} \to \bV_{\ft,\kappa^{\prime}}$ from
\eqref{eq:vac-waki}. Factorization makes clear that
$\vph$ is a morphism of vertex algebras. 
As in Remark \ref{r:waki-inj}, one can show that it is injective
using filtrations; this argument will be generalized in what follows.

\subsection{Generalization to higher $n$}

We now wish to construct the maps $\vph_n$. 
We do this through a straightforward generalization of the above,
but adapted to the modules 
$\ind_{\Lie\o{I}_n}^{\widehat{\fg}_{\kappa}}(\psi)$ in place
of $\bV_{\kappa}$.

For $n>0$, we let $\delta_{\o{I}_n}^{\psi}$ denote the
$D$-module on $G(K)$ given by pushforward from the
character sheaf on $\o{I}_n$ defined by $\psi$; we normalize
it to lie in cohomological degree $0$. For $n = 0$, we let
$\delta_{\o{I}_n}^{\psi}$ be the $D$-module
considered before: the pushforward from $I\cdot I^-$.

Let $\o{I}_n^-$ denote $B^-(O) \cap \o{I}_n^-$.

Define $\bW_{\kappa}^n$ 
as the semi-infinite cohomology:

\[
C^{\sinf}(\fn^-((t))+\fb^-[[t]],\o{I}_n^-,
\Gamma(G(K),\delta_{\o{I}_n}^{\psi}))
\]

\noindent where the semi-infinite cohomology is again taken
with respect to the right action. Note that 
$\bW_{\kappa}^n \in \widehat{\fg}_{\kappa}\mod$ again.

Writing $\ind_{\Lie{\o{I}_n}}^{\widehat{\fg}_{\kappa}}(\psi)$
as $\Gamma(G(K),\delta_{\o{I}_n}^{\psi})^{\o{I}_n}$
(the invariants being for the right action), we obtain a map
$\ind_{\Lie{\o{I}_n}}^{\widehat{\fg}_{\kappa}}(\psi) \to \bW_{\kappa}^n$,
as in \eqref{eq:v->w}.

\subsection{}

Let us compute $\bW_{\kappa}^n$ more explicitly.

Let $\delta_{\o{I}_n^-} \in D_{\kappa}(B^-(K))$ denote the 
pushforward of $\omega_{\o{I}_n^-}$ with a cohomological
shift to put it in the heart of the $t$-structure.

We use the notation $\o{I}_n^+$ for 
$\Ad_{-n\check{\rho}(t)}N(O) = \o{I}_n \cap N(K)$.
Let $\delta_{\o{I}_n^+}^{\psi} \in D_{\kappa}(N(K))$ be
the pushforward of the character sheaf on $\o{I}_n^+$
defined by $\psi$, again normalized to be in cohomological degree $0$.

Then the triangular decomposition $\o{I}_n = \o{I}_n^+ \cdot \o{I}_n^-$
readily implies:

\[
\Gamma(G(K),\delta_{\o{I}_n}^{\psi}) = 
\Gamma(N(K),\delta_{\o{I}_n^+}^{\psi} ) \otimes 
\Gamma(B^-(K),\delta_{\o{I}_n^-})
\]

\noindent compatible with the left $\fn((t))$ and right
$\widehat{\fb}_{-\kappa+2\kappa_{crit}}^-$ actions.
Note that: 

\[
C^{\sinf}\big(\fn^-((t))+\fb^-[[t]],\o{I}_n^-,
\Gamma(B^-(K),\delta_{\o{I}_n^-})\big)
\]

\noindent is $\bV_{\ft,\kappa^{\prime}}^n$: this
follows because $\Lie\o{I}_n \cap \ft[[t]] = t^n\ft[[t]]$.

\subsection{Generalized free-field morphism}

Note that:

\[
\Psi_n(\Gamma(N(K),\delta_{\o{I}_n^+}^{\psi} )) = k
\]

\noindent for $\Psi_n$ the Drinfeld-Sokolov functor
defined relative to the lattice $\Lie(\o{I}_n^+) \subset \fn((t))$,
as in \eqref{eq:psi-n-defin}.

Combining this with the above, 
we obtain $\Psi_n(\bW_{\kappa}^n) = \bV_{\ft,\kappa^{\prime}}^n$.
Factorization geometry makes $\Psi_n(\bW_{\kappa}^n)$
a vertex module for $\Psi(\bW_{\kappa}) = \bV_{\ft,\kappa^{\prime}}^n$,
and this isomorphism is compatible.

\subsection{Generalized free-field morphisms}

Now by functoriality, we obtain the desired map:

\[
\vph_n: \sW_{\kappa}^n \coloneqq 
\Psi_n(\ind_{\Lie{\o{I}_n}}^{\widehat{\fg}_{\kappa}}(\psi)) \to 
\Psi_n(\bW_{\kappa}^n) = \bV_{\ft,\kappa^{\prime}}^n.
\]

\noindent This is obviously a morphism of $\sW_{\kappa}$-modules
by functoriality.

As in \S \ref{ss:free-field-sketch}, 
it remains to show the following.

\begin{lem}

The morphism $\vph_n$ is injective and preserves vacuum vectors.

\end{lem}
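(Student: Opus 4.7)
The plan is to mimic the $n = 0$ argument for the injectivity of $\vph$ (Remark~\ref{r:waki-inj}): equip both $\sW_\kappa^n$ and $\bV_{\ft,\kappa^{\prime}}^n$ with compatible KK filtrations, identify $\gr_{\dot}^{KK}\vph_n$ as a twist of the classical Chevalley/Miura morphism on arc spaces, and then check injectivity and vacuum preservation at the associated graded level.

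First I would put KK filtrations on every actor in the construction of $\vph_n$. The module $\ind_{\Lie\o{I}_n}^{\widehat{\fg}_\kappa}(\psi)$ carries its canonical KK filtration from Section~\ref{s:ds}. On the Heisenberg side, since $-\check{\rho}$ acts trivially on $\ft((t))$, the KK filtration on $\bV_{\ft,\kappa^{\prime}}^n$ coincides with the standard PBW filtration on $\ind_{t^n\ft[[t]]}^{\widehat{\ft}_{\kappa^{\prime}}}(k)$, whose associated graded (by a direct PBW computation) is the structure sheaf of the affine (ind-)space $t^{-n}\ft[[t]]$, under the identification $\ft((t))^{\vee} \simeq \ft((t))$ via $dt$ and a chosen form on $\ft$. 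The triangular decomposition $\o{I}_n = \o{I}_n^+\cdot \o{I}_n^-$ realizes $\bW_\kappa^n$ as (semi-infinite cohomology of) a tensor product of a Whittaker CDO on $N(K)$ and a Heisenberg-type factor; the formalism of Appendix~\ref{a:hc} ensures that semi-infinite cohomology against $\fn^-((t))+\fb^-[[t]]$ and the Drinfeld--Sokolov functor $\Psi_n$ are filtered for these choices. Hence $\vph_n$ is strictly compatible with the KK filtrations.

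Second, I would compute $\gr_{\dot}^{KK}\vph_n$ as a morphism of (ind-)schemes. By Theorem~\ref{t:ds-wstr}, $\gr_{\dot}^{KK}\sW_\kappa^n$ is the structure sheaf of $\cX_n := t^{-n}\Ad_{-n\check{\rho}(t)}(f+\fb^e[[t]])$, so $\gr_{\dot}^{KK}\vph_n$ corresponds to a morphism $\mu_n: t^{-n}\ft[[t]] \to \cX_n$. Applying the automorphism $t^n\Ad_{n\check{\rho}(t)}$ of $\fg((t))$, which fixes $\ft((t))$ pointwise because $\check{\rho}\in\ft$, identifies $\mu_n$ with the arc-space version of the classical Chevalley morphism $\ft \to f+\fb^e \simeq \ft/W$; on the $n=0$ factor this recovers the classical associated graded of $\vph$ from Remark~\ref{r:waki-inj}. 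The finite map $\ft \to f+\fb^e$ is surjective of degree $|W|$ and generically étale, so its passage to arc spaces remains dominant; since both source and target are integral affine (ind-)schemes, the induced map on rings of functions is injective. This yields injectivity of $\gr_{\dot}^{KK}\vph_n$, and hence of $\vph_n$ itself, since both KK filtrations are exhaustive and bounded from below by Theorem~\ref{t:ds-wstr} and its Heisenberg analogue.

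For the vacuum vectors, the degree zero piece of each KK filtration is one-dimensional and spanned by the respective vacuum (cf.\ Remark~\ref{r:cyclic} for $\sW_\kappa^n$, and by inspection for $\bV_{\ft,\kappa^{\prime}}^n$); since $\vph_n$ is filtered and nonzero on these lines, it preserves the vacuum up to a scalar, and the explicit construction via the adjunction unit pins that scalar to $1$. The main technical obstacle is the bookkeeping in Step 1: setting up the KK filtration on $\bW_\kappa^n$ through the triangular decomposition and the semi-infinite cohomology for $\fn^-((t))+\fb^-[[t]]$ in a way that makes all the semi-infinite determinant line twists and cohomological shifts match on the nose after applying $\Psi_n$, so that $\gr_{\dot}^{KK}\vph_n$ really is the classical Miura morphism rather than some twist of it.
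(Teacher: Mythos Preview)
Your approach is essentially identical to the paper's: both equip source and target with KK filtrations (noting that on the Heisenberg side the KK and PBW filtrations coincide because $\ft$ has trivial $-\check{\rho}$-grading), identify $\gr_{\dot}^{KK}\vph_n$ with pullback along the Miura map $\xi\mapsto f+\xi$, and deduce injectivity from dominance of that map together with vacuum preservation from the one-dimensionality of $\gr_0$. One small slip: the automorphism $t^n\Ad_{n\check{\rho}(t)}$ does not fix $\ft((t))$ pointwise (only $\Ad_{n\check{\rho}(t)}$ does; the factor $t^n$ scales), but it still carries $t^{-n}\ft[[t]]$ isomorphically onto $\ft[[t]]$ and intertwines $\mu_n$ with $\mu_0$, so your reduction to $n=0$ goes through as written.
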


\begin{proof}

Note that $\bW_{\kappa}^n$ carries a canonical KK (i.e., Kazhdan-Kostant)
filtration such that the morphism from
$\ind_{\Lie{\o{I}_n}}^{\widehat{\fg}_{\kappa}}(\psi)$ 
is filtered. Indeed, both are derived from the KK filtration on
$\Gamma(G(K),\delta_{\o{I}_n}^{\psi})$.
The resulting filtration
on $\Psi(\bW_{\kappa}^n)$ is the PBW filtration on this
Heisenberg module: this follows because
$\ft((t))$ has zero $-\check{\rho}$-grading.

In particular, the KK filtrations on the source and target
are both bounded from below.

At the associated level, we obtain a morphism:

\[
\Fun(\sO_{f+t^{-n}\Ad_{n\check{\rho}(t)}\fb[[t]]}/\Ad_{-n\check{\rho}(t)}N(O)) \to 
\Fun(\sO_{t^{-n}\ft[[t]]}).
\]

\noindent It is routine
(and similar to the methods from \S \ref{s:ds}) to see that this
map is obtained by pullback from the 
\emph{Miura transform}:

\[
t^{-n}\ft[[t]] \xar{\xi \mapsto f+\xi} 
f+t^{-n}\Ad_{n\check{\rho}(t)}\fb[[t]]/\Ad_{-n\check{\rho}(t)}N(O).
\]

So obviously this map preserves vacuum vectors:
they correspond to the constant function with value $1$.
Since the filtrations on $\sW_{\kappa}^n$ and
$\bV_{\ft,\kappa^{\prime}}^n$ begin in degree $0$ 
with 1-dimensional $\gr_0$, this implies that $\vph_n$ preserves
vacuum vectors as well.

Then it remains to show that the Miura transform is dominant.
Applying the isomorphism $t^n\Ad_{n\check{\rho}(t)}$,
we are reduced to the $n = 0$ case, where it is well-known:
over the open in $f+\fb[[t]]/N(O)$ corresponding to regular
semisimple elements, the map is finite \'etale.

\end{proof}

\section{Applications}\label{s:applications}

\subsection{}

In this section, we give some applications of the above results.
First, we discuss how Theorem \ref{t:aff-skry} 
provides a systematic framework for understanding exactness
properties of the Drinfeld-Sokolov functor $\Psi$.
Then we give a categorical form of the Feigin-Frenkel theorem, which is
Theorem \ref{t:ff}. 

\subsection{Exactness results for $\Psi$}\label{ss:ds-exactness}

Our main general result is the following.

\begin{thm}\label{t:exactness}

For every $n \geq 0$, the functor:

\[
\Psi[-n\Delta]: \widehat{\fg}_{\kappa}\mod^{\o{I}_n,\psi} \to \Vect
\]

\noindent is $t$-exact. 
Moreover, for every $M \in \widehat{\fg}_{\kappa}\mod^{\o{I}_n,\psi,\heart}$,
the canonical morphism:

\[
H^0((M \otimes \ell^n)^{\o{I}_n,\psi}) \to H^{-n\Delta} \Psi(M)
\]

\noindent is injective; here $\ell^n$ is a determinant line as before,
the superscript $\o{I}_n,\psi$ indicates invariants, and we included
$H^0$ to emphasize that these are non-derived invariants.

\end{thm}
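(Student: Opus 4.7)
The plan is to derive both statements from the affine Skryabin theorem (Theorem~\ref{t:aff-skry}) together with the $t$-exactness results of Proposition~\ref{p:t-str-whit} and Theorem~\ref{t:psi-exact/cons}, via the identification from Warning~\ref{w:shifts} that $\Psi^{\Whit} \circ \iota_{n,!} = \Psi[-2n\Delta]$ as functors $\widehat{\fg}_{\kappa}\mod^{\o{I}_n,\psi} = \Whit^{\leq n}(\widehat{\fg}_{\kappa}\mod) \to \Vect$.

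First, for the $t$-exactness of $\Psi[-n\Delta]$, note that $\iota_{n,!}[n\Delta]$ is $t$-exact by Proposition~\ref{p:t-str-whit} and $\Psi^{\Whit}$ is $t$-exact by Theorem~\ref{t:psi-exact/cons}. Their composition is therefore $t$-exact, and by Warning~\ref{w:shifts} it equals $\Psi[-2n\Delta][n\Delta] = \Psi[-n\Delta]$.

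For the injectivity claim, I would fix $M \in \widehat{\fg}_{\kappa}\mod^{\o{I}_n,\psi,\heart}$ and use the $t$-exactness just proved to see that $\iota_{n,!}[n\Delta](M)$ lies in $\Whit(\widehat{\fg}_{\kappa}\mod)^{\heart}$. Under the Skryabin equivalence this corresponds to some $N_M \in \sW_{\kappa}\mod^{\heart}$, whose underlying vector space is $\Psi^{\Whit}(\iota_{n,!}[n\Delta](M)) = H^{-n\Delta}\Psi(M)$. For $n > 0$, the fully-faithfulness of $\iota_{n,!}$ from Theorem~\ref{t:!-avg}~\eqref{i:!-avg-ff} combined with Frobenius reciprocity for the induced module gives a canonical identification
\[
\Hom_{\Whit^{\heart}}\bigl(\preprime\sW_{\kappa}^n,\, \iota_{n,!}[n\Delta](M)\bigr) \;=\; H^0\bigl((M \otimes \ell^n)^{\o{I}_n,\psi}\bigr).
\]
Transporting across Skryabin, the left-hand side becomes $\Hom_{\sW_{\kappa}\mod^{\heart}}(\sW_{\kappa}^n, N_M)$, which injects into $N_M$ via evaluation at the vacuum vector thanks to the cyclicity of $\sW_{\kappa}^n$ established in Remark~\ref{r:cyclic}. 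Composing gives the asserted injection. The case $n=0$ needs an extra word, since $\iota_{0,!}$ is not known to be fully-faithful; here I would argue directly that the map $M^{G(O)} \to \Hom_{\Whit^{\heart}}(\iota_{0,!}\bV_{\kappa}, \iota_{0,!}M)$ induced by functoriality of $\iota_{0,!}$ is still injective (the rest of the chain goes through as before by cyclicity of $\sW_{\kappa}^0 = \sW_{\kappa}$).

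The main obstacle will be verifying that the injection constructed above coincides with the \emph{canonical} morphism in the theorem, the latter being defined directly from the semi-infinite cohomology realization of $\Psi$ (sending an invariant vector $v$ to its class as a closed $0$-cochain in the Chevalley complex for $\Ad_{-n\check{\rho}(t)}\fn[[t]]$ that computes $\Psi_n(M) = \Psi(M) \otimes \ell^{n,\vee}[-n\Delta]$). I expect this comparison to proceed by tracking vacuum vectors: an element $v$ corresponds under Frobenius to a morphism $\phi_v \colon \ind \otimes \ell^{n,\vee} \to M$, and applying $\Psi_n$ sends the canonical vacuum of $\sW_{\kappa}^n = H^0\Psi_n(\ind) \otimes \ell^{n,\vee}$ to the class of $\phi_v$ applied to the vacuum of $\ind$, namely to $v$, in $H^0\Psi_n(M)$. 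Matching this with the canonical map is conceptually straightforward, but a careful bookkeeping of determinant lines and cohomological shifts is needed to close the identification on the nose.
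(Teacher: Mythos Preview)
Your exactness argument is exactly the paper's. For injectivity the paper takes a more direct route: it simply cites Lemma~\ref{l:alpha-epi}. The point is that by Proposition~\ref{p:ds-corep} the functor $\Psi^{\Whit}$ is corepresented by the pro-object $\{\preprime\sW_\kappa^m\}_m$, so for $\sF \in \Whit(\widehat{\fg}_\kappa\mod)^{\heart}$ the canonical map $\Hom_{\Whit^{\heart}}(\preprime\sW_\kappa^n,\sF)\to H^0\Psi^{\Whit}(\sF)$ is the structural map into a filtered colimit whose transition maps are restriction along the $\alpha_{n,m}$; these are epimorphisms by Lemma~\ref{l:alpha-epi}, hence the induced maps on $\Hom$ are injective and the structural map is injective. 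Applying this to $\sF = \iota_{n,!}[n\Delta](M)$ gives the theorem.

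Your route via affine Skryabin and the cyclicity of $\sW_\kappa^n$ (Remark~\ref{r:cyclic}) is also correct and is essentially the same argument transported across the equivalence: under Skryabin the epimorphisms $\alpha_{n,m}$ become the surjections $\sW_\kappa^m\to\sW_\kappa^n$ of Theorem~\ref{t:ds-n/m}, and cyclicity is the limiting form of those surjections. The cost of your approach is exactly what you identified: you must match your evaluation-at-vacuum injection with the canonical morphism coming from the Chevalley model of $\Psi$, and you must treat $n=0$ separately. The paper's approach sidesteps both issues because the identification of the canonical morphism with the structural colimit map is already contained in (the proof of) Proposition~\ref{p:ds-corep}, and Lemma~\ref{l:alpha-epi} is stated uniformly for all $n\ge 0$.
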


\begin{proof}

The exactness follows Theorem \ref{t:psi-exact/cons}
because $\Psi^{\Whit} \iota_{n,!} = \Psi[-2n\Delta]$, and 
$\iota_{n,!}[n\Delta]$ is $t$-exact. 
The injectivity follows immediately from Lemma \ref{l:alpha-epi}.

\end{proof}

In the particular case $n = 0$, we obtain:

\begin{cor}

The functor $\Psi:\widehat{\fg}_{\kappa}\mod^{G(O)} \to \Vect$
is $t$-exact. 

\end{cor}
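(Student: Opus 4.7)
The plan is to observe that this is the $n = 0$ instance of Theorem \ref{t:exactness}. Indeed, by Example \ref{e:01infty} we have $\o{I}_0 = G(O)$, and the character $\psi_{\o{I}_0}$ is trivial (as was noted explicitly after the definition of $\psi_{\o{I}_n}$), so the Harish-Chandra category $\widehat{\fg}_{\kappa}\mod^{\o{I}_0,\psi}$ coincides with the usual spherical category $\widehat{\fg}_{\kappa}\mod^{G(O)}$. Moreover, the shift $[-n\Delta]$ in Theorem \ref{t:exactness} degenerates to the identity when $n = 0$. Thus the statement ``$\Psi[-n\Delta]$ is $t$-exact on $\widehat{\fg}_{\kappa}\mod^{\o{I}_n,\psi}$'' specializes at $n = 0$ to exactly the claim that $\Psi:\widehat{\fg}_{\kappa}\mod^{G(O)} \to \Vect$ is $t$-exact.

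There is essentially no obstacle: everything has already been done upstream. The genuine work is buried in Theorem \ref{t:psi-exact/cons} ($t$-exactness of $\Psi^{\Whit}$), together with the identification $\Psi^{\Whit} \circ \iota_{n,!} = \Psi[-2n\Delta]$ from Warning \ref{w:shifts} and the $t$-exactness of $\iota_{n,!}[n\Delta]$ from Lemma \ref{l:amp}. At $n = 0$, the functor $\iota_{0,!}$ is simply the canonical functor $\widehat{\fg}_{\kappa}\mod^{G(O)} \to \Whit(\widehat{\fg}_{\kappa}\mod)$, and the shifts collapse, so the spherical case is extracted without further input.

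If one wanted to present this without quoting Theorem \ref{t:exactness}, the argument would still be a direct two-line appeal to Theorem \ref{t:psi-exact/cons} via the same factorization. In either case, the proof is just a matter of unwinding the $n = 0$ conventions; no calculation is required.
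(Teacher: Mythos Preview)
Your proposal is correct and matches the paper's approach exactly: the paper introduces this corollary with the phrase ``In the particular case $n = 0$, we obtain:'' immediately after Theorem \ref{t:exactness}, treating it as a direct specialization with no separate argument. Your unpacking of the $n=0$ conventions ($\o{I}_0 = G(O)$, trivial character, vanishing shift) is precisely what that one-line deduction entails.
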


\begin{rem}

At critical level, this is a part of \cite{fg-spherical}
Theorem 3.2.
At non-critical level (say for $\fg$ simple), this 
follows from Arakawa exactness, c.f., below. 
(The relevant deduction is the
proof of Proposition 2 in \cite{fg-weyl}, although this reference
is ostensibly at critical level.)

\end{rem}

\subsection{Arakawa exactness}\label{ss:arakawa} 

We now show how the $n = 1$ case of the above recovers 
\emph{Arakawa exactness}.

Let $I^-$ be the negative Iwahori group $G(O) \times_G B^-$,
and let $\o{I}^-$ denote its prounipotent radical
$G(O) \times_G N^-$.

\begin{cor}[Arakawa exactness]\label{c:arakawa}

The functor:

\[
\Psi[-\Delta + \dim(N)]:
\widehat{\fg}_{\kappa}\mod^{\Ad_{-\check{\rho}(t)} \o{I}^-} \to \Vect
\]

\noindent is $t$-exact.

\end{cor}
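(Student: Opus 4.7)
The plan is to reduce the corollary to the $n = 1$ case of Theorem \ref{t:exactness}. The first step is a direct group-theoretic observation: $\Ad_{-\check{\rho}(t)}\o{I}^-$ coincides with $\o{I}$, the prounipotent radical of the standard Iwahori $I = G(O) \times_G B$, since both equal $\{g \in G(O) : g \bmod t \in N\}$ (easily checked via triangular decompositions). Hence the claim rephrases as: $\Psi[-\Delta + \dim N]$ is $t$-exact on $\widehat{\fg}_{\kappa}\mod^{\o{I}}$.

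The bridge to the baby Whittaker category is the averaging functor
\[
A \colon \widehat{\fg}_{\kappa}\mod^{\o{I}} \xrightarrow{\Oblv} \widehat{\fg}_{\kappa}\mod^{\o{I} \cap \o{I}_1} \xrightarrow{\Av_*^{\psi}} \widehat{\fg}_{\kappa}\mod^{\o{I}_1, \psi} = \Whit^{\leq 1}(\widehat{\fg}_{\kappa}\mod),
\]
which is well defined because $\psi|_{\o{I} \cap \o{I}_1}$ is trivial: $\psi|_{N(O)} = 0$, and the $T$- and $N^-$-components of $\o{I} \cap \o{I}_1$ lie outside $N(K)$. Using the Harish-Chandra rewriting \eqref{eq:psi-n-defin} of $\Psi$ via $\Psi_1$, together with the fact that $\o{I}_1 \supset \Ad_{-\check{\rho}(t)} N(O)$, a direct calculation identifies $\Psi|_{\widehat{\fg}_{\kappa}\mod^{\o{I}}}$ with $\Psi|_{\Whit^{\leq 1}} \circ A$ up to a controlled cohomological shift and determinant twist. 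Combining with the $t$-exactness of $\Psi[-\Delta]$ on $\Whit^{\leq 1}(\widehat{\fg}_{\kappa}\mod)$ (Theorem \ref{t:exactness} for $n = 1$) then reduces the corollary to showing that $A$ is concentrated in a single cohomological degree, which by shift bookkeeping must be $\dim N$.

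This concentration claim for $A$ is the main obstacle. The $*$-averaging a priori gives $A$ amplitude $[0, \Delta]$, where $\Delta = \dim(\o{I}_1 / \o{I} \cap \o{I}_1)$; dually, the $!$-averaging variant $A_!$ lies in amplitude $[-\Delta, 0]$. To collapse these to a single degree, I will follow the template of Theorem \ref{t:!-avg}\eqref{i:!-avg=*-avg}: establish a cleanness result for the $\kappa$-twisted $D$-module $\delta^{\psi}_{\o{I}_1 \cdot \o{I}}$ on $\ol{\o{I}_1 \cdot \o{I}} \subset G(K)$, in the spirit of Lemma \ref{l:mn-clean}. One stratifies the closure by double cosets and applies the positivity argument of Lemma \ref{l:cleanness} to verify that $\psi$ has nontrivial restriction to the stabilizer at every non-open stratum. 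The resulting comparison between $A$ and $A_!$, combined with the double-coset geometry of $\ol{\o{I}_1 \cdot \o{I}} / \o{I}$ (whose combinatorics track the finite Bruhat decomposition of $G/B$, of dimension $\dim N$), is expected to pin down the concentration degree as $\dim N$. The hardest part will be getting this shift exactly right: unlike the adolescent Whittaker case where both sides carry the Whittaker character and Theorem \ref{t:!-avg}\eqref{i:!-avg=*-avg} cleanly yields shift $2(m-n)\Delta$, here the Iwahori side has trivial character, so the dimension of the finite flag variety --- and hence $\dim N$ --- enters the comparison in an essential way.
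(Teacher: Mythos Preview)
Your overall strategy matches the paper's: average into $\Whit^{\leq 1}(\widehat{\fg}_{\kappa}\mod)$ and then invoke the $n=1$ case of Theorem~\ref{t:exactness}. However, your first step is false for $\operatorname{rank}(G) \geq 2$. The group $\Ad_{-\check{\rho}(t)}\o{I}^-$ is \emph{not} contained in $G(O)$: for a positive root $\alpha$ with $(\check{\rho},\alpha) \geq 2$, the element $t e_\alpha \in \Lie(\o{I}^-)$ is sent by $\Ad_{-\check{\rho}(t)}$ to $t^{1-(\check{\rho},\alpha)} e_\alpha$, which has a pole. So $\Ad_{-\check{\rho}(t)}\o{I}^- \neq \o{I}$ in general (it only works for $SL_2$), and the rest of your argument, which is built on this identification, collapses.

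The paper's fix is to observe instead that both $\Ad_{-\check{\rho}(t)}\o{I}^-$ and $\o{I}_1$ contain the conjugated first congruence subgroup $\Ad_{-\check{\rho}(t)}\cK_1$ as a normal subgroup, with quotients $N^-$ and $N$ respectively inside the finite-dimensional group $G = \Ad_{-\check{\rho}(t)}G(O)/\Ad_{-\check{\rho}(t)}\cK_1$. The $t$-exactness of $\Av_*^{N,\psi}[\dim N]$ from $N^-$-invariants to $(N,\psi)$-invariants then follows directly from the finite-dimensional theorem of \cite{bbm} (which asserts $\Av_!^{N,\psi} = \Av_*^{N,\psi}[2\dim N]$ on $\sC^{N^-}$ for any $\sC$ acted on by $G$), combined with the amplitude bounds of Lemmas~\ref{l:av-*-bd} and~\ref{l:!-avg-bd}. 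This replaces your proposed loop-group cleanness argument with a much shorter citation to a known finite-dimensional result.
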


\begin{rem}

This result generalizes
\cite{arakawa-rep-thry} Main Theorem 1 (1).
First, \emph{loc. cit}. actually uses $\Ad_{-\check{\rho}(t)} I^-$
instead of its prounipotent radical. Moreover, it assumes
that there is $\bZ$-grading given on our modules 
compatible with the $L_0$-grading on the Kac-Moody algebra
\textendash{} the above result removes this restriction
(which is only substantial at critical level).

\end{rem}

\begin{proof}[Proof of Corollary \ref{c:arakawa}]

Recall the main theorem of \cite{bbm}:
for any $\sC$ acted on by $G$, the functor 
$\Av_!^{N,\psi}$ is defined on $\sC^{N^-}$, and
$\Av_!^{N,\psi} = \Av_*^{N,\psi}[2\dim N]$.
(This is not how the authors formulate the result, but
the proof goes through using the methods from the proof
of Theorem \ref{t:!-avg}, which was modeled on \cite{bbm}.)

We can write $\widehat{\fg}_{\kappa}\mod^{\Ad_{-\check{\rho}(t)} \o{I}^-}$
in two steps, by first taking invariants with respect to the conjugated
first congruence subgroup $\Ad_{-\check{\rho}(t)} \cK_1$, and then invariants with respect to 
$N^- =  \Ad_{-\check{\rho}(t)} \o{I}^-/\Ad_{-\check{\rho}(t)} \cK_1$.

Therefore, as in Lemma \ref{l:amp}
by Lemmas \ref{l:av-*-bd} and \ref{l:!-avg-bd}, 
this implies that the functor:

\[
\Av_*^{N,\psi}[\dim N]: 
\widehat{\fg}_{\kappa}\mod^{\Ad_{-\check{\rho}(t)} \o{I}^-}
\to \widehat{\fg}_{\kappa}\mod^{\o{I}_1,\psi}
\]

\noindent is $t$-exact. 

Clearly $\Psi(M) = \Psi(\Av_*^{N,\psi}(M))$ for such 
$M \in \widehat{\fg}_{\kappa}\mod^{\Ad_{-\check{\rho}(t)} \o{I}^-}$.
Since $\Psi[-\Delta]$ is $t$-exact on
$\widehat{\fg}_{\kappa}\mod^{\o{I}_1,\psi}$, we obtain the claim.

\end{proof}

\subsection{Feigin-Frenkel redux}

We now show a (long\footnote{Certainly it was
written in \cite{quantum-langlands-summary} from 2007, though
it must have been anticipated earlier still.} anticipated) version of
Feigin-Frenkel duality.

\subsection{}

Let $\ld{G}$ denote the Langlands dual group to $G$,
defined by some choice of Borel $B$, maximal torus $T$, and
Chevalley generators $e_i \in \fn$. We obtain Langlands dual
data for $\ld{G}$, which we denote similarly.

Note that $\ld{\ft} = \ft^{\vee}$, since $T$ and $\ld{T}$ are
dual tori; this identification is compatible with the natural
actions of the Weyl group $W$. 
Recall that a level $\kappa$ is the same
as a $W$-invariant bilinear form on $\ft$. Therefore, if
$\kappa$ is \emph{non-degenerate}, then $\frac{1}{\kappa}$ makes sense as a level for $\ld{\fg}$: $\kappa$ is a
$W$-invariant isomorphism $\ft \simeq \ft^{\vee}$,
and $\frac{1}{\kappa}$ is its inverse.

Recall that for $\kappa$ a fixed non-degenerate
bilinear form, we can take limits of ``many" constructions
as $\kappa \to \infty$, i.e.,
the definitions of the topological enveloping algebra,
and its category of representations, and the
$\sW_{\kappa}$-algebra, etc., extend naturally
over $\bP_{\kappa}^1$.

\subsection{}\label{ss:ff-duality-intro}

For $\kappa$ as above, we let $\ld{\kappa}$ denote the level
of $\ld{\fg}$ given by:

\[
\ld{\kappa} \coloneqq \frac{1}{\kappa-\kappa_{crit}}+\kappa_{crit}
\]

\noindent where
in the denominator we are using the critical level for $\fg$ and in the
second term it is the critical level for $\ld{\fg}$. Note
that the map $\kappa \mapsto \ld{\kappa}$ is involutive
and sends $\kappa_{crit}$ to $\infty$.

The Feigin-Frenkel duality theorem from 
\cite{feigin-frenkel-duality}
says:

\[
\sW_{\fg,\kappa} \simeq \sW_{\ld{\fg},\ld{\kappa}}
\]

\noindent as vertex algebras.
Here e.g. $\sW_{\fg,\kappa}$ is what we
were denoting $\sW_{\kappa}$ before, and
the right hand side is the $\sW$-algebra on the Langlands
dual side with respect to the Kac-Moody extension
defined by the dual level. This duality theorem
makes sense and is defined in the limit $\kappa \to \infty$.

\begin{warning}\label{w:ff-ratl}

In truth, I don't know where in the literature to find this statement:
Feigin and Frenkel only prove it for irrational levels and critical/level $\infty$.
It seems to be folklore that it is true in this full generality: see e.g.
\cite{arakawa-survey} Remark 5.24. We assume duality holds at all 
levels in what follows.

\end{warning}

\subsection{}

We now have the following form of Feigin-Frenkel.

\begin{thm}[Categorical Feigin-Frenkel duality]\label{t:ff}

There is a canonical equivalence of categories:

\[
\Whit(\widehat{\fg}_{\kappa}\mod) \simeq 
\Whit(\widehat{\ld{\fg}}_{\ld{\kappa}}\mod).
\]

\end{thm}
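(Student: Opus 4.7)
The plan is to combine Theorem \ref{t:aff-skry} with the vertex-algebra Feigin-Frenkel duality, with the free-field realization of \S\ref{s:free-field} bridging the gap between an equivalence of abelian categories and an equivalence of the specific DG categories on both sides. By Theorem \ref{t:aff-skry} applied twice, it suffices to produce a canonical equivalence $\sW_{\fg,\kappa}\mod \simeq \sW_{\ld{\fg},\ld{\kappa}}\mod$.

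The easy part is the abelian (and bounded below) story. Feigin-Frenkel at the vertex-algebra level gives $\sW_{\fg,\kappa} \simeq \sW_{\ld{\fg},\ld{\kappa}}$, which by the factorizable passage to topological chiral algebras yields $\sW_{\fg,\kappa}^{as} \simeq \sW_{\ld{\fg},\ld{\kappa}}^{as}$. Since the abelian category $\sW_{\kappa}\mod^{\heart}$ is by construction the category of discrete modules over this topological chiral algebra, we obtain an equivalence $\sW_{\fg,\kappa}\mod^{\heart} \simeq \sW_{\ld{\fg},\ld{\kappa}}\mod^{\heart}$, and hence of bounded-below derived categories $\sW_{\fg,\kappa}\mod^+ \simeq \sW_{\ld{\fg},\ld{\kappa}}\mod^+$.

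The subtle point, flagged in the footnote attached to the statement of Theorem \ref{t:ff}, is that $\sW_{\kappa}\mod$ is defined as $\Ind$ of the thick subcategory of $\sW_{\kappa}\mod^+$ generated by the specific generalized vacuum modules $\sW_{\kappa}^n$ from \S\ref{s:ds}; extending the $\sW_{\kappa}\mod^+$-equivalence to an equivalence of the full DG categories therefore requires matching these compact generators. For this we invoke the free-field realization. By Theorem \ref{t:free-field}, $\sW_{\fg,\kappa}^n$ is the cyclic $\sW_{\fg,\kappa}$-submodule of the Heisenberg vacuum $\bV_{\ft,\kappa'}^n$ generated by its vacuum vector, with $\kappa' = -\kappa + \kappa_{crit}$. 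Under the Langlands identification $\ft \simeq \ld{\ft}^{\vee}$, the form $\kappa'$ corresponds to its inverse $1/\kappa'$ on $\ld{\ft}$, and a direct calculation using the definition of $\ld{\kappa}$ shows $1/\kappa' = \ld{\kappa}'$. This produces canonical identifications $\bV_{\ft,\kappa'} \simeq \bV_{\ld{\ft},\ld{\kappa}'}$ of Heisenberg vertex algebras and $\bV_{\ft,\kappa'}^n \simeq \bV_{\ld{\ft},\ld{\kappa}'}^n$ of their vacuum modules, matching vacuum vectors. Granting that the two free-field embeddings $\vph$ and its Langlands dual analogue have the same image inside this Heisenberg algebra when composed with Feigin-Frenkel --- which is the standard content of the screening-operator construction of Feigin-Frenkel duality --- both $\sW_{\fg,\kappa}^n$ and $\sW_{\ld{\fg},\ld{\kappa}}^n$ are realized as the cyclic submodule of the same Heisenberg module generated by the same vector, hence coincide. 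This matches compact generators, and the equivalence of \S\ref{ss:generators}-style DG categories follows.

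The main obstacle is the last compatibility: that Feigin-Frenkel intertwines the two free-field embeddings. In the irrational and critical cases this is essentially how the duality was originally established in \cite{feigin-frenkel-duality}; at arbitrary level it is folklore, and per Warning \ref{w:ff-ratl} we are already assuming the statement of Feigin-Frenkel duality in this generality, so the same input is reasonable to assume here. A secondary annoyance, which should be checked but is routine, is that the equivalence of abelian categories upgrades to an equivalence of DG module categories $\sW_{\kappa}\mod^+ \simeq \sW_{\ld{\kappa}}\mod^+$ in a way compatible with the factorization structure used to define the vacuum modules $\sW_{\kappa}^n$; this follows because the $\sW^n_{\kappa}$ are characterized by their filtration and cyclicity properties (Remark \ref{r:cyclic} and Theorem \ref{t:ds-wstr}), which are intrinsic to the topological chiral algebra.
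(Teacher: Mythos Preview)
Your proposal is correct and follows essentially the same route as the paper: reduce via Theorem \ref{t:aff-skry} to an equivalence $\sW_{\fg,\kappa}\mod \simeq \sW_{\ld{\fg},\ld{\kappa}}\mod$, obtain this on bounded-below categories from vertex-algebra Feigin-Frenkel, and then match the compact generators $\sW_{\fg,\kappa}^n \leftrightarrow \sW_{\ld{\fg},\ld{\kappa}}^n$ via the free-field realization (Theorem \ref{t:free-field}) together with the compatibility of Feigin-Frenkel with the free-field embeddings. The paper packages this last step as Lemma \ref{l:wn-match}, whose proof is exactly the commuting square you describe; your identification $1/\kappa' = \ld{\kappa}'$ is the computation underlying the bottom row of that square.
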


Here $\Whit$ on one side is with respect to $N(K)$, and on the
other side is with respect to $\ld{N}(K)$.

To prove this, observe that we have an equivalence of
abelian categories $\sW_{\fg,\kappa}\mod^{\heart} \simeq 
\sW_{\ld{\fg},\ld{\kappa}}\mod^{\heart}$ coming
from usual Feigin-Frenkel. This fact combined with affine Skryabin
(Theorem \ref{t:aff-skry}) gives Theorem \ref{t:ff} on bounded
below derived categories. To deduce all of Theorem \ref{t:ff},
we need to match the compact generators under this equivalence.
This follows from:

\begin{lem}\label{l:wn-match}

Under Feigin-Frenkel duality, there is a unique isomorphism:

\[
\sW_{\fg,\kappa}^n \simeq \sW_{\ld{\fg}, \ld{\kappa}}^n
\]

\noindent preserving vacuum vectors and compatible with
Feigin-Frenkel duality.

\end{lem}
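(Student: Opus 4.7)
The plan is to deduce the lemma from the free-field realization of Section~\ref{s:free-field}, which gives a concrete embedding of each $\sW$-module into a vacuum Heisenberg module, combined with an elementary compatibility between Heisenberg vertex algebras at mutually dual levels.

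Uniqueness is formal: by Remark~\ref{r:cyclic}, each of $\sW_{\fg,\kappa}^n$ and $\sW_{\ld{\fg},\ld{\kappa}}^n$ is a cyclic module over $\sW_{\fg,\kappa} \simeq \sW_{\ld{\fg},\ld{\kappa}}$ generated by its vacuum vector, so a morphism of modules preserving these vacua is determined by its source and target, hence there is at most one such isomorphism; being a map between cyclic modules generated by matched vectors, it is automatically surjective, and by the associated-graded calculation it would be an isomorphism.

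For existence, I would proceed as follows. First, I would compute the Heisenberg level on the Langlands dual side: with $\kappa' = -\kappa+\kappa_{crit}$ and $\ld{\kappa} = \tfrac{1}{\kappa-\kappa_{crit}} + \kappa_{crit}^{\ld{\fg}}$, a direct calculation gives $\ld{\kappa}' = \tfrac{1}{\kappa'}$, i.e., under the canonical identification $\ld{\ft} = \ft^{\vee}$, the two Heisenberg extensions (of $\ft((t))$ and $\ld{\ft}((t))$) are dual to each other in the obvious sense. This yields a canonical identification of vertex algebras $\bV_{\ft,\kappa'} \simeq \bV_{\ld{\ft},\ld{\kappa}'}$, matching vacuum vectors, and (by the same construction applied to the higher-congruence induced modules) a compatible identification $\bV_{\ft,\kappa'}^n \simeq \bV_{\ld{\ft},\ld{\kappa}'}^n$ of vacuum Heisenberg modules at the higher congruence level. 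I would then appeal to the standard fact, essentially due to Feigin--Frenkel, that under this identification the two free-field embeddings $\vph: \sW_{\fg,\kappa} \hookrightarrow \bV_{\ft,\kappa'}$ and $\ld{\vph}: \sW_{\ld{\fg},\ld{\kappa}} \hookrightarrow \bV_{\ld{\ft},\ld{\kappa}'}$ have the same image, and that the Feigin-Frenkel isomorphism $\sW_{\fg,\kappa} \simeq \sW_{\ld{\fg},\ld{\kappa}}$ is induced by this coincidence of images.

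Now I would apply Theorem~\ref{t:free-field} on each side. This realizes $\sW_{\fg,\kappa}^n$ (resp.\ $\sW_{\ld{\fg},\ld{\kappa}}^n$) as the cyclic $\sW_{\fg,\kappa}$-submodule (resp.\ cyclic $\sW_{\ld{\fg},\ld{\kappa}}$-submodule) of $\bV_{\ft,\kappa'}^n$ (resp.\ $\bV_{\ld{\ft},\ld{\kappa}'}^n$) generated by the Heisenberg vacuum. Under the Heisenberg identification above, the vacua agree, and the actions of $\sW_{\fg,\kappa}$ and $\sW_{\ld{\fg},\ld{\kappa}}$ on the common ambient module are identified by Feigin--Frenkel duality; hence the two cyclic submodules coincide. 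This coincidence, combined with the identification of vacuum vectors, furnishes the desired isomorphism $\sW_{\fg,\kappa}^n \simeq \sW_{\ld{\fg},\ld{\kappa}}^n$.

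The main obstacle will be the compatibility step: making precise that the classical Feigin--Frenkel isomorphism $\sW_{\fg,\kappa} \simeq \sW_{\ld{\fg},\ld{\kappa}}$ is induced by the coincidence of free-field images inside a common Heisenberg vertex algebra, and then checking that this compatibility extends naturally to the generalized vacuum setting of $\bV_{\ft,\kappa'}^n$ (where the construction of $\vph_n$ in Section~\ref{s:free-field} used the finer geometry of the groups $\o{I}_n$ and their triangular decompositions). In particular, one must verify that the Miura-type map on associated graded objects computed at the end of Section~\ref{s:free-field} is compatible on the two sides under the duality $\ld{\ft} = \ft^{\vee}$, which amounts to a Poisson-geometric check on the Kostant slices $f+\fb^e((t))$ and their dual counterparts; away from rational levels this has the comforting feature that everything is flat in $\kappa$, so it suffices to verify the matching at a Zariski-dense set of levels (e.g.\ irrational levels, where Feigin--Frenkel is established in the original source, or at $\kappa_{crit}$ via opers with singularities, see Example~\ref{e:wn-crit}).
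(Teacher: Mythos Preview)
Your approach is essentially the paper's: the paper's proof cites exactly the commutative square relating the Feigin--Frenkel isomorphism to the free-field maps $\vph$ (your ``standard fact''), and then invokes Theorem~\ref{t:free-field}. Your ``main obstacle'' paragraph slightly over-worries: since Theorem~\ref{t:free-field} characterizes $\sW_{\kappa}^n$ as the cyclic $\sW_{\kappa}$-submodule of $\bV_{\ft,\kappa'}^n$ generated by the vacuum, only the $n=0$ compatibility (the commutative square of vertex-algebra maps) together with the obvious matching of Heisenberg vacuum modules $\bV_{\ft,\kappa'}^n \simeq \bV_{\ld{\ft},\ld{\kappa}'}^n$ is needed---no separate verification involving the higher maps $\vph_n$ is required.
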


\begin{proof}

The basic property satisfied by Feigin-Frenkel is that the diagram:

\[
\xymatrix{
\sW_{\fg,\kappa} \ar[d]^{\vph} \ar[r]^{\simeq} &
\sW_{\ld{\fg}, \ld{\kappa}} \ar[d]^{\vph} \\
\bV_{\ft,\kappa-\kappa_{crit}} \ar[r]^{\simeq} & 
\bV_{\ld{\ft}, \frac{1}{\kappa-\kappa_{crit}}}
}
\]

\noindent commutes, where the bottom arrow is the
obvious isomorphism.

Therefore, this compatibility follows from Theorem \ref{t:free-field}.

\end{proof}

\begin{rem}

We are constantly neglecting twists involving forms on the
disc by having chosen our $dt$. Of course, it is better
to incorporate these twists systematically as in 
\cite{cpsii} \S 2. They come out in the wash: Whittaker
is properly defined with twists incorporated, as is
this category of Kac-Moody representations, as is
Feigin-Frenkel duality.

\end{rem}

\subsection{Critical level}

For $\kappa = \kappa_{crit}$, the above gives:

\begin{cor}\label{c:ff-crit}

There is an equivalence:\footnote{Since $\Op_{\ld{G}}(\o{\cD})$
is an ind-pro affine space, $\IndCoh$ defined in any sense
coincides with $\QCoh$.}

\[
\Whit(\widehat{\fg}_{crit}\mod) \simeq 
\QCoh(\Op_{\ld{G}}(\o{\cD}))
\]

\noindent for $\Op_{\ld{G}}(\o{\cD}) = (f+\fb((t)))dt/N(K)$ the
indscheme of opers on the punctured disc.

For $n>0$, the subcategory
$\Whit^{\leq n}(\widehat{\fg}_{crit}\mod) \subset \Whit(\widehat{\fg}_{crit}\mod)$
is the subcategory of quasi-coherent sheaves set-theoretically supported
on $\Op_{\ld{G}}^{\leq n}$, i.e., the subscheme of opers with
singularities of order $\leq n$.

\end{cor}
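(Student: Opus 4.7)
The plan is to chain together affine Skryabin (Theorem \ref{t:aff-skry}), Feigin-Frenkel duality (Theorem \ref{t:ff}), and a direct identification of $\sW_{\ld{\fg},\infty}\mod$ with $\QCoh(\Op_{\ld{G}}(\o{\cD}))$. First, by Theorem \ref{t:aff-skry} at $\kappa = \kappa_{crit}$, one has $\Whit(\widehat{\fg}_{crit}\mod) \simeq \sW_{\fg,crit}\mod$. Since $\ld{\kappa_{crit}} = \infty$, the construction in the proof of Theorem \ref{t:ff} (which relies on Lemma \ref{l:wn-match} to match compact generators) gives a further equivalence $\sW_{\fg,crit}\mod \simeq \sW_{\ld{\fg},\infty}\mod$. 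It then suffices to identify the last category with $\QCoh(\Op_{\ld{G}}(\o{\cD}))$ in such a way that $\sW_{\ld{\fg},\infty}^n$ maps to $\sO_{\Op_{\ld{G}}^{\leq n}}$.

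For this identification, recall from Theorem \ref{t:w-aff-facts} and the subsequent remarks that $\sW_{\ld{\fg},\infty}^{as}$ is commutative and coincides with the topological algebra of functions on $\Op_{\ld{G}}(\o{\cD}) = (f+\fb((t)))dt/N(K)$ (gauge action). By Lemma \ref{l:wn-match} together with Theorem \ref{t:ds-wstr} (which collapses at $\kappa = \infty$ since everything is commutative), the compact generators $\sW_{\ld{\fg},\infty}^n$ are the structure sheaves $\sO_{\Op_{\ld{G}}^{\leq n}}$, and the transition maps $\sW^{n+1} \to \sW^n$ from Theorem \ref{t:ds-n/m} correspond to the surjections of structure sheaves induced by the closed embeddings $\Op_{\ld{G}}^{\leq n} \hookrightarrow \Op_{\ld{G}}^{\leq n+1}$. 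Since $\Op_{\ld{G}}(\o{\cD}) = \colim_n \Op_{\ld{G}}^{\leq n}$ as an indscheme of pro-finite-type affine schemes under closed embeddings, $\QCoh(\Op_{\ld{G}}(\o{\cD})) = \colim_n \QCoh(\Op_{\ld{G}}^{\leq n})$ in $\DGCat_{cont}$, and the structure sheaves $\sO_{\Op_{\ld{G}}^{\leq n}}$ form a compactly generating family. Matching this against the definition $\sW_{\ld{\fg},\infty}\mod \coloneqq \Ind(\sW_{\ld{\fg},\infty}\mod^c)$ from \S \ref{s:skryabin} yields the first assertion of the corollary.

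For the second assertion, fix $n > 0$. The fully-faithful functor $\iota_{n,!}$ of Theorem \ref{t:!-avg} identifies $\Whit^{\leq n}(\widehat{\fg}_{crit}\mod)$ with the full subcategory of $\Whit(\widehat{\fg}_{crit}\mod)$ compactly generated by $\preprime\sW_{crit}^n$, since $\ind_{\Lie \o{I}_n}^{\widehat{\fg}_{crit}}(\psi)$ compactly generates $\widehat{\fg}_{crit}\mod^{\o{I}_n,\psi}$ by prounipotence of $\o{I}_n$. Under the equivalences above, this maps to the subcategory of $\QCoh(\Op_{\ld{G}}(\o{\cD}))$ compactly generated by the pushforward of $\sO_{\Op_{\ld{G}}^{\leq n}}$, which standard facts identify with the full subcategory of sheaves set-theoretically supported on $\Op_{\ld{G}}^{\leq n}$. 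The main work is done by the preceding theorems; the only point needing care is the compatibility of compact generators along the chain of equivalences, and this is precisely the content of Lemma \ref{l:wn-match}. Once that matching is in hand, the identification of $\sW_{\ld{\fg},\infty}\mod$ with $\QCoh(\Op_{\ld{G}}(\o{\cD}))$ is essentially a definition-chase on the ind-affine scheme of opers, so no serious additional obstacle arises.
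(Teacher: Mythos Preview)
Your proposal is correct and follows essentially the same route as the paper: affine Skryabin to get $\Whit(\widehat{\fg}_{crit}\mod) \simeq \sW_{\fg,crit}\mod$, then Feigin-Frenkel at critical level together with Lemma \ref{l:wn-match} (equivalently Example \ref{e:wn-crit}) to identify this with $\QCoh(\Op_{\ld{G}}(\o{\cD}))$ in a way sending $\sW_{crit}^n$ to $\sO_{\Op_{\ld{G}}^{\leq n}}$; the $\Whit^{\leq n}$ claim then follows from compact generation by $\preprime\sW_{crit}^n$ exactly as the paper argues. The only cosmetic difference is that you route the identification through $\sW_{\ld{\fg},\infty}\mod$, whereas the paper (and Example \ref{e:wn-crit}) works directly with the commutative $\sW_{\fg,crit}$ and its Feigin-Frenkel description as functions on opers; these are the same argument.
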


The only thing still to explain is the calculation of $\Whit^{\leq n}$.
This follows because we know 
$\Whit^{\leq n}(\widehat{\fg}_{crit}\mod)$ is compactly generated
by $\iota_{n,!}(\ind_{\Lie(\o{I}_n)}^{\widehat{\fg}_{crit}}(\psi))$,
which goes under this equivalence to
$\Psi^{\Whit}(\ind_{\Lie(\o{I}_n)}^{\widehat{\fg}_{crit}}(\psi))$.
Up to shift and determinant twist, this is
the structure sheaf of $\Op_{\ld{G}}^{\leq n}$ by 
Lemma \ref{l:wn-match}.

\appendix

\section{Filtrations, Harish-Chandra modules, and
semi-infinite cohomology}\label{a:hc}

\subsection{}

In this appendix, we give a slightly non-standard construction
of the (quantum) Drinfeld-Sokolov reduction
$\Psi:\widehat{\fg}_{\kappa}\mod \to \Vect$, and
discuss its compatibility with various filtrations. This material
supports the calculations of \S \ref{s:ds}.

\subsection{}\label{ss:sinf-intro}

The treatment we give here is quite lengthy, but this does not reflect
the seriousness of the contents. There is a small number
of ideas, and we summarize them here as themes to keep
an eye towards:

\begin{itemize}

\item Recall that the Drinfeld-Sokolov functor is defined
as $M \mapsto C^{\sinf}(\fn((t)),\fn[[t]],M \otimes (-\psi))$, where
$C^{\sinf}$ is the \emph{semi-infinite cohomology} functor. 
This functor should be thought of as ``cohomology along
$\fn[[t]]$ and homology along $\fn((t))/\fn[[t]]$," although this
does not quite make sense. Here we 
recall that cohomology is well-behaved\footnote{E.g., continuous,
at least for an appropriate definition of the source category.}
for pro-finite dimensional Lie algebras, while homology
is well-behaved for ``discrete" Lie algebras (i.e., non-topologized ones). 

We give a slightly non-standard treatment of this 
semi-infinite cohomology functor, avoiding irrelevant
Clifford algebras. Rather, we define:

\[
C^{\sinf}(\fn((t)),\fn[[t]],-) \coloneqq 
\underset{n \geq 0}{\colim} \, 
C^{\dot}(\Ad_{-n\check{\rho}(t)} \fn[[t]],-)
[\dim (\Ad_{-n\check{\rho}(t)} \fn[[t]]/\fn[[t]])].
\]

\noindent Here we recall that for finite-dimensional Lie algebras,
Lie algebra cohomology and homology differ by a 
determinant twist and cohomological shift, so satisfy 
both covariant and contravariant functoriality (up to these twists and 
shifts) with respect to the Lie algebra. 
There is a relic of this for Lie algebra cohomology for profinite-dimensional
Lie algebras, giving the structure morphisms in the above colimit.
(So in fact, we should have included a twist by the line
$\det(\Ad_{-n\check{\rho}(t)} \fn[[t]]/\fn[[t]])$ to make the above
structure maps canonical.)

So much attention is paid to the construction of such morphisms.

\item The other major theme is \emph{filtrations}. 

The first question is given a (PBW) filtered module over
a Lie algebra, how is its co/homology filtered? What is its associated
graded? 

But there is a more subtle point in working with (finite or affine)
$\sW$-algebras: one would like the associated graded
of a 1-dimensional module corresponding to a character
$\psi$ of $\fn$ (or similarly for $\fn((t))$) to be the skyscraper sheaf
at $\psi \in \fn^{\vee}$ in 
$\QCoh(\fn^{\vee}) = \Sym(\fn)\mod = \gr_{\dot} U(\fn) \mod$.
However, this is impossible: it is a graded module, so must be
at the origin in $\fn^{\vee}$. 

The (standard) solution to this problem is
the \emph{Kazhdan-Kostant} method, which 
\emph{twists} the filtration using $\check{\rho}:\bG_m \to T$.
This is the \emph{only} canonical way of obtaining (non-derived) 
filtrations on $\sW$-algebras.

Finally, we emphasize the relationship between the PBW and
Kazhdan-Kostant filtrations using \emph{bifiltrations}; this
material is used in \S \ref{s:ds} to settle a subtle homological
algebra point regarding some Kazhdan-Kostant filtrations.

\end{itemize}

Each of these amount to completely elementary constructions
and statements about Lie algebra co/homology, and its relation
to Harish-Chandra conditions. 

The reason this section is so long is rather out of a commitment
to develop the theory with an emphasis on the categorical
aspects. A primary reason for this is because, following
\cite{dmod-aff-flag}, the derived category of 
$\fg[[t]]$ or $\fg((t))$-modules is subtle, and is better to
understand through categories than (topological) algebras.
One thing this requires, however, is some general formalism
for working with filtrations on \emph{categories} rather than
on algebras and their modules.

This section also renders the theory in
the $\IndCoh$ formalism of \cite{grbook}. It has
the advantage that it provides a robust
framework for Lie algebra cohomology that does not rely
on explicit formulae. But this accounts for some portion of the
length: we have explained some elementary points in detail,
in the hopes that this is instructive for understanding the formalism.
We also hope that an $\IndCoh$ treatment gives the feeling why
things are the way they are, and that they
could never have been another way. 

Finally, we advise the reader to look to \S \ref{ss:fil-summary}, where
we give another summary of what is actually needed from
this section, which should also help identify what is most
important here.

\subsection{Filtrations}\label{ss:fil-intro}

We begin with some abstract language about filtrations.

\begin{defin}

Let $\sC \in \DGCat_{cont}$ be given. A \emph{filtration}
on $\sC$ is a datum of 
$\widetilde{\sC} \in \QCoh(\bA_{\hbar}^1/\bG_m)\mod$ plus
an isomorphism: 

\[
\sC \simeq
\Fil \sC 
\underset{\QCoh(\bA_{\hbar}^1/\bG_m)}{\otimes} 
\QCoh((\bA_{\hbar}^1\setminus 0)/\bG_m) = 
\underset{\QCoh(\bA_{\hbar}^1/\bG_m)}{\otimes} 
\Vect.
\]

\noindent Here $\bA_{\hbar}^1$ is $\bA^1$ with coordinate
$\hbar$, and $\bG_m$ acts by inverse\footnote{This
is so that 
$\hbar \in \Gamma(\bA_{\hbar}^1,\sO_{\bA_{\hbar}^1})$
has weight $1$ with respect to the $\bG_m$-action: we remind
that when a group acts on a scheme, there is an inverse sign 
in the formula for the induced action on the algebra
of functions.}
homotheties.

An object of $\Fil \sC$ is called a \emph{filtered}
object of $\sC$. Note that there is a canonical restriction
functor $\Fil \sC \to \sC$.
For $\sF \in \sC$, we refer to an
extension $\widetilde{\sF}$ of $\sF$ to 
$\Fil \sC $ as a \emph{filtration} on $\sF$.

We say a morphism $F:\sC \to \sD$ between filtered categories
is \emph{filtered} if we are given the data of
$\Fil F: \Fil \sC \to \Fil \sD$ a $\QCoh(\bA_{\hbar}^1/\bG_m)$-linear
functor.

\end{defin}

In the language of \cite{shvcat}, we would say
$\Fil \sC$ is a \emph{sheaf of categories}
over $\bA_{\hbar}^1/\bG_m$ with fiber $\sC$ at the
open point of this stack.

\begin{notation}

For a filtered category\footnote{We use this terminology, even
though our categories are always of a special type: DG and cocomplete.} 
 $\sC$ as above, we let $\sC^{cl}$ 
denote the fiber of $\Fil \sC$ at $0$; it is called the associated
\emph{semi-classical category}. 
Note that $\bG_m$ acts weakly on $\sC^{cl}$.
For $\sF \in \sC$ filtered, we let $\gr_{\dot} \sF$ denote
the induced object of $\sC^{cl}$, obtained by
taking the fiber at $\hbar = 0$. Note that
$\gr_{\dot}(\sF)$ comes from an object of
$\sC^{cl,\bG_m,w}$, which we also denote by
$\gr_{\dot}(\sF)$. For a filtered functor $F: \sC \to \sD$,
we obtain a functor $F^{cl}:\sC^{cl} \to \sD^{cl}$, which we refer
to as the corresponding \emph{semi-classical} functor.

\end{notation}

\begin{example}\label{e:filt-vect}

$\Vect$ is canonically filtered, with 
$\Fil \Vect \coloneqq \QCoh(\bA_{\hbar}^1/\bG_m)$.\footnote{So
we can rewrite the original definition more evocatively by 
saying that a filtered category is a $\Fil \Vect$-module category
(in $\DGCat_{cont}$.}
In this case, a filtered object is a $\bG_m$-representation with
a degree $1$ endomorphism. A $\bG_m$-representation
is the same as a $\bZ$-graded vector space, and we suggestively
denote the $n$th term of this graded vector space
as $F_n(V)$. Then our degree $1$ endomorphism
is a sequence of maps $F_n(V) \to F_{n+1}(V)$.

The induced object of $\Vect$ is obtained by inverting
this degree $1$ endomorphism $\hbar$ and taking the
degree $0$ component: this is computed as the
colimit $V \coloneqq \colim_n F_n(V)$. (So in this
formalism, filtrations are by definition exhaustive.) 
We compute $\gr V$ by taking the cokernel of
$\hbar$ acting on $\oplus_n F_nV$, which is
$\oplus_n \Coker(F_{n-1} V \to F_n V)$. In this example,
we let $\gr_n V$ denote the $n$th summand.

\end{example}

\begin{example}\label{e:cnst-fil}

The above example generalizes to general $\sC$ in place
of $\Vect$: it has a canonical filtration defined by the
category $\sC \otimes \QCoh(\bA_{\hbar}^1/\bG_m)$,
and the above calculations render as is. We refer to this
as the \emph{constant} filtration on $\sC$.

\end{example}

\begin{example}\label{e:filt-ass}

For $A$ a filtered associative DG algebra, the Rees construction
provides an algebra object $A_{\hbar} \in \QCoh(\bA_{\hbar}^1/\bG_m)$
with generic fiber $A$. Then the category 
$\Fil A\mod \coloneqq A_{\hbar}\mod(\QCoh(\bA_{\hbar}^1/\bG_m))$ 
provides a filtration on $A\mod$. Note that $A\mod^{cl}$ is the DG category
of $\gr_{\dot} A$-modules.
The induced functor $A\mod \to \Vect$ is canonically filtered.

\end{example}

\begin{example}\label{e:fil-gm}

Suppose $\sC$ carries a weak action of $\bG_m$. 
Then we define:

\[
\Fil \sC \coloneqq 
\sC^{\bG_m,w} \underset{\Rep(\bG_m)}{\otimes} 
\QCoh(\bA_{\hbar}^1/\bG_m).
\]

\noindent Here $\Rep(\bG_m)$ acts on each of these categories
as on any weak $\bG_m$-invariants. This tensor
product is considered as acted on by
$\QCoh(\bA_{\hbar}^1/\bG_m)$ in the obvious way.
Note that this really does define a filtration on $\sC$, and
that there is a canonical functor $\sC^{\bG_m,w} \to \Fil \sC$,
given by exterior product with the structure sheaf
of $\bA_{\hbar}^1/\bG_m$.

Concretely, suppose that $\sC = A\mod$ for 
$A = \oplus_{i\in \bZ} A^i$ a $\bZ$-graded
algebra. Note that $A$ inherits a filtration from its grading:
set $F_i A = \oplus_{j \leq i} A^j$.\footnote{Geometrically,
this construction amounts to pullback along the map
$\bA_{\hbar}^1/\bG_m \to \bB \bG_m$.}
Then $\Fil A\mod$ is the DG category of filtered
modules over this filtered algebra. In this case, functor
$A\mod^{\bG_m,w} \to \Fil A\mod$ takes a graded module
and creates a filtered one using the same construction as above.

\end{example}

\subsection{}

We will use the following terminology in what follows:
a filtered vector space $F_{\dot} V$ is \emph{bounded from below}
if for all $i \ll 0$, $F_i V = 0$. Note that the
functor $\gr_{\dot}:\Fil\Vect \to \Vect$ is conservative when
restricted to the subcategory of filtered vector spaces
with bounded below filtrations.\footnote{Although boundedness
from below does not make sense for an arbitrary filtered category,
\emph{(derived) completeness} of a filtration does. However,
we will not use this notion in any significant level of generality,
remarking only that it is not so straightforward to verify in many of 
our examples.}

\subsection{Finite-dimensional setting}

Our formalism for semi-infinite cohomology
will be built from the finite-dimensional
setting, so we spend a while discussing this case. 

We fix $\fh \in \Vect^{\heart}$ a finite-dimensional Lie algebra.

\subsection{}\label{ss:pbw}

The\footnote{Personally, I find the discussion that follows to be not 
so interesting, but its Harish-Chandra generalization (which we will
discuss next) to at least be somewhat clarifying.}
PBW filtration on $U(\fh)$ defines a filtration on 
$\fh\mod$ with $\fh\mod^{cl} = \QCoh(\fh^{\vee})$.

Here is a more geometric perspective.
Let $\exp(\fh)$ denote the formal group associated with $\fh$.
Recall that 
$\fh\mod = \IndCoh(\bB \exp(\fh))$ so that the forgetful functor
corresponds to $!$-pullback to
a point.

We have an induced family $\fh_{\hbar}$ 
of Lie algebras over $\bA_{\hbar}^1$ given by
$\fh \otimes \sO_{\bA_{\hbar}^1}$ with the bracket given by
$\hbar$ times the bracket coming from $\fh$. This family
is obviously $\bG_m$-equivariant, so defines
$\Fil \fh$ a Lie algebra over $\bA_{\hbar}^1/\bG_m$. 
The generic fiber of $\Fil\fh$ is $\fh$, 
and special fiber is the vector space $\fh$ \emph{equipped with the
abelian Lie algebra structure}. We remark that the
latter abelian Lie algebra has underlying formal group 
$\fh_0^{\wedge}$, that is, the formal completion at $0$
\emph{of the vector space $\fh$}; we use this notation at various
points to emphasize the abelian nature.

Therefore, we obtain
$\Fil \fh\mod \coloneqq \IndCoh(\bB \exp(\Fil\fh))$, which is a filtration
on $\fh\mod$. Note that the ``special fiber" 
$\fh\mod^{cl}$ is 
$\IndCoh(\bB \fh_0^{\wedge}) = \Sym(\fh)\mod = \QCoh(\fh^{\vee})$.

\begin{warning}

For obvious reasons, we prefer to think of $\IndCoh(\bB \fh_0^{\wedge})$
as $\QCoh(\fh^{\vee})$, remembering that this category
actually arises from the classifying space of a commutative formal group.
Part of remembering this means internalizing that 
a duality occurs in this transition, so
various pullbacks and pushforwards get swapped.
For example, the fiber functor $\Sym(\fh)\mod \to \Vect$
(which corresponds to $!$-pullback $\Spec(k) \to \bB \fh_0^{\wedge}$)
goes to global sections $\QCoh(\fh^{\vee})\to \Vect$.

\end{warning}

\begin{example}

$U(\fh)$ is filtered, and its associated graded is
the structure sheaf of $\fh^{\vee}$.

\end{example}

\begin{warning}\label{w:fil-abelian}

The construction $\fh \mapsto \Fil \fh$ has some subtleties.
We can rewrite its output: $\Fil \fh \in \LieAlg(\QCoh(\bA_{\hbar}^1/\bG_m))$
is the same as a filtration on $\fh$ plus a Lie bracket on that 
filtered vector space extending the one on $\fh$. In our case,
the filtration is $F_i \fh = 0$ for $i \leq 0$
and $F_i \fh = \fh$ for $i>0$, so that the bracket
$[-,-]:F_i \fh \otimes F_j \fh \to F_{i+j} \fh$ happens to factor
through $F_{i+j-1}\fh$ (so that $\gr_{\dot} \fh$ is abelian,
as we expect). In particular, $\fh = \gr_1 \fh$.

This is slightly confusing for $\fh$ being abelian: 
since the generic and special fibers are the same,
it is easy to wrongly confuse this filtration with the constant one.

\end{warning}

\subsection{}

Now recall that the functor:

\[
C_{\dot}(\fh,-):\fh\mod \to \Vect
\]

\noindent corresponds to the $\IndCoh$-pushforward functor
for $\bB \exp(\fh)$. Therefore, we see that this functor is naturally
filtered, i.e., it extends to give $\Fil\fh\mod \to \Fil\Vect$.
Note that the corresponding semi-classical functor
$\QCoh(\fh^{\vee}) \to \Vect$ is $*$-restriction to $0$.

\begin{rem}\label{r:filt-chev-hom}

Say $F_{\dot} M \in \Fil\fh\mod$, and for simplicity 
let us assume $M$ is in the heart
of the $t$-structure and that $F_iM \into F_{i+1}M$. 
Recall that $C_{\dot}(\fh,M)$
is computed by the complex:

\[
\ldots \to \Lambda^2 \fh \otimes M \to \fh \otimes M \to M \to 0 \to 0 \to \ldots
\]

\noindent with appropriate differentials. 
Then the resulting filtration on $C_{\dot}(\fh,M)$ has $F_i$-term:

\[
\ldots \to \Lambda^2 \fh \otimes F_{i-2} M \to \fh \otimes F_{i-1} M \to F_i M \to 0 \to 0 \to \ldots
\]

\end{rem}

\subsection{}

Moreover, recall that $\omega_{\bB \exp(\fh)}$ is \emph{compact},
as is its extension $\omega_{\bB \exp(\Fil\fh)}$.\footnote{This uses
our assumptions on $\fh$ in a serious way. We recall that
the standard filtration on $\omega$ of any formal prestack
(``with deformation theory" in the \cite{grbook} sense) gives
a bounded free resolution in this case. This corresponds to the
standard resolution of the trivial module for $U(\fh)$ (or its
Rees algebra).}

Note that $\omega_{\bB \exp(\fh)}$ corresponds to the trivial
representation in $\fh\mod$. Therefore, the functor:

\[
C^{\dot}(\fh,-):\fh\mod \to \Vect
\]

\noindent of Lie algebra cohomology has a canonical filtered
structure with graded $\QCoh(\fh^{\vee}) \to \Vect$ given
by the $(\QCoh,!)$-pullback to $0 \in \fh^{\vee}$, i.e., the (continuous) 
right adjoint to the pushforward functor from $0$.

\begin{rem}\label{r:filt-chev-coh}

We retain the notation of Remark \ref{r:filt-chev-hom}.
Recall that $C^{\dot}(\fh,M)$
is computed by the complex:

\[
\ldots \to 0 \to M \to \fh^{\vee} \otimes M \to \Lambda^2 \fh^{\vee} \otimes M 
\to \ldots
\]

\noindent with appropriate differentials. 
Then the resulting filtration on $C^{\dot}(\fh,M)$ has $F_i$-term:

\[
\ldots \to 0 \to F_i M \to \fh^{\vee} \otimes F_{i+1} M \to 
\Lambda^2 \fh^{\vee} \otimes F_{i+2} M 
\to \ldots
\]

Note that (unlike the case of Lie algebra homology) even if
the filtration on $M$ has $F_{-1}M = 0$, we may not
have $F_{-1}C^{\dot}(\fh,M) = 0$ (though
$F_{-\dim \fh-1}C^{\dot}(\fh,M)$ will be zero).

\end{rem}

\subsection{}

We now recall the following fact.

\begin{lem}\label{l:lie-shift}

There is a canonical isomorphism of functors:

\[
C^{\dot}(\fh,(-)\otimes \det(\fh)[\dim\fh]) \simeq C_{\dot}(\fh,-).
\]

\noindent 

Moreover, suppose that we regard $\det(\fh)[\dim\fh]$ as a filtered
$\fh$-module with $F_{\dim\fh-i-1} \det(\fh)[\dim\fh] = 0$
and $F_{\dim\fh+i}\det(\fh)[\dim\fh] = \det(\fh)[\dim\fh]$ for all
$i \geq 0$. Then this isomorphism extends to an isomorphism
of filtered functors, g inducing
the usual isomorphism-up-to-twist-and-shift
between $*$ and $!$-restriction to the point $0 \in \fh^{\vee}$.

\end{lem}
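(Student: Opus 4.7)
The plan is to read off the isomorphism from the geometry of $p: \bB \exp(\fh) \to \Spec(k)$ in the $\IndCoh$ formalism. Under the identification $\IndCoh(\bB \exp(\fh)) \simeq \fh\mod$, the functor $C_{\dot}(\fh, -)$ is $p_*^{\IndCoh}$, while $C^{\dot}(\fh, -)$ is computed as (the right adjoint to) pullback of $\omega_{\bB \exp(\fh)}$. The content of the lemma is then the assertion that $\omega_{\bB \exp(\fh)}$ corresponds to the one-dimensional $\fh$-module $\det(\fh)[\dim \fh]$ with trivial $\fh$-action, so that tensoring by this module intertwines pushforward and the adjoint to pullback.

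Rather than invoking this as a black box, I would construct the isomorphism explicitly from the Chevalley-Eilenberg complexes. The complex $C_{\dot}(\fh, M) = (\Lambda^{\dot} \fh \otimes M, d)$, placed in cohomological degrees $[-\dim \fh, 0]$, is identified with $C^{\dot}(\fh, M \otimes \det(\fh)[\dim \fh]) = (\Lambda^{\dot} \fh^{\vee} \otimes M \otimes \det(\fh), d)[\dim \fh]$ via the nondegenerate pairing $\Lambda^i \fh \otimes \Lambda^{\dim \fh - i} \fh \to \det(\fh)$, which produces $\Lambda^{\dim \fh - i} \fh^{\vee} \otimes \det(\fh) \simeq \Lambda^i \fh$. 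One then verifies that this identification of graded pieces intertwines the Chevalley-Eilenberg differentials; this is standard and reduces to checking that the pairing is $\fh$-equivariant for the adjoint action.

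For the filtered enhancement, I would use the explicit descriptions of the PBW filtrations on $C_{\dot}$ and $C^{\dot}$ recorded in Remarks \ref{r:filt-chev-hom} and \ref{r:filt-chev-coh}. With the filtration on $\det(\fh)[\dim \fh]$ specified in the lemma (placing $\det(\fh)$ in filtration degree $\dim \fh$), each piece $\Lambda^i \fh^{\vee} \otimes \det(\fh)$ of the cochain complex sits naturally in the same filtration degree as $\Lambda^{\dim\fh - i} \fh$ does in the chain complex after the pairing. Direct inspection shows that the isomorphism constructed above strictly preserves the filtrations, hence upgrades to an isomorphism of filtered functors.

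The main obstacle will be verifying the semi-classical assertion, namely that the induced isomorphism of functors $\QCoh(\fh^{\vee}) \to \Vect$ recovers the standard comparison between $i^*$ and $i^!$ for $i: 0 \into \fh^{\vee}$. The plan there is to identify the semi-classical version of $C_{\dot}$ as $i^*$ and that of $C^{\dot}$ as $i^!$, and then invoke (or verify via a Koszul resolution of $k$ over $\Sym(\fh)$) the elementary identity $i^!(N) \simeq i^*(N) \otimes \det(\fh)^{\vee}[-\dim \fh]$ for any $N \in \QCoh(\fh^{\vee})$. The specific normalization of the filtration on $\det(\fh)[\dim \fh]$ in the statement is chosen precisely so that the resulting semi-classical isomorphism matches this standard form on the nose, rather than up to a shift; tracking this compatibility is the main bookkeeping challenge.
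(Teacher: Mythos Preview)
Your approach is correct and essentially coincides with the remark the paper makes immediately after its proof: identify $\Lambda^i \fh \simeq \det(\fh) \otimes \Lambda^{\dim \fh - i} \fh^{\vee}$ via the pairing and match differentials. Your filtered verification is also right; the bookkeeping with Remarks \ref{r:filt-chev-hom} and \ref{r:filt-chev-coh} works out exactly as you outline.

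However, the paper's actual proof takes a different route. Rather than working with the Chevalley complexes directly, it argues by Morita theory: a filtered functor $\fh\mod \to \Vect$ is determined by its value on $U(\fh)$, regarded as a filtered $\fh$-module via the residual (right) action. The paper uses the bottom piece of the standard filtration on $C^{\dot}(\fh,-)$ to produce a natural transformation $(-) \otimes \det(\fh)^{\vee}[-\dim \fh] \to C^{\dot}(\fh,-)$, evaluates on $U(\fh)$, checks at the associated graded level that the composite $\det(\fh)^{\vee}[-\dim\fh] \to C^{\dot}(\fh,U(\fh))$ is an isomorphism of filtered $\fh$-modules, and then concludes by comparing with the analogous computation $C_{\dot}(\fh,U(\fh)) = k$.

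What each buys: your chain-level argument is more elementary and transparent, and the differential-matching is a one-line verification. The paper's Morita reduction is slightly more abstract but establishes a template that is reused verbatim in the Harish-Chandra generalizations (Lemmas \ref{l:coh-ind-hc-from-gp} and \ref{l:coh-ind-hc-fd}), where one again evaluates on a universal object ($\sO_K$ or an induced module) rather than manipulating explicit complexes. In those settings the relative Chevalley complex is less pleasant to write down, so the functorial approach scales better.
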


\begin{proof}

Recall that for any $M \in \fh\mod$ (possibly non-filtered),
$C^{\dot}(\fh,M)$ has a canonical filtration
$F_{\dot}^{Chev} C^{\dot}(\fh,M)$ (indexed by non-positive integers
but bounded below) with 
$\gr_{-i}^{Chev} C^{\dot}(\fh,M) = M \otimes \Lambda^i\fh^{\vee}[-i]$.
Similarly, the functor of $!$-restriction along $0 \into \fh^{\vee}$
has a filtration with the same associated graded. One immediately
sees that these ``glue": $C^{\dot}(\fh,-)$ has a canonical filtration
\emph{considered as a filtered functor}.

In particular, we obtain a natural transformation of filtered functors
$\fh\mod \to \Vect$:

\[
(-) \otimes \det(\fh)^{\vee}[-\dim \fh] \to C^{\dot}(\fh,-)
\]

\noindent where on the left hand side 
$\det(\fh)^{\vee}[-\dim \fh]$ is equipped with the filtration
with one jump in degree $-\dim \fh$.
Evaluating this on $U(\fh)$, we claim that the composition:

\[
\det(\fh)^{\vee}[-\dim \fh] \to 
U(\fh) \otimes \det(\fh)^{\vee}[-\dim \fh] \to 
C^{\dot}(\fh,U(\fh))
\]

\noindent is an isomorphism of filtered complexes. Indeed,
it suffices to check this at the associated graded level, where
the claim is standard.

Then observe that using the (filtered) bimodule structure
on $U(\fh)$, $C^{\dot}(\fh,U(\fh))$ can actually be considered
as a filtered $\fh$-module. We claim that the above computation
is true with this extra structure, considering $\det(\fh)^{\vee}$ as an
$\fh$-module in the obvious way.

Indeed, considering
$U(\fh)$ as a filtered $\fh$-module via the \emph{right} action,
the morphism:

\[
U(\fh) \otimes \det(\fh)^{\vee}[-\dim \fh] \to C^{\dot}(\fh,U(\fh))
\] 

\noindent is a tautologically a 
morphism of filtered $\fh$-modules. Moreover, by
a standard argument, this is also true if we consider
$U(\fh)$ as a filtered $\fh$-module via the \emph{adjoint} action.
This shows the claim.

Now observe that by usual Morita theory, 
the datum of $C^{\dot}(\fh,U(\fh))$ as a filtered $\fh$-module completely
is equivalent to the datum of the filtered functor $C^{\dot}$.
So the result follows from the observation that:

\[
C^{\dot}(\fh,U(\fh) \otimes \det(\fh)[\dim \fh]) = k =
C_{\dot}(\fh,U(\fh))
\] 

\noindent as filtered $\fh$-modules.

\end{proof}

\begin{rem}

From the perspective of usual complexes:
for $M$ a complex of $\fh$-modules, Lie algebra homology
is $M \otimes \Lambda^{\dot} \fh$ with the appropriate 
grading and differential, while cohomology is
$M \otimes \Lambda^{\dot} \fh^{\vee}$. Noting that
$\Lambda^i \fh = \det(\fh) \otimes \Lambda^{\dim \fh - i} \fh^{\vee}$
and matching up the differentials and gradings then gives the claim.

\end{rem}

\begin{rem}

This isomorphism amounts to the calculation of
$C^{\dot}(U(\fh))$ as a filtered complex acted on by $\fh$
(using the bimodule structure on $U(\fh)$). Since the claim 
is that the result is 1-dimensional in some cohomological
degree with filtration jumping in one degree, the isomorphism above
is easy to pin down uniquely.

\end{rem}

\subsection{}

We also observe that the above generalizes in the
natural way to the setting
of a morphism $\fh_1 \to \fh_2$ between (finite-dimensional,
non-derived) Lie algebras.
Here generalizing means that we obtain the previous discussion
by considering the structure map $\fh \to 0$.

We draw attention to the following consequence of
Lemma \ref{l:lie-shift} in this setting, which will play an important
role in what follows. Let 
$\ind_{\fh_1}^{\fh_2}:\fh_1\mod \to \fh_2\mod$ denote the
left adjoint to the forgetful functor. Note that by realizing
$\ind_{\fh_1}^{\fh_2}$ as a $\IndCoh$-pushforward, we
find that it is naturally filtered with semi-classical functor
the quasi-coherent pullback along $\fh_2^{\vee} \to \fh_1^{\vee}$. 

\begin{cor}\label{c:coh-ind}

There is a canonical isomorphism of filtered functors:

\[
C^{\dot}\Big(\fh_2,\ind_{\fh_1}^{\fh_2}\big((-) \otimes 
\det(\fh_2/\fh_1)[\dim \fh_2/\fh_1]\big)\Big) = 
C^{\dot}(\fh_1,-).
\]

\noindent Here $\det(\fh_2/\fh_1)[\dim \fh_2/\fh_1]$ is considered as
trivially filtered with a single jump in degree $\fh_2/\fh_1$.

\end{cor}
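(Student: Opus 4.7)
The plan is to sandwich a filtered version of Shapiro's lemma for Lie algebra homology between two applications of Lemma \ref{l:lie-shift}.

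First I would establish the filtered Shapiro identity
\[
C_{\dot}(\fh_2, \ind_{\fh_1}^{\fh_2}(-)) \simeq C_{\dot}(\fh_1, -)
\]
as an isomorphism of filtered functors $\fh_1\mod \to \Vect$. In the geometric picture of \S\ref{ss:pbw}, $C_{\dot}(\fh, -)$ is $\IndCoh$-pushforward along $\bB\exp(\fh) \to \on{pt}$ and $\ind_{\fh_1}^{\fh_2}$ is $\IndCoh$-pushforward along $\bB\exp(\fh_1) \to \bB\exp(\fh_2)$; so this is just functoriality of $\IndCoh$-pushforward under composition, and the filtered upgrade is obtained by running the same reasoning over $\bA_{\hbar}^1/\bG_m$ for the family $\Fil\fh_1 \to \Fil\fh_2$ of filtered Lie algebras.

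Next I would combine this with Lemma \ref{l:lie-shift} applied to $\fh_1$, written in the inverse direction as
\[
C^{\dot}(\fh_1, M) \simeq C_{\dot}(\fh_1, M \otimes \det(\fh_1)^{\vee}[-\dim\fh_1]),
\]
to obtain
\[
C^{\dot}(\fh_1, M) \simeq C_{\dot}(\fh_2, \ind_{\fh_1}^{\fh_2}(M \otimes \det(\fh_1)^{\vee}[-\dim\fh_1])),
\]
and then apply Lemma \ref{l:lie-shift} to $\fh_2$ to convert the outer $C_{\dot}$ back to $C^{\dot}$, producing
\[
C^{\dot}(\fh_2, \ind_{\fh_1}^{\fh_2}(M \otimes \det(\fh_1)^{\vee}[-\dim\fh_1]) \otimes \det(\fh_2)[\dim\fh_2]).
\]

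The final step is a projection-formula identification in $\Fil\fh_2\mod$ between the module appearing inside $C^{\dot}(\fh_2, -)$ above and $\ind_{\fh_1}^{\fh_2}(M \otimes \det(\fh_2/\fh_1)[\dim\fh_2/\fh_1])$: pull $\det(\fh_2)[\dim\fh_2]$ inside the induction, and use the $\fh_1$-module identification $\det(\fh_2) \simeq \det(\fh_1) \otimes \det(\fh_2/\fh_1)$ (which holds since $\fh_1 \subset \fh_2$ is $\fh_1$-invariant) so that the $\det(\fh_1)$-factor cancels the $\det(\fh_1)^{\vee}$.

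The main obstacle is precisely this last step at the filtered level: the lines $\det(\fh_1)$ and $\det(\fh_2)$ carry nontrivial $\fh_1$- and $\fh_2$-module structures that degenerate to the trivial ones on the semiclassical fiber, and one must check that under the Rees deformation the required cancellation preserves the filtration with a single jump in degree $\dim\fh_2/\fh_1$. I would handle this exactly as in the proof of Lemma \ref{l:lie-shift}: evaluate the whole composition on the filtered compact generator $U(\fh_1)$ (equivalently, $\ind_{\fh_1}^{\fh_2}(k) \in \fh_2\mod$), check that the resulting bimodule computation yields the correct 1-dimensional answer with the correct filtration jump, and then appeal to Morita to promote this pointwise computation to an identification of filtered functors.
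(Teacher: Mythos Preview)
Your approach is correct and arrives at the result, but it takes a somewhat more circuitous route than the paper intends. The paper treats the corollary as the \emph{relative} form of Lemma~\ref{l:lie-shift} itself: one reruns the proof of that lemma with the map $\bB\exp(\fh_1)\to\bB\exp(\fh_2)$ in place of $\bB\exp(\fh)\to\Spec(k)$, using the standard (Chevalley) filtration on the left-hand functor and evaluating on $U(\fh_1)$ directly. This is exactly how the later Harish--Chandra versions (Lemmas~\ref{l:coh-ind-hc-from-gp} and \ref{l:coh-ind-hc-fd}) are handled. By contrast, you reduce to the \emph{absolute} Lemma~\ref{l:lie-shift} applied separately to $\fh_1$ and $\fh_2$, with filtered Shapiro for homology as the bridge; this is more modular but introduces the cancellation step.

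Your worry about that cancellation step is somewhat overblown, and your proposed fix is redundant. Each ingredient is already a filtered isomorphism: Shapiro is composition of $\IndCoh$-pushforwards over $\bA_\hbar^1/\bG_m$; the projection formula $\ind(N)\otimes P\simeq\ind(N\otimes\Oblv P)$ is the usual $\IndCoh$ projection formula in families; and the identification $\det(\fh_2)|_{\fh_1}\simeq\det(\fh_1)\otimes\det(\fh_2/\fh_1)$ is automatically filtered because a filtered line with a single jump is determined by its underlying module together with the jump degree, and the degrees add correctly ($\dim\fh_1+\dim\fh_2/\fh_1=\dim\fh_2$). So the composite is a filtered isomorphism of functors on the nose, and there is no need for a separate Morita verification on $U(\fh_1)$---though of course that would also work, and is closer in spirit to what the paper does directly.
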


\begin{rem}

If $\fh_1 \to \fh_2$ is not injective, the quotient above should
be replaced by the cone.

\end{rem}

\subsection{Harish-Chandra setting}\label{ss:hc-fd-start}

Now suppose that $(\fh,K)$ is a finite-dimensional Harish-Chandra pair,
so $\fh$ is as above, $K$ is an affine algebraic group acting on $\fh$,
and we are given a $K$-equivariant morphism 
$\Lie(K) \eqqcolon \fk \to \fh$.

Recall that a group always acts trivially on its classifying
stack. Therefore, the induced action of $K$ on $\bB \exp(\fh)$
factors through an action of $K_{dR}$ on $\bB \exp(\fh)$.
Using somewhat strange notation, we let\footnote{We do not
include $\exp$ in the notation because we believe that 
with the comma, there is no risk for confusing 
the vector space underlying $\fh$ and its associated formal
group, the way there could be in the notation $\bB \fh$.}
$\bB(\fh,K)$
denote the quotient $\bB \exp(\fh)/K_{dR}$. We let
$\fh\mod^K$ denote $\IndCoh(\bB \exp(\fh)/K_{dR})$,
noting that this category is tautologically the $K$-equivariant
category for the induced strong action of $K$ on $\fh\mod$.

\begin{example}

For the Harish-Chandra pair $(\fk,K)$, we obtain
$\bB(\fk,K) = \bB K$.

\end{example}

\begin{example}\label{e:fh-hc-integrates}

Note that the projection map $\Spec(k) \to \bB (\fh,K)$ defines a group
$(\fh,K)$ whose classifying stack is as notated. If $\fh = \Lie(H)$
for $K \subset H$, then $(\fh,K)$ is the formal completion of
$K$ in $H$.

\end{example}

\subsection{}

Now we recall that for any ind-affine nil-isomorphism $f:X \to Y$ of
prestacks, \cite{grbook} \S IV.5.2
associates a \emph{deformation} of this map,
i.e., a prestack $Y_{\hbar}$ over $\bA_{\hbar}^1/\bG_m$
and an $\bA_{\hbar}^1/\bG_m$-morphism 
$X \times \bA_{\hbar}^1/\bG_m \to Y_{\hbar}$
giving $f$ over the generic point.

For example, for $\Spec(k) \to \bB \exp(\fh)$, we obtain
the deformation $\bB \Fil \fh$ corresponding to the PBW filtration.

For a Harish-Chandra datum as above, we obtain a deformation
$\bB \Fil(\fh,K)$
associated with the map $\bB K \to \bB (\fh,K)$.
The special fiber of this deformation is
$(\bB (\fh/\fk)_0^{\wedge} )/K$, where we note that
$(\fh/\fk)_0^{\wedge}$ is\footnote{We will not need
this in practice, but if $\fk \to \fh$ is not injective, then $\fh/\fk$ should
be considered as a complex and understood as the cone;
the corresponding scheme should be understood in the usual sense.}
$\exp(\fh/\fk)$ with $\fh/\fk$ \emph{considered as an abelian Lie algebra}. 

Therefore, we obtain a filtration on 
$\fh\mod^K$ with $\fh\mod^{K,cl} = \QCoh((\fh/\fk)^{\vee}/K)$.

\begin{example}

When $\fk \isom \fh$, this is the constant filtration
on $\Rep(K)$ (in the sense of Example \ref{e:cnst-fil}).

\end{example}

\subsection{}

From now on, we assume $\fk \to \fh$ is injective.

\subsection{}

$\IndCoh$ pushforward and pullback along
the ind-affine nil-isomorphism $\bB K \to \bB(\fh,K)$ induces a pair
of adjoint functors:

\[
\ind = \ind_{\fk}^{\fh}:\Rep(K) \rightleftarrows \fh\mod^K:\Oblv.
\]

These functors are evidently compatible with filtrations.
At the semi-classical level, they induce the adjunction:

\[
\Rep(K) \rightleftarrows \QCoh((\fh/\fk)^{\vee}/K)
\]

\noindent given by pullback and pushforward along the
structure map $(\fh/\fk)^{\vee}/K \to \bB K$.

In particular, we find that $\fh\mod^K$ admits a canonical $t$-structure
for which $\Oblv:\fh\mod^K \to \Rep(K)$ is $t$-exact,
and that the functor $\ind$ is $t$-exact for this $t$-structure as well.

\subsection{}

We focus on the case of \emph{Harish-Chandra cohomology}.

Using the standard resolution of 
$\omega_{\bB (\fh,K)}$ induced by the ind-affine nil-isomorphism
$\bB K \to \bB(\fh,K)$, we find that 
$\omega_{\bB (\fh,K)}$ is compact. Moreover, this compactness
remains true (for the same reason) for 
$\omega_{\bB \Fil(\fh,K)}$.

Therefore, mapping out of it defines a filtered functor:

\[
C^{\dot}(\fh,K,-):\fh\mod^K \to \Vect.
\]

On associated, graded, this functor
is given by taking $!$-restriction
along $0/K \into (\fh/\fk)^{\vee}/K$, and then global sections
(i.e., group cohomology for $K$).

\begin{example}

If $\fk \isom \fh$, then $C^{\dot}(\fk,K,-)$ computes group
cohomology $\fk\mod^K = \Rep(K) \to \Vect$. 
The filtration on this functor
is the constant one.

\end{example}

\begin{example}

If $K$ is unipotent, then $C^{\dot}(\fh,K,-)$
coincides with $C^{\dot}(\fh,-)$
(composed with the forgetful functor $\fh\mod^K \to \fh\mod$).
However, the filtered structures are substantially affected
by the presence of $K$, as can be seen e.g. by looking at the
semi-classical level.

\end{example}

\begin{rem}\label{r:hc-cplx}

We explain the above constructions using usual complexes.
Suppose for simplicity that $K$ is unipotent, so 
$C^{\dot}(\fh,K,-) = C^{\dot}(\fh,-)$ as a non-filtered functor.
In the notation
of Remark \ref{r:filt-chev-hom}, the filtration $F_{\dot} M$
is a filtration in $\fh\mod^K$ if the $F_i M$ are $K$-submodules
of $M$, i.e., if it induces a filtration on the image of 
$M \in \fk\mod^K = \Rep(K)$.

In this case, we obtain a filtration on the cohomological Chevalley complex with $i$th term:

\[
\ldots \to 0 \to F_i M \to 
(\fh/\fk)^{\vee} \otimes F_{i+1} M + \fh^{\vee} \otimes F_i M
\to 
\Lambda^2 (\fh/\fk)^{\vee} \otimes F_{i+2} M +
(\fh/\fk)^{\vee} \wedge \fh^{\vee} \otimes F_{i+1} M + 
\Lambda^2 \fh^{\vee} \otimes F_i M
\to \ldots
\]

\end{rem}

\subsection{}

We now give versions of Corollary \ref{c:coh-ind} in this setting.

\begin{lem}\label{l:coh-ind-hc-from-gp}

Let $\ind$ denote the induction functor $\Rep(K) \to \fh\mod^K$.
There is a canonical isomorphism:

\[
C^{\dot}\Big(\fh,K,\ind\big((-) \otimes 
\det(\fh/K)[\dim \fh/\fk]\big)\Big) = 
C^{\dot}(K,-):\Rep(K) \to \Vect
\]

\noindent of filtered functors. As before, 
$\det(\fh/\fk)[\dim \fh/\fk])$ is filtered with a single jump in
degree $\dim \fh/\fk$.

\end{lem}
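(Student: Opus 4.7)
The strategy is to Harish-Chandra-equivariantize the proof of Corollary \ref{c:coh-ind}. The key conceptual point is that $C^{\dot}(\fh,K,-)$ factors as derived $K$-invariants applied to $C^{\dot}(\fh,-)$, where the latter acquires a natural $K$-representation structure from the $K$-equivariance on its input and the adjoint $K$-action on $\fh$. Geometrically, this factorization reflects the presentation $\bB(\fh,K)=\bB\exp(\fh)/K_{dR}$ and the fact that $\omega_{\bB(\fh,K)}$ is the $K_{dR}$-equivariant version of $\omega_{\bB\exp(\fh)}$. I would set this up first, with care to ensure the identification is one of filtered functors (the filtration on $C^{\dot}(\fh,-)$ as a functor $\fh\mod^{K,w}\to\Rep(K)^w$ coming, as in \S\ref{ss:hc-fd-start}, from the deformation $\bB\Fil\fh$ equipped with its compatible $K$-action).

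Next, I would apply Corollary \ref{c:coh-ind} with $\fh_1=\fk$, $\fh_2=\fh$, regarded now as a statement in the weak $K$-equivariant setting. The point is that the proof of Corollary \ref{c:coh-ind}, whether executed via the filtered Chevalley resolutions in Remarks \ref{r:filt-chev-hom}--\ref{r:filt-chev-coh} or more conceptually via the $\IndCoh$-deformation of Lemma \ref{l:lie-shift}, is built from canonical morphisms that are manifestly functorial for the adjoint $K$-action on $\fh$, $\fk$, and $\fh/\fk$. Hence the resulting filtered isomorphism
\[
C^{\dot}\bigl(\fh,\ind_{\fk}^{\fh}(V\otimes\det(\fh/\fk)[\dim\fh/\fk])\bigr)\simeq C^{\dot}(\fk,V)
\]
refines to an isomorphism in $\Rep(K)^w$ (with $V\in\Rep(K)$ producing the given $K$-structure via its own equivariance combined with the adjoint actions).

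Taking derived $K$-invariants of both sides and using the factorization from the first step, the left-hand side becomes $C^{\dot}(\fh,K,\ind(V\otimes\det(\fh/\fk)[\dim\fh/\fk]))$. For the right-hand side, I would apply the first step again, but now to the Harish-Chandra pair $(\fk,K)$: since $\fk\mod^K=\Rep(K)$, the identification yields $(C^{\dot}(\fk,V))^{K,R}=C^{\dot}(\fk,K,V)=C^{\dot}(K,V)$, where the final equality is the standard fact that Chevalley cohomology of $\fk$ on a $K$-module, derived-invariantly for $K$, computes group cohomology. Combining all these identifications produces the desired canonical filtered isomorphism.

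The main obstacle is the first step: rigorously establishing $C^{\dot}(\fh,K,-)\simeq (-)^{K,R}\circ C^{\dot}(\fh,-)$ as filtered functors. This is a base-change statement for the Cartesian square
\[
\xymatrix{
\bB\exp(\fh)\ar[r]\ar[d] & \Spec(k)\ar[d] \\
\bB(\fh,K)\ar[r] & \bB K_{dR}
}
\]
in the $\IndCoh$-formalism, which must moreover be compatible with the deformation $\bB\Fil(\fh,K)\to\bA_\hbar^1/\bG_m$. Once this base-change is in hand the $K$-equivariance of Corollary \ref{c:coh-ind}'s isomorphism is essentially automatic, since that corollary is proved by maps of complexes that are literally $K$-equivariant; so all the difficulty is concentrated in the decomposition of $C^{\dot}(\fh,K,-)$ through $K$-invariants, which one handles by the general machinery of \cite{grbook} applied to the square above.
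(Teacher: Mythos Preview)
Your proposed factorization $C^{\dot}(\fh,K,-)\simeq C^{\dot}(K,-)\circ C^{\dot}(\fh,-)$ is false, and this is a genuine gap. Take $\fh=\fk$ and $K=\bG_m$ with $M=k$: then $C^{\dot}(\fh,K,k)=\Hom_{\Rep(\bG_m)}(k,k)=k$, whereas your formula gives $C^{\dot}(\bG_m,C^{\dot}(\fk,k))=C^{\dot}(\bG_m,k\oplus\fk^{\vee}[-1])=k\oplus k[-1]$ (the $\bG_m$-action on $\fk^{\vee}$ is coadjoint, hence trivial since $\bG_m$ is abelian). The same failure occurs for $K=\bG_a$, where both sides are nontrivial but yours acquires an extra tensor factor. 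The error is that $C^{\dot}(\fh,-)$ already encodes the $\fk$-directions in the Chevalley complex, and applying $C^{\dot}(K,-)$ afterward imposes them again. Your Cartesian square is correct but does not deliver this factorization: $\IndCoh(\bB K_{dR})$ is not $\Rep(K)$, and the relevant functors are $\Hom$ from $\omega$ rather than $\IndCoh$-pushforwards, so base-change in that square does not yield what you claim. Your ``standard fact'' that $(C^{\dot}(\fk,V))^{K}=C^{\dot}(K,V)$ fails for exactly the same reason. The structural fact that \emph{does} hold, and which the paper records, is the standard filtration on $C^{\dot}(\fh,K,-)$ with $\gr_{-i}=C^{\dot}(K,(-)\otimes\Lambda^i(\fh/\fk)^{\vee}[-i])$, involving only the relative directions $\fh/\fk$.

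The paper bypasses all of this with a direct Morita argument in the style of Lemma~\ref{l:lie-shift}: both sides are filtered functors $\Rep(K)\to\Vect$, hence determined by their values on the regular representation $\sO_K$, regarded as filtered $K$-modules via the residual $K$-action. The right-hand side gives the trivial representation; for the left-hand side one uses the standard filtration above to compute $C^{\dot}(\fh,K,\ind(\sO_K))=\det(\fh/\fk)^{\vee}[-\dim\fh/\fk]$, so that after the determinant twist both sides agree.
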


\begin{proof}

The proof is similar to Lemma \ref{l:lie-shift}:
we need to evaluate both sides on the regular
representation\footnote{We are lazily not distinguishing
between $\sO_K$ considered as a sheaf and its global sections.} 
$\sO_K$ and consider the
result as a filtered $K$-module. For the right hand side, we obtain
the trivial representation (in cohomological degree $0$, with the
filtration jumping only in degree $0$).

Then one calculates $C^{\dot}(\fh,K,\ind \sO_K )$ as 
$\det(\fh/\fk)^{\vee}[-\dim \fh/\fk]$ using standard 
filtrations
as in Lemma \ref{l:lie-shift}, which gives the claim.
Here we note that for $C^{\dot}(\fh,K,-)$ (considered
merely as a filtered functor) the standard filtration has 
associated graded: 

\[
\gr_{-i}C^{\dot}(\fh,K,-) = 
C^{\dot}(K, (-) \otimes \Lambda^i (\fh/\fk)^{\vee}[-i]).
\]

\end{proof}

Suppose now that $(\fh_1,K) \to (\fh_2,K)$ is a morphism of
Harish-Chandra pairs. Note that $\IndCoh$ pushforward
defines a functor $\ind_{\fh_1}^{\fh_2}:\fh_1\mod^K \to \fh_2\mod^K$
compatible with forgetful functors and satisfying similar properties
as before.

We have the following generalization of Lemma \ref{l:coh-ind-hc-from-gp}.

\begin{lem}\label{l:coh-ind-hc-fd}

There is a canonical isomorphism of filtered functors:

\[
C^{\dot}\Big(\fh_2,K,\ind_{\fh_1}^{\fh_2}\big((-) \otimes 
\det(\fh_2/\fh_1)[\dim \fh_2/\fh_1]\big)\Big) = 
C^{\dot}(\fh_1,K,-): \fh_1\mod^K \to \Vect.
\]

\noindent As before, $\det(\fh_2/\fh_1)[\dim \fh_2/\fh_1]$ is filtered
with a single jump in degree $\dim \fh_2/\fh_1$.

\end{lem}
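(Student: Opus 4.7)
The plan is to deduce Lemma \ref{l:coh-ind-hc-fd} from Lemma \ref{l:coh-ind-hc-from-gp} applied twice, via the factorization of Harish-Chandra pairs $(\fk, K) \hookrightarrow (\fh_1, K) \hookrightarrow (\fh_2, K)$ and the transitivity of induction $\ind_{\fh_1}^{\fh_2} \circ \ind_{\fk}^{\fh_1} = \ind_{\fk}^{\fh_2}$ (i.e., of $\IndCoh$-pushforward). The strategy mirrors the proofs of Lemmas \ref{l:lie-shift} and \ref{l:coh-ind-hc-from-gp}: both sides are continuous filtered functors $\fh_1\mod^K \to \Vect$, and one pins them down by evaluating on a canonical bimodule generator and invoking Morita theory.

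First I would note that $\fh_1\mod^K$ is generated under colimits by objects of the form $\ind_{\fk}^{\fh_1}(V)$ for $V \in \Rep(K)$, since $\ind_{\fk}^{\fh_1}$ is left adjoint to the $t$-exact conservative forgetful functor. For $N = \ind_{\fk}^{\fh_1}(V)$, the projection formula (the $K$-representation $\det(\fh_2/\fh_1)[\dim(\fh_2/\fh_1)]$ is pulled back from $\Rep(K)$, so tensoring with it commutes with $\ind_{\fh_1}^{\fh_2}$) combined with transitivity of induction rewrites the LHS as
\[
C^{\dot}\bigl(\fh_2, K,\, \ind_{\fk}^{\fh_2}(V \otimes \det(\fh_2/\fh_1)[\dim(\fh_2/\fh_1)])\bigr).
\]
Using the filtered identification $\det(\fh_2/\fk)[\dim(\fh_2/\fk)] \simeq \det(\fh_1/\fk)[\dim(\fh_1/\fk)] \otimes \det(\fh_2/\fh_1)[\dim(\fh_2/\fh_1)]$ induced by the short exact sequence $0 \to \fh_1/\fk \to \fh_2/\fk \to \fh_2/\fh_1 \to 0$ of $K$-representations, Lemma \ref{l:coh-ind-hc-from-gp} applied to $(\fh_2, K)$ identifies this with $C^{\dot}(K, V) \otimes \det(\fh_1/\fk)^{\vee}[-\dim(\fh_1/\fk)]$ as a filtered object. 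The same lemma applied to $(\fh_1, K)$ on $\ind_{\fk}^{\fh_1}(V)$ computes the RHS to the same object.

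To promote this pointwise agreement on generators to a canonical natural isomorphism of filtered functors, I would follow the Morita-theoretic device at the end of Lemma \ref{l:lie-shift}'s proof: take $V = \sO_K$, the regular representation, so that $\ind_{\fk}^{\fh_1}(\sO_K)$ is a filtered compact generator of $\fh_1\mod^K$ equipped with a commuting right $K$-action. Both functors of the lemma, evaluated on this bimodule generator, produce objects of $\Fil\Rep(K)$ rather than just $\Fil\Vect$, and the preceding calculation identifies both with $\det(\fh_1/\fk)^{\vee}[-\dim(\fh_1/\fk)]$ concentrated in one cohomological degree and equipped with the prescribed one-jump filtration---so the isomorphism between these two realizations is determined up to a nonzero scalar, fixed by the obvious normalizations. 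By Morita theory this identification lifts uniquely to a canonical natural isomorphism of the original filtered functors on all of $\fh_1\mod^K$.

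The main obstacle will be tracking coherently all the twists, cohomological shifts, and filtered structures through the iterated application of Lemma \ref{l:coh-ind-hc-from-gp}. In particular, the filtered splitting of the short exact sequence $0 \to \fh_1/\fk \to \fh_2/\fk \to \fh_2/\fh_1 \to 0$ into one-jump filtrations on the three determinant lines, and the filtered Morita-theoretic reconstruction of a functor from its value on a single filtered bimodule, both deserve careful scrutiny. All the main ingredients---the projection formula, transitivity of induction, and the $\IndCoh$-pushforward adjunctions along the ind-affine nil-isomorphisms $\bB K \to \bB(\fh_1, K) \to \bB(\fh_2, K)$---admit their evident filtered upgrades over $\bA_{\hbar}^1/\bG_m$ coming from the PBW deformations of the corresponding classifying stacks, so these filtered compatibilities ultimately reduce to their generic-fiber counterparts.
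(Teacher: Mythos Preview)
Your proposal is correct and follows essentially the approach the paper intends: the paper omits the proof, saying only ``The argument is similar to those that preceded it,'' and your reduction to Lemma \ref{l:coh-ind-hc-from-gp} via transitivity of induction, together with the Morita/Tannakian reconstruction using $\ind_{\fk}^{\fh_1}(\sO_K)$, is exactly in that spirit.

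One small inaccuracy: $\ind_{\fk}^{\fh_1}(\sO_K)$ is not a \emph{compact} generator of $\fh_1\mod^K$ in general (already $\sO_K \in \Rep(K)$ fails to be compact unless $K$ is finite). What makes the Morita step work is not compactness but the extra commuting $K$-coaction: a continuous functor out of $\fh_1\mod^K$ is recovered from its value on $\ind_{\fk}^{\fh_1}(\sO_K)$ \emph{as an object of $\Rep(K)$}, because $\fh_1\mod^K$ is monadic over $\Rep(K)$ and $\sO_K$ is the cofree comodule. This is precisely how the reconstruction was used in the proof of Lemma \ref{l:coh-ind-hc-from-gp}, so your invocation of ``the Morita-theoretic device'' is right in substance; just drop the word ``compact.''

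A stylistic note: the paper's phrasing ``similar to those that preceded it'' might also be read as directly rerunning the standard-filtration computation of Lemma \ref{l:coh-ind-hc-from-gp} (with $\gr_{-i} C^{\dot}(\fh_2,K,-) = C^{\dot}(K, (-) \otimes \Lambda^i(\fh_2/\fk)^{\vee}[-i])$) on the object $\ind_{\fk}^{\fh_1}(\sO_K)$, rather than invoking that lemma twice as a black box. Your approach is equivalent and arguably cleaner, since it packages the two standard-filtration computations into two citations of the same lemma, leaving only the determinant short exact sequence to track.
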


The argument is similar to those that preceded it, so we omit it.

\subsection{Group actions on filtered categories}\label{ss:q-gp-action}

We now put the above into a more conceptual
framework whose perspective will be convenient at some points.
We essentially rewrite the above using a version of the theory
of group actions on categories.

Associated with the ind-affine nil-isomorphism, $K \to K_{dR}$ one has the
deformation $\Fil K_{dR}$, which is
a \emph{relative group\footnote{One way to see that the deformation formalism
from \cite{grbook} forces a (relative) group structure is to instead work
with the classifying prestacks.} 
prestack} over $\bA_{\hbar}^1/\bG_m$ with
generic fiber $K_{dR}$ and special fiber
$K \ltimes \bB \fk_0^{\wedge}$. Note that
because $\fk_0^{\wedge}$ is a commutative formal group,
$\bB \fk_0^{\wedge}$ actually is a commutative group prestack
(acted on by $K$).

Observe that we have a fiber sequence of relative groups:

\begin{equation}\label{eq:k-dr-seq}
1 \to \exp(\Fil \fk) \to \Fil K \to 
\Fil K_{dR} \to 1
\end{equation}

\noindent where $\Fil K$ just means $K \times \bA_{\hbar}^1/\bG_m$.

Note that the usual convolution monoidal structure makes
$\IndCoh(\Fil K_{dR}) \in \Alg(\QCoh(\bA_{\hbar}^1/\bG_m)\mod)$.

\begin{defin}

Let $\sC$ be a filtered category.
A \emph{(strong) action of $K$ on $\Fil \sC$}
is a $\IndCoh(\Fil K_{dR})$-module structure on 
$\Fil\sC \in \QCoh(\bA_{\hbar}^1/\bG_m)\mod$.\footnote{One may
work equivalently with co-actions by duality. This has the advantage
that one may equate $\IndCoh$ with $\QCoh$ using formal smoothness,
and then at least try to forget $\IndCoh$.}

\end{defin}

\begin{warning}

There is a redundancy in the terminology: $K$ acts on $\Fil \sC$
could wrongly be understood to mean that 
$\Fil \sC \in \DGCat_{cont}$ is a $D(K)$-module category.
However, we are confident that there is no risk for confusion
in our use of the above terminology: the meaning is
completely determined by whether or not there is a $\Fil$ in front
of what is acted upon. 

We also note that there is no such risk for \emph{weak} actions,
i.e., if $K$ acts on $\Fil \sC$, then $K$ acts weakly on $\Fil \sC$
in the sense that $\Fil \sC$ is a $\QCoh(K)$-module category. 

\end{warning}

Note that if $K$ acts on $\Fil \sC$,
then $K$ in particular acts on $\sC$ (in the usual sense).

Moreover, $\sC^{cl}$ receives an action of
$\IndCoh(K \ltimes \bB \fk_0^{\wedge})$. 
This is equivalent to saying that
$\QCoh(\fk^{\vee})$ (with the usual symmetric monoidal
structure) acts on $\sC^{cl}$, and $K$ acts weakly on $\sC^{cl}$,
and these two actions are compatible under the adjoint
action of $K$ on $\fk^{\vee}$. In particular, $\sC^{cl,K,w}$ may
be thought of as a sheaf of categories on $\fk^{\vee}/K$.

Using the ``trivial" action of $K$ on $\Fil \Vect$ as a filtered category,
we obtain an invariants and coinvariants formalism.
By the same arguments as in \cite{dario-*/!} \S 2, the
two are canonically identified.

In particular, we obtain a filtration on $\sC^K$ with
$\sC^{K,cl} = 
(\sC^{cl} \underset{\QCoh(\fk^{\vee})}{\otimes} \Vect)^{K,w}$,
where $\QCoh(\fk^{\vee})$ acts on $\Vect$ through
the restriction to $0$ functor. Geometrically, this means
we take our sheaf of categories on $\fk^{\vee}/K$,
restrict to $0/K$, and then take global sections.

The adjoint functors $\Oblv: \sC^K \rightleftarrows \sC: \Av_*$
carry natural filtrations. semi-classically, 
$\Oblv^{cl}$ is the functor:

\[
(\sC^{cl} \underset{\QCoh(\fk^{\vee})}{\otimes} \Vect)^{K,w} \to
\sC^{cl} \underset{\QCoh(\fk^{\vee})}{\otimes} \Vect \to
\sC^{cl}
\]

\noindent where the first functor is forgetting and the second
is $*$-pushforward along $0 \into \fk^{\vee}$. Of course,
the semi-cl{cl}assical $\Av_*^{cl}$ is the right adjoint to the
functor we just described: $!$-pullback
$0 \into \fk^{\vee}$ and then $*$-pushforward to $\bB K$.

\begin{example}

We can reformulate some of our earlier constructions
in saying that $K$ acts on $\Fil \fh\mod$ for a
Harish-Chandra pair $(\fh,K)$. The associated filtration
on $\fh\mod^K$ is our earlier one.

\end{example}

\subsection{Derived categories}

The following result will play an important role for us, but may be
skipped at first pass.

\begin{lem}[Bernstein-Lunts, \cite{bernstein-lunts}]\label{l:bernstein-lunts-fd}

$\fh\mod^K$ is the
derived category of its heart. (Here we continue to assume
$\fk \to \fh$ is injective.)

\end{lem}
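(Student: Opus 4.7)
The plan is to bootstrap from the case $\fh = \fk$, where $\fh\mod^K = \Rep(K)$ and the statement is a classical consequence of injective resolutions in the Grothendieck abelian category $\Rep(K)^{\heart}$ (which is well-known for an affine algebraic group in characteristic zero). I would then transport the equivalence to $\fh\mod^K$ using the $t$-exact adjoint pair $\ind \dashv \Oblv$ between $\Rep(K)$ and $\fh\mod^K$ associated with the ind-affine nil-isomorphism $\bB K \to \bB(\fh, K)$, exploiting that $\Oblv$ is $t$-exact, conservative, and continuous.

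First I would verify that $\fh\mod^{K,\heart}$ is Grothendieck abelian, so that $D(\fh\mod^{K,\heart})$ is well-defined: small colimits and a small generator come from pulling back the corresponding data along $\Oblv$ (the image of a generator of $\Rep(K)^{\heart}$ under $\ind$ is a generator, and exactness of filtered colimits is transported along the conservative $t$-exact $\Oblv$). The universal property of the derived category then produces a $t$-exact functor $\Phi: D(\fh\mod^{K,\heart}) \to \fh\mod^K$ restricting to the identity on the heart.

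The heart of the proof is to show $\Phi$ is fully faithful, i.e., that injective objects in $\fh\mod^{K,\heart}$ have vanishing higher $\Ext$'s in $\fh\mod^K$. I would produce enough injectives by Harish-Chandra coinduction from injectives in $\Rep(K)^{\heart}$: using Lemma \ref{l:coh-ind-hc-from-gp}, coinduction from $\Rep(K)$ to $\fh\mod^K$ agrees with $\ind$ up to the determinant twist $\det(\fh/\fk)[\dim \fh/\fk]$, which provides a right adjoint to $\Oblv$ on bounded-below complexes. If $J$ is injective in $\Rep(K)^{\heart}$, the adjunction $\ul{\Hom}_{\fh\mod^K}(M, \on{coind}(J)) = \ul{\Hom}_{\Rep(K)}(\Oblv(M), J)$ together with the classical vanishing of higher $\Ext$'s into $J$ in $\Rep(K)$ gives the desired vanishing for injectives of the form $\on{coind}(J)$; standard arguments show every injective in $\fh\mod^{K,\heart}$ embeds into such an object, yielding the general vanishing.

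To pass from the bounded-below equivalence $D^+(\fh\mod^{K,\heart}) \isom \fh\mod^{K,+}$ to the full category, I would observe that both sides are compactly generated by objects of $\fh\mod^{K,\heart}$ — namely by $\ind(V)$ for $V$ an irreducible $K$-representation (these are compact in $\fh\mod^K$ because $\Oblv$ is continuous, and they generate because $\Oblv$ is conservative). Since the compact generators lie in the heart and $\Phi$ is already an equivalence there, $\Phi$ is an equivalence in general. The main obstacle is the coinduction/injectivity step: one must verify that the right adjoint to $\Oblv$ on bounded-below complexes really is computed by $\ind$ twisted by $\det(\fh/\fk)[\dim\fh/\fk]$, and that this preserves injectives of the heart — both of which rely on the self-duality-up-to-twist of Lie algebra cohomology versus homology recorded in Lemma \ref{l:lie-shift} and its Harish-Chandra refinement.
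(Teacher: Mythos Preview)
Your proof has a genuine gap at the coinduction step: the claim that the right adjoint to $\Oblv:\fh\mod^K \to \Rep(K)$ is given by $\ind\big((-)\otimes\det(\fh/\fk)[\dim\fh/\fk]\big)$ is false. Lemma~\ref{l:coh-ind-hc-from-gp} only computes $C^{\dot}(\fh,K,-)$ of an induced module, i.e.\ $\ul{\Hom}$ \emph{from the trivial object}; it says nothing about $\ul{\Hom}$ from a general $M$. Testing your proposed adjunction on $M=\ind(W)$ gives on one side $\ul{\Hom}_{\Rep(K)}\big(\Sym(\fh/\fk)\otimes W,\,V\big)$ and on the other $\ul{\Hom}_{\Rep(K)}\big(W,\,\Sym(\fh/\fk)\otimes V\otimes\det(\fh/\fk)[\dim\fh/\fk]\big)$; equality would force $\Sym(\fh/\fk)^{\vee}\simeq\Sym(\fh/\fk)\otimes\det(\fh/\fk)[\dim\fh/\fk]$, which fails since the left side is a direct \emph{product} and the right a direct \emph{sum}. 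Concretely, for $K=\{1\}$ and $\fh$ one-dimensional abelian, $\Oblv:k[x]\mod\to\Vect$ has (non-continuous) right adjoint $V\mapsto\Hom_k(k[x],V)=\prod_{n\geq 0}V$, which is not $k[x]\otimes V$ shifted. In fact $\Oblv$ does not preserve compacts, so it has no continuous right adjoint at all.

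The paper's proof sidesteps this entirely via monadicity: since $\ind$ and $\Oblv$ are both $t$-exact (the monad $\Oblv\circ\ind$ being filtered with associated graded $\Sym(\fh/\fk)\otimes-$), one gets the same adjoint pair between $\Rep(K)=D(\Rep(K)^{\heart})$ and $D(\fh\mod^{K,\heart})$; both forgetful functors are conservative and continuous, hence monadic, and the monads visibly coincide. Your final paragraph is close to this: the objects $\ind(V)$ for $V$ finite-dimensional are compact \emph{projective} generators of $\fh\mod^{K,\heart}$ (so compact in its derived category) and compact generators of $\fh\mod^K$, with matching $\Hom$-complexes computed via the $(\ind,\Oblv)$ adjunction. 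Pursuing that directly, rather than via injectives and a nonexistent coinduction identity, recovers the paper's argument.
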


\begin{proof}

If $\fk \to \fh$ is an isomorphism, then 
$\fh\mod^K = \Rep(K)$, and this is a general property
about algebraic stacks (see e.g. \cite{qcoh} Proposition 5.4.3).

In general, we have a $t$-exact forgetful functor
$\Oblv:\fh\mod^K \to \Rep(K)$. Moreover,
this functor admits a left adjoint $\ind$, geometrically given
by $\IndCoh$-pushforward. 

Now observe that our hypothesis implies that the 
tangent complex of the morphism 
$\bB K \to \bB(\fh,K)$ is 
concentrated in cohomological degree $0$: it
is $\fh/\fk$ considered as a $K$-representation.
Therefore, the monad $\Oblv\circ \ind$ has a standard filtration
with associated graded given by tensoring with
$\Sym(\fh/\fk)$, so in particular, this monad is $t$-exact.
Since $\Oblv$ is $t$-exact, we find that $\ind$ is as well.

Therefore, we obtain a similar pair of adjoint functors
between $D(\Rep(K)^{\heart}) = \Rep(K)$ and 
$D(\fh\mod^{K,\heart})$. Both forgetful functors
$\fh\mod^K \to \Rep(K)$ and $D(\fh\mod^{K,\heart}) \to \Rep(K)$
are conservative and commute with colimits, so 
are monadic. Then we observe that the monads on
$\Rep(K)$ are naturally identified, giving the claim.

\end{proof}

\subsection{Kazhdan-Kostant twists}\label{ss:kk-start}

We now discuss how to render the standard solution to a standard
problem in our framework.

We begin by describing the issue. Suppose $\fg$ and $\fn$ are as usual,
and $\psi:\fn \to k$ is a non-degenerate character.
We choose $G$-equivariant symmetric $\fg \simeq \fg^{\vee}$
and take $f \in \fg = \fg^{\vee}$ the principal nilpotent
mapping to $\psi \in \fn^{\vee}$.

Recall that $\sW^{fin}\mod \simeq \fg\mod^{N,\psi}$, 
and that $\sW^{fin}$ is filtered with associated graded
being the algebra of functions on $f+\fb$.
In particular, $C^{\dot}(\fn,(-\psi) \otimes \ind_{\fn}^{\fg}(\psi))$ 
is filtered with associated graded being this algebra of functions.

However, this filtration is \emph{not} induced by the obvious PBW filtration
on $\ind_{\fn}^{\fg}(\psi)$. Indeed, suppose more generally that
$M$ is any (PBW) filtered $\fn$-module. 
Then the induced filtration on $M \otimes -\psi$ \emph{has the
same associated graded} as $M$. (So in the case above, we will
see the DG algebra $\Gamma(\fb/N,\sO_{\fb/N})$ instead.)

The issue is with the
filtration on the 1-dimensional representation $\psi$: jumping
in a single degree, its associated graded is going to
be the augmentation module of $\Sym(\fn)$. In other words,
its associated graded will be a $\bG_m$-equivariant quasi-coherent
sheaf on $\fn^{\vee}$, so it must be the skyscraper at the origin,
though we would rather see the skyscraper at $\psi \in \fn^{\vee}$.

The solution\footnote{Essentially introduced, I believe, in \cite{kostant-whittaker} \S 1, where it is attributed to Kazhdan.}
to this problem is to use $\check{\rho}$ to modify
the filtration on $U(\fn)$, so that its associated graded is
again $\Sym(\fn)$ (but with a modified grading!),
and the module $\psi$ is filtered with associated graded
being the skyscraper at $\psi \in \fn^{\vee}$ instead.
Namely, we set:

\[
\begin{gathered}
F_i^{KK} U(\fn) = \underset{j}{\oplus} \,  
F_{i-j}^{PBW} U(\fn) \cap U(\fn)^j \\
U(\fn)^j \coloneqq 
\{x \in U(\fn) \mid \Ad_{-\check{\rho}(\lambda)}(x) = \lambda^j \cdot x\}.
\end{gathered}
\]

\noindent We emphasize that the grading used here is induced
by $-\check{\rho}: \bG_m \to G^{ad}$, not $\check{\rho}$ itself
(so $\fn$ is negatively graded).
For example, for $i \in \cI_G$, 
$e_i \in F_0^{KK} U(\fn)$, and for $\alpha$ a general 
root, $e_{\alpha} \in F_{1-(\check{\rho},\alpha)}^{KK} U(\fn)$.

One has a similar filtration on $U(\fg)$. Moreover, these
filtrations naturally induce the ``correct" filtration on
$\sW^{fin}$ (e.g., it is a filtration in the abelian category, not
just the derived category).

Our present goal is to render the above ideas in the categorical framework.
We will begin by discussing some generalities, and then apply
these to the example of Harish-Chandra modules.

\begin{warning}\label{w:kk-bdd}

We immediately see that Kazhdan-Kostant filtrations are typically
unbounded from below and are not complete filtrations. 
This causes some technical problems
(e.g., in \S \ref{s:ds}), and requires care.

\end{warning}

\subsection{}\label{ss:kk-vect}

We begin by discussing how to twist a filtration by a grading to
obtain a new filtration. Motivated by our particular
concerns, we use the notation
\emph{PBW} to indicate an ``old" filtration and \emph{KK} to 
indicate a ``new" filtration.

So suppose that $F_{\dot}^{PBW} V \in \Fil \Rep(\bG_m)$, i.e.,
$V$ is equipped with a grading $V = \oplus_j V^j$ and
a filtration $F_{\dot}^{PBW} V$ \emph{as a graded vector space},
so we have a grading $F_i^{PBW} V = \oplus_j F_i^{PBW} V^j$ compatible
with varying $i$ in the natural sense.

Then we can \emph{twist} the filtration $F_{\dot}^{PBW}$ by the grading to
obtain the filtration:

\[
F_i^{KK} V = \underset{j}{\oplus} \, F_{i-j}^{PBW} V^j
\]

\noindent as before.

Here is a geometric interpretation. Recall that a filtered vector
space is the same as a quasi-coherent sheaf on $\bA_{\hbar}^1/\bG_m$.
The data of a compatible filtration and grading as above
is equivalent to a quasi-coherent sheaf on 
$\bA_{\hbar}^1/\bG_m \times \bB \bG_m$, where we recover
the underlying filtered vector space by pulling back along the
first projection.

Formation of the KK filtration corresponds to 
pulling back along the graph of the projection
$\bA_{\hbar}^1/\bG_m \to \bB \bG_m$ instead.

In either perspective, we immediately find that:

\[
\gr_{i}^{KK} V = \underset{j}{\oplus} \,  \gr_{i-j}^{PBW} V^j.
\]

\noindent That is, $\gr_{\dot}^{KK} V = \gr_{\dot}^{PBW} V$
as vector spaces, although the gradings are different.

\subsection{}\label{ss:kk-cat}

We now give a categorical version of the above.

Suppose $\sC$ is a filtered category. We use the notation
$\Fil\!^{PBW} \sC$ for the underlying 
$\QCoh(\bA_{\hbar}^1/\bG_m)$-module category, since we wish
to construct another filtration on $\sC$.

The extra data we need is an action of 
$\QCoh(\bG_m)$ (with its convolution monoidal structure) 
on $\Fil\!^{PBW} \sC$ commuting with the
$\QCoh(\bA_{\hbar}^1/\bG_m)$. In this case, we can again
\emph{twist} our filtration by this action in forming:

\[
\Fil\!^{KK} \sC \coloneqq 
\Fil\!^{PBW} \sC^{\bG_m,w} 
\underset{\QCoh(\bA_{\hbar}^1/\bG_m) \otimes \Rep(\bG_m)}{\otimes} 
\QCoh(\bA_{\hbar}^1/\bG_m)
\]

\noindent where the action on the right term is induced
by the symmetric monoidal functor: 

\[
\QCoh(\bA_{\hbar}^1/\bG_m) \otimes \Rep(\bG_m) \to 
\QCoh(\bA_{\hbar}^1/\bG_m)
\]

\noindent  of pullback along the
graph of the structure map $\bA_{\hbar}^1/\bG_m \to \bB \bG_m$.

More geometrically: we are given the datum of a sheaf of categories
on $\bA_{\hbar}^1/\bG_m \times \bB \bG_m$, and we observe 
that we can form \emph{two} filtered categories from it,
via pullback along the maps:

\[
\id \times p,\Gamma: \bA_{\hbar}^1/\bG_m \to
\bA_{\hbar}^1/\bG_m \times \bB \bG_m
\]

\noindent where $p:\Spec(k) \to \bB \bG_m$ is the tautological
projection, and $\Gamma$ is the graph of the structure map as above.
These two maps coincide over the open point
$\bA_{\hbar}^1\setminus 0/\bG_m$, so are filtrations on the
same category $\sC$. By definition, the pullback along
$\id \times p$ defines the ``PBW" filtration, and pullback along
$\Gamma$ defines the KK filtration.

\begin{rem}\label{r:gr-fil-kk-twist}

There is a tautological functor 
$\Fil\!^{PBW} \sC^{\bG_m,w} \to \Fil\!^{KK} \sC$.

\end{rem}

\begin{rem}

Note that $\sC^{cl}$ is also the fiber at $0$ of $\Fil\!^{KK} \sC$,
but the weak $\bG_m$-action is different: it is the diagonal action
mixing the standard action of $\bG_m$ on $\sC^{cl}$ with the
action coming from the weak $\bG_m$-action on $\Fil\!^{PBW} \sC$.

\end{rem}

\begin{example}

In \S \ref{ss:kk-vect}, it is tautological that the KK twisting
construction is symmetric monoidal. So if $A$ is an 
algebra with compatible filtration $F_{\dot}^{PBW}$ and
grading, we obtain a filtration $F_{\dot}^{KK}$ on $A$ (as an algebra).
Therefore, we obtain two filtrations $\Fil\!^{PBW},\Fil\!^{KK}$ on 
$A\mod$.
Of course, the grading on $A$ induces a weak
$\bG_m$-action on 
$\Fil\!^{PBW} A$, and the general categorical 
construction above produces $\Fil\!^{KK} A$.
The functor $\Fil\!^{PBW} A\mod^{\bG_m,w} \to \Fil\!^{KK} A\mod$
corresponds to taking a graded and PBW filtered
$A$-module and then applying the corresponding 
KK twist to obtain a KK filtered $A$-module.

\end{example}

\subsection{}\label{ss:kk-bifilt}

The reader may skip this material for the time being, and return
to it as necessary. Its purpose is closely tied to 
Warning \ref{w:kk-bdd}: Kazhdan-Kostant filtrations are
typically incomplete and unbounded from below, even when a
corresponding PBW filtration is bounded from below.
To deal with this issue, we wish to yoke the two filtrations on our category.

\begin{defin}

A \emph{bifiltration} on $\sC \in \DGCat_{cont}$ is a 
$\QCoh(\bA_{\hbar_1}^1/\bG_m \times \bA_{\hbar_2}^1/\bG_m)$-module
category $\BiFil \sC$ plus an isomorphism:

\[
\sC \simeq
\BiFil \sC 
\underset{\QCoh(\bA_{\hbar_1}^1/\bG_m \times \bA_{\hbar_2}^1/\bG_m)}
{\otimes} 
\QCoh((\bA_{\hbar_1}^1\setminus 0)/\bG_m \times
(\bA_{\hbar_2}^1\setminus 0)/\bG_m) =
\BiFil \sC
\underset{\QCoh(\bA_{\hbar_1}^1/\bG_m \times \bA_{\hbar_2}^1/\bG_m)}
{\otimes} 
\Vect.
\]

\noindent A \emph{bifiltration} on $\sF$ in $\sC$ is an object
of $\BiFil \sC$ restricting to $\sF$.

\end{defin}

Much of our earlier discussion generalies.
E.g., we have an obvious notion of bifiltered vector spaces, and
so on.

Note that a bifiltration on $\sC$ indeed gives rise to
two filtrations on $\sC$, denoted $\Fil\!^{PBW} \sC$ and $\Fil\!^{KK} \sC$.
These are respectively obtained by restricting $\BiFil \sC$ to the
loci:

\[
\begin{gathered} 
\bA_{\hbar_1}^1/\bG_m \times (\bA_{\hbar_2}^1\setminus 0)/\bG_m =
\bA_{\hbar}^1/\bG_m \\
(\bA_{\hbar_1}^1\setminus 0)/\bG_m \times \bA_{\hbar_2}^1/\bG_m =
\bA_{\hbar}^1/\bG_m.
\end{gathered}
\]

\noindent Note that a bifiltration on $\sF \in \sC$ gives rise to
PBW and KK filtrations on $\sF$.

\begin{rem}\label{r:bifil->fil-on-cl}

Let $\sC^{PBW\mathendash{cl}}$ and $\sC^{KK\mathendash{cl}}$ denote the semi-classical
categories associated with each of these filtrations. We claim
that e.g. $\sC^{PBW\mathendash{cl}}$ carries a natural KK filtration
$\Fil\!^{KK} \sC^{PBW\mathendash{cl}}$; moreover, the weak $\bG_m$-action
on $\sC^{PBW\mathendash{cl}}$ extends to one on $\Fil\!^{KK} \sC^{PBW\mathendash{cl}}$,
and this action commutes with the action of $\QCoh(\bA_{\hbar}^1/\bG_m)$.

Indeed, note that $\sC^{PBW\mathendash{cl},\bG_m,w}$
is the restriction to:

\[
\bB \bG_m \times (\bA_{\hbar_2}^1\setminus 0)/\bG_m \subset
\bA_{\hbar_1}^1/\bG_m \times \bA_{\hbar_2}^1/\bG_m
\]

\noindent so taking $\Fil\!^{KK} \sC^{PBW\mathendash{cl},\bG_m,w}$ as
the restriction to:

\[
\bB \bG_m \times \bA_{\hbar_2}^1/\bG_m 
\]

\noindent  (and applying
de-equivariantization) gives the desired
construction.

Of course, this works symmetrically in PBW and KK.

\end{rem}

\begin{example}\label{e:kk-bifil}

Suppose that we are in the setting of 
\S \ref{ss:kk-cat}, so $\sC$ carries a single (PBW) filtration
and a compatible weak $\bG_m$-action. We claim that
this data induces a bifiltration on $\sC$ inducing the 
PBW and KK filtrations in the sense of \S \ref{ss:kk-cat}.
For this, we note that we have the morphism:\footnote{The notation 
$s\in \sL$ for points of $\bA_{\hbar}^1/\bG_m$
is used because this stack is the moduli of a line bundle plus a section.}

\[
\begin{gathered}
\bA_{\hbar_1}^1/\bG_m \times \bA_{\hbar_2}^1/\bG_m \to
\bA_{\hbar}^1/\bG_m \times \bB\bG_m \\
(s_1 \in \sL_1, s_2\in \sL_2) \mapsto 
(s_1 \otimes s_2 \in \sL_1 \otimes \sL_2, \sL_2)
\end{gathered}
\]

\noindent whose restriction to
$\bA_{\hbar_1}^1/\bG_m \times (\bA_{\hbar_2}^1\setminus 0)/\bG_m$
is the $\id \times p$ and whose restriction to
$(\bA_{\hbar_1}^1\setminus 0)/\bG_m \times \bA_{\hbar_2}^1/\bG_m =
\bA_{\hbar}^1/\bG_m$ is $\Gamma$; so the operation 
of pullback (in the sheaf of categories language) along this
morphism gives the desired structure.

Recall that $\sC^{PBW\mathendash{cl}}$ is canonically isomorphic to 
$\sC^{KK\mathendash{cl}}$ in this case, so we denote them each by
$\sC^{cl}$.

Recall that $\bG_m \times \bG_m$ weakly acts on $\sC^{cl}$:
one factor acts because this is always true for the semi-classical
category, and the other factor acts because of the weak $\bG_m$-action
on $\Fil\!^{PBW} \sC$. 
Then it is straightforward to verify that the KK filtration on 
$\sC^{PBW\mathendash{cl}} = \sC^{cl}$ is induced by taking the diagonal
weak action of $\bG_m$ on $\sC^{cl}$ and applying
Example \ref{e:fil-gm}.

The situation with $\sC^{KK\mathendash{cl}} = \sC^{cl}$ is similar: 
its filtration is induced
by the ``canonical" weak $\bG_m$-action on $\sC^{cl}$, i.e., the
one from the first $\bG_m$-factor above (so is unrelated to
the weak $\bG_m$-action on $\Fil\!^{PBW} \sC$).

\end{example}

\subsection{}\label{ss:psi-pbw-filt}

We now begin to apply the above in the setting of Lie algebras and
Harish-Chandra modules.

Before discussing Kazhdan-Kostant directly, let us discuss
what we can obtain without the additional ``grading" (i.e., weak 
$\bG_m$-action). We use the language of \S \ref{ss:q-gp-action}.

Suppose $\bG_a$ acts on $\Fil \sC$. Let $\psi$ denote the
exponential (alias: Artin-Schreier) 
character sheaf on $\bG_a$. Our problem is to construct
a filtration on $\sC^{\bG_a,\psi}$.

First, note that (forgetting the filtrations) 
we can write $\sC \mapsto \sC^{\bG_a,\psi}$ in two steps:
for $\widehat{\bG}_a$ the formal completion of 
$\bG_a$ at the origin, the group prestack 
$\bB \widehat{\bG}_a = \bG_{dR}/\bG_a$ acts on
$\sC^{\bG_a,w}$. Note that
$\QCoh(\bB \widehat{\bG}_a) = \QCoh(\bA_{\Lie}^1)$ with the
convolution structure on the LHS corresponding to the
tensor product structure on the RHS; here the subscript $\Lie$
is used so we later remember the Lie-theoretic origins of 
this copy of the affine line.
Therefore, $\sC^{\bG_a,w}$ fibers over $\bA_{\Lie}^1$, and we
can take its fiber at $1 \in \bA_{\Lie}^1$, i.e., we can form:

\[
\sC^{\bG_a,w} \underset{\QCoh(\bA_{\Lie}^1)}{\otimes} \Vect
\]

\noindent using the restriction functor along $1 \into \bA_{\Lie}^1$.
It is immediate to see that this tensor product is $\sC^{\bG_a,\psi}$.

In the filtered setting, let
$\Fil \widehat{\bG}_a$ denote the (commutative) relative formal group over
$\bA_{\hbar}^1/\bG_m$ defined by $\Fil \Lie(\bG_a)$.
As above, $\bB \Fil \widehat{\bG}_a$ acts on $\Fil \sC^{\bG_a,w}$.
Note that: 

\[
\QCoh(\bB \Fil \widehat{\bG}_a) \simeq 
\QCoh((\bA_{\Lie}^1 \times \bA_{\hbar}^1)/\bG_m) \in 
\Alg(\QCoh(\bA_{\hbar}^1/\bG_m)\mod)
\]

\noindent where the left hand side is equipped with the convolution
monoidal structure, and the right hand side is equipped with
the tensor product monoidal structure. Moreover, 
since $\bA_{\Lie}^1$ occurs here as the \emph{co}adjoint space 
$\Lie(\bG_a)^{\vee}$, it is naturally 
equipped with the action of 
$\bG_m$ by inverse homotheties; so both $\bA^1$-factors are
acted on in this way, and our graded algebra of functions 
is a polynomial algebra
on the two degree $1$ generators $\hbar$ and $x\hbar$ for 
$x\in \Lie(\bG_a)$ the generator.

\begin{warning}

The reader confused why we see 
$(\bA_{\Lie}^1 \times \bA_{\hbar}^1)/\bG_m$ and
not $\bA_{\Lie}^1 \times (\bA_{\hbar}^1/\bG_m)$
should return to Warning \ref{w:fil-abelian}.

\end{warning}

The upshot is that we obtain a filtration on 
$\sC^{\bG_a,\psi}$ by taking $\Fil \sC^{\bG_a,\psi}$ as
$\Fil \sC^{\bG_a,w}$ and restricting along the diagonal map:

\[
\bA_{\hbar}^1/\bG_m \xar{x \mapsto (x,x)} (\bA_{\Lie}^1 \times \bA_{\hbar}^1)/\bG_m.
\]

\noindent When there is risk for confusion, we refer to this
as the \emph{PBW} filtration on $\sC^{\bG_a,\psi}$ and denote
it by $\Fil\!^{PBW} \sC^{\bG_a,\psi}$.

\begin{rem}

But in our hearts, we know that we would rather restrict
along the \emph{non-existing map}:

\[
\bA_{\hbar}^1/\bG_m \xar{x \mapsto (1,x)} (\bA_{\Lie}^1 \times \bA_{\hbar}^1)/\bG_m.
\]

\end{rem}

\begin{rem}\label{r:psi-pbw-cl}

Note that $\sC^{\bG_a,\psi,cl} = \sC^{\bG_a,cl}$ under this construction.

\end{rem}

\subsection{}\label{ss:psi-kk-filt}

Let $\bG_m$ act on the group $\bG_a$ through inverse\footnote{The 
reader is reminded that we used $-\check{\rho}$ in
\S \ref{ss:kk-start}, so $\fn$ had negative degree with respect to the
$\bG_m$-action. So the sign here is the expected one.}
homotheties.
Since the morphism $\bG_a \to \bG_{a,dR}$ is $\bG_m$-equivariant,
$\bG_m$ acts on $\Fil \bG_{a,dR}$. For $\sC$ ``PBW" filtered, it makes
sense to ask that a weak $\bG_m$-action and a 
(strong) $\bG_a$-action on $\Fil\!^{PBW} \sC$ be 
compatible (with the action
of $\bG_m$ on $\Fil \bG_a$). Note that this in particular gives
a weak $\bG_m$-action on $\Fil\!^{PBW} \sC^{\bG_a,w}$,
so we may form $\Fil\!^{KK} \sC^{\bG_a,w}$.

Observe that if we regard $\Lie \bG_a$ as a
filtered Lie algebra through the PBW method of \S \ref{ss:pbw}
(remembering Warning \ref{w:fil-abelian}) and equip it with the 
above (degree $-1$) grading, then Kazhdan-Kostant twisting
gives $\Lie \bG_a$ equipped with the \emph{constant} filtration
(jumping only in degree $0$). 

It follows formally that $\bB \widehat{\bG}_a$ (with no $\Fil\!$!) 
acts on $\Fil\!^{KK} \sC^{\bG_a,w}$. 
Therefore, $\Fil\!^{KK} \sC^{\bG_a,w}$ has an action of
$\QCoh(\bA_{\Lie}^1)$, and we may take its fiber at $1 \in \bA_{\Lie}^1$
(by approriately tensoring with $\Vect$).
By definition, this is $\Fil\!^{KK} \sC^{\bG_a,\psi}$, the
\emph{Kazhdan-Kostant} filtration on $\sC^{\bG_a,\psi}$.

Recall that $\sC^{cl}$ has commuting (because $\bG_a$ is commutative)
actions of $\QCoh(\bG_a)$ (under convolution) and
$\QCoh(\Lie(\bG_a)^{\vee}) = \QCoh(\bA_{\Lie}^1)$. One immediately
finds that $\sC^{\bG_a,\psi,KK\mathendash{cl}}$, the ``semi-classical" category
for the special fiber, is $\sC^{cl,\bG_a,w}|_{1\in \bA_{\Lie}^1}$,
where the restriction notation means we form the appropriate 
tensor product.

\begin{warning}

This Kazhdan-Kostant filtration on $\sC^{\bG_a,\psi}$ is \emph{not} obtained
by applying the method of \S \ref{ss:kk-cat} to the PBW filtration.
Indeed, the semi-classical categories are different.

\end{warning}

\subsection{}

We now repeat the above to produce a 
\emph{bi}filtration on $\sC^{\bG_a,\psi}$
inducing the PBW and KK filtrations.

Note that $\sC^{\bG_a,w}$ carries a canonical
bifiltration from \S \ref{ss:kk-bifilt}: it is induced by the weak $\bG_m$-action
on $\Fil\!^{PBW} \sC^{\bG_a,w}$. 
Moreover, because $\Lie \bG_a$ is bifiltered by its
PBW and KK filtrations by \S \ref{ss:kk-bifilt}, it follows that the
corresponding bifiltered formal group acts on $\BiFil \sC^{\bG_a,w}$.
Combining our analysis in the PBW and KK cases, we find that
the action of 
$\QCoh(\bA_{\hbar_1}^1/\bG_m \times \bA_{\hbar_2}^1/\bG_m)$ extends to 
an action of:

\[
\QCoh\Big(
(\bA_{\Lie}^1 \times \bA_{\hbar_1}^1)/\bG_m \times 
\bA_{\hbar_2}^1/
\bG_m
\Big)
\]

\noindent where the action on the first two factors is diagonal.
So we obtain our desired bifiltration by setting
$\BiFil \sC^{\bG_a,\psi}$ to be the restriction
of $\BiFil \sC^{\bG_a,w}$ along the map:

\[
\bA_{\hbar_1}^1/\bG_m \times 
\bA_{\hbar_2}^1/
\bG_m \xar{\Delta/\bG_m \times \id}
(\bA_{\Lie}^1 \times \bA_{\hbar_1}^1)/\bG_m \times 
\bA_{\hbar_2}^1/
\bG_m
\]

\noindent where $\Delta$ is the diagonal map
$\bA_{\hbar_1}^1 = \bA^1 \to \bA^1 \times \bA^1 = 
\bA_{\Lie}^1 \times \bA_{\hbar_1}^1$. 
By construction, 
it induces the PBW and KK filtrations on $\sC^{\bG_a,\psi}$,
with terminological conventions consistent with \S \ref{ss:kk-bifilt}.

\begin{rem}\label{r:psi-filts-on-cl}

Recall from Remark \ref{r:bifil->fil-on-cl} that our bifiltration
induces filtrations on $\sC^{\bG_a,\psi,PBW\mathendash{cl}}$ and
$\sC^{\bG_a,\psi,KK\mathendash{cl}}$ (in the notation of \emph{loc. cit}.).
It is straightforward to verify that the (KK) filtration on
$\sC^{\bG_a,\psi,PBW\mathendash{cl}} = \sC^{cl,\bG_a,w}|_{0 \in \bA_{\Lie}^1}$
is as in Example \ref{e:kk-bifil}, i.e., induced by the diagonal
$\bG_m$-action via Example \ref{e:fil-gm}.
The (PBW) filtration on 
$\sC^{\bG_a,\psi,KK\mathendash{cl}} = \sC^{cl,\bG_a,w}|_{1 \in \bA_{\Lie}^1}$
is obtained from degenerating the character, i.e., it is the $\bG_m$-equivariant
sheaf of categories over $\bA^1$ with fiber 
$\sC^{cl,\bG_a,w}|_{\lambda \in \bA_{\Lie}^1}$ at $\lambda \in \bA^1$;
of course, the $\bG_m$-equivariance here comes 
from the $\bG_m$-action on $\Fil\!^{PBW} \sC$. 

\end{rem}

\subsection{}\label{ss:kk-hc-fd}

We now apply the above in the Harish-Chandra setting.

Suppose as before that $(\fh,K)$ is a Harish-Chandra pair with
$\fh$ finite-dimensional and $K$ an affine algebraic group. We suppose
$\Lie(K) \into \fh$ for simplicity.
Suppose moreover that we are given a non-trivial character
$\psi:K \to \bG_a$; let $K^{\prime}$ denote the kernel. We also 
let $\psi$ denote the induced character $\fk \to k$, or the
corresponding $1$-dimensional $\fk$-module; similarly for $-\psi$.

Then since $K$ acts on $\Fil \fh\mod$, $\bG_a$ acts on $\Fil \fh\mod^{K'}$,
so by the above, we obtain a \emph{PBW} filtration 
$\Fil\!^{PBW} \fh\mod^{K,\psi}$ from \S \ref{ss:psi-pbw-filt}.
We have $\fh\mod^{K,\psi,PBW\mathendash{cl}} = \QCoh((\fh/\fk)^{\vee}/K)$.

Suppose now that $\bG_m$ acts on $K$; that the
$K$-action on $\fh$ has been extended to
$\bG_m \ltimes K$; and that the character
$\psi:K \to \bG_a$ is $\bG_m$-equivariant for the inverse homothety
action of $\bG_m$ on $\bG_a$. 
Then $\Fil \fh\mod^{K^{\prime}}$ carries an action of 
$\bG_a = K/K^{\prime}$ and a weak action of $\bG_m$, giving
a datum as in \S \ref{ss:psi-kk-filt}. Therefore, we obtain
a KK filtration $\Fil\!^{KK} \fh\mod^{K,\psi}$. 
We have: 

\[
\fh\mod^{K,\psi,KK\mathendash{cl}} = \QCoh(\psi + (\fh/\fk)^{\vee}/K)
\]

\noindent where $\psi + (\fh/\fk)^{\vee} \subset \fh^{\vee}$ is the 
inverse image of $\psi$ 
under the map $\fh^{\vee} \to \fk^{\vee}$; this locus is
closed under the $K$-action because $\psi$ is a character.

These two filtrations fit into a bifiltration by the general formalism.

\begin{example}\label{e:pbw-kk-k=h}

For $\fk = \fh$, the PBW filtration on 
$\Rep(K) \overset{-\otimes \psi}{\simeq} \fk\mod^{K,\psi}$
is the constant one (as we discussed before),
and the KK filtration is induced from Example \ref{e:fil-gm}
via the weak $\bG_m$-action on $\Rep(K)$. In other words,
we regard $\sO_K$ as a graded coalgebra, so by
\emph{loc. cit}. it inherits a natural coalgebra filtration; then 
the KK filtration is obtained by
considering filtered comodules. Note that the group 
cohomology functor
$\Rep(K) \to \Vect$ is canonically bifiltered, e.g. because
the trivial representation has a canonical\footnote{It 
is the constant bifiltration on the
underlying vector space of the representation.} bifiltration.

\end{example}

It follows formally that the induction functor
$\ind_{\fk}^{\fh}:\Rep(K) = \fk\mod^{K,\psi} \to \fh\mod^{K,\psi}$
is bifiltered. Applying this to the trivial representation with its
canonical bifiltration, we see that 
the functor 
$C^{\dot}(\fk,K,(-)\otimes -\psi): \fh\mod^{K,\psi} \to \Vect$ is also
naturally bifiltered. Its underlying semi-classical functors
for the PBW and KK filtrations are the appropriate
global sections functors:

\[
\begin{gathered}
\Gamma((\fh/\fk)^{\vee}/K,-):\QCoh((\fh/\fk)^{\vee}/K) \to \Vect \\
\Gamma(\psi+ (\fh/\fk)^{\vee}/K,-):
\QCoh(\psi+ (\fh/\fk)^{\vee}/K) \to \Vect.
\end{gathered}
\]

Somewhat more generally,\footnote{This setup appears strange if one
has the finite $W$-algebra example $(\fh,K) = (\fg,N)$, but its infinite-dimensional
version appears in the affine $W$-algebra setup.}
suppose that the character $\psi$ is extended to $\fh$ and
continues to satisfy the
appropriate $\bG_m$-equivariance.

Then for $M \in \fh\mod^{K,\psi}$,
$M \otimes -\psi$ can be considered as an object of $\fh\mod^K$,
so it makes sense to take the Harish-Chandra cohomology:

\[
C^{\dot}(\fh,K,M \otimes -\psi).
\]

\noindent This functor is bifiltered, with PBW and KK 
semi-classical versions given by:

\[
\begin{gathered}
\QCoh((\fh/\fk)^{\vee}/K) \to \Vect \\
\QCoh(\psi+(\fh/\fk)^{\vee}/K) \to \Vect
\end{gathered}
\]

\noindent given by $!$-restriction to $0/K$ or $\psi/K$ 
followed by global sections on this stack (i.e., group cohomology
for $K$).

\begin{rem}\label{r:kk-hc-explicit}

Let us describe what a KK filtration on an object of $\fh\mod^{K,\psi}$
looks like concretely. Suppose $M \in \fh\mod^{K,\psi,\heart}$
observe that $M \otimes -\psi \in \fk\mod^K = \Rep(K)$,
i.e., the natural $\fk$-action integrates to the group.
A sequence:

\[
\ldots \subset F_i^{KK} M \subset F_{i+1}^{KK} M \subset \ldots
\]

\noindent is
a KK filtration if: 

\begin{itemize}

\item It is a filtration of $M$ considered as a module
over the KK-filtered algebra $U(\fh)$. In other words,
if $\fh^j$ indicates the $j$th graded component of $\fh$,
$\fh^j$ maps $F_i^{KK} M$ to $F_{i+j+1}^{KK} M$.

\item Consider $\sO_K$ as a filtered coalgebra
using the $\bG_m$-action on it and 
Example \ref{e:fil-gm}. 
Then the coaction map
$(M \otimes -\psi) \to (M\otimes -\psi) \otimes \sO_K$ should
be filtered. 

If $K$ is connected, this
is equivalent to asking that 
$\fk^j$ acting on $M \otimes -\psi$ takes
$F_i^{KK} M \otimes -\psi$ to $F_{i+j}^{KK} M \otimes -\psi$.

\end{itemize}

Suppose now that $\psi$ is $\bG_m$-equivariantly
extended to $\fh$.
We also suppose that $K$ is unipotent, so $C^{\dot}(\fh,K,-)$
coincides with $C^{\dot}(\fh,-)$ as a non-filtered functor. 
Then the KK filtration on $C^{\dot}(\fh,K,M \otimes -\psi)$
is similar to the filtration from Remark \ref{r:hc-cplx}; its $i$th term is:

\[
0 \to F_i^{KK} M \otimes -\psi \to 
\sum_j 
(\fh^j/\fk^j)^{\vee} \otimes 
F_{i+j+1}^{KK} M \otimes -\psi +
(\fh^j)^{\vee} \otimes F_{i+j}^{KK} M \otimes -\psi \to \ldotsplus
\]

\end{rem}

\subsection{Compact Lie algebras}

We now begin to move to an infinite dimensional setting.
Let $\fh$ be a \emph{profinite-dimensional Lie algebra},
so $\fh = \lim_i \fh/\fh_i$ is a filtered limit of
finite-dimensional Lie algebras $\fh/\fh_i \in \LieAlg(\Vect^{\heart})$, 
and with all structure maps being surjective. Of course,
$\fh_i \subset \fh$ indicates the corresponding
normal open Lie subalgebra, which is of the requisite type.

Following \cite{dmod-aff-flag} \S 22-23, we define:

\[
\fh\mod \coloneqq \underset{i}{\colim} \, 
\fh/\fh_i\mod \in \DGCat_{cont}
\]

\noindent where our structure functors are forgetful
functors. By our assumptions, for
each structure map $\fh/\fh_i \onto \fh/\fh_j$,
the induced functor:

\[
\Oblv: \fh/\fh_j\mod \to \fh/\fh_i\mod
\]

\noindent has a continuous
right adjoint: it is Lie algebra cohomology with respect to
the $\fh_j/\fh_i$. Therefore, the above colimit
is also the limit under these right adjoint functors.

By Lemma \ref{l:t-str}, $\fh\mod$ has a canonical $t$-structure
compatible with filtered colimits with heart the abelian 
category of discrete\footnote{Recall that
these are $\fh$-modules $V$ such that the stabilizer
of any vector in $V$ is open in $\fh$.} 
$\fh$-modules. 
Moreover, if our indexing category is countable,
$\fh\mod^+$ is the
(bounded below) derived category of this abelian category. 

We see that $\fh\mod$ is compactly generated, and that
it has a canonical trivial representation $k \in \fh\mod$ that
is compact. Therefore, we have a continuous 
functor $C^{\dot}(\fh,-): \fh\mod \to \Vect$, which is defined
as the complex of maps from the trivial representation.

\begin{rem}

Note that the $t$-structure on $\fh\mod$ is not 
necessarily left complete.
Indeed, suppose $\fh$ is abelian and infinite-dimensional. 
Then\footnote{In what follows, $\fh^{\vee}$ should always be understood
as the \emph{continuous} dual to $\fh$.}
 $\Ext_{\fh\mod}^{\dot}(k,k) = \Lambda^{\dot} \fh^{\vee}$,
so there are non-zero maps $k \to k[n]$ for each $n \geq 0$.
If the $t$-structure were left complete, we would
have $\oplus_{n\geq 0} k[n] \isom \prod_{n \geq 0} k[n]$
(proof: consider the Postnikov tower for the LHS).
But this is impossible: we would have
constructed a map $k \to \oplus_{n \geq 0} k[n]$ that would
not factor through any finite direct sum, contradicting the
compactness of $k$.

In fact, the $t$-structure is not even left separated.
Here is one explicit way to see this. 
Then $\fh\mod$ is canonically \emph{self-dual}
in the sense of \cite{dgcat}: indeed,
each $\fh/\fh_i\mod$ has a canonical Serre self-duality,
and we tautologically have:\footnote{
Dualizability is no issue because we are in
a co/limit situation.}

\[
\fh\mod^{\vee} = \underset{\Oblv^{\vee}}{\lim} \fh/\fh_i\mod 
\]

\noindent where the notation indicates the
limit under the functors \emph{dual} to the forgetful functors;
these are given by \emph{co}invariants with respect
to the kernels, which differ from the invariants
by a shift and tensoring with a 1-dimensional
representation, according to Lemma \ref{l:lie-shift}. 
This readily implies the claim: we should replace
Serre self-duality on each $\fh/\fh_i\mod$ by its
composition with the functor of shifting by $\dim \fh/\fh_i$
and tensoring with the determinant of the adjoint
representation.

It follows that we have an equivalence
$\bD:\Pro(\fh\mod^{c,op})^{op} \simeq \fh\mod$,
where $\fh\mod^c$ indicates the subcategory of
compact objects. The objects $U(\fh/\fh_i)$ are compact
in $\fh\mod$ (and even generate), and form a filtered projective
system in the obvious way; we denote this
pro-object by $U(\fh)$. The object
$\bD U(\fh) \in \fh\mod$ is then obviously non-zero,
but lives in $\cap_n \fh\mod^{\leq -n}$ because
$\bD U(\fh) = \colim_i \bD U(\fh/\fh_i)$, and
$\bD U(\fh/\fh_i)$ lies in cohomological degree
$-\dim \fh/\fh_i$.

\end{rem}

\begin{rem}

Note that $\fh^{\vee}$ is a Lie coalgebra, so there is a general
formalism of taking comodules over it. It is straightforward to show that
$\fh^{\vee}\comod$ is the left completion of $\fh\mod$.

\end{rem}

\begin{warning}

For $M \in \fh\mod^+$, $C^{\dot}(\fh,M)$ is computed by a standard
complex, but this is \emph{not} true for general $M \in \fh\mod$.
To make this precise, note that for any $M \in \fh\mod$, one can form a 
semi-cosimplicial diagram
$M \overset{\coact}{\underset{0}{\rightrightarrows}} 
M \otimes \fh^{\vee} \rightrightrightarrows
\ldots \in \Vect$ and the canonical morphism:

\[
C^{\dot}(\fh,M) \to \underset{\bDelta_{inj}} {\lim} \, 
\big(
M \rightrightarrows M \otimes \fh^{\vee} 
\rightrightrightarrows
\ldotsplus
\big)
\]

\noindent This map is an equivalence for $M \in \fh\mod^+$,
but not for general $M$. Indeed, considering the right hand side 
as a functor in the variable $M$, it is easy to see that it will not
commute with colimits.

\end{warning}

\subsection{}\label{ss:fil-cpt-lie}

We have a canonical filtration
on $\fh\mod$. Indeed, this follows immediately from the
fact that the functors $\Oblv: \fh/\fh_j \mod \to \fh/\fh_i\mod$
are filtered. 

We have:

\[
\fh\mod^{cl} \simeq \underset{i}{\colim} \, \QCoh((\fh/\fh_i)^{\vee})
\]

\noindent where the colimit is under $*$-pushforward functors
along the closed embeddings $\alpha_{i,j}:(\fh/\fh_j)^{\vee} \into (\fh/\fh_i)^{\vee}$.

We claim that $\fh\mod^{cl}$ is canonically isomorphic
to\footnote{Recall that tensoring with the dualizing sheaf
induces an equivalence $\QCoh(\fh^{\vee}) \isom \IndCoh(\fh^{\vee})$.
We prefer to write the category of $\IndCoh$ rather than $\QCoh$
though because the notation is somewhat simpler.

(Note that the place where this equivalence is 
shown, \cite{indschemes} Theorem 10.0.7, has a
countability hypothesis. This assumption is verified for us 
in our applications,
so the reader may safely assume it in this section. 
But in fact, one readily verifies that this assumption is only
used in finding nice presentations for a fairly general class of
indschemes;
this is no problem for our indscheme $\fh^{\vee}$, 
so one finds that the countability is not needed
in applying their method in the present case.
}
$\IndCoh(\fh^{\vee})$, where $\fh^{\vee}$ is considered
as an indscheme. Indeed, in the standard
$\IndCoh$ notation from \cite{indcoh}, we have commutative diagrams:

\[
\xymatrix{
\IndCoh((\fh/\fh_j)^{\vee}) \ar[d]^{\Psi} \ar[r]^{\alpha_{i,j,*}^{\IndCoh}} &
\IndCoh((\fh/\fh_i)^{\vee}) \ar[d]^{\Psi} \\
\QCoh((\fh/\fh_j)^{\vee}) \ar[r]^{\alpha_{i,j,*}} &
\QCoh((\fh/\fh_i)^{\vee})
}
\]

\noindent with vertical arrows equivalences; of course, these commutative
diagrams have the requisite compatibilities of higher category theory.
Therefore, $\fh\mod^{cl}$ is equivalent to this colimit; since 
the functors $\alpha_{i,j,*}^{\IndCoh}$ admit the continuous right
adjoints $\alpha_{i,j}^!$, we obtain the claim.

As before, the functor $C^{\dot}(\fh,-):\fh\mod \to \Vect$ is
filtered, and with semi-classical functor 
$\IndCoh(\fh^{\vee}) \to \Vect$ given as $!$-restriction to $0 \in \fh^{\vee}$.

\subsection{Actions of group schemes on filtered categories}

It is straightforward to generalize the above definitions to the
Harish-Chandra setting and to compute the outputs.
But it is quite clarifying in this setting 
to generalize the language of \S \ref{ss:q-gp-action},
so we do so.

\subsection{}\label{ss:ren-start}

Suppose $K$ is an affine group scheme; we write $K$
as a filtered limit $\lim_i K/K_i$ for
$K_i \subset K$ a normal subgroup scheme 
with $K/K_i$ an affine algebraic group.
Recall that an action of an algebraic group on a filtered category
induced a weak action the semi-classical category.
As a warm-up, we begin our discussion there.

\begin{defin}

A \emph{weak action} of $K$ on a category $\sC \in \DGCat_{cont}$ 
a $\QCoh(K)$-module structure on $\sC$, where $\QCoh(K)$ is
given the convolution monoidal structure.

A \emph{renormalized (weak)\footnote{If 
$K$ is an extension
of an affine algebraic group by a prounipotent one, 
as is always the case for us, this renormalization has no effect
in the setting of \emph{strong} group actions on categories:
this a consequence of the coincidence of invariants and
coinvariants for such categories (see \cite{dario-*/!}).
So we set the convention that the word \emph{renormalization} 
indicates that we are working with weak group actions.
}
action of $K$}
is an object of $\lim_i \QCoh(K/K_i)\mod$, where the structure functors
$\QCoh(K/K_i)\mod \to \QCoh(K/K_j)\mod$ are given by 
weak invariants with respect to $K_j/K_i$.

\end{defin}

\begin{notation}

We say a renormalized action of $K$ is \emph{on $\sC \in \DGCat_{cont}$}
if the compatible system of $\QCoh(K/K_i)$-module categories
is denoted by $\sC^{K_i\ren}$ and $\sC = \colim \, \sC^{K_i\ren}$.
Note that this is necessarily a co/limit situation, i.e., all structure
functors admit right adjoints.

\end{notation}

\begin{example}

Define $\Rep(K)$ as $\colim \, \Rep(K/K_i) \in \DGCat_{cont}$. 
(This category
should not be confused with $\QCoh(\bB K)$: they have 
$t$-structures and coincide on bounded below derived categories,
but $\Rep(K)$ is always compactly generated while $\QCoh(\bB K)$
may not be; rather, the latter category is the left completion of the
former.) 

Clearly this definition makes sense
for any affine group scheme. Moreover, $\Rep(K_i)$ is weakly acted
upon by $K/K_i$. So setting 
$\Vect^{K_i\ren} \coloneqq \Rep(K_i)$, we obtain a renormalized
action of $K$ on $\Vect$.

\end{example}

\begin{rem}

Note that the trivial representation in $\Rep(K)$ is compact,
so we have a \emph{continuous} group cohomology
functor $C^{\dot}(K,-):\Rep(K) \to \Vect$.

\end{rem}

\begin{rem}

Note that for any $\sC$ with a renormalized action of $K$,
$\Rep(K)$ acts on $\sC^{K\ren}$: indeed, writing
$\sC^{K\ren} = (\sC^{K_i\ren})^{K/K_i,w}$, we find
$\Rep(K_i)$ acts, so taking the colimit over $i$ gives the claim.

In fact, since the colimit defining $\Rep(K)$ is under (symmetric)
monoidal functors, and since:

\[
\QCoh(K/K_i)\mod \xar{(-)^{K/K_i,w}} \Rep(K/K_i)\mod
\]

\noindent is an equivalence (by 1-affineness of $\bB K/K_i$),
we find that the functor $\sC \mapsto \sC^{K\ren}$ 
is actually an equivalence between $\Rep(K)\mod$ and the
(2-)category of categories with a renormalized $K$-action. 

\end{rem}

\begin{rem}

Suppose that $X$ is a quasi-compact quasi-separated 
\emph{classical}\footnote{We have used the word \emph{scheme} 
throughout to mean classical scheme, but are emphasizing it
here because although it may seem unnecessary, it is important
for the Noetherian approximation we are applying.} 
scheme with a $K$-action. By Noetherian approximation,
we can write $X = \lim_i X^i$ under affine morphisms
and $K = \lim_i K/K_i$ as above such that
$X^i$ is finite type and $K/K_i$ acts on $X^i$, with these actions
being compatible in the natural sense as we vary $i$.
Finally, we assume that all structure morphisms among the
$X^i$ are \emph{flat}.

In this case, we obtain a renormalized action of $K$ on
$\QCoh(X)$ by setting:

\[
\QCoh(X)^{K\ren} \coloneqq \underset{i}{\colim} \, \QCoh(X^i/(K/K_i)).
\]

\noindent To emphasize a stacky perspective, we sometimes
use the notation:

\[
\QCoh^{ren}(X/K) = \QCoh(X)^{K\ren}.
\]

All structure maps here are affine, so this is a
co/limit situation. We similarly have $\IndCoh(X)^{K\ren}$; the
flatness of our structure maps implies that this is also a co/limit. 
(All of this is invariant
under choices of presentations as limits.)

Note that for $X = \Spec(k)$, we recover
$\Rep(K)$ by this construction.

More generally, let $X = \colim_j X_j$ be an indscheme, with
the $X_j$ schemes satisfying the above hypotheses.
Moreover, we assume the closed embeddings among the $X_j$
are finitely presented and \emph{eventually
coconnective} (e.g. regular). Note that $\QCoh(X) \coloneqq 
\lim \, \QCoh(X_j)$ is a co/limit, and similarly for 
$\IndCoh(X) \coloneqq \IndCoh(X_j)$.\footnote{Note that
taking $K = \{1\}$, our earlier discussion gave a makeshift definition of 
$\IndCoh(X_i)$.} Clearly each of these categories has a canonical
renormalized action of $K$.

\end{rem}

\subsection{}\label{ss:gp-sch-act}

We now discuss the filtered setting.

\begin{defin}

A \emph{(strong) action of $K$ on a filtered category} is an object
of:

\[
\underset{i}{\lim} \, \IndCoh(\Fil (K/K_i)_{dR})\mod(\QCoh(\bA_{\hbar}^1/\bG_m)\mod)
\]

\noindent i.e., a compatible system of filtered categories acted
on by the $K/K_i$.

\end{defin}

As before, we say this action is on $\Fil \sC$ if our compatible 
system is denoted 
$\Fil \sC^{K_i}$ and $\Fil \sC = \colim_i \Fil \sC^{K_i}$.
Again, this is a co/limit. 

If $K$ has a prounipotent tail, then 
$\sC$ inherits a (strong)
action of $K$ with all notation compatible, i.e.,
the generic fiber $\sC^{K_i}$ of $\Fil \sC^{K_i}$ is
the $K_i$-invariants for this action.

\begin{rem}

At the semi-classical level, we obtain an object of:

\[
\underset{i}{\lim} \, 
\QCoh((\fk/\fk_i)^{\vee}/(K/K_i))\mod \eqqcolon
\ShvCat^{ren}(\fk^{\vee}/K).
\]

\noindent Here we use the sheaf of categories notation
because, by \cite{ainfty}, 

\[
\ShvCat(\fk^{\vee}) \coloneqq 
\underset{i}{\lim} \, \QCoh((\fk/\fk_i)^{\vee})\mod \neq
\QCoh(\fk^{\vee})\mod.
\]

\noindent (Although the RHS is a full subcategory of the LHS.)
We use the superscript \emph{ren} because of the relationship
to our notion of a renormalized action of $K$ on a category.

In the above setup, we use the notation 
$\sC^{cl}|_{(\fk/\fk_i)^{\vee}}^{K_i\ren}$ to indicate
the corresponding object of $\QCoh((\fk/\fk_i)^{\vee}/(K/K_i))\mod$.
With this notation, we are encouraging the reader to imagine 
$\sC^{cl}$ as sitting over $\fk^{\vee}$
and equipped with a compatible weak (or better: renormalized) 
action of $K$. 

Then observe that the filtration on $\sC^K$ has 
$\sC^{K,cl} = \sC^{cl}|_0^{K\ren}$, and more generally,
$\sC^{K_i,cl} = \sC^{cl}|_{(\fk/\fk_i)^{\vee}}^{K_i\ren}$.

\end{rem}

\begin{rem}

Note that in the above setting, we may reformulate our
semi-classical data in saying that we have a compatible
system of categories 
$\sC^{cl}|_{(\fk/\fk_i)^{\vee}}$ equipped with renormalized
$K$-actions and $\QCoh((\fk/\fk_i)^{\vee})$-module
category structures and satisfying the natural compatibilities.

We let $\sC^{cl}$ denote the limit of this diagram. Note that this is actually
a co/limit situation. Then $\sC^{cl}$ can be thought of as the
global sections of the sheaf of categories on $\fk^{\vee}$ that
our datum induced. Note that $\sC$ itself actually is a filtered
category, with $\sC^{cl}$ as its semi-classical version.

Note that the place to be careful about making
mistakes in distinguishing sheaves of categories from
module categories is that we may have:

\[
\sC^{cl}|_{(\fk/\fk_i)^{\vee}} \neq 
\sC^{cl} \underset{\QCoh(\fk^{\vee})}{\otimes} \QCoh((\fk/\fk_i)^{\vee}).
\]

\end{rem}

\subsection{}\label{ss:hc-cpt}

Now suppose that we are in a Harish-Chandra setting:
we assume we are given a projective system
of Harish-Chandra data $(\fh/\fh_i,K/K_i)$
with $\fk/\fk_i \to \fh/\fh_i$ injective. Our two projective 
systems $\fh/\fh_i$ and $K/K_i$ are assumed to satisfy our
earlier (e.g., finiteness) hypotheses.

We then set:

\[
\Fil \fh\mod^K \coloneqq \underset{i}{\colim} \, 
\Fil \fh/\fh_i\mod^{K/K_i} \in \DGCat_{cont}.
\]

\noindent Note that this is a co/limit situation;
for the right adjoint to the forgetful functor:

\[
\Fil \fh/\fh_j\mod^{K/K_j} \to
\Fil \fh/\fh_i\mod^{K/K_i} 
\]

\noindent is given by 
(the filtered version of) 
Harish-Chandra cohomology with respect to $(\fh_j/\fh_i,K_j/K_i)$.

Note that this construction makes sense for each $K_i$ in place of
$K$. Moreover, $K/K_i$ acts on $\Fil \fh\mod^{K_i}$, and
we have natural compatibilities as we take invariants.
Therefore, the above data defines an action of $K$ on $\Fil \fh\mod$.

Note that $\fh\mod^{K,cl} = \IndCoh^{ren}((\fh/\fk)^{\vee}/K)$:
the calculation is the same as the one we gave for
$\fh\mod^{cl}$. More precisely, recall that our semi-classical data
is an object of $\ShvCat^{ren}(\fk^{\vee}/K)$: it is defined
by the compatible system:

\[
i \mapsto \IndCoh((\fk/\fk_i)^{\vee}/(K/K_i))
\] 

\noindent where for $K_i \subset K_j \subset K$, 
we note that $!$-pullback
along the morphism:

\[
(\fk/\fk_i)^{\vee}/K_i \to (\fk/\fk_j)/K_j
\]

induces an equivalence:

\[
\IndCoh((\fk/\fk_i)^{\vee}/(K/K_i)) 
\underset{\QCoh((\fk/\fk_i)^{\vee}/(K/K_i)) }{\otimes}
\QCoh(\fk/\fk_j)^{\vee}/(K/K_j)) \isom 
\IndCoh((\fk/\fk_i)^{\vee}/(K/K_i)) 
\]

\noindent because we can identify $\IndCoh$ with $\QCoh$ by
smoothness.

We have a natural filtered Harish-Chandra cohomology functor
$\fh\mod^K \to \Vect$ with expected semi-classical version given
by $!$-restriction to $0/K$ followed by group cohomology.

\subsection{}\label{ss:kk-cpt}

Now suppose in the above setting that we have
compatible $\bG_m$-actions on each $K/K_i$ and
$\fh/\fh_i$. Suppose moreover that we are given a 
$\bG_m$-equivariant character
$\psi:K \to \bG_a$ for the action of $\bG_m$ on $\bG_a$ by
inverse homotheties.

The construction of \S \ref{ss:kk-hc-fd} applies as is, giving
a Kazhdan-Kostant filtration on $\fh\mod^{K,\psi}$
fitting into a bifiltration with the
PBW filtration. 
It has similar properties to the finite-dimensional version,
up to the various differences we saw above between the 
finite and profinite-dimensional settings.

\subsection{}

Now suppose that we are given $\fh^0 \subset \fh$ an open
subalgebra, so $\fh/\fh^0$ is finite-dimensional. 
We assume the pair
$(\fh^0,K)$ satisfies the profinite-dimensional Harish-Chandra conditions
as above, so $\fk \subset \fh^0 \subset \fh$ and $\fh^0$ is
a $K$-submodule of $\fh$.

We have the following version of 
Corollary \ref{c:coh-ind} and Lemma \ref{l:coh-ind-hc-fd}.

\begin{lem}\label{l:coh-ind-cpt-hc}

\begin{enumerate}

\item\label{i:coh-ind-cpt-lie}
The forgetful functor $\fh\mod \to \fh^0\mod$ admits
a left adjoint $\ind_{\fh^0}^{\fh}$ \emph{as a filtered functor}.
The induced semi-classical functor $(\ind_{\fh^0}^{\fh})^{cl}$
is the $(\IndCoh,*)$ pullback functor:

\[
\IndCoh(\fh^{0,\vee}) \to \IndCoh(\fh^{\vee})
\]

\noindent i.e., the left adjoint to the $\IndCoh$-pushforward along
the projection $\fh^{\vee} \to \fh^{0,\vee}$.

There is a canonical isomorphism of filtered functors
$\fh\mod \to \Vect$:

\[
C^{\dot}\Big(\fh,\ind_{\fh^0}^{\fh}\big((-) \otimes 
\det(\fh/\fh^0)[\dim \fh/\fh^0]\big)\Big) = 
C^{\dot}(\fh^0,-)
\]

\noindent where $\det(\fh/\fh^0)[\dim \fh/\fh^0])$ is filtered
with a single jump in degree $\dim \fh/\fh^0$.

\item\label{i:coh-ind-cpt-hc} The functor 
$\ind_{\fh^0}^{\fh}$ is a morphism of filtered categories acted
on by $K$. The induced functor
$\ind_{\fh^0}^{\fh}: \fh^0\mod^K \to \fh\mod^K$ 
has semi-classical version:

\[
\IndCoh^{ren}((\fh^0/\fk)^{\vee}/K) \to \IndCoh^{ren}((\fh/\fk)^{\vee}/K)
\]

\noindent again given by $(\IndCoh,*)$-pullback.

There is a canonical isomorphism of
filtered functors $\fh^0\mod^K \to \Vect$:

\[
C^{\dot}\Big(\fh,K,\ind_{\fh^0}^{\fh}\big((-) \otimes 
\det(\fh/\fh^0)[\dim \fh/\fh^0]\big)\Big) = 
C^{\dot}(\fh^0,-).
\]

\item\label{i:coh-ind-cpt-kk} Suppose now that we are given the extra 
data of \S \ref{ss:kk-cpt}, and suppose that
we are given a character $\psi:\fh \to k$ extending
the same-named character on $\fk$.
Then there is a canonical
isomorphism of \emph{bi}filtered functors
$\fh^0\mod^{K,\psi} \to \Vect$:

\[
C^{\dot}\Big(\fh,K,\ind_{\fh^0}^{\fh}\big((-) \otimes (-\psi) \otimes 
\det(\fh/\fh^0)[\dim \fh/\fh^0]\big)\Big) = 
C^{\dot}(\fh^0,K, - \otimes (-\psi)).
\]

\noindent Each of these functors has semi-classical version:

\[
\IndCoh^{ren}(\psi+(\fh^0/\fk)^{\vee}/K) \to \Vect
\]

\noindent given by $!$-restriction to $\psi/K$ followed by 
$\Gamma^{\IndCoh}$ (i.e., group cohomology with respect to $K$).

\end{enumerate}

\end{lem}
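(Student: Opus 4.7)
The plan is to bootstrap from the finite-dimensional version of the lemma (Corollary \ref{c:coh-ind} and Lemma \ref{l:coh-ind-hc-fd}) by presenting everything as co/limits over a cofinal system of open subalgebras $\fh_i \subset \fh^0$, using the presentations from \S \ref{ss:fil-cpt-lie}, \S \ref{ss:hc-cpt}, and \S \ref{ss:kk-cpt}. Since $\fh^0 \subset \fh$ has finite codimension, for such $\fh_i$ we get a compatible projective system of \emph{finite-dimensional} inclusions $\fh^0/\fh_i \into \fh/\fh_i$ with $\dim(\fh/\fh_i)/(\fh^0/\fh_i) = \dim \fh/\fh^0$ and with determinant line canonically identified with $\det(\fh/\fh^0)$ throughout.

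For part \eqref{i:coh-ind-cpt-lie}, the finite-dimensional induction functors $\ind_{\fh^0/\fh_i}^{\fh/\fh_i}$ are filtered left adjoints to forgetful functors, intertwine among themselves as $i$ varies (both sides being pushforwards along ind-affine nil-isomorphisms of classifying stacks), and therefore induce a filtered left adjoint $\ind_{\fh^0}^{\fh}$ on the colimits by general nonsense (the right adjoints assemble into a compatible system, so their left adjoints do as well). The semi-classical identification with $(\IndCoh,*)$-pullback is then immediate from the fact that each $\ind_{\fh^0/\fh_i}^{\fh/\fh_i}$ has this semi-classical description, followed by the identification $\fh\mod^{cl} = \IndCoh(\fh^{\vee})$ from \S \ref{ss:fil-cpt-lie} (and its analogue for $\fh^0$). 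The isomorphism of filtered Lie algebra cohomology functors follows by taking the colimit in $i$ of Corollary \ref{c:coh-ind} applied to each $\fh^0/\fh_i \subset \fh/\fh_i$, noting that the determinant twist and cohomological shift are the same for every $i$.

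Part \eqref{i:coh-ind-cpt-hc} proceeds in the same way, now by taking the colimit of Lemma \ref{l:coh-ind-hc-fd} applied to the Harish-Chandra pairs $(\fh^0/\fh_i, K/K_i) \to (\fh/\fh_i, K/K_i)$. The $K$-equivariance passes through because the constructions of \S \ref{ss:hc-cpt} present $\fh\mod^K$ as a co/limit compatible with the action of $K$ on $\Fil \fh\mod$ described in \S \ref{ss:gp-sch-act}, and the induced functor $\ind_{\fh^0}^{\fh}$ is automatically $K$-linear. The identification of the semi-classical functor as $(\IndCoh, *)$-pullback between the renormalized $\IndCoh$-categories of \S \ref{ss:hc-cpt} comes from the corresponding statement for each $K/K_i$.

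For part \eqref{i:coh-ind-cpt-kk}, the plan is to combine \eqref{i:coh-ind-cpt-hc} with the bifiltration formalism of \S \ref{ss:kk-bifilt}--\S \ref{ss:psi-kk-filt}. The character $\psi$ extends to $\fh$ and is $\bG_m$-equivariant, so its tensor product operation gives a bifiltered equivalence between categories with and without $\psi$-twists, intertwining the two sides of the desired identity via the comparison in \eqref{i:coh-ind-cpt-hc}. The semi-classical description reduces, after unwinding the KK restriction map along $\psi/K \into (\fh^0/\fk)^\vee/K$, to a $!$-restriction followed by equivariant global sections, which matches on both sides because $\det(\fh/\fh^0)[\dim \fh/\fh^0]$ is concentrated in a single (bi)filtered degree and the character is the same on $\fh$ and on $\fh^0$.

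The main obstacle I anticipate is bookkeeping rather than anything deep: verifying that the finite-dimensional comparisons really do assemble into a \emph{bifiltered} natural isomorphism (not just a plain one), and that the semi-classical description over the ind-finite-type indscheme $(\fh/\fk)^\vee$ is preserved under passage to the renormalized $\IndCoh$-formalism introduced in \S \ref{ss:ren-start}. In particular, one has to be careful that the colimit defining $\fh\mod^K$ interacts well with the $\bG_m$-action implementing the KK twist, but this is ultimately forced by the fact that the $\bG_m$-action is already compatible with the projective system $\{(K/K_i, \fh/\fh_i)\}$ by hypothesis.
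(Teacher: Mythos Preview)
Your proposal is correct and is precisely the approach the paper takes: its proof is the single sentence ``These results follow immediately from their finite-dimensional counterparts by passing to the limit,'' and you have simply unpacked what that limit procedure entails. Your bookkeeping concerns in the last paragraph are appropriate cautions but do not hide any genuine obstruction, since the projective systems are assumed $\bG_m$-compatible and the renormalized $\IndCoh$ formalism of \S \ref{ss:ren-start} was designed exactly so that such co/limit presentations behave well.
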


\begin{proof}

These results follow immediately from their finite-dimensional
counterparts by passing to the limit.

\end{proof}

\subsection{Tate setting}\label{ss:tate-lie-start}

Our treatment here follows \cite{km-indcoh} at some points.

Suppose $\fh \in \Pro(\Vect^{\heart})$
is a Tate Lie algebra, i.e., $\fh$ is a limit under surjective
maps of (possibly infinite dimensional) vector spaces,
has a continuous Lie bracket, and an open 
profinite dimensional Lie subalgebra.\footnote{As
in \cite{beilinson-top-alg} \S 1.4, 
a topological Lie algebra structure on 
a Tate vector space automatically has a
basis by open Lie subalgebras.}

Suppose moreover that we are given a Harish-Chandra
datum $(\fh,K)$ with $K$ a group scheme and $\fk \into \fh$ an \emph{open}\footnote{This is a serious condition: for example,
$K$ can not be trivial if the topology on $\fh$ is non-trivial.} 
subalgebra. We are 
going to define a filtered category $\fh\mod$ acted on by $K$.

First, observe that the \emph{group prestack} $(\fh,K)$ 
from \S \ref{ss:hc-fd-start}
still makes sense,
receiving a canonical ind-affine nil-isomorphism $\bB K \to \bB (\fh,K)$.
The definition from \emph{loc. cit}. does not make sense as is:
de Rham spaces and formal completions are best avoided in 
infinite type. Instead, we assume $\fh = \Lie(H)$ for a
group indscheme $H$ with $K \subset H$ a compact open 
subgroup;\footnote{In particular, we assume $H/K$ is ind-finite type:
this forces $H$ to be reasonable in the sense of \cite{hitchin}.}
then $(\fh,K)$ is the formal completion of $K$ in $H$.
In general, one can appeal e.g. to \cite{hitchin} 7.11.2 (v) for the 
construction.

We form the simplicial diagram:

\[
\ldots 
K \backslash (\fh,K) \overset{K}{\times} (\fh,K) /K
\rightrightrightarrows
K\backslash (\fh,K) /K
\rightrightarrows
\bB K
\]

\noindent given by applying the Cech construction
to the morphism $\bB K \to \bB (\fh,K)$; note that the geometric
realization of this diagram is $\bB (\fh,K)$.
Moreover, note that each term in the simplicial diagram is of the
form ``an ind-finite type indscheme modulo an action of $K$."
Therefore, $\IndCoh^{ren}$ makes sense for each term
of this diagram. We define $\fh\mod^K$ as the totalization:

\[
\fh\mod^K \coloneqq 
\Tot \Big(
\Rep(K) = \IndCoh^{ren}(\bB K) \rightrightarrows
\IndCoh^{ren}(K\backslash (\fh,K) /K)
\rightrightrightarrows
\ldots 
\Big)
\]

\noindent Since all the morphisms in our simplicial diagram
are ind-affine nil-isomorphisms, this is a co/limit.
The Beck-Chevalley formalism easily implies that
the forgetful functor $\Oblv:\fh\mod^K \to \Rep(K)$ is
monadic, and in particular, admits
a left adjoint $\ind_{\fk}^{\fh}$.

It is easy to see that $\ind_{\fk}^{\fh} \circ \Oblv$
is conservative and $t$-exact, so $\Oblv$ is monadic
and $\fh\mod^K$ has a canonical $t$-structure with
$\Oblv$ and $\ind_{\fk}^{\fh}$ both $t$-exact.
We have:

\begin{lem}[Bernstein-Lunts, \cite{bernstein-lunts}]\label{l:bernstein-lunts-tate}

If $K = \lim_i K/K_i$ is a countable inverse limit,
$\fh\mod^{K,+}$ is the
bounded below derived category of the heart of its $t$-structure.

\end{lem}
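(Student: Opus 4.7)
The argument follows the strategy of the finite-dimensional Lemma \ref{l:bernstein-lunts-fd}, with the main new input being that $\Rep(K)^+$ itself satisfies the analogous derived-category property in our Tate setup. The countability hypothesis is used exactly to invoke Lemma \ref{l:t-str}.

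First, I would establish that $\Rep(K)^+ = D^+(\Rep(K)^{\heart})$. Writing $\Rep(K) = \colim_i \Rep(K/K_i)$, each $\Rep(K/K_i)^+$ is the bounded below derived category of its heart because $K/K_i$ is of finite type (this is the standard affine-algebraic-group version cited in the proof of Lemma \ref{l:bernstein-lunts-fd}). The structure functors in this colimit are $t$-exact forgetful functors, and they admit continuous right adjoints given by Lie algebra cohomology along $\fk_j/\fk_i$ (equivalently, by the classical fact that $\bB K_j/K_i$ is of finite cohomological dimension with coefficients in $\Rep(K/K_i)$). Since our indexing set is countable by assumption, Lemma \ref{l:t-str} applies and gives the claim.

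Next, I would verify that the induction functor $\ind_{\fk}^{\fh}:\Rep(K) \to \fh\mod^K$ is $t$-exact. Since $\Oblv$ is conservative and $t$-exact, this reduces to showing the monad $T := \Oblv \circ \ind_{\fk}^{\fh}$ on $\Rep(K)$ is $t$-exact. By the Beck--Chevalley analysis of the Cech simplicial presentation of $\fh\mod^K$, the monad $T$ carries a canonical PBW-style filtration whose associated graded is the endofunctor $M \mapsto M \otimes \Sym(\fh/\fk)$ (tensoring with the symmetric algebra on the \emph{discrete} $K$-representation $\fh/\fk$, using that $\fk \subset \fh$ is open). Because $\Sym(\fh/\fk)$ is a sum of representations concentrated in cohomological degree $0$ and the PBW filtration is eventually constant in each symmetric degree, $T$ inherits $t$-exactness from its associated graded.

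Finally, I would conclude by a double monadicity argument. By Beck--Chevalley, $\Oblv: \fh\mod^K \to \Rep(K)$ is monadic with monad $T$; passing to bounded below parts and using the $t$-exactness established above, $\Oblv: \fh\mod^{K,+} \to \Rep(K)^+$ is also monadic with monad $T$. On the other hand, the heart-level forgetful functor $\fh\mod^{K,\heart} \to \Rep(K)^{\heart}$ is monadic with monad $T^{\heart} = H^0(T) = T$, and by $t$-exactness of $T$ this promotes to a monadic adjunction between $D^+(\fh\mod^{K,\heart})$ and $D^+(\Rep(K)^{\heart}) = \Rep(K)^+$ with the same monad. The tautological functor $D^+(\fh\mod^{K,\heart}) \to \fh\mod^{K,+}$ commutes with both forgetful functors (by construction) and with both left adjoints (on the heart, both recover the non-derived induction, and both are $t$-exact left adjoints extending it), so it is an equivalence. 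The main obstacle is the convergence issue for the PBW filtration on $T$: one must check that $t$-exactness really passes from associated graded to the filtered functor itself when $\Sym(\fh/\fk)$ is infinite-dimensional. This is not automatic in general, but is harmless here because each filtered piece differs from the next by a $t$-exact direct summand, so $t$-exactness is preserved in the (degreewise bounded) filtered colimit defining $T$.
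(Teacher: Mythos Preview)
Your proof is correct and follows essentially the same approach as the paper: first invoke Lemma~\ref{l:t-str} (using countability) to get $\Rep(K)^+ = D^+(\Rep(K)^{\heart})$, then run the monadicity argument of Lemma~\ref{l:bernstein-lunts-fd} verbatim. One small slip: the right adjoints in the colimit defining $\Rep(K)$ are group cohomology along $K_j/K_i$, not Lie algebra cohomology along $\fk_j/\fk_i$ (though these agree when $K_j/K_i$ is unipotent), and your worry about convergence of the PBW filtration on the monad is unnecessary, since tensoring with any object of $\Rep(K)^{\heart}$ is $t$-exact and the $t$-structure is compatible with filtered colimits.
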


Indeed, $\Rep(K)^+ = D^+(\Rep(K)^{\heart})$ by
Lemma \ref{l:t-str} (which is where the countability hypothesis
enters),
and then the same argument as in the finite-dimensional 
Lemma \ref{l:bernstein-lunts-fd} applies.

\subsection{}

Now note that the deformations defined in 
\cite{grbook} \S IV.5.2 makes sense in the infinite type setup and
are well-behaved in our setup.
Applying this to $\bB K \to \bB(\fh,K)$, we obtain a
prestack $\Fil \bB (\fh,K)$ over $\bA_{\hbar}^1/\bG_m$ with
special fiber $(\bB (\fh/\fk)_0^{\wedge})/K$.

We can form the above Cech construction along this deformation
and imitate the above construction to obtain a filtration
on $\fh\mod^K$. We claim that $\fh\mod^{K,cl}$ is canonically
isomorphic to $\QCoh^{ren}((\fh/\fk)^{\vee}/K)$, with
$(\fh/\fk)^{\vee}$ the continuous dual considered as an affine scheme.
Indeed, we need to compute:

\[
\Tot \Big(
\IndCoh^{ren}(\bB K) \rightrightarrows
\IndCoh^{ren}( (\fh/\fk)_0^{\wedge}/K)
\rightrightrightarrows
\ldots 
\Big).
\]

\noindent Here it makes sense to replace
$\fh/\fk$ by any $K$-representation $V$. Since any $K$-representation
is the union of its finite-dimensional representations,
we find that the above is $\IndCoh^{ren}(\bB (\fh/\fk)_0/K)$
(where the renormalization makes sense because
$\bB (\fh/\fk)_0$ is the appropriate colimit of the classifying
stacks acted on by $K$ corresponding to finite-dimensional 
finite-dimensional subrepresentations of $\fh/\fk$).
Clearly $\IndCoh^{ren}(\bB (\fh/\fk)_0^{\wedge}/K) \simeq 
\QCoh^{ren}((\fh/\fk)^{\vee}/K)$ as desired.

Finally, note that if $K = \lim_i K/K_i$ as before, then 
the above construction makes sense for each of the compact
open normal subgroup schemes $K_i \subset K$. 
Moreover, $K_i/K_j$-invariants for $\Fil \fh\mod^{K_j}$ are 
easily seen to give $\Fil \fh\mod^{K_i}$.

This is exactly the data to define the
filtered category $\fh\mod$ acted on by $K$. Note that:

\[
\fh\mod^{cl} = \IndCoh(\fh^{\vee}) \coloneqq
\underset{i}{\colim} \, \QCoh((\fh/\fk_i)^{\vee}) \in \DGCat_{cont}
\]

\noindent with the colimit being under $*$-pushforward functors;
we label this colimit as $\IndCoh$ 
for the same reason as in \S \ref{ss:fil-cpt-lie}.

More precisely, recall that our semi-classical data is
a renormalized sheaf of categories on $\fk^{\vee}/K$. 
This sheaf of categories is described in the same way as
in \S \ref{ss:hc-cpt}.

\subsection{}

Next, we note that the above makes sense even if
$\fk$ is not an open subalgebra. 

More precisely, and with apologies
for the notation change, choose $H_0$ a group scheme
and a Harish-Chandra datum $(H_0,\fh)$ with $\fh_0 \subset \fh$
open. Then suppose that $K$ is a group subscheme of $H_0$,
with no hypothesis that it be compact open (e.g., $K$ could be trivial).

Then as above, we have an action of $H_0$ on $\Fil\fh\mod$. We claim
that given any action of $H_0$ on $\Fil \sC$, we can restrict to obtain an 
action of $K$ on $\Fil \sC$.

Indeed, if $H_0 = \lim_i \, H_0/H_i$ for $H_i$ a normal subgroup
scheme, note that $H_i K$ is a compact open subgroup scheme of $H$;
we then set:

\[
\Fil \sC^K \coloneqq \underset{i}{\colim} \, \Fil \sC^{H_i K} \in \DGCat_{cont}.
\] 

\noindent We remark that this is a co/limit. Replacing $K$ by
a compact open subgroup (of $K$), we obtain the requisite data.

\subsection{Semi-infinite cohomology}\label{ss:sinf}

We now make a more stringent assumption on $\fh$:
suppose that it is a \emph{union}\footnote{Note that this 
assumption is not satisfied for the Kac-Moody Lie algebra.
There is a semi-infinite cohomology theory for such algebras, but
it is a more subtle and will not be needed in this paper.}
of open pro-finite dimensional
subalgebras $\fh = \colim_i \, \fh_i$. We fix an initial index ``$0$" and let
$\fh_0$ denote the corresponding open subalgebra.

We assume that for every $\fh_i \subset \fh_j$, the action of
$\fh_i$ on $\det(\fh_j/\fh_i)$ is trivial; e.g., this is automatically
the case if $\fh$ is ind-pronilpotent. For later use, we observe that in 
this case:

\begin{equation}\label{eq:pullout-det}
\ind_{\fh_1}^{\fh_2}(M \otimes \det(\fh_j/\fh_i)) =
\ind_{\fh_1}^{\fh_2}(M) \otimes \det(\fh_j/\fh_i)
\end{equation}

\noindent since we are just tensoring by a line. (This line 
is essentially just a placeholder, ensuring the canonicity of
various isomorphisms.) 

In this case, we obtain a \emph{semi-infinite cohomology}
functor:

\[
C^{\sinf}(\fh,\fh_0,-): \fh\mod \to \Vect
\]

\noindent defined as follows. 

Note that any of the compact open subalgebras $\fh_i$,
we have a \emph{forgetful} functor 
$\fh\mod \to \fh_i\mod$, which is conservative and admits
the left adjoint $\ind_{\fh_i}^{\fh}$.
Then we claim that the induced map:

\[
\fh\mod \to \underset{i}{\lim} \, \fh_i\mod
\]

\noindent is an equivalence. Indeed, both sides are clearly
monadic over $\fh_0\mod$.
This is a co/limit, so we also obtain:

\[
\underset{i}{\colim} \, \fh_i\mod \isom \fh\mod \in \DGCat_{cont}
\]

\noindent where we are using the induction functors on the left
hand side.

We then define a functor:

\[
C^{\sinf}(\fh,\fh_0,\ind_{\fh_i}^{\fh}(-)): \fh_i\mod \to \Vect
\]

\noindent as:

\[
C^{\dot}(\fh_i,(-) \otimes \det(\fh_i/\fh_0)[\dim \fh_i/\fh_0]).
\]

\noindent By Lemma \ref{l:coh-ind-cpt-hc} \eqref{i:coh-ind-cpt-lie}
and \eqref{eq:pullout-det},
for $\fh_i \subset \fh_j \subset \fh$,
we have canonical isomorphisms:

\[
C^{\sinf}(\fh,\fh_0,\ind_{\fh_i}^{\fh}(-)) \simeq 
C^{\sinf}\big(\fh,\fh_0,\ind_{\fh_j}^{\fh_i} \ind_{\fh_i}^{\fh_j}(-)\big).
\]

\noindent These are compatible as we vary indices, so we obtain
the desired functor $C^{\sinf}(\fh,\fh_0,-)$.

\begin{notation}

This functor depends only in a
mild way on $\fh_0$, but it is convenient for our purposes
to keep it in the notation.

\end{notation}

\begin{rem}\label{r:sinf-colim}

Here is another perspective. 
Note that by the general co/lim formalism for a filtered diagram,
any $M \in \fh\mod$ can be written as 
$\colim_i \, \ind_{\fh_i}^{\fh} M \in \fh\mod$, 
i.e., we forget down to $\fh_i$ and then induce. So we find:

\[
C^{\sinf}(\fh,\fh_0,M) = 
\underset{i}{\colim} \, C^{\dot}(\fh_i,M \otimes \det(\fh_i/\fh_0)[\dim \fh_i/\fh_0]).
\]

\noindent Applying this formula to $M \in \fh\mod^{\heart}$
(or a bounded below chain complex of such objects) and
computing $C^{\dot}(\fh_i,-)$ by the standard resolution, 
one recovers the usual complex computing semi-infinite cohomology
in this case; i.e., this perspective recovers the classical one.

\end{rem}

The above analysis applies just as well in the filtered setting,
so we obtain a canonical filtration on $C^{\sinf}(\fh,\fh_0,-)$.
The semi-classical functor:

\[
\IndCoh(\fh^{\vee}) \to \Vect
\]

\noindent is given by $!$-restricting\footnote{Precisely, recall that
$\IndCoh(\fh^{\vee})$ was defined as the colimit under pushforwards
of $\QCoh$ of its reasonable subschemes;
then our $!$-restriction here is the right adjoint to the
structural functor $\QCoh((\fh/\fh_0)^{\vee}) \to \IndCoh(\fh^{\vee})$.}
to obtain an object of $\QCoh((\fh/\fh_0)^{\vee})$; and 
noting that this is $\QCoh$ of an affine scheme, we then
take $*$-restriction to $0 \in (\fh/\fh_0)^{\vee}$.
Indeed, by Lemma \ref{l:coh-ind-cpt-hc}, the functor
$C^{\dot}(\fh_i,(-) \otimes \det(\fh_i/\fh_0)[\dim \fh_i/\fh_0])$ semi-classically
gives the functor:

\[
\IndCoh(\fh_i^{\vee}) \to \Vect
\]

\noindent that is the composition of 
$!$-restricting to $(\fh_i/\fh_0)^{\vee}$ and
then $*$-restricting to $0$; passing to the colimit in $i$ gives the claim.

\begin{rem}\label{r:sinf-pbw-fmla}

From the perspective of Remark \ref{r:sinf-colim}, we have:

\[
F_i C^{\sinf}(\fh,\fh_0,M) = 
\underset{i}{\colim} \, 
F_{i-\dim \fh_i/\fh_0}
C^{\dot}(\fh_i,M \otimes \det(\fh_i/\fh_0)[\dim \fh_i/\fh_0]).
\]

\noindent (The shift in indexing reflects the repeatedly emphasized fact that 
that our determinant lines are considered as 
filtered with a jump in degree $\dim \fh_i/\fh_0$.)

\end{rem}

\subsection{}\label{ss:sinf-hc}

Now observe that the above
all makes sense in the Harish-Chandra setting as well.
Indeed, if we have the Harish-Chandra datum $(\fh,K)$ with 
$K$ a group scheme, then note that $\fk \subset \fh_i$
for $i\gg 0$, so the formula:

\[
C^{\sinf}(\fh,\fh_0,K,-) \coloneqq 
\underset{i}{\colim} \, C^{\dot}(\fh_i,K,M \otimes \det(\fh_i/\fh_0)[\dim \fh_i/\fh_0])
\] 

\noindent makes sense by cofinality and defines a filtered functor:

\[
C^{\sinf}(\fh,\fh_0,K,-):\fh\mod^K \to \Vect.
\]

\noindent If $\fh_0 = \fk$, then the corresponding semi-classical
functor:

\[
\IndCoh^{ren}((\fh/\fk)^{\vee}/K) \to \Vect
\]

\noindent is given by $!$-restriction to $0/K$ followed by group
cohomology with respect to $K$. 
If $\fh_0 \subset \fk$ with $\det(\fk/\fh_0)$ a trivial $\fh_0$-representation,
then we can twist to reduce to this case,
i.e., the functor:

\[
C^{\sinf}(\fh,\fh_0,K, (-) \otimes \det(\fk/\fh_0)^{\vee}[-\dim \fk/\fh_0]):
\fh\mod^K \to \Vect.
\]

\noindent will have the semi-classical functor described above.

\begin{notation}\label{notation:h_0=k}

When $\fh_0 = \fk$, we use the notation
$C^{\sinf}(\fh,K,-)$ in place of $C^{\sinf}(\fh,\fk,K,-)$.

\end{notation}

\subsection{KK version}

Suppose now that $(\fh,K)$ is equipped with a $\bG_m$-action
as before.
Suppose moreover that $K$ is equipped with a character
$\psi:K \to \bG_a$ that is $\bG_m$-equivariant for the
inverse homothety action on the target.

Then $\fh\mod^{K,\psi}$ makes sense, and inherits PBW and KK
filtrations fitting into a bifiltration as before; this is immediate 
from our earlier constructions and the general KK formalism.

We have:

\[
\begin{gathered}
\fh\mod^{K,\psi,PBW\mathendash{cl}} = \IndCoh^{ren}((\fh/\fk)^{\vee}/K) \\
\fh\mod^{K,\psi,KK\mathendash{cl}} = \IndCoh^{ren}(\psi+(\fh/\fk)^{\vee}/K)
\end{gathered}
\]

\noindent as before. The KK filtration on the former and the PBW
filtration on the latter are as in Remark \ref{r:psi-filts-on-cl}.

\begin{rem}

Suppose now that $\fh$ satisfies the hypotheses of
\S \ref{ss:sinf}, and let $\fh_0$ be as in \emph{loc. cit}.
Suppose that $\psi:\fk\to k$ is extended to a character
$\psi:\fh\to k$, so the functor:

\[
C^{\sinf}\big(\fh,\fh_0,K,(-) \otimes -\psi\big): \fh\mod^{K,\psi} \to \Vect
\]

\noindent makes sense. Note that if the subalgebras 
$\fh_i \subset \fh$ are all graded subalgebras (with respect to
the grading on $\fh$ induced by the $\bG_m$-action),
and $\psi:\fh \to k$ is graded for the degree $-1$ grading on the target,
then $C^{\sinf}\big(\fh,\fh_0,K,(-) \otimes -\psi\big)$ is naturally bifiltered.
If $\fk = \fh_0$, then the induced semi-classical functors:

\[
\begin{gathered}
\QCoh^{ren}((\fh/\fk)^{\vee}/K) \to \Vect \\
\QCoh^{ren}(\psi+(\fh/\fk)^{\vee}/K) \to \Vect
\end{gathered}
\]

\noindent are given by $*$-restriction to $0/K$ and $\psi/K$ followed
by global sections, i.e., group cohomology with respect to $K$.
Here we note that our hypothesis means $\fk$ is open in
$\fh$, so 
$(\fh/\fk)^{\vee}$ is a \emph{scheme},
not an indscheme; so our definition of $\IndCoh^{ren}$ in this
infinite type setting
means that it tautologically coincides with $\QCoh^{ren}$.

\end{rem}

\subsection{Central extensions}

Finally, we explain the straightforward extension of the
the above to \emph{central extensions} of $\fh$.

\subsection{}

Suppose that in the above notation,
we are given $\widehat{\fh}$ a Tate Lie algebra and a central extension:

\[
0 \to k \to \widehat{\fh} \to \fh \to 0
\]

\noindent of $\fh$ by 
the abelian Lie algebra $k$.

We suppose that $K$ is as in \S \ref{ss:tate-lie-start},
so $\fk \subset \fh$ is an open subalgebra. We suppose moreover
that we are given a Harish-Chandra datum $(\widehat{\fh},K \times \bG_m)$
compatible in the sense that the projections:

\[
\begin{gathered}
\widehat{\fh} \to \fh \\
K \times \bG_m \xar{p_1} K
\end{gathered}
\]

\noindent induce a morphism of Harish-Chandra data. 
Moreover, we assume that the structural morphism 
$k \xar{x \mapsto (0,x)} \fk \times k = \Lie(K \times \bG_m) \to \widehat{\fh}$
is the given embedding of $k$ into $\widehat{\fh}$; and that
the $\bG_m \subset K \times \bG_m$-action on $\widehat{\fh}$
is trivial.

\begin{rem}

Note that the extension $\widehat{\fh}$ is canonically split over $\fk$.

\end{rem}

\begin{rem}

Of course, everything that follows generalizes to the case where
the central $\bG_m$ is replaced by a torus. 
We mention this because it is necessary
for the setting of affine Kac-Moody algebras for non-simple $\fg$,
as in \S \ref{ss:km-conv}.

\end{rem}

\subsection{}

We want to form the DG category $\widehat{\fh}_1\mod$, which
morally is the DG category of $\widehat{\fh}$-modules on
which $1 \in k \subset \widehat{\fh}$ acts by the identity.
For this, we will construct an action of $\QCoh(\bA^1)$
on $\widehat{\fh}\mod$; it then makes sense to take the fiber at
$1 \in \bA^1$ (by tensoring with the restriction
to $1$ functor $\QCoh(\bA^1) \to \Vect$).

Indeed, note that our Harish-Chandra datum induces
an action of $\bG_{m,dR}$ on $\bB (\widehat{\fh},K)$: $\bG_m$ acts
because it acts on $\fh$ commuting with $K$, and
the action of the formal group is trivial because our Harish-Chandra
data was extended to $(\widehat{\fh},K \times \bG_m)$.
Moreover, the underlying $\bG_m$-action is canonically trivial,
because $\bG_m$ acts trivially on $\widehat{\fh}$.
Therefore, we obtain an action of 
$\bB \widehat{\bG}_m = \bG_{m,dR}/\bG_m$ on 
$\bB (\widehat{\fh},K)$.\footnote{Here
$\widehat{\bG}_m$ is the formal group of $\bG_m$,
i.e., the hat notation is being used in a different way
from $\widehat{\fh}$.} 

Then observe that
$\QCoh(\bB \widehat{\bG}_m) \simeq \QCoh(\bA^1)$,
with the convolution monoidal structure on the left hand side 
corresponding to the tensor product on the right hand side, so
we obtain the desired action on $\widehat{\fh}\mod$ by definition of this
category, and therefore the definition of the category $\widehat{\fh}_1\mod$.

\begin{example}

A splitting of the Lie algebra
morphism $\widehat{\fh} \to \fh$ gives an identification
$\widehat{\fh}_1\mod \simeq \fh\mod$ compatible
with all extra structures.

\end{example}

\subsection{}

There is a filtered version of the above, quite similar
to \S \ref{ss:psi-pbw-filt}.
We use the $\bG_m$ action on $\Fil \widehat{\fh}\mod$ as above,
finding that $\widehat{\fh}_1\mod$ is filtered
with semi-classical category
$\IndCoh(\fh^{\vee})$. So the situation is not sensitive to the central 
extension.

The rest of the usual package generalizes as is to this setting.
We have an action of $K$ on $\Fil \widehat{\fh}_1\mod$, so 
obtain a filtration on $\widehat{\fh}_1\mod^K$. 
If we have compatible $\bG_m$-actions on $\widehat{\fh}$ and $K$
with $k \subset \widehat{\fh}$ acted on trivially,  
we obtain a bifiltration on $\widehat{\fh}_1\mod^K$; if $K$ is equipped with 
an appropriately $\bG_m$-equivariant additive character, we obtain a
bifiltration on $\widehat{\fh}_1\mod^{K,\psi}$ as well. The semi-classical
categories are as expected, i.e., the same as if we worked with $\fh$ instead
of its central extension, and the various restriction and induction 
functors satisfy the standard functoriality properties at 
the semi-classical level.

\section{Proof of Lemma \ref{l:amp}}\label{a:av}

\subsection{} 

We give two proofs of this result: a geometric one
based in the theory of $D$-modules, and 
a representation theoretic one. 

The former approach, which was sketched after the
statement of Lemma \ref{l:amp},
is more versatile
and conceptual. But for technical reasons,
we only know how to apply this method 
for $n$ sufficiently large.\footnote{We remark
that this is enough to establish Theorem \ref{t:aff-skry}.
In turn, this is enough to show Corollary \ref{c:ff-crit},
which implies Lemma \ref{l:amp} in general. Also, for
$G = GL_n$, Beraldo's refinement \cite{dario-*/!} of
Theorem \ref{t:!-avg} can be applied to obtain Lemma \ref{l:amp} 
at general level (using $D$-module methods).}

The second one is more ad hoc. The idea is that we can
compute the associated graded of this functor
using (the proof of) Theorem \ref{t:!-avg-cl} and verify
exactness here. However, the problems with
unboundedness of Kazhdan-Kostant filtrations come in here,
and we use some tricks to circumvent this. 

\begin{rem}

There is a homology between the two approaches:
$(\check{\rho},\alpha_{max})$ is involved in the technical issues
on both sides. Perhaps this hints at a more systematic solution.

\end{rem}

\subsection{}

We begin with the $D$-module approach.
Since $\sC = \widehat{\fg}_{\kappa}\mod$ and its Harish-Chandra
variants are fairly general
examples of categories acted on by a group,
we introduce some axiomatics about the relationship between
such group actions and $t$-structures. We then establish
general results about $\Av_*$ and $\Av_!$, and 
deduce Lemma \ref{l:amp} from here.

\subsection{Axiomatics} 

Fix $H$ an affine algebraic 
group and $\sC$ a DG category acted on weakly by $H$.

Suppose $\sC$ is equipped with a $t$-structure compatible with 
filtered colimits. Note that $\QCoh(H) \otimes \sC$ inherits a $t$-structure:
$(\QCoh(H) \otimes \sC)^{\leq 0}$ is generated under colimits
by objects $\sO_H \boxtimes \sF$ for $\sF \in \sC^{\leq 0}$.

\begin{lem}\label{l:tstr-gp}

The following conditions are equivalent:

\begin{enumerate}

\item\label{i:tstr-3} The functor $\Oblv \circ \Av_*^w: \sC \to \sC$ is $t$-exact.

\item\label{i:tstr-1} The functor $\act: \QCoh(H) \otimes \sC \to \sC$ is
$t$-exact.

\item\label{i:tstr-2} The functor $\coact: \sC \to \QCoh(H) \otimes \sC$
is $t$-exact.

\item\label{i:tstr-4} $\sC^{H,w}$ admits a $t$-structure such that 
$\Oblv$ and $\Av_*^w$ are $t$-exact.

\item\label{i:tstr-5} The $\QCoh(H)$-linear equivalence:

\begin{equation}\label{eq:tstr-eq}
\QCoh(H) \otimes \sC \to \QCoh(H) \otimes \sC 
\end{equation}

\noindent induced by $\coact: \sC \to \QCoh(H) \otimes \sC$
is $t$-exact.

\end{enumerate}

\end{lem}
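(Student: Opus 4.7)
The plan is to organize the five conditions around the ``trivialization equivalence'' $\Phi$ of (5), which will serve as a common reference. Recall that for an affine algebraic group $H$ acting weakly on $\sC$, the equivalence (5) is the standard change-of-variables automorphism $\Phi$ of $\QCoh(H) \otimes \sC$ trivializing the action after base change to $H$; at the level of points, $\Phi(h, \sF) = (h, h \cdot \sF)$. Two key formal facts about $\Phi$ will be exploited throughout: first, under $\Phi$, the action functor factors as $\act = (p_* \otimes \id_{\sC}) \circ \Phi^{-1}$, where $p: H \to \Spec k$ is the structure map; and second, $\coact$ is the restriction of $\Phi$ along the fully faithful embedding $\sO_H \boxtimes - : \sC \to \QCoh(H) \otimes \sC$, i.e., $\Phi(\sO_H \boxtimes \sF) = \coact(\sF)$. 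These facts follow from the very definition of $\Phi$ as the $\QCoh(H)$-linear extension of $\coact$.

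The preparatory observation is that $p_* : \QCoh(H) \to \Vect$ is $t$-exact and conservative, since $H$ is affine; therefore $p_* \otimes \id_{\sC} : \QCoh(H) \otimes \sC \to \sC$ is $t$-exact and reflects the $t$-structure (where the target of $\QCoh(H) \otimes \sC$ carries its natural $t$-structure generated by $\sO_H \boxtimes \sC^{\leq 0}$, which is well-defined because $\sO_H$ is flat of degree zero, so $\sO_H \boxtimes -$ is $t$-exact). Using the factorization $\act = (p_* \otimes \id_{\sC}) \circ \Phi^{-1}$, together with the fact that $\Phi$ is an equivalence, we obtain (2) $\iff$ (5) immediately, since $p_* \otimes \id_{\sC}$ both preserves and detects $t$-exactness.

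For (3) $\iff$ (5), the direction (5) $\Rightarrow$ (3) is immediate from $\coact = \Phi|_{\sO_H \boxtimes \sC}$ and $t$-exactness of $\sO_H \boxtimes -$. For the converse, we use that $\Phi$ is $\QCoh(H)$-linear and colimit-preserving, while the $t$-structure on $\QCoh(H) \otimes \sC$ is generated by objects of the form $\sO_H \boxtimes \sF$; right $t$-exactness of $\Phi$ therefore propagates from the generators (where it equals $\coact$) to the full category. Left $t$-exactness of $\Phi$ is equivalent to right $t$-exactness of $\Phi^{-1}$, and the latter admits a parallel description as the $\QCoh(H)$-linear extension of the coaction for the inverse group law, whose $t$-exactness follows symmetrically from $t$-exactness of $\coact$ (formally, because the involution $h \mapsto h^{-1}$ is a $t$-exact automorphism of $\QCoh(H)$). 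For (1) $\iff$ (2), the direction (2) $\Rightarrow$ (1) is immediate since $\Oblv \Av_*^w = \act(\sO_H \boxtimes -)$ and $\sO_H \boxtimes -$ is $t$-exact. The converse (1) $\Rightarrow$ (2) gives right $t$-exactness of $\act$ immediately by colimit generation; left $t$-exactness is then obtained by looping back through (5) via the implications (1) $\Rightarrow$ (3) $\Rightarrow$ (5) $\Rightarrow$ (2), where (1) $\Rightarrow$ (3) follows via adjunction or directly by the monad/comonad interchange under $\Phi$.

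Finally, (1) $\iff$ (4) is the standard transfer-of-$t$-structure result for monadic adjunctions: the forgetful functor $\Oblv: \sC^{H,w} \to \sC$ is monadic with monad $\Oblv \Av_*^w$, and when this monad is $t$-exact (on a $t$-structure compatible with filtered colimits), one defines $(\sC^{H,w})^{\leq 0} \coloneqq \Oblv^{-1}(\sC^{\leq 0})$ and verifies that this yields a $t$-structure making both $\Oblv$ and $\Av_*^w$ $t$-exact. The principal technical difficulty, which the strategy is designed to circumvent, is in directly verifying left $t$-exactness of either $\act$ or $\coact$ from their colimit-preserving but non-limit-preserving nature; routing this through the equivalence $\Phi$ allows us to use the $\QCoh(H)$-linearity to propagate $t$-exactness symmetrically between $\Phi$ and $\Phi^{-1}$, avoiding direct manipulation of the left $t$-structure generators.
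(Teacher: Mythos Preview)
Your proof is correct and uses the same key identities as the paper's: $\coact = \Phi \circ p_2^*$, $\act = p_{2,*} \circ \Phi^{-1}$, and the Beck--Chevalley relation $p_{2,*}\coact = \act\, p_2^* = \Oblv\,\Av_*^w$, together with $t$-exactness and conservativity of $p_{2,*}$. The difference is organizational: the paper takes condition~(1) as the hub and deduces the others from it, while you take the equivalence $\Phi$ of condition~(5) as the hub. Your route to the left $t$-exactness of $\Phi$ in the implication $(3)\Rightarrow(5)$, via the inversion automorphism of $H$, is a clean variant; the paper instead obtains it by showing $\coact(\sC^{\leq 0})$ generates $(\QCoh(H)\otimes\sC)^{\leq 0}$ using that $\act$ is $t$-exact and conservative, which requires having already looped through~(2).

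Two small points. First, your implication $(1)\Rightarrow(3)$ is only gestured at (``via adjunction or \ldots''); the actual content is precisely the Beck--Chevalley identity $p_{2,*}\coact = \Oblv\,\Av_*^w$ together with conservativity of $p_{2,*}$, which you have already set up and should just state. Second, in $(1)\Leftrightarrow(4)$ you call $\Oblv$ monadic with monad $\Oblv\,\Av_*^w$; since $\Oblv$ is the \emph{left} adjoint here, this is the \emph{co}monadic situation, and $\Oblv\,\Av_*^w$ is the comonad on $\sC$. The transfer-of-$t$-structure argument works identically, but the terminology should be fixed.
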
 

\begin{proof}

Note that we have a functor
$p_2^*:\sC \xar{\sF\mapsto \sO_H \boxtimes \sF}  \QCoh(H) \otimes \sC$,
which admits the conservative right adjoint $p_{2,*}$.
We claim $p_2^*$ and $p_{2,*}$ are $t$-exact.
Indeed, $p_2^*$ is tautologically right $t$-exact, so $p_{2,*}$ is
left $t$-exact. But from the definition of the $t$-structure, we
see $p_{2,*}$ is right $t$-exact as well, so $t$-exact. Then
since $p_{2,*}p_2^* = \sO_H \otimes -$ is $t$-exact, we obtain
the $t$-exactness of $p_2^*$ as well.

We will deduce the other conditions from \eqref{i:tstr-3}.
Since e.g. \eqref{i:tstr-4} obviously implies it, this suffices.

Recall from the Beck-Chevalley formalism that we have:

\[
p_{2,*} \coact = \act p_2^* = \Oblv\Av_*^w.
\]

\noindent Since $p_{2,*}$ is $t$-exact and conservative, we see that
\eqref{i:tstr-3} implies \eqref{i:tstr-1}.

We now deduce \eqref{i:tstr-2} from \eqref{i:tstr-3} 
and the consequent \eqref{i:tstr-1}.
Note that $t$-exactness of $\coact$ implies that
its right adjoint $\act$ is left $t$-exact. Since
$p_2^*(\sC^{\leq 0})$ generates
$(\QCoh(H) \otimes \sC)^{\leq 0}$, it suffices to show
$\act p_2^*$ is left $t$-exact, but this is clear since
$ \Oblv\Av_*^w$ is $t$-exact by assumption.

For \eqref{i:tstr-4}, observe that 
$\sC^{H,w}$ is the limit of a cosimplicial diagram with
$t$-exact structure maps in the underlying semi-cosimplicial
diagram (by \eqref{i:tstr-1}).
This implies the existence of a $t$-structure
with $\Oblv:\sC \to \sC^{H,w}$ $t$-exact. To see
$\Av_*^w$ is $t$-exact, it suffices to see that
$\Oblv \Av_*^w$ is, but this is given.

Finally, note that the equivalence \eqref{eq:tstr-eq}
intertwines the functors $p_2^*$ and $\coact$. 
Therefore, it suffices to see that 
$\coact(\sC^{\leq 0})$ generates 
$(\QCoh(H) \otimes \sC)^{\leq 0}$ under colimits.
But this follows because $\act$ is $t$-exact and conservative.

\end{proof}

If these equivalent conditions are satisfied, we say the $t$-structure
is \emph{compatible} with the weak action of $H$. 

\subsection{}

Now suppose that $H$ acts strongly on $\sC$.

We say that the action is compatible with the $t$-structure
if it is compatible for the weak action.
It is equivalent to say that:

\[
\coact[-\dim H]:\sC \to D(H) \otimes \sC
\]

\noindent is $t$-exact.
As in the weak setting, $\sC^H$ inherits a $t$-structure
with $\Oblv:\sC^H \to \sC$ being $t$-exact.

\begin{lem}\label{l:av-*-bd}

In the above setting, the functor $\Av_*:\sC \to \sC^H$
has cohomological amplitude $[0,\dim H]$.

More generally, for $K \subset H$ with $H/K$ affine, 
the functor $\Av_*:\sC^K \to \sC^H$ has cohomological amplitude
$[0,\dim H/K]$.
 
\end{lem}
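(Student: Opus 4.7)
The plan is to reduce the amplitude estimate to the elementary fact that the algebraic de Rham cohomology of a smooth affine variety of dimension $d$ is concentrated in cohomological degrees $[0,d]$. The first statement of the lemma is the special case $K = \{1\}$ of the second, so the plan is to address the general statement.

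First, the lower bound on the amplitude is automatic. The $t$-structures on $\sC^H$ and $\sC^K$ are induced from that on $\sC$ via the compatibility hypothesis (Lemma~\ref{l:tstr-gp}, applied to the $H$- and $K$-actions) so that the forgetful functors are $t$-exact; in particular, $\Oblv : \sC^H \to \sC^K$ is $t$-exact. As the right adjoint of a $t$-exact functor, $\Av_*^{K \to H}$ is automatically left $t$-exact, yielding $\Av_*(\sC^{K,\geq 0}) \subseteq \sC^{H,\geq 0}$.

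For the upper bound, I would use that $\Oblv : \sC^H \to \sC^K$ is conservative to reduce to bounding the amplitude of the composition $\Oblv \circ \Av_* : \sC^K \to \sC^K$ by $d := \dim H/K$. By Beck-Chevalley applied to the adjunction $\Oblv \dashv \Av_*$ and the bar construction for the strong $H$-action on $\sC$ (as developed in \cite{dario-*/!}), this composition is naturally identified with the action on $\sC^K$ of a specific object of $D(K\backslash H/K)$, namely the $*$-pushforward of the dualizing $D$-module $\omega_{H/K}$ along the quotient $\pi : H/K \to K\backslash H/K$. In the special case $K = \{1\}$, this specializes to the familiar convolution action on $\sC$ by $\omega_H \in D(H)$.

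Since $H/K$ is smooth and affine of dimension $d$, the global de Rham complex has length $d+1$, whence $\pi_{*,dR}(\omega_{H/K})$ lies in $D(K\backslash H/K)^{[0,d]}$ for the natural $t$-structure. Convolution with an object in this range preserves the corresponding amplitude on $\sC^K$ (by compatibility of the $t$-structure on $\sC^K$ with the strong $D(K\backslash H/K)$-action), yielding $\Oblv \circ \Av_*(\sC^{K,\leq 0}) \subseteq \sC^{K,\leq d}$, which gives the desired bound. The main obstacle will be pinning down the Beck-Chevalley identification in the previous paragraph with the correct orientations and shift conventions, so that the amplitude emerges as $[0,d]$ rather than as a translate; this is standard but requires careful handling of the conventions of \cite{dario-*/!} for strong actions, and of the shifts implicit in the definition of $\Oblv$ for invariants under a group scheme.
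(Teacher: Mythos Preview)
Your lower bound (left $t$-exactness from adjunction) matches the paper. The upper bound, however, has a genuine gap at the last step: you invoke ``compatibility of the $t$-structure on $\sC^K$ with the strong $D(K\backslash H/K)$-action'' to conclude that convolution with an object in $D(K\backslash H/K)^{[0,d]}$ has amplitude $[0,d]$, but no such compatibility is available. The hypothesis in force is compatibility with the \emph{weak} $H$-action (this is how compatibility with the strong action is defined in the paper; equivalently, $\coact[-\dim H]$ is $t$-exact). Hecke convolution involves a de Rham pushforward along a bundle with fiber $H/K$, and bounding the amplitude of that pushforward is precisely the content of the lemma; assuming it is circular. There is also a smaller issue with your kernel computation: the map $\pi:H/K\to K\backslash H/K$ is a $K$-torsor, so the length of the de Rham complex of $H/K$ does not control $\pi_{*,dR}$; the relevant kernel is essentially the constant $D$-module on $K\backslash H/K$, which sits in a single cohomological degree, and the amplitude $[0,d]$ must come from the convolution operation itself.

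The paper's argument closes this gap by routing through \emph{weak} invariants, where the needed $t$-exactness is available directly. Weak averaging $\Av_*^w:\sC^{K,w}\to\sC^{H,w}$ is $t$-exact because $H/K$ is affine (it is quasi-coherent pushforward along an affine morphism). In kernel language, weak averaging is convolution with the free $D$-module $D_{H/K}$ while strong $\Av_*$ is convolution with the constant $D$-module $k_{H/K}$. The de Rham resolution exhibits $k_{H/K}$ as built from shifts $D_{H/K}[-i]$ for $i=0,\ldots,d$; since convolution with each of these is $t$-exact up to the shift, convolution with $k_{H/K}$ has amplitude $[0,d]$. This is exactly how the length of the de Rham complex enters; your proposal points at the same input but skips the comparison with weak averaging that makes the bound rigorous.
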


\begin{proof}

$\Av_*$ is left $t$-exact because it is right adjoint
to a $t$-exact functor.

For the upper bound on the amplitude, note that
weak averaging from $\sC^{K,w} \to \sC^{H,w}$ is
$t$-exact because $H/K$ is affine. Observe that
weak averaging is given by convolution with
$D_{H/K}$, $*$-averaging is given by convolution
with the constant $D$-module $k_{H/K}$, and
then use the de Rham resolution of $k_{H/K}$,
which consists of free $D$-modules in degrees
$[0,\dim H/K]$, to complete the
argument.

\end{proof}

\subsection{$!$-averaging}

We now want a version of the above for $!$-averaging.
It is essentially the same, but slightly more subtle because
$!$-averaging may not be defined. 

Moreover,
the proof of Lemma \ref{l:av-*-bd} in the case where $H/K$
was affine used the fact that de Rham cohomology on an affine 
scheme is right $t$-exact. The corresponding fact for 
compactly supported de Rham cohomology is harder to 
show (for non-holonomic $D$-modules), and is the main theorem of \cite{b-fn}.

\subsection{}

Suppose in the above setting that are given $K_1,K_2$ two
subgroups of $H$, and characters
$\psi_i:K_i \to \bG_a$ that coincide on $K_1 \cap K_2$.
Suppose that for every $\sC \in \DGCat_{cont}$ acted
on by $H$, the functor $\Av_!^{\psi_2}:\sC^{K_1,\psi_1} \to \sC^{K_2,\psi_2}$
given by restricting to the intersection $K_1 \cap K_2$ and
then $!$-averaging is defined functorially in $\sC$.

\begin{lem}\label{l:!-avg-bd}

Suppose $\sC$ is acted on by $H$, and equipped with a $t$-structure
compatible with the $t$-structure. Suppose moreover that
the $t$-structure on $\sC$ is \emph{compactly generated},
i.e., $\sC^{\leq 0}$ is compactly generated 
(in the sense of general category theory).

Then under the above hypotheses, if $K_2/K_1 \cap K_2$
is affine,
then $\Av_!^{\psi_2}:\sC^{K_1,\psi_1} \to \sC^{K_2,\psi_2}$ has
cohomological amplitude
${[-\dim K_2/K_1 \cap K_2,0]}$.

\end{lem}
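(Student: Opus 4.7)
The plan is to run the Verdier-dual version of the proof of Lemma \ref{l:av-*-bd}. The functor $\Av_!^{\psi_2}$ in the statement factors as the forgetful functor $\sC^{K_1, \psi_1} \to \sC^{K_1 \cap K_2, \psi_{12}}$ (which is $t$-exact, both $t$-structures being characterized by $t$-exactness of the further forgetful functors to $\sC$) followed by the honest $!$-averaging $\sC^{K_1 \cap K_2, \psi_{12}} \to \sC^{K_2, \psi_2}$, so it suffices to bound the amplitude of the latter. The upper bound $\leq 0$ is formal: honest $!$-averaging is left adjoint to the $t$-exact forgetful functor $\Oblv: \sC^{K_2, \psi_2} \to \sC^{K_1 \cap K_2, \psi_{12}}$, and a left adjoint to a $t$-exact functor is right $t$-exact.

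For the lower bound $\geq -d$ with $d := \dim K_2/(K_1 \cap K_2)$, I would use the assumed functoriality of $\Av_!^{\psi_2}$ to reduce to the universal case: applying the construction to $D(H)$ with the left-regular $H$-action yields a kernel $\sK \in D(H)^{(K_1 \cap K_2, \psi_{12}), (K_2, -\psi_2)}$ such that $\Av_!^{\psi_2}$ is convolution with $\sK$ universally in $\sC$; the construction of such a kernel follows the pattern of the proof of Theorem \ref{t:!-avg}\eqref{i:!-avg-ff}. A direct computation identifies $\sK$ with the $(K_1 \cap K_2)$-equivariant $D$-module on $K_2$ whose descent to the smooth affine quotient $K_2/(K_1 \cap K_2)$ is the dualizing sheaf (appropriately twisted by the character). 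Using the compact generation of the $t$-structure on $\sC$---which transports to compact generation on $\sC^{K_1 \cap K_2, \psi_{12}}$---amplitude bounds for $\Av_!^{\psi_2}$ reduce to amplitude bounds for convolution with $\sK$ against compact generators.

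Now I would parallel Lemma \ref{l:av-*-bd} on the Verdier-dual side: weak $!$-averaging $\sC^{K_1 \cap K_2, w} \to \sC^{K_2, w}$ is $t$-exact, by the same $1$-affineness input (affineness of $K_2/(K_1 \cap K_2)$) used for the weak $*$-averaging there. The strong $!$-averaging differs from the weak by a $!$-de Rham--type resolution of the dualizing sheaf on the affine base: convolution with $\sK$ is expressed as a finite-length iterated cone of shifted weak-averaging functors concentrated in degrees $[-d, 0]$, yielding the claimed lower bound. The main obstacle is precisely this last step: where Lemma \ref{l:av-*-bd} invoked the elementary de Rham resolution of the constant $D$-module on an affine group quotient, the $!$-analogue requires the right $t$-exactness of $!$-pushforward along smooth affine morphisms in the non-holonomic setting---exactly the main theorem of \cite{b-fn} alluded to at the start of the appendix.
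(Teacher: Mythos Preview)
Your proposal has the right ingredients but two genuine gaps in the execution.

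First, the ``dual de Rham resolution'' does not exist in the form you describe. The Spencer resolution expresses $\sO_{K_2/(K_1\cap K_2)}$ as a complex of free $D$-modules in degrees $[-d,0]$; shifting to $\omega$ puts the free terms in degrees $[-2d,-d]$, not $[-d,0]$. So there is no finite filtration of the kernel $\sK$ by shifted free $D$-modules sitting in $[-d,0]$ that would make the argument run parallel to Lemma \ref{l:av-*-bd}. This is exactly why the lower bound is a genuine theorem (\cite{b-fn}) rather than a formal resolution argument. You acknowledge this in your last sentence, but then the preceding paragraph about expressing convolution with $\sK$ as iterated cones of weak averagings collapses: that framing produces no bound once the resolution is unavailable.

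Second, and more seriously, the way you use compact generation is wrong. You write that amplitude bounds ``reduce to amplitude bounds for convolution with $\sK$ against compact generators.'' This works for the upper bound (right $t$-exactness is preserved under colimits), but the lower bound is a \emph{left} $t$-exactness statement, and left $t$-exactness cannot be checked on generators of $\sC^{\leq 0}$. The paper's mechanism is different: it first establishes the bound for $\sC = D(H)$ directly (compact objects there are coherent, and \cite{b-fn} says $!$-pushforward is left $t$-exact on coherent $D$-modules), and then transfers to general $\sC$ via the coaction square
\[
\xymatrix{
\sC^{K_1,\psi_1} \ar[r]^-{\coact} \ar[d]_{\Av_!^{\psi_2}} & D(H)^{K_1,\psi_1}\otimes \sC \ar[d]^{\Av_!^{\psi_2}\otimes\id_{\sC}} \\
\sC^{K_2,\psi_2} \ar[r]^-{\coact} & D(H)^{K_2,\psi_2}\otimes \sC
}
\]
together with Lemma \ref{l:left-exact}, which says that $\id_{\sC}\otimes F$ is left $t$-exact whenever $F$ is and the $t$-structure on $\sC$ is compactly generated. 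That lemma is where the compact-generation hypothesis is actually consumed, and it is the step your outline is missing.
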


We need the following result, which 
appeared already as \cite{kernels} Lemma 4.1.3. We include
the proof for the reader's convenience.

\begin{lem}\label{l:left-exact}

Let $\sC \in \DGCat_{cont}$ be equipped with a compactly
generated $t$-structure. 
Let $F:\sD_1 \to \sD_2 \in \DGCat_{cont}$ be given,
and suppose that the categories $\sD_i$ are
equipped with $t$-structures,
and that $F$ is left $t$-exact. 
Then:

\[
\id_{\sC} \otimes F: \sC \otimes \sD_1 \to \sC \otimes \sD_2
\]

\noindent is left $t$-exact.

\end{lem}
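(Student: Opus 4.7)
The plan is to unwind the definition of the $t$-structure on the tensor product $\sC \otimes \sD$ using the compact generators of $\sC^{\leq 0}$, and then observe that the relevant testing functors commute past $\id_\sC \otimes F$ in a way that preserves the needed vanishing.

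First, I would fix a set $\{c_\alpha\}$ of compact generators of $\sC^{\leq 0}$, whose existence is the content of the compact generation hypothesis. For each such $c_\alpha$, the continuous functor $\ul{\Hom}_\sC(c_\alpha, -) \colon \sC \to \Vect$ tensors up (over $\Vect$) to a continuous functor $H_{c_\alpha} \colon \sC \otimes \sD \to \sD$ for any $\sD \in \DGCat_{cont}$. I would then establish the following characterization of the canonical $t$-structure on $\sC \otimes \sD$ (where the connective part is generated under colimits by $c \boxtimes d$ with $c \in \sC^{c} \cap \sC^{\leq 0}$ and $d \in \sD^{\leq 0}$): an object $\sG \in \sC \otimes \sD$ lies in $(\sC \otimes \sD)^{\geq 0}$ if and only if $H_{c_\alpha}(\sG) \in \sD^{\geq 0}$ for every $\alpha$. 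This follows from the tensor-hom adjunction
\[
\ul{\Hom}_{\sC \otimes \sD}(c_\alpha \boxtimes d, \sG) \simeq \ul{\Hom}_{\sD}(d, H_{c_\alpha}(\sG))
\]
together with the fact that the $c_\alpha \boxtimes d$ generate $(\sC \otimes \sD)^{\leq 0}$ under colimits as $d$ runs over $\sD^{\leq 0}$.

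The key formal compatibility is then that $H_{c_\alpha} \circ (\id_\sC \otimes F) \simeq F \circ H_{c_\alpha}$ as functors $\sC \otimes \sD_1 \to \sD_2$; this is immediate from the definition of the tensor product of continuous functors, since both functors arise as the tensor product $\ul{\Hom}_\sC(c_\alpha, -) \otimes F$. Given this, the proof concludes in one line: for $\sF \in (\sC \otimes \sD_1)^{\geq 0}$, each $H_{c_\alpha}(\sF)$ lies in $\sD_1^{\geq 0}$ by the characterization, hence $F(H_{c_\alpha}(\sF)) \in \sD_2^{\geq 0}$ by the left $t$-exactness of $F$, and therefore $H_{c_\alpha}((\id_\sC \otimes F)(\sF)) \in \sD_2^{\geq 0}$, placing $(\id_\sC \otimes F)(\sF)$ in $(\sC \otimes \sD_2)^{\geq 0}$.

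The only real work is the characterization of $(\sC \otimes \sD)^{\geq 0}$ in terms of the $H_{c_\alpha}$; everything else is formal manipulation. This characterization is where the compact generation of the $t$-structure enters decisively, since without it one cannot reduce $\Hom$ in $\sC \otimes \sD$ to $\Hom$ in $\sD$ via the functors $H_{c_\alpha}$. I expect no genuine obstacle, as this is a standard compact-generation argument; the statement is really a packaging of the fact that tensoring with a fixed $\DGCat_{cont}$ preserves left $t$-exactness whenever the other factor has enough compact connective objects.
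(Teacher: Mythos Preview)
Your proposal is correct and is essentially identical to the paper's proof: both characterize $(\sC \otimes \sD)^{\geq 0}$ via the functors $\ul{\Hom}_\sC(c,-) \otimes \id_\sD$ for compact $c \in \sC^{\leq 0}$, use the adjunction $\ul{\Hom}_{\sC \otimes \sD}(c \boxtimes d, -) \simeq \ul{\Hom}_\sD(d, H_c(-))$ to establish this, and then commute these testing functors past $\id_\sC \otimes F$ to conclude. The only cosmetic difference is that the paper ranges over all compact $\sF \in \sC^{\leq 0}$ rather than a fixed generating set, which makes no difference.
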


\begin{proof}

Let $\sF \in \sC^{\leq 0}$ be compact. 
Let $\bD \sF:\sC \to \Vect$ denote the corresponding
continuous functor $\ul{\Hom}_{\sC}(\sF,-)$.
Note that $\bD \sF$ is left $t$-exact because
$\sF \in \sC^{\leq 0}$.

We have induced functors:

\[
\bD \sF \otimes \id_{\sD_i}: \sC \otimes \sD_i \to 
\Vect \otimes \sD_i = \sD_i.
\]

The main observation is that
$\sG \in \sC \otimes \sD_i$ lies in 
$(\sC \otimes \sD_i)^{\geq 0}$ if and only if:

\[
\bD \sF \otimes \id_{\sD_i} (\sG) \in \sD_i^{\geq 0}
\]

\noindent for all $\sF$ as above.
Indeed, for $\sH \in \sD_i$, the (possibly non-continuous)
composite functor:

\[
\sC \otimes \sD_i 
\xar{\bD \sF \otimes \id_{\sD_i}}
\sD_i \xar{\ul{\Hom}_{\sD_i}(\sH,-)} \Vect
\]

\noindent coincides with
$\ul{\Hom}_{\sC \otimes \sD_i}(\sF \boxtimes \sH,-)$,
as follows by observing that it is the right adjoint
to the functor $k \mapsto \sF \boxtimes \sH$.
Taking $\sH \in \sD_i^{\leq 0}$, this 
immediately implies the observation.

Therefore, we need to show
that for $\sG \in (\sC \otimes \sD_1)^{\geq 0}$, we have: 

\[
(\bD \sF \otimes \id_{\sD_2}) \circ 
(\id_{\sC} \otimes F) (\sG) \in \sD_2^{\geq 0}
\]

\noindent for all compact $\sF \in \sC^{\leq 0}$.
By functoriality, we have:
 
\[
(\bD \sF \otimes \id_{\sD_2}) \circ 
(\id_{\sC} \otimes F) (\sG) =
F \circ (\bD \sF \otimes \id_{\sD_1})(\sG).
\]

\noindent Because 
$(\bD \sF \otimes \id_{\sD_1})(\sG) \in \sD_1^{\geq 0}$
by the above, we obtain the claim from 
left $t$-exactness of $F$.

\end{proof}

\begin{proof}[Proof of Lemma \ref{l:!-avg-bd}]

The functor $\Av_!^{\psi_2}$ 
is right $t$-exact because it is a left adjoint to a $t$-exact functor.
So it remains to show the other bound.

First, suppose that $\sC = D(H)$. 
Then $D(H)^{K_1,\psi_1}$ is compactly generated
by coherent $D$-modules.
Therefore, for the $t$-structure on
$D(H)^{K_1,\psi_1}$, compact objects are closed under truncations.
So it suffices to show that every compact object
of $D(H)^{K_1,\psi_1,\geq 0}$ maps to 
$D(H)^{K_2,\psi_2,\geq -\dim K_2/K_1 \cap K_2}$.

This follows immediately from the fact that $!$-pushforward 
is left $t$-exact on coherent objects, which is Theorem 3.3.1
of \cite{b-fn}. 
(The cohomological
shift by $\dim K_2/K_1 \cap K_2$ 
arises because 
$!$-averaging is $!$-convolution with a dualizing $D$-module.)

For general $\sC$, we use the commutative diagram:

\[
\xymatrix{
\sC^{K_1,\psi_1} \ar[rr]^{\coact} \ar[d]^{\Av_!^{\psi_2}} & &
D(H)^{K_1,\psi_1} \otimes \sC \ar[d]^{\Av_!^{\psi_2} \otimes \id_{\sC}} \\
\sC^{K_2,\psi_2} \ar[rr]^{\coact} & &
D(H)^{K_2,\psi_2} \otimes \sC.
}
\]

\noindent The horizontal arrows are obviously
conservative and $t$-exact up to shift
(by assumption on the action on $\sC$),
while the right vertical arrow has the correct amplitude
by the above and Lemma \ref{l:left-exact}.
This immediately implies the same for the left vertical arrow.

\end{proof}

\begin{rem}

In the case $\sC = \fh\mod$, the argument given
amounts to using the Beilinson-Bernstein localization functor 
to pass from the Lie algebra to $D$-modules.

\end{rem}

\begin{rem}\label{r:ignorance}

The above works just as well when $H$ is a group scheme
and the $K_i$ are compact open subgroup schemes:
indeed, there is a normal compact open subgroup scheme
of $H$ contained in the $K_i$, reducing the problem to 
the finite-dimensional version. But it is not clear how to
show the lemma for $H$ being the loop group, since
$\coact$ is no longer $t$-exact up to shift (it maps into 
infinitely connective objects).

\end{rem}

\begin{rem}

The above works just as well in the setting of twisted
$D$-modules.

\end{rem}

\subsection{Geometric proof of Lemma \ref{l:amp} for 
$n$ large enough}\label{ss:amp-coxeter}

We will show Lemma \ref{l:amp} for
$n \geq (\check{\rho},\alpha_{max})$ (alias: the Coxeter
number of $G$ minus 1). 

The right exactness is immediately 
given by Lemma \ref{l:av-*-bd}.
The issue in applying Lemma \ref{l:!-avg-bd} is that
we need $\Av_!$ to be defined and functorial for
subgroups of a group scheme, not a group indscheme
such as $G(K)$.

But in the given range of $n$,
$\o{I}_n$ and $\o{I}_{n+1}$ are 
both contained in $\Ad_{-(n+1)\check{\rho}(t)} G(O)$.
Since the existence and functoriality of $\Av_!$ is really
about convolution identities, this means that for any category
strongly acted on by $\Ad_{-(n+1)\check{\rho}(t)} G(O)$,
we can $!$-average from $(\o{I}_n,\psi)$ to
$(\o{I}_{n+1},\psi)$, and this
$!$-averaging coincides with the $*$-averaging up to
the shift by $2\Delta$ from Theorem \ref{t:!-avg}.
Now Lemma \ref{l:!-avg-bd} applies
and gives the desired left $t$-exactness
for $m = n+1$, which evidently suffices.

\begin{rem}

If for $\sC$ we had $D$-modules on a reasonable indscheme
$X$ acted on by $G(K)$ (or the $\kappa$-twisted version of this notion),
then we could apply \cite{b-fn} directly, 
without needing the general Lemma \ref{l:!-avg-bd}.
That is, we would not need any restrictions on $n$.

\end{rem}

\subsection{Representation theoretic approach}\label{ss:amp-rep-thry-method}

We now indicate a representation theoretic approach
to treat Lemma \ref{l:amp} for all $n$.

\begin{proof}[Proof of Lemma \ref{l:amp}]

\setcounter{steps}{0}

\step\label{st:amp-cl}

Note that by the general formalism from Appendix \ref{a:hc},
$\iota_{n,m,*}:\Whit^{\leq n}(\widehat{\fg}_{\kappa}\mod) \to 
\Whit^{\leq m}(\widehat{\fg}_{\kappa}\mod)$ is filtered
for the KK filtration with associated semi-classical functor: 

\[
\QCoh^{ren}(f+\Lie \o{I}_n^{\perp}/\o{I}_n) \to
\QCoh^{ren}(f+\Lie \o{I}_m^{\perp}/\o{I}_m)
\]

\noindent given by push/pull along the correspondence:

\begin{equation}\label{eq:av-*-cl-corr}
\vcenter{\xymatrix{
& f+\Lie\o{I}_n^{\perp} \cap \Lie\o{I}_m^{\perp}/\o{I}_n \cap \o{I}_m \ar[dl] \ar[dr] & \\
f+\Lie\o{I}_n^{\perp}/\o{I}_n &&
f+\Lie\o{I}_m^{\perp}/\o{I}_m
}}
\end{equation} 

\noindent up to cohomological shift and a determinant
twist. The main observation is that this functor is $t$-exact.
(The ``up to cohomological shift" is compatible with the
shift by $(m-n)\Delta$ in Lemma \ref{l:amp}.)

Indeed, the pushforward in this correspondence is obviously
$t$-exact because the map is affine.
It remains to see that the left leg of the correspondence is flat
(and in fact, smooth).

This follows from the explicit description
of both sides from the proof of Theorem \ref{t:!-avg-cl}. 
Indeed, first say $n>0$ for simplicity. Then 
both sides are classifying stacks over
$f+t^{-n}\Ad_{-n\check{\rho}(t)}\fb^e[[t]]$ by \emph{loc. cit}.
Moreover, the relevant group schemes are congruence subgroups
of jets into the group scheme of regular centralizers. We then obtain
the claim from the smoothness
of that group scheme.

If $n = 0$ and $m>n$, then the relevant map
$f+\fg[[t]] \cap \Lie\o{I}_m^{\perp}/G(O) \cap \o{I}_m \to
\fg[[t]]/G(O)$ factors through
$\fg^{reg}(O)/G(O)$, which is the classifying stack
over $f+\fb^e[[t]]$ of jets into regular centralizers.
So the same analysis applies.

\step 

To show $\iota_{n,m,*}[(m-n)\Delta]$ is $t$-exact, it
suffices to show that it is left $t$-exact, since
Lemma \ref{l:av-*-bd} implies the right $t$-exactness.
For this, it suffices to show that it suffices to show that
for $\sF \in \widehat{\fg}_{\kappa}\mod^{\o{I}_n,\psi,\heart}$,
$\iota_{n,m,*}(\sF)[(m-n)\Delta]$ is also in cohomological
degree $0$. 

For $n>0$, it suffices to take $\sF$ to be a quotient
of $\ind_{\o{I}_n}^{\widehat{\fg}_{\kappa}}(\psi)$. Indeed,
such quotients generate the abelian category under
extensions and filtered colimits. Similarly, for $n = 0$,
it suffices to take $\sF$ to be a quotient of a Weyl module
(i.e., a quotient of 
$\bV_{\kappa}^{\lambda} \coloneqq \ind_{\fg[[t]]}^{\widehat{\fg}_{\kappa}}(V^{\lambda})$
for $\lambda \in \Lambda^+$ a dominant coweight, where
$V^{\lambda}$ is the highest weight representation 
of $G$, and is acted on by $\fg[[t]]$ through the quotient $\fg$).

Here is a wrong conclusion to the argument, which we correct
in what follows. The modules $\ind_{\o{I}_n}^{\widehat{\fg}_{\kappa}}(\psi)$
(resp. $\bV_{\kappa}^{\lambda}$)
have KK filtrations, so the quotient $\sF$ inherits one as well.
Therefore, $\iota_{n,m,*}(\sF)$ has a canonical filtration. 
By Step \ref{st:amp-cl},
$\Gr_{\dot} \iota_{n,m,*}(\sF)[(m-n)\Delta]$ is concentrated in cohomological
degree $0$. 

However, because the KK filtration on 
$\ind_{\o{I}_n}^{\widehat{\fg}_{\kappa}}(\psi)$
is not bounded below, 
\emph{it is not clear that the filtration on $\Psi(\sF)$
is bounded below in this case} (and probably it is not).
That is, the argument from the proof of Theorem \ref{t:ds-no-w-str}
does not adapt well to this setting. So we give a different method below, which essentially uses different bookkeeping to
avoid this issue.

\step Of course, it suffices to treat the case where $G$ is not a torus, so 
we assume this in one follows. We first additionally suppos that $n > 0$.
The main idea is to imitate the trick of using a (weighted)
$L_0$-grading from the proof of Theorem \ref{t:spr-fib}.

Let $h \in \bQ^{>1}$ be a rational number (greater than 1)
to be specified later. This choice defines a grading
on the Kac-Moody algebra with degrees lying in
$h\bZ \subset \bQ$ as follows.
Note that the Kac-Moody algebra has 
canonical $L_0\coloneqq t\partial_t$ and 
$\check{\rho}$-gradings. 
Consider it as equipped with the
grading $-h\check{\rho}-(h-1)L_0$
(so e.g., $t^i e_{\alpha}$ has degree
$-h(\check{\rho},\alpha)-(h-1)i$).

The subalgebras $\Lie \o{I}_n,\Lie\o{I}_m$ 
are obviously graded. Moreover, the
character $\psi:\Lie \o{I}_n \to k$ vanishes
on homogeneous components
apart from degree $-1$, so we can use the KK formalism
from Appendix \ref{a:hc}. Note that
there is no problem in using fractional indices, though
our filtration will be graded similarly. (Clearing
denominators, it is the same as renormalizing the
PBW filtration to have the same associated
graded, but with jumps only at multiples of the denominator
of $h$.) Let us refer to this as the
KK' filtration on $\widehat{\fg}_{\kappa}\mod$, etc. 
Note that if $h = 1$, this is recovering the usual
KK filtration.

A straightforward calculation shows we can 
take $h$ so that the induced KK' filtration on 
$\ind_{\Lie \o{I}_n}^{\widehat{\fg}_{\kappa}}(\psi)$
to be \emph{bounded from below} (it is essential that
$n>0$ here).

Of course, the same boundedness occurs
for the induced KK' filtration on $\sF$,
any quotient of our induced module.
 
It is straightforward to see that the induced KK' filtration on:

\[
C^{\dot}(\Lie\o{I}_m,\o{I}_n \cap \o{I}_m, \sF \otimes -\psi_{\o{I}_m})
\]

\noindent is then bounded from below as well. 
First, observe that (for any $h>1$) there is a compact
open subalgebra of $\Lie\o{I}_n$ on which the
$-h\check{\rho}-(h-1)L_0$-degrees are negative.
It follows that the degrees on
$\Lambda^i \big(\Lie\o{I}_n\big)^{\vee}$ are bounded
from below \emph{independently of $i$}, since a compact
open subalgebra has finite codimension. This shows that
the induced filtration on:

\[
C^{\dot}(\o{I}_n,\sF \otimes -\psi)
\]

\noindent is bounded from below, or similarly for
$\o{I}_n\cap \o{I}_m$.
The Harish-Chandra cohomology appearing above
differs from the latter group cohomology 
by tensoring with the exterior algebra
of $\Ad_{-m\check{\rho}(t)\fn[[t]]}/\Ad_{-n\check{\rho}(t)}\fn[[t]]$,
so the result follows.

This Chevalley complex computes:

\[
\ul{\Hom}_{\widehat{\fg}_{\kappa}\mod^{\o{I}_m}}(
\ind_{\Lie \o{I}_m}^{\widehat{\fg}_{\kappa}} \psi,
\iota_{n,m,*}(\sF))
)
\]

\noindent by definition of $\iota_{n,m,*}$ as $*$-averaging.
To compute the associated graded,
one takes 
$\gr_{\dot}^{KK'}(\sF) \in \QCoh(f+\Lie\o{I}_n^{\perp}/\o{I}_n)^{\heart}$,
applies pull-push along the correspondence
\eqref{eq:av-*-cl-corr}, applies the cohomological shift
by $(m-n)\Delta$ and the determinant twist, and then
applies global sections on the stack 
$f+\Lie\o{I}_m^{\perp}/\o{I}_m$. 

The upshot is that
the resulting object of $\Vect$ is in cohomological
degrees $\geq (m-n)\Delta$ by the exactness
of our pull-push operation and because of the cohomological
shift. This means the same is true for the Chevalley complex
above. Because
$\ind_{\Lie \o{I}_m}^{\widehat{\fg}_{\kappa}} \psi$
generates $\Whit^{\leq m}(\widehat{\fg}_{\kappa}\mod)^{\leq 0}$
under colimits, we finally obtain that
$\iota_{n,m,*}(\sF)$ is in cohomological degrees
$\geq (m-n)\Delta$, hence is in degree $(m-n)\Delta$,
as was desired.

\step 

It remains to define $h$ and check the desired boundedness.
For this, let $\alpha_{max}$ denote the highest root, 
and take:

\[
h \coloneqq
\frac{n(\check{\rho},\alpha_{max})}{1+(n-1)(\check{\rho},\alpha_{max})}.
\]

\noindent (E.g., for $n = 1$, $h$ is one less than the
Coxeter number of $G$.)

We want to see that KK' filtration on
$\ind_{\Lie \o{I}_n}^{\widehat{\fg}_{\kappa}}(\psi)$ is
bounded below: in fact, we will see 
$F_{-1}^{KK'} \ind_{\Lie \o{I}_n}^{\widehat{\fg}_{\kappa}}(\psi) = 0$.
It suffices to show that the non-zero graded degrees on:

\[
\gr_{\dot}^{KK'} \ind_{\Lie \o{I}_n}^{\widehat{\fg}_{\kappa}}(\psi) =
\Sym^{\dot}(\fg((t))/\Lie\o{I}_n)
\]

\noindent are $\geq 0$.
Note that in the notation from the proof
of Lemma \ref{l:alpha-epi},
this associated graded is an algebra
generated by elements $\frac{e_{\alpha}}{t^r}$
($r\geq n(\check{\rho},\alpha)+1$) and 
$\frac{f_{\beta}}{t^r}$ ($r\geq -n(\check{\rho},\beta)-n+1$),
which have gradings:

\[
\begin{gathered}
-h(\check{\rho},\alpha)+(h-1)r+1 \\
h(\check{\rho},\beta)+(h-1)r+1.
\end{gathered}
\]

\noindent We need to show that these numbers are
each $\geq 0$ for $\alpha$ (resp. $\beta$) a 
positive root (resp. or zero) and $r$ in the appropriate range.

Regarding the ``$\alpha$ inequality," note that:

\begin{equation}\label{eq:alpha-h-ineq}
h \geq
\frac{n(\check{\rho},\alpha)}{1+(n-1)(\check{\rho},\alpha)}.
\end{equation}

\noindent Then the bound on $r$ means
the KK' degree of $\frac{e_{\alpha}}{t^r}$ is:

\[
-h(\check{\rho},\alpha)+(h-1)r + 1 \geq 
-h(\check{\rho},\alpha) + (h-1)(n(\check{\rho},\alpha)+1) + 1 = 
h\big(1+(n-1)(\check{\rho},\alpha)\big)-n(\check{\rho},\alpha)
\]

\noindent which is non-negative by \eqref{eq:alpha-h-ineq}.

For the second inequality, first note that:\footnote{
Here the manipulation for the potentially dangerous value
$n = 1$ is obviously justified.
}

\begin{equation}\label{eq:beta-h-ineq}
(n-1) h = \frac{n(n-1)}
{\frac{1}{(\check{\rho},\alpha_{max})}+n-1} <
\frac{n(n-1)}{n-1} = n.
\end{equation}

\noindent Then the bound on
$r$ gives the degree of $\frac{f_{\beta}}{t^r}$ as:

\[
h(\check{\rho},\beta) + (h-1) r + 1 \geq 
h(\check{\rho},\beta) + (h-1) \big(-n(\check{\rho},\beta)-n+1\big) +1 =
(\check{\rho},\beta)\big(n-(n-1)h\big)
\]

\noindent which is non-negative by \eqref{eq:beta-h-ineq}
(recall our normalization that $\beta$ is $0$ or a \emph{positive} 
root).

\step 

Finally, we treat the case $n = 0$. Here are three arguments.

Observe that (e.g. by 
Theorems \ref{t:aff-skry} and \ref{t:psi-exact/cons}),
it suffices to show that 
$\Psi:\widehat{\fg}_{\kappa}\mod^{G(O)} \to \Vect$
is $t$-exact.

First, this result can be found in the literature:
at non-critical level, this is \cite{fg-weyl} Proposition 2
plus the Sugawara construction, and
at critical level this is \cite{fg-spherical} Theorem 3.2.

Second, one can organize the above differently:
\cite{fg-weyl} uses Arakawa exactness in an essential
way, and our generalization Corollary \ref{c:arakawa}
of it, which removes the use of the 
\emph{extended} affine Kac-Moody algebra, allows
one to use the Frenkel-Gaitsgory method directly.

Finally, note that any object of 
$\widehat{\fg}_{\kappa}\mod^{G(O)}$ has a 
$\check{\rho}$-grading, and morphisms
preserve these gradings.
Therefore, 
$\sF$ ($\coloneqq $ a quotient of $\bV_{\kappa}^{\lambda}$)
has canonical $\check{\rho}$-gradings,
and also inherits PBW and KK filtrations from
$\bV_{\kappa}^{\lambda}$. These satisfy the usual
compatibility in the KK formalism. Therefore, we can apply the
method from Theorem \ref{t:ds-no-w-str} to obtain the
desired result.

\end{proof}

\bibliography{bibtex}{}
\bibliographystyle{alphanum}

\end{document}